\documentclass[10pt, a4paper, notitlepage]{article}
\pdfoutput=1

\usepackage{pgfplots}
\usepackage{biblatex}
\usepackage{bbm}

\usepackage{etoolbox}
\usepackage{tikz}
\usetikzlibrary{calc}
\usetikzlibrary{cd}
\usetikzlibrary{decorations.markings}
\usetikzlibrary{decorations.pathreplacing}
\usetikzlibrary{decorations.pathmorphing}
\usetikzlibrary{decorations.text}
\usetikzlibrary{arrows.meta}
\usetikzlibrary{arrows}
\usetikzlibrary{positioning}

\usepackage{geometry} 
\usepackage{tocloft} 
\usepackage{fancyhdr} 
\usepackage{longtable}
\usepackage{subcaption}
\usepackage{enumitem}
\usepackage{amssymb}
\usepackage{amsmath}
\usepackage{stackrel} 
\usepackage{uniinput}
\usepackage{amsthm}
\usepackage{stmaryrd}
\usepackage{mathtools}
\usepackage{fouridx}
\usepackage{xparse}
\usepackage{bm}
\usepackage{aliascnt}
\usepackage{import}

\usepackage{hyperref}
\theoremstyle{definition}
\newtheorem{theorem}{Theorem}
\let\emph\textbf

\newaliascnt{remark}{theorem}
\newaliascnt{definition}{theorem}
\newaliascnt{proposition}{theorem}
\newaliascnt{lemma}{theorem}
\newaliascnt{corollary}{theorem}
\newaliascnt{example}{theorem}
\newaliascnt{convention}{theorem}
\newaliascnt{todo}{theorem}

\newtheorem{remark}[remark]{Remark}
\newtheorem{definition}[definition]{Definition}
\newtheorem{proposition}[proposition]{Proposition}
\newtheorem{lemma}[lemma]{Lemma}

\newtheorem{example}[example]{Example}
\newtheorem{convention}[convention]{Convention}
\newtheorem{todo}[todo]{Todo}

\aliascntresetthe{remark}
\aliascntresetthe{definition}
\aliascntresetthe{proposition}
\aliascntresetthe{lemma}
\aliascntresetthe{corollary}
\aliascntresetthe{example}
\aliascntresetthe{convention}
\aliascntresetthe{todo}

\numberwithin{equation}{section}
\numberwithin{figure}{section}
\numberwithin{table}{section}
\makeatletter\let\c@table\c@figure\makeatother

\numberwithin{theorem}{section}
\numberwithin{remark}{section}
\numberwithin{definition}{section}
\numberwithin{proposition}{section}
\numberwithin{lemma}{section}
\numberwithin{corollary}{section}
\numberwithin{example}{section}
\numberwithin{convention}{section}
\numberwithin{todo}{section}

\newcommand{\Id}{\operatorname{Id}}
\newcommand{\id}{\mathrm{id}}

\newcommand{\Hom}{\operatorname{Hom}}
\newcommand{\Ker}{\operatorname{Ker}}

\newcommand{\End}{\operatorname{End}}
\newcommand{\Ob}{\operatorname{Ob}}
\newcommand{\cat}[1]{\mathcal{#1}}

\newcommand{\tensor}{\otimes}
\newcommand{\htensor}{\widehat{\otimes}}

\newcommand{\Tw}{\operatorname{Tw}}

\newcommand{\Add}{\operatorname{Add}}
\newcommand{\MC}{\operatorname{MC}}

\def\H{\operatorname{H}}

\newcommand{\HH}{\operatorname{HH}}
\newcommand{\HC}{\operatorname{HC}}

\newcommand{\pmat}[1]{\begin{pmatrix} #1 \end{pmatrix}}
\newcommand{\vspan}{\operatorname{span}}

\newcommand{\isoto}{\xrightarrow{\sim}}

\newcommand{\embeds}{\hookrightarrow}
\def\Im{\operatorname{Im}}

\newcommand{\running}{~|~}

\newcommand{\cone}{\operatorname{cone}}
\newcommand{\Perf}{\operatorname{Perf}}

\newcommand{\M}{\mathcal{M}}

\newcommand{\Sym}{\operatorname{Sym}}

\newcommand{\Spec}{\operatorname{Spec}}

\newcommand{\CF}{\operatorname{CF}}

\newcommand{\restr}{|}

\newcommand{\Disk}{\operatorname{Disk}}
\renewcommand{\Re}{\operatorname{Re}}

\newcommand{\Hilbhor}{\operatorname{Hilb}^{\operatorname{hor}}}
\newcommand{\NH}{\operatorname{NH}}

\newcommand{\HW}{\operatorname{HW}}
\newcommand{\HF}{\operatorname{HF}}

\newcommand{\Comp}{\operatorname{Comp}}

\newcommand{\Map}{\operatorname{Map}}

\newcommand{\landau}{\operatorname{O}}
\newcommand{\pathsplitL}{\operatorname{split}^{\mathrm{L}}}
\newcommand{\pathsplitR}{\operatorname{split}^{\mathrm{R}}}
\newcommand{\pathsplit}{\operatorname{split}}
\newcommand{\Irr}{\operatorname{Irr}}
\newcommand{\Bimod}{\operatorname{Bimod}}
\newcommand{\Barcon}{\operatorname{Bar}}
\newcommand{\sgn}{\operatorname{sign}}
\newcommand{\KLRW}{\operatorname{KLRW}}

\newcommand{\ordFuk}{\operatorname{Fuk}^{\mathcal{O}}}
\newcommand{\ordrelFuk}{\operatorname{relFuk}^{\mathcal{O}}}
\newcommand{\Fuk}{\operatorname{Fuk}}

\newcommand{\relFuk}{\operatorname{relFuk}}

\title{Fukaya categories of Coulomb branches as unique deformations}
\author{Jasper van de Kreeke}
\date{}

\begin{document}

\maketitle

\begin{abstract}
The symplectic geometry of Coulomb branches is complicated and it is particularly difficult to determine their Fukaya categories. Relative Fukaya categories present an approach to circumvent these difficulties by first computing the Fukaya category of the complement of a divisor and then solving a deformation problem. In this paper, we apply this approach to the specific case of horizontal Hilbert schemes by removing their matter divisor and narrowing down the set of possible deformations through an additional $ ℤ^2 $-grading. We utilize an existing description of the Fukaya category after removal of the matter divisor, in particular we use a specific generating Lagrangian and the identification between its endomorphism algebra and the NilHecke algebra. The core of this paper consists of solving the deformation problem, after which we recover the result of Aganagic et al.
\end{abstract}

\tableofcontents

\section{Introduction}
Coulomb branches are a class of symplectic varieties rigorously defined by Braverman-Finkelberg-Nakajima \cite{BFN-II}. Their Fukaya categories were computed in the case of multiplicative Coulomb branches of framed quiver settings via a generating system of Lagrangians and explicit disk-counting by Aganagic et al.~\cite{ADLSZ}. They turn out to be equivalent to the twisted completions of KLRW algebras:
\begin{equation*}
\Tw\KLRW(Γ, d) \embeds \relFuk(\mathcal{M}^× (Γ, d), W).
\end{equation*}
In this paper, we offer an alternative proof in the special case of planar horizontal Hilbert schemes $ Y = \Hilbhor_k (ℂ^* × ℂ) $ where the specific KLRW algebras are the classical NilHecke algebras $ \NH_k $. The idea is to exhibit the relative Fukaya category $ \relFuk(Y_W)_L $ and the algebra $ \NH_k $ as deformations of the common base algebra $ \NH_{k, ħ=0} $. By establishing an additional $ ℤ^2 $-grading on $ \relFuk(Y_W)_L $ and $ \NH_k $ and proving that $ \NH_{k, ħ=0} $ admits only one non-trivial $ ℤ^2 $-graded deformation over $ ℂ⟦ħ⟧_{(0, 1)} $, we conclude that both deformations are automatically equivalent:
\begin{equation*}
\begin{tikzcd}
\relFuk(Y_W)_L \arrow[d, dash, "ℤ^2\text{-defo}"] \arrow[r, "\sim"] & \NH_k \arrow[d, dash, "ℤ^2\text{-defo}"] \\
\Fuk(Y_{O, W})_L \arrow[r, "\sim"] & \NH_{k, ħ=0}.
\end{tikzcd}
\end{equation*}

\paragraph*{Fukaya categories of Coulomb branches}
Coulomb branches originally gained importance as mirror pairs for Higgs branches in 3d mirror symmetry \cite{Intriligator-Seiberg}. Their mathematical formulation was finalized in \cite{BFN-II} and their coherent sheaf categories were investigated by Webster \cite{Webster-I, Webster-II}. As part of the program to categorify knot invariants via mirror symmetry, their Fukaya categories were investigated towards categorification of knot invariants \cite{Aganagic} and the case of $ \Fuk(\mathcal{M}^× (Γ, d), W) $ was worked out explicitly by Aganagic, Danilenko, Li, Shende and Zhou \cite{ADLSZ}.

Their procedure starts with analyzing the definition of the Coulomb branch, which is defined as the spectrum of a K-theoretical algebra. This analysis yields a set of algebra generators and relations, thus establishing a coordinate system on the Coulomb branch itself. With this coordinate set, the Coulomb branch is embedded into a high-dimensional complex space and obtains a Kähler Liouville structure. A specific potential $ W $ dictates the location of stops to be introduced, turning the Liouville manifold into a Liouville sector in the sense of Ganatra-Pardon-Shende \cite{GPS-I}. In particular, this bundle of data defines a wrapped Fukaya category.

There are two important divisors in the Coulomb branch, the root and matter divisors. In the complement of these divisors, there is a coordinate transformation which brings the potential $ W $ into a simpler form. In terms of these new coordinates, for every red and black dot configuration $ θ ∈ \KLRW(Γ, d) $ on the circle $ S^1 $ there is an explicit associated Lagrangians $ T_θ ⊂ \mathcal{M}^× (Γ, d) $. It is verified that the strand diagrams between two configurations are in one-to-one correspondence with the intersection points of the associated Lagrangians in the Fukaya category. This is the basis of the $ A_∞ $-inclusion $ \Tw\KLRW \embeds \Fuk(\M^{×} (Γ, d), W) $.

\paragraph*{Relative Fukaya categories}
A technique to compute a Fukaya category $ \Fuk(X) $ via deformations was introduced by Seidel in 2001 \cite{Seidel-relative}. First, one removes a divisor $ D ⊂ X $ and forms the relative Fukaya category $ \relFuk(X, D) $. In its standard use case, $ \relFuk(X, D) $ is a deformation of $ \Fuk(X \setminus D) $, linear over the deformation base $ ℂ⟦ħ⟧ $ with one deformation parameter $ ħ $. Second, one examines the simpler category $ \Fuk(X \setminus D) $ by means of symplectic procedures and generating systems. Third, one determines the set of deformations of $ \Fuk(X \setminus D) $ up to gauge equivalence and proves that the specific deformation $ \relFuk(X, D) $ falls within a narrow subclass of these deformations. In the ideal case, this procedure makes it possible to identify $ \relFuk(X, D) $ as being equivalent to an a priori determined candidate deformation $ \cat C_{ħ} $. In consequence $ \relFuk(X, D) ≅ \cat C_{ħ} $ and specialization to $ ħ=1 $ yields a complete description of $ \Fuk(X) $.

In the present paper, we apply this technique to the case of horizontal Hilbert schemes $ \Hilbhor_k (ℂ^* × ℂ) $. The idea is to remove the matter divisor, equivalently the big diagonal where at least two of the $ y_i $ coordinates agree. We show that via a standard topological procedure, the relative Fukaya category obtains an additional $ ℤ^2 $-grading. Through an explicit computation of the relevant graded parts of Hochschild cohomology, we observe that the $ ℤ^2 $-grading drastically narrows down the deformation theory. In our process we borrow the description of the Fukaya category after removal of the big diagonal from \cite[section 7]{ADLSZ}. However we circumvent the remaining disk-counting carried out in \cite[section 8]{ADLSZ}. This allows us to recover the inclusion of $ \Tw\NH_k $ into $ \relFuk(\Hilbhor_k (ℂ^* × ℂ), W) $.

\paragraph*{Structure of the paper}
In \autoref{sec:prelimliou}, we formulate a graded version of $ A_∞ $-deformation theory and recall constructions of Liouville sectors and Fukaya categories. In \autoref{sec:prelimhilbhor}, we recall the construction of the horizontal Hilbert scheme $ \Hilbhor_k (ℂ^* × ℂ) $ and its associated Fukaya categories. In \autoref{sec:unique}, we introduce the relative Fukaya category of the horizontal Hilbert scheme as a $ ℤ^2 $-graded deformation over $ ℂ⟦ħ⟧_{(0, 1)} $ and examine the $ ℤ^2 $-graded deformation theory of $ \NH_{k, ħ=0} $. In \autoref{th:unique-main-th}, we conclude that the relative Fukaya category necessarily agrees with $ \NH_k $.

\paragraph*{Acknowledgements}
The author would like to thank Mina Aganagic and Peng Zhou for supervision. The author was supported by NWO Rubicon grant 019.232EN.029.

\section{Preliminaries on deformation theory and Fukaya categories}
\label{sec:prelimliou}
In this section, we recall standard deformation theory of $ A_∞ $-categories and algebras. We recall the formalism of $ L_∞ $-algebras and Hochschild cohomology. We also recall the deformation theory of quiver algebras with reduction system as studied by Barmeier and Wang \cite{Barmeier-Wang}.

In this section, we recall the notions of Liouville manifolds, Liouville domains, Liouville sectors and Kähler Liouville structures. We also recall several standard Liouville structures on low-dimensional spaces for the convenience of the reader.

\subsection{Deformations of $ A_∞ $-categories}
\label{sec:prelim-liou-defo}
In this section we recall $ A_∞ $-categories and their deformation theory. We start by recalling $ A_∞ $-categories, their twisted completion and minimal models. Then we recall completed tensor products and curved $ A_∞ $-deformations. We comment on uncurving procedures, twisted completion and minimal models in the context of $ A_∞ $-deformations. We follow \cite{Paper-IIA} where also more detail can be found.

\paragraph*{$ A_∞ $-categories} We start by recalling that a ($ ℤ $- or $ ℤ/2ℤ $-graded, strictly unital) \emph{$ A_∞ $-category} $ \cat C $ consists of a collection of objects together with $ ℤ $- or $ ℤ/2ℤ $-graded hom spaces $ \Hom(X, Y) $, distinguished identity morphisms $ \id_X ∈ \Hom^0(X, X) $ for all $ X ∈ \cat C $, together with multilinear higher products
\begin{equation*}
μ^k: \Hom(X_k, X_{k+1}) ¤ … ¤ \Hom(X_1, X_2) → \Hom(X_1, X_{k+1}), \quad k ≥ 1
\end{equation*}
of degree $ 2-k $ such that the $ A_∞ $-relations and strict unitality axioms hold. The \emph{twisted completion} $ \Tw\cat C $ is the $ A_∞ $-category $ \cat C $ of virtual chain complexes of objects of $ \cat C $. A \emph{functor} $ F: \cat C → \cat D $ between two $ A_∞ $-categories is a mapping $ F: \Ob(\cat C) → \Ob(\cat D) $ together together with higher components $ F^k $ for $ k ≥ 1 $ which satisfy the $ A_∞ $-functor relations. A functor $ F $ is an \emph{isomorphism} if $ F: \Ob(\cat C) → \Ob(\cat D) $ is a bijection and $ F^1: \Hom_{\cat C} (X, Y) → \Hom_{\cat D} (FX, FY) $ is an isomorphism for all $ X, Y ∈ \cat C $. The functor $ F $ is a \emph{quasi-isomorphism} if $ F: \Ob(\cat C) → \Ob(\cat D) $ is a bijection and $ F^1: \Hom_{\cat C} (X, Y) → \Hom_{\cat D} (FX, FY) $ is a quasi-isomorphism of complexes for every $ X, Y ∈ \cat C $. An $ A_∞ $-category $ \cat C $ is \emph{minimal} if $ μ^1_{\cat C} = 0 $. A \emph{minimal model} of $ \cat C $ is any minimal $ A_∞ $-category $ \cat D $ together with a quasi-isomorphism $ F: \cat D → \cat C $. A minimal model of $ \cat C $ is generically denoted $ \H\cat C $. By the famous Kadeishvili theorem, every $ A_∞ $-category has a minimal model. In fact, a minimal model can be constructed semi-explicitly by sums over trees.

\paragraph*{Completed tensor products} A \emph{deformation base} is a complete local Noetherian unital $ ℂ $-algebra $ B $ with residue field $ B/\mathfrak{m} = ℂ $. By the Cohen structure theorem, every deformation base is of the form $ ℂ⟦x_1, …, x_n⟧ / I $ with $ I $ denoting some ideal. If $ X $ is a vector spaces, then $ B \htensor X = \lim (B/\mathfrak{m}^k \tensor X) $ denotes the completed tensor product over $ ℂ $. For simplicity, we write $ \mathfrak{m}^k X $ to denote the infinitesimal part $ \mathfrak{m}^k X = \mathfrak{m}^k \htensor X ⊂ B \htensor X $. Recall that $ B \htensor X $ is a $ B $-module and comes with the $ \mathfrak{m} $-adic topology, which turns $ B \htensor X $ into a sequential Hausdorff space. For convenience, we may from time to time use expressions like $ x = \landau(\mathfrak{m}^k) $ to indicate $ x ∈ \mathfrak{m}^k X $. Every element in $ B \htensor X $ can be written as a series $ \sum_{i = 0}^∞ m_i x_i $. Here $ m_i $ is a sequence of elements $ m_i ∈ \mathfrak{m}^{→∞} $ and $ x_i $ is a sequence of elements $ x_i ∈ X $. We use the notation $ m_i ∈ \mathfrak{m}^{→∞} $ to indicate that $ m_i ∈ \mathfrak{m}^{k_i} $ for some sequence $ (k_i) ⊂ ℕ $ with $ k_i → ∞ $. A map $ φ: B \htensor X → B \htensor Y $ is \emph{continuous} if it is continuous with respect to the $ \mathfrak{m} $-adic topologies. A map $ φ: (B \htensor X_k) ¤ … ¤ (B \htensor X_1) → B \htensor Y $ is \emph{continuous} if for every $ 1 ≤ i ≤ k $ and every sequence of elements $ x_1, …, \hat x_i, …, x_k $ the map $ μ(x_k, …, -, …, x_1): B \htensor X_i → B \htensor Y $ is continuous. Every $ B $-linear map $ B \htensor X → B \htensor Y $ and every every $ B $-multilinear map $ (B \htensor X_k) ¤ … ¤ (B \htensor X_1) → B \htensor Y $ is automatically continuous. Linear maps $ X → B \htensor Y $ can be uniquely extended to $ B $-linear maps $ B \htensor X → B \htensor Y $ and multilinear maps $ X_k ¤ … ¤ X_1 → B \htensor Y $ can be uniquely extended to $ B $-multilinear maps $ (B \htensor X_k) ¤ … ¤ (B \htensor X_1) → B \htensor Y $. The \emph{leading term} of a $ B $-linear map $ φ: B \htensor X → B \htensor Y $ is the map $ φ_0: X → Y $ given by the composition $ φ_0 = π φ \restr_{X} $, where $ π: B \htensor Y → Y $ denotes the standard projection. If the leading term $ φ_0 $ is injective or surjective, then $ φ $ is injective or surjective itself.

\paragraph*{$ A_∞ $-deformations} Let $ \cat C $ be an $ A_∞ $ category with products $ μ^k $ and let $ B $ a deformation base. An \emph{$ A_\infty $-deformation} of $ \cat C_q $ of $ \cat C $ consists of the same objects as $ \cat C $, hom spaces $ \Hom_{\cat C_q} (X, Y) = B \htensor \Hom_{\cat C} (X, Y) $ for $ X, Y ∈ \cat C $, and $ B $-multilinear products $ μ_q^k $ of degree $ 2 - k $ for $ k ≥ 1 $, and curvature $ μ_{q, X}^0 ∈ \mathfrak{m} \Hom_{\cat C_q}^2 (X, X) $ of degree $ 2 $ for every object $ X ∈ \cat C $, such that $ μ_q $ reduces to $ μ $ once the maximal ideal $ \mathfrak{m} $ is divided out, and $ μ_q $ satisfies the curved $ A_∞ $ ($ cA_∞ $) relations. The deformation is \emph{unital} if the deformed higher products still satisfy the unitality axioms. A \emph{functor of deformed $ A_∞ $-categories} $ F_q: \cat C_q → \cat D_q $ consists of a map $ F_q: \Ob(\cat C) → \Ob(\cat D) $ together with for every $ k ≥ 1 $ a $ B $-multilinear degree $ 1-k $ map $ F^k_q $ and infinitesimal curvature $ F^0_{q, X} ∈ \mathfrak{m} \Hom^1_{\cat D} (F_q X, F_q X) $ for every $ X ∈ \cat C $, such that the curved $ A_∞ $-functor relations hold. If $ \cat C_q $ and $ \cat D_q $ are strictly unital, then $ F_q $ is strictly unital if $ F_q^1 (\id_X) = \id_{F_q X} $ for every $ X \in \cat C $ and $ F_q^{≥2} (…, \id_X, …) = 0 $. Note that $ F_q $ itself is allowed to have curvature. If $ F_q: \cat C_q \to \cat D_q $ is a functor of $ A_\infty $-deformations, then its leading term $ F: \cat C \to \cat D $ is automatically a functor of $ A_\infty $-categories. The functor $ F_q $ is a \emph{quasi-isomorphism} if its leading term $ F: \cat C → \cat D $ is a quasi-isomorphism of $ A_∞ $-categories.

\paragraph*{Uncurving} It is sometimes possible to remove curvature from a deformation $ \cat C_q $ by means of gauging. Uncurving refers to finding a functor $ F_q: \cat C_q' → \cat C_q $ of $ A_∞ $-deformations where $ \cat C_q' $ is another deformation of $ \cat C $ with possibly less curvature and the leading term of $ F_q $ is the identity functor. Let $ r = \{r_X\}_{X ∈ \cat C} $ consist of $ r_X ∈ \mathfrak{m} \End^1_{\cat C} (X) $ for every $ X ∈ \cat C $, then the \emph{uncurving} $ \cat C_q' $ of $ \cat C_q $ by $ r $ is the unique $ A_∞ $-deformation of $ \cat C $ such that $ F_q: \cat C_q' \isoto \cat C_q $ given by $ F_q^0 = r $ and $ F_q^1 = \Id $ and $ F_q^{≥2} = 0 $ is an isomorphism of $ A_∞ $-deformations. The curvature of $ \cat C_q' $ is then $ μ^0_{\cat C_q', X} = \sum_{i ≥ 0}^∞ μ^i_{\cat C_q} (r_X, …, r_X) $. An object $ X ∈ \cat C $ is \emph{uncurvable} if there exists an $ r_X ∈ \mathfrak{m} \End^1(X) $ such that $ X $ has vanishing curvature in the uncurving $ \cat C_q' $ defined by $ r_X $. If $ X, Y ∈ \cat C $ are quasi-isomorphic, then $ X $ is uncurvable if and only if $ Y $ is uncurvable. If $ F_q: \cat C_q → \cat D_q $ is a functor of $ A_∞ $-deformations and $ X ∈ \cat C $ is uncurvable, then $ F_q (X) $ is uncurvable.

\paragraph*{Deformed twisted completion and minimal models}
Let $ \cat C_q $ be a deformation of $ \cat C $. The twisted completion $ \Tw\cat C_q $ is a deformation of $ \Tw\cat C $. Its objects are defined in terms of twisted differentials, but the twisted differentials do not satisfy the Maurer-Cartan equation with respect to the deformed product $ μ_{\cat C_q} $. The failure to satisfy the Maurer-Cartan equation is captured in the object's curvature. It is possible to define a variant $ \Tw'\cat C_q $ of the twisted completion of $ A_∞ $-deformations by allowing additional infinitesimal entries anywhere in the $ δ $-matrix. The objects of $ \Tw'\cat C_q $ are pairs $ (X, δ = δ_0 + δ') $ where $ X ∈ \Add\cat C $ and $ δ_0 ∈ \Hom_{\cat C}^1 (X, X) $ and $ δ' ∈ \mathfrak{m} \Hom_{\cat C}^1 (X, X) $ such that the leading part $ δ_0 $ is upper triangular and satisfies the Maurer-Cartan equation with respect to $ μ_{\cat C} $. The category $ \Tw'\cat C_q $ is an object-cloning deformation of $ \Tw\cat C_q $. The minimal model $ \H\cat C_q $ of $ \cat C_q $ is a deformation of $ \H\cat C $. There exists a quasi-isomorphism $ π_q: \cat C_q → \H\cat C_q $ whose leading term is the standard projection $ π: \cat C → \H\cat C $. The minimal model $ \H\cat C_q $ may have nonzero curvature and differential.

\subsection{Perfect modules of deformed algebras}
In this section, we investigate perfect modules over deformed algebras. We start from an associative algebra $ A $ and a deformation $ A_ħ $ over a deformation base $ B $. The goal is to show that the perfect modules of $ A $ lift to perfect modules of $ A_ħ $.

\begin{definition}
Let $ A $ be an algebra and $ A_ħ $ a deformation. Let $ M $ be an $ A $-module and $ M_ħ $ an $ A_ħ $-module. Then $ M_ħ $ is a \emph{flat deformation} of $ M $ if there is a continuous $ B $-linear isomorphism $ φ: M_ħ \isoto B \htensor M $ which preserves the $ A $-action on zeroth order in the sense that $ φ(am) - a φ(m) ∈ \mathfrak{m} B $ for $ a ∈ A $ and $ m ∈ M_ħ $.
\end{definition}

\begin{lemma}
\label{th:perfect-generation}
Let $ M_ħ $ be a flat deformation of $ M $ by means of $ φ: M_ħ \isoto B \htensor M $. If $ v_1, …, v_k ∈ M $ are generators of $ M $, then their preimages $ φ^{-1} (v_1), …, φ^{-1} (v_k) $ generate $ M_ħ $.
\end{lemma}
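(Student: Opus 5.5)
This is a Nakayama-type argument combined with successive approximation in the $\mathfrak{m}$-adic topology. Set $w_j = φ^{-1}(v_j) ∈ M_ħ$. Since $φ$ is a continuous $B$-linear isomorphism, it suffices to show that every element of $B \htensor M$ lies in the $A_ħ$-span of $w_1, …, w_k$ after transporting through $φ$. Concretely, I will show that every $x ∈ B \htensor M$ can be written as $x = φ\bigl(\sum_j a_j w_j\bigr)$ with $a_j ∈ A_ħ$.

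\emph{Step 1: zeroth-order approximation.} Let $x ∈ \mathfrak{m}^n M$ for some $n ≥ 0$; the case $n = 0$ is an arbitrary element. Write $x$ as a convergent series $\sum_i m_i x_i$ with $m_i ∈ \mathfrak{m}^{n}$ and $m_i ∈ \mathfrak{m}^{→∞}$, where $x_i ∈ M$. Since the $v_j$ generate $M$, write $x_i = \sum_j a_{ij} v_j$ with $a_{ij} ∈ A$. Regard $a_{ij}$ as elements of $A_ħ = B \htensor A$, and set $b_j = \sum_i m_i a_{ij}$. This series converges in $A_ħ$ and lies in $\mathfrak{m}^n A_ħ$. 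The zeroth-order compatibility $φ(a w) - a φ(w) ∈ \mathfrak{m}(B \htensor M)$, together with $B$-linearity and continuity of $φ$ and of the action, then gives
\begin{equation*}
x - φ\Bigl(\sum_j b_j w_j\Bigr) ∈ \mathfrak{m}^{n+1} M .
\end{equation*}
To justify this, first check it termwise: for $a ∈ A$ we have $φ(m a w_j) = m φ(a w_j) = m a v_j + m\cdot\landau(\mathfrak{m})$. Then pass to the convergent sums.

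\emph{Step 2: iteration.} Start with $x^{(0)} = x$. Apply Step 1 repeatedly to the remainders $x^{(n)} ∈ \mathfrak{m}^n M$ to obtain $b_j^{(n)} ∈ \mathfrak{m}^n A_ħ$. Then set $a_j = \sum_n b_j^{(n)}$. This sum converges in $A_ħ$ by completeness, and continuity of the action and of $φ$ yields $φ\bigl(\sum_j a_j w_j\bigr) = x$. Since $φ$ is bijective, every element of $M_ħ$ is an $A_ħ$-combination of the $w_j$.

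\emph{Main obstacle.} The delicate point is the interpretation of the compatibility condition $φ(am) - aφ(m) ∈ \mathfrak{m}$. I read it as saying that the difference lies in $\mathfrak{m}(B \htensor M)$. I also need it for $a ∈ A_ħ$ and for elements carrying $\mathfrak{m}^n$ coefficients, with the error then lying in $\mathfrak{m}^{n+1}$. For $a ∈ A$ this follows by $B$-linearity; for general $a$ it follows by $B$-linearity plus continuity. I also have to confirm that the $A_ħ$-action on $M_ħ$ is continuous, meaning $B$-bilinear, so that the infinite sums commute with the action. This continuity is implicit in the notion of module over a deformation, and I would state it explicitly. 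The rest of the argument is routine Cauchy-sequence bookkeeping.
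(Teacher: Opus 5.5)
Your proposal is correct and follows the same route as the paper: approximate modulo $\mathfrak{m}^{n+1}$ using the generators $v_j$, then iterate and use $\mathfrak{m}$-adic completeness to sum the corrections. Your version is more careful than the paper's, which writes the coefficients as $a_i ∈ \mathfrak{m}^k$. You instead place them in $\mathfrak{m}^n A_ħ$ and invoke the zeroth-order compatibility $φ(am) - aφ(m) ∈ \mathfrak{m}(B \htensor M)$ to control the error term, a step the paper's proof leaves implicit.
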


\begin{proof}
As a first step, we show that every $ x ∈ \mathfrak{m}^k M_ħ $ can be written in the form $ x = \sum a_i φ^{-1} (v_i) + y $ with $ a_i ∈ \mathfrak{m}^k $ and $ y ∈ \mathfrak{m}^{k+1} M_ħ $. Indeed, we have $ φ(x) ∈ \mathfrak{m}^k \htensor M $. Since $ v_1, …, v_k $ generate $ M $, we can write $ φ(x) = \sum a_i v_i + y $ where $ a_i ∈ \mathfrak{m}^k $ and $ y ∈ \mathfrak{m}^{k+1} \htensor M $.

As a second step, start with arbitrary $ z_0 ∈ M_ħ $. Applying the first step, write $ z_0 = \sum a_i φ^{-1} (v_i) + z_1 $ with $ a_i ∈ \mathfrak{m}^k $ and $ z_1 ∈ \mathfrak{m}^1 M_ħ $. Continuing this way, we obtain a presentation of $ z_0 $ as a linear series combination of the $ φ^{-1} (v_i) $.
\end{proof}

Let us now investigate whether the kernel and image of deformed maps are flat deformations. As it turns out, in some cases they are deformations of the kernel and image of the original maps, while in other cases this does not hold.

\begin{example}
Regard the deformation base $ B = ℂ⟦ħ⟧ $. The map $ f_ħ: A_ħ → A_ħ $ given by $ f_ħ (x) = ħ x $ is a deformation of $ f = 0 $. The kernel of $ f_ħ $ is zero, while the kernel of $ f $ is $ A $. The image of $ f_ħ $ is $ ħA_ħ $ while the image of $ f $ is zero.
\end{example}

\begin{remark}
Let us recall the notion of flat deformations of $ B $-modules. Let $ V $ be a vector space and $ W ⊂ V $ a subspace. Then a completed $ B $-submodule $ W_ħ ⊂ B \htensor V $ is a \emph{flat deformation} of $ W $ if $ W_ħ ∩ \mathfrak{m} \htensor V ⊂ \mathfrak{m} W_ħ $.
\end{remark}

\begin{lemma}
\label{th:perfect-free-im-ker}
Let $ f: A^{⊕k} → A^{⊕l} $ be a map of free $ A $-modules and $ f_ħ: A_ħ^{⊕k} → A_ħ^{⊕l} $ a deformation. If the image of $ f_ħ $ is a flat deformation of the image of $ f $, then so is the kernel.
\end{lemma}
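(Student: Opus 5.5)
We must show $\Ker f_ħ$ is a flat deformation of $\Ker f$, in the sense of the preceding remark. That is, with $V = A^{⊕k}$ and $A_ħ^{⊕k} = B \htensor V$ as $B$-modules, we need two things: $\Ker f_ħ ∩ \mathfrak{m} \htensor V ⊂ \mathfrak{m} \Ker f_ħ$, and the leading term map $\Ker f_ħ → \Ker f$ is surjective. The plan is to argue degree by degree in the $\mathfrak{m}$-adic filtration and use completeness to pass to the limit, in the spirit of the proof of \autoref{th:perfect-generation}. Write $f_ħ = f + f'$ with $f'$ of order $\mathfrak{m}$, after $B$-linear extension of $f$.

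\textbf{Lifting kernel elements.} Take $x_0 ∈ \Ker f$ and regard it inside $B \htensor V$. Then $f_ħ(x_0) = f'(x_0) ∈ \mathfrak{m} \htensor A^{⊕l}$, and this element lies in $\Im f_ħ$. Flatness of the image gives $f_ħ(x_0) ∈ \mathfrak{m} \Im f_ħ$, so $f_ħ(x_0) = \sum m_j f_ħ(u_j)$ with $m_j ∈ \mathfrak{m}$. Set $x_1 = x_0 - \sum m_j u_j$. This has the same leading term $x_0$, and $f_ħ(x_1) = 0$ exactly. So every element of $\Ker f$ lifts to $\Ker f_ħ$ in a single step, with no iteration needed. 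The only subtlety is that $\mathfrak{m} \Im f_ħ$ should mean the closure of finite $B$-combinations; since $B$ is Noetherian and $\mathfrak{m}$ is finitely generated, I would write $\mathfrak{m} = (t_1, …, t_n)$ so that $\mathfrak{m} W = \sum t_i W$ is a finite sum.

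\textbf{Infinitesimal kernel elements.} Let $x ∈ \Ker f_ħ ∩ \mathfrak{m}^p \htensor V$. I want to show $x ∈ \mathfrak{m}^p \Ker f_ħ + \mathfrak{m}^{p+1} \htensor V$, while keeping the remainder in the kernel.
\begin{enumerate}
\item Write $x = \sum_i t^{α_i} y_i + \landau(\mathfrak{m}^{p+1})$ over a finite basis of monomials $t^{α_i}$ spanning $\mathfrak{m}^p/\mathfrak{m}^{p+1}$.
\item Reading off $f_ħ(x) = 0$ at order $p$ gives $\sum t^{α_i} f(y_i) ≡ 0$ modulo $\mathfrak{m}^{p+1}$. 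If the $t^{α_i}$ are independent in $\mathfrak{m}^p/\mathfrak{m}^{p+1}$ over $ℂ$, each $y_i ∈ \Ker f$.
\item In general there are relations among the monomials. Then I would instead use that $\mathfrak{m}^p/\mathfrak{m}^{p+1} \tensor V → \mathfrak{m}^p/\mathfrak{m}^{p+1} \tensor A^{⊕l}$ is $\id \tensor f$, whose kernel is $\mathfrak{m}^p/\mathfrak{m}^{p+1} \tensor \Ker f$ because we are working over a field. Hence the order-$p$ part of $x$ is a combination of $\mathfrak{m}^p$-multiples of elements of $\Ker f$.
\item Lift each such element to $\Ker f_ħ$ by the previous step and subtract. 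The remainder $x'$ lies in $\Ker f_ħ ∩ \mathfrak{m}^{p+1} \htensor V$.
\end{enumerate}
Iterating from $p$ upward expresses $x$ as a convergent series $\sum c_j z_j$ with $z_j ∈ \Ker f_ħ$ and coefficients $c_j$ in $\mathfrak{m}^{p}$ tending to $0$ adically. The same iteration, started with an arbitrary kernel element, also shows that the lifts of a spanning set of $\Ker f$ topologically generate $\Ker f_ħ$.

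\textbf{Main obstacle.} The delicate point is the convergence and closedness bookkeeping. I must check that $\Ker f_ħ$ is closed, which holds because $f_ħ$ is continuous. I must also check that the infinite series $\sum c_j z_j$ lies in $\mathfrak{m}^p \Ker f_ħ$ rather than only in its closure. For the latter I would regroup the series using the finite generators $t_i$ of $\mathfrak{m}$, writing it as $\sum_i t_i (\text{convergent series in } \Ker f_ħ)$, and use closedness of $\Ker f_ħ$ to conclude. Everything else is linear algebra over the residue field.
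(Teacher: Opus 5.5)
Your proof is correct. Your first step, lifting $x_0 \in \Ker f$ via $f_\hbar(x_0) \in \mathfrak{m}\Im f_\hbar = f_\hbar(\mathfrak{m}A_\hbar^{\oplus k})$, is exactly the paper's first part. Your second step, however, takes a different route from the paper.

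The paper restricts to $B = \mathbb{C}[[\hbar]]$ and argues in one line. It writes $x = \hbar x'$, so $\hbar f_\hbar(x') = 0$, and $\hbar$-torsion-freeness of $A_\hbar^{\oplus l}$ gives $x' \in \Ker f_\hbar$. That argument never uses the hypothesis on the image, and it does not extend to general deformation bases. When $\mathfrak{m}$ has several generators, an element of $\mathfrak{m}\htensor V$ need not be a single multiple $t x'$. When $B$ is Artinian, $t$ may be a zero divisor. For instance, take $B = \mathbb{C}[\varepsilon]/(\varepsilon^2)$, $f = 0$ and $f_\hbar = \varepsilon\cdot$ on $A$. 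Then $\Ker f_\hbar \cap \mathfrak{m}A_\hbar = \varepsilon A$ while $\mathfrak{m}\Ker f_\hbar = 0$, so there the image hypothesis is genuinely needed.

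Your argument handles an arbitrary complete local Noetherian base, which is exactly the case the paper only gestures at by reference. It runs as follows:
\begin{enumerate}
\item Pass to $\mathfrak{m}^p/\mathfrak{m}^{p+1}$.
\item Use exactness of $(\mathfrak{m}^p/\mathfrak{m}^{p+1}) \otimes_{\mathbb{C}} -$ to place the order-$p$ part of $x$ in $(\mathfrak{m}^p/\mathfrak{m}^{p+1}) \otimes \Ker f$.
\item Correct it with the exact lifts from your first step.
\item Reassemble the resulting convergent series into $\mathfrak{m}^p \Ker f_\hbar$ by regrouping along generators $t_i$ of $\mathfrak{m}$ and using closedness of $\Ker f_\hbar$.
\end{enumerate}
The price is the convergence and closedness bookkeeping that the one-parameter trick avoids.

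One small point of phrasing: your first step needs $\mathfrak{m}\Im f_\hbar$ to mean $\sum_i t_i \Im f_\hbar = f_\hbar(\mathfrak{m}A_\hbar^{\oplus k})$, as the paper also uses, not its closure. Finite generation of $\mathfrak{m}$ alone does not make $\sum_i t_i \Im f_\hbar$ closed, since images of such continuous maps need not be closed.
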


\begin{proof}
We divide the proof into two parts. In the first part, we show that for every $ x ∈ \Ker(f) $ there exists a deformation $ x + x' ∈ \Ker(f_ħ) $ with $ x' ∈ \mathfrak{m} A_ħ^{⊕k} $. In the second part, we show that $ \Ker(f_ħ) ∩ \mathfrak{m} A^{⊕k} ⊂ \mathfrak{m} \Ker(f_ħ) $. We start by writing $ f_ħ (e_i) = a_i + b_i $ with $ a_i ∈ A^{⊕l} $ and $ b_i ∈ \mathfrak{m} A^{⊕l} $.

For the first part, let $ x = (x_1, …, x_l) ∈ \Ker(f) $. Since $ 0 = f(x) = \sum x_i a_i $, we have
\begin{equation*}
f_ħ (x) = \sum x_i (a_i + b_i) = \sum x_i b_i ∈ \mathfrak{m} A^{⊕l} ∩ \Im(f_ħ).
\end{equation*}
By assumption we conclude $ f_ħ (x) ∈ \mathfrak{m} \Im(f_ħ) = f_ħ (\mathfrak{m} A^{⊕l}) $. Therefore there exists $ x' ∈ \mathfrak{m} A^{⊕l} $ such that $ f_ħ (x + x') = 0 $.

For the second part, let $ x ∈ \Ker(f_ħ) ∩ \mathfrak{m} A^{⊕k} $. We assume for simplicity that the deformation base is $ B = ℂ⟦ħ⟧ $. The case of general $ B $ is more complicated and can be solved by means of techniques similar to those in \cite[section 2.3]{Paper-III}. We shall therefore assume that $ B = ℂ⟦ħ⟧ $ and thus $ x ∈ \Ker(f_ħ $ is of the form $ x = ħx' $, thus $ 0 = f_ħ (x) = ħ f_ħ (x') $. Since $ A_ħ^{⊕l} $ is flat, we conclude $ f_ħ (x') = 0 $. This shows $ x = ħx' ∈ ħ\Ker(f_ħ) $ and finishes the proof.
\end{proof}

Let us now treat finitely generated projective modules of $ A_ħ $. They are direct summands of $ A_ħ^{⊕k} $ for some $ k $ and we shall inspect their properties.

\begin{lemma}
Let $ A_ħ^{⊕k} = P_ħ ⊕ P'_ħ $. Then $ \mathfrak{m} A_ħ^{⊕k} = \mathfrak{m} P_ħ ⊕ \mathfrak{m} P'_ħ $. In particular $ P_ħ ∩ \mathfrak{m} A_ħ^{⊕k} ⊂ \mathfrak{m} P_ħ $.
\end{lemma}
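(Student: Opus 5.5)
The plan is to deduce the statement from the idempotent decomposition underlying $ A_ħ^{⊕k} = P_ħ ⊕ P'_ħ $. Let $ e: A_ħ^{⊕k} → A_ħ^{⊕k} $ be the projection onto $ P_ħ $ along $ P'_ħ $. It is $ A_ħ $-linear, hence $ B $-linear, because $ A_ħ $ is a $ B $-algebra and $ B $ acts through $ A_ħ $. Likewise $ 1 - e $ is the $ B $-linear projection onto $ P'_ħ $. Both $ P_ħ = e(A_ħ^{⊕k}) $ and $ P'_ħ = (1-e)(A_ħ^{⊕k}) $ are therefore $ B $-submodules, and $ \mathfrak{m} P_ħ $ and $ \mathfrak{m} P'_ħ $ make sense as the submodules generated by products $ m p $ with $ m ∈ \mathfrak{m} $.

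For the inclusion $ \mathfrak{m} P_ħ ⊕ \mathfrak{m} P'_ħ ⊂ \mathfrak{m} A_ħ^{⊕k} $, note that $ \mathfrak{m} P_ħ ⊂ \mathfrak{m} A_ħ^{⊕k} $ is immediate, and similarly for $ P'_ħ $. The sum is direct because it sits inside $ P_ħ ⊕ P'_ħ $.

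For the reverse inclusion, take $ x ∈ \mathfrak{m} A_ħ^{⊕k} $ and write $ x = \sum m_i y_i $ with $ m_i ∈ \mathfrak{m} $ and $ y_i ∈ A_ħ^{⊕k} $. By $ B $-linearity,
\begin{equation*}
e(x) = \sum m_i e(y_i) ∈ \mathfrak{m} P_ħ, \qquad (1-e)(x) = \sum m_i (1-e)(y_i) ∈ \mathfrak{m} P'_ħ.
\end{equation*}
Since $ x = e(x) + (1-e)(x) $, this gives the equality $ \mathfrak{m} A_ħ^{⊕k} = \mathfrak{m} P_ħ ⊕ \mathfrak{m} P'_ħ $.

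The consequence follows directly. If $ x ∈ P_ħ ∩ \mathfrak{m} A_ħ^{⊕k} $, then $ e(x) = x $ and $ (1-e)(x) = 0 $, so the decomposition above shows $ x = e(x) ∈ \mathfrak{m} P_ħ $.

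The only point needing care is the meaning of $ \mathfrak{m} X $ in the completed setting, where the paper writes $ \mathfrak{m} X = \mathfrak{m} \htensor X $. If $ x $ is only a convergent series $ \sum m_i y_i $ with $ m_i ∈ \mathfrak{m}^{→∞} $, rather than a finite sum, I would use continuity of $ e $: every $ B $-linear map is continuous, so $ e $ can be applied termwise. Each $ m_i $ can be written as a finite combination $ \sum_j n_{ij} n'_{ij} $ with $ n_{ij} ∈ \mathfrak{m} $ and $ n'_{ij} ∈ \mathfrak{m}^{k_i - 1} $, where $ k_i → ∞ $. The series $ z = \sum_i \sum_j n'_{ij} e(y_i) $ then converges in $ P_ħ $. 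To rewrite $ e(x) $ as an element of $ \mathfrak{m} P_ħ $, I would use that $ \mathfrak{m} $ is finitely generated, say by $ t_1, …, t_r $, since $ B $ is Noetherian. This gives $ e(x) = \sum_{j=1}^r t_j z_j $ with $ z_j ∈ P_ħ $, where each $ z_j $ converges because $ P_ħ = e(A_ħ^{⊕k}) $ is closed, being the kernel of the continuous map $ 1 - e $. This completeness bookkeeping is the main step to check carefully.
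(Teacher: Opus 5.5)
Your proof is correct and follows the paper's argument: write $x = e(x) + (1-e)(x)$, pull the coefficients in $\mathfrak{m}$ through the $B$-linear projections, and then intersect with $P_ħ$. The only difference is scope. The paper assumes $B = ℂ⟦ħ⟧$, so every $x ∈ \mathfrak{m} A_ħ^{⊕k}$ is simply $ħ y$. Your last paragraph handles a general $B$ by writing $m_i = \sum_j t_j m_{ij}$ over generators $t_1, …, t_r$ of $\mathfrak{m}$ and using that $P_ħ = \Ker(1-e)$ is closed. The auxiliary series $z$ you introduce before that step is not needed.
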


\begin{proof}
For simplicity we assume $ B = ℂ⟦ħ⟧ $. For the first part, let $ x ∈ A_ħ^{⊕k} ∩ (ħ) $. Write $ x = ħ y $. Then $ x = π_{P_ħ} (x) + π_{P'_ħ} (x) = ħ π_{P_ħ} (y) + ħ π_{P'_ħ} (y) $. For the second part, let $ x ∈ P_ħ ∩ (ħ) $. Using the first part, we conclude $ x ∈ (ħ P_ħ ⊕ ħ P'_ħ) ∩ P_ħ = ħ P_ħ $. This finishes the proof.
\end{proof}

Let us recall the characterization of projective $ A_ħ $-modules. If $ P $ is any finitely generated projective $ A $-module, we can present $ P $ as a direct summand in $ A^{⊕k} = P ⊕ Q $. In consequence we have the projection $ π_P: A^{⊕k} → A^{⊕k} $ which is an idempotent in the sense that $ π_P^2 = π_P $. In \cite[section 7.4]{Paper-III} it was shown that $ π_P $ can be deformed to an $ A_ħ $-linear map $ π_{P_ħ}: A_ħ^{⊕k} → A_ħ^{⊕k} $ which satisfies $ π_{P_ħ}^2 = π_{P_ħ} $. In consequence, the image $ P_ħ = π_{P_ħ} (A_ħ^{⊕k}) $ is a projective $ A_ħ $-module. It is naturally a deformation of $ P $, and we shall explain explicitly here that it is flat.

\begin{lemma}
\label{th:perfect-perfect-flat}
Let $ P $ be a projective $ A $-module and $ A^{⊕k} = P ⊕ Q $. Let $ π_P: A^{⊕k} → P $ be the projection onto $ P $. Let $ A_ħ^{⊕k} = P_ħ ⊕ Q_ħ $ be a decomposition of $ A_ħ $-modules. Then $ P_ħ $ is a flat deformation of $ P $.
\end{lemma}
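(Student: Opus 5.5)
The plan is to build the isomorphism $ φ $ directly from the given projection, working over $ B = ℂ⟦ħ⟧ $ for simplicity, as in the preceding lemmas. As in the discussion before the statement, I take $ P_ħ = π_{P_ħ} (A_ħ^{⊕k}) $, where $ π_{P_ħ} $ is an $ A_ħ $-linear idempotent deforming $ π_P $. Concretely, $ π_{P_ħ} ≡ π_P $ modulo $ \mathfrak{m} $.

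I would define
\begin{equation*}
φ := π_P \restr_{P_ħ} : P_ħ ⊂ B \htensor A^{⊕k} → B \htensor P,
\end{equation*}
where $ π_P $ is extended $ B $-linearly to $ B \htensor A^{⊕k} = A_ħ^{⊕k} $. This map is $ B $-linear and hence continuous. The argument then has three steps: zeroth-order compatibility, injectivity and surjectivity.

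\textbf{Zeroth-order compatibility.} For $ a ∈ A $ and $ x ∈ P_ħ $, the difference $ φ(a ⋆_ħ x) - a φ(x) $ reduces modulo $ \mathfrak{m} $ to $ π_P(a x_0) - a π_P(x_0) $. This vanishes because $ π_P $ is $ A $-linear.

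\textbf{Injectivity.} Suppose $ x ∈ P_ħ $ with $ φ(x) = 0 $. Its leading term $ x_0 $ lies in the reduction of $ P_ħ $, which is $ π_P(A^{⊕k}) = P $, so $ x_0 = π_P(x_0) = 0 $. Hence $ x ∈ P_ħ ∩ \mathfrak{m} A_ħ^{⊕k} $, and by the previous lemma $ x ∈ \mathfrak{m} P_ħ $. Write $ x = ħ y $ with $ y ∈ P_ħ $. Since $ B \htensor P $ is $ ħ $-torsion free, $ ħ φ(y) = 0 $ gives $ φ(y) = 0 $. Iterating this shows $ x ∈ \bigcap_n ħ^n A_ħ^{⊕k} = 0 $, by the Hausdorff property.

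\textbf{Surjectivity.} Take $ p ∈ B \htensor P $. The element $ π_{P_ħ}(p) ∈ P_ħ $ satisfies $ φ(π_{P_ħ}(p)) = π_P π_{P_ħ}(p) = p + ħ p_1 $ for some $ p_1 ∈ B \htensor P $, because $ π_P π_{P_ħ} ≡ π_P^2 = π_P $ modulo $ ħ $. Replace $ p $ by $ -ħ p_1 $ and repeat. The corrections lie in $ ħ^n P_ħ $, and $ P_ħ $ is $ ħ $-adically closed since it is the image of an idempotent. So the series $ \sum_n π_{P_ħ}(\text{corrections}) $ converges in $ P_ħ $ to a preimage of $ p $. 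Alternatively, one can argue as in \autoref{th:perfect-generation}, using that a surjective leading term gives surjectivity.

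The step I expect to be the main obstacle is the bookkeeping at zeroth order. One must make precise that the given decomposition $ A_ħ^{⊕k} = P_ħ ⊕ Q_ħ $ reduces to $ P ⊕ Q $ modulo $ \mathfrak{m} $, i.e.\ that its projection deforms $ π_P $. The statement leaves this implicit, and I would take it as part of the hypothesis, as in the construction from \cite[section 7.4]{Paper-III}. Once that is granted, injectivity and surjectivity are the standard leading-term arguments. For general $ B $, the $ ħ $-division in the injectivity step would be replaced by the filtration argument of \cite[section 2.3]{Paper-III}.
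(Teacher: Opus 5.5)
Your proof is correct, but it takes a longer route than the paper and proves a stronger form of the statement.

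The paper only checks the submodule criterion $W_\hbar \cap \mathfrak{m} V \subset \mathfrak{m} W_\hbar$. For $p_\hbar \in P_\hbar \cap \hbar A_\hbar^{\oplus k}$ it writes $p_\hbar = \hbar p'_\hbar$, so that $p_\hbar = \pi_{P_\hbar}(p_\hbar) = \hbar\, \pi_{P_\hbar}(p'_\hbar) \in \hbar P_\hbar$. The fact that $P_\hbar$ reduces to $P$ at all is taken over silently from the construction stated before the lemma.

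You instead construct $\varphi = \pi_P \restr_{P_\hbar}$ and show it is a continuous $B$-linear bijection onto $B \htensor P$, compatible with the $A$-action at zeroth order. That is, you verify the paper's Definition of a flat deformation of a module. The paper's entire argument reappears inside your injectivity step, where you cite it via the preceding lemma. What you add is the leading-term bookkeeping and the successive-approximation argument for surjectivity, which correctly uses that $P_\hbar = \Ker(1 - \pi_{P_\hbar})$ is $\hbar$-adically closed.

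The paper's version is two lines, but it is really only a statement about direct summands. Yours makes explicit the hypothesis that the splitting reduces to $P \oplus Q$ modulo $\hbar$. Without that hypothesis the statement fails: take $P_\hbar = 0$ with $P \neq 0$, where the submodule criterion still holds trivially. Your version also yields the identification $P_\hbar \cong B \htensor P$ that the module-level definition demands.
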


\begin{proof}
For simplicity we assume $ B = ℂ⟦ħ⟧ $. Regard any $ p_ħ ∈ P_ħ ∩ ħ A_ħ^{⊕k} $. Then we can write $ p_ħ = ħ p'_ħ $ and thus $ p_ħ = π_{P_ħ} (p_ħ) = ħ π_{P_ħ} (p'_ħ) ∈ ħ P_ħ $. We conclude $ P_ħ ∩ ħ A_ħ^{⊕k} ⊂ ħ P_ħ $ as desired.
\end{proof}

\begin{lemma}
\label{th:perfect-subtraction}
Let $ A_ħ^{⊕k} = P_ħ ⊕ P'_ħ $. Let $ V_ħ ⊂ P_ħ $ and $ V ⊂ P $. If $ V_ħ ⊕ P'_ħ ⊂ A_ħ^{⊕k} $ is a flat deformation of $ V ⊕ P' ⊂ A^{⊕k} $, then $ V_ħ ⊂ A_ħ^{⊕k} $ is a flat deformation of $ V ⊂ A_ħ^{⊕k} $.
\end{lemma}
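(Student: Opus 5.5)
We follow the convention of the preceding lemmas and assume $ B = ℂ⟦ħ⟧ $; general $ B $ would need the filtration techniques alluded to after \autoref{th:perfect-free-im-ker}. The notion of flatness to verify is the one from the remark: $ V_ħ ∩ ħ A_ħ^{⊕k} ⊂ ħ V_ħ $. Implicitly, $ P_ħ $ is a deformation of $ P $ and $ P'_ħ $ a deformation of $ P' $, with $ A^{⊕k} = P ⊕ P' $ as the zeroth-order decomposition. The idea is to transport the flatness of the sum $ V_ħ ⊕ P'_ħ $ to the summand $ V_ħ $ by means of the idempotent projection $ π_{P_ħ} $. This projection kills $ P'_ħ $ and is $ ħ $-linear, hence compatible with the $ ħ $-adic filtration.

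First, take an arbitrary $ v ∈ V_ħ ∩ ħ A_ħ^{⊕k} $. Since $ v ∈ V_ħ ⊂ V_ħ ⊕ P'_ħ $, it lies in $ (V_ħ ⊕ P'_ħ) ∩ ħ A_ħ^{⊕k} $. By the flatness hypothesis on $ V_ħ ⊕ P'_ħ $, this intersection is contained in $ ħ (V_ħ ⊕ P'_ħ) = ħ V_ħ ⊕ ħ P'_ħ $. We may therefore write $ v = ħ w + ħ p' $ with $ w ∈ V_ħ $ and $ p' ∈ P'_ħ $.

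Second, apply the projection $ π_{P_ħ} $ associated with the decomposition $ A_ħ^{⊕k} = P_ħ ⊕ P'_ħ $. This map is $ B $-linear and is the identity on $ P_ħ ⊃ V_ħ $, so it fixes both $ v $ and $ w $, while it annihilates $ p' $. We obtain $ v = π_{P_ħ}(v) = ħ π_{P_ħ}(w) + ħ π_{P_ħ}(p') = ħ w ∈ ħ V_ħ $, which is the desired inclusion $ V_ħ ∩ ħ A_ħ^{⊕k} ⊂ ħ V_ħ $.

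The main obstacle is not this computation but making precise which flatness notion the statement intends. The statement writes ``$ V ⊂ A_ħ^{⊕k} $'', which should read $ V ⊂ A^{⊕k} $. If the intended notion also requires that every element of $ V $ lifts to $ V_ħ $, as in the first part of the proof of \autoref{th:perfect-free-im-ker}, we would add a lifting step. Given $ x ∈ V $, the hypothesis provides a lift $ x + x' ∈ V_ħ ⊕ P'_ħ $ with $ x' ∈ ħ A_ħ^{⊕k} $. Applying $ π_{P_ħ} $ yields an element of $ V_ħ $ whose leading term is $ π_P(x) = x $, because $ π_{P_ħ} $ reduces to $ π_P $ modulo $ ħ $. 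This uses only that the deformed decomposition reduces to $ P ⊕ P' $ at zeroth order.
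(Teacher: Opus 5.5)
Your proof is correct and is essentially the paper's: the paper also first lifts elements of $V$ to $V_\hbar$, so it treats lifting as part of flatness and your conditional step belongs in the proof. It then shows $V_\hbar \cap (\hbar) \subset \hbar V_\hbar$ by writing $(v,0) = \hbar(v',p')$ and comparing components in $P_\hbar \oplus P'_\hbar$, which is exactly your application of $\pi_{P_\hbar}$; your lifting step, via $\pi_{P_\hbar} \equiv \pi_P$ modulo $\hbar$, is slightly more careful than the paper's, which tacitly treats $v \in P$ as an element of $P_\hbar$.
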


\begin{proof}
For simplicity we assume $ B = ℂ⟦ħ⟧ $. Let $ v ∈ V $ and regard $ (v, 0) ∈ V ⊕ P' $. There exists $ (v_ħ, p'_ħ) ∈ V_ħ ⊕ P'_ħ $ such that $ (v_ħ, p'_ħ) - (v, 0) ∈ A_ħ^{⊕k} ∩ (ħ) = ħ P_ħ ⊕ ħ P'_ħ $. Therefore $ v - v_ħ ∈ ħ P_ħ ⊂ ħ A_ħ^{⊕k} $.

Second, let $ v ∈ V_ħ ∩ (ħ) $. Regard $ (v, 0) ∈ (V_ħ ⊕ P'_ħ) ∩ (ħ) $. By assumption, we can write $ (v, 0) = ħ (v', p') $ with $ v' ∈ V_ħ $ and $ p' ∈ P'_ħ $. This implies $ v = ħv' ∈ ħ V_ħ $ and finishes the proof.
\end{proof}

\begin{lemma}
Let $ f_ħ: P_ħ → Q_ħ $ be a deformation of $ f: P → Q $. If $ \Im(f_ħ) $ is a flat deformation of $ \Im(f) $, then $ \Ker(f_ħ) $ is a flat deformation of $ \Ker(f) $.
\end{lemma}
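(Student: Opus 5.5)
The plan is to reduce to the free case, \autoref{th:perfect-free-im-ker}, by the same direct summand argument already used for projectives. Here $ P_ħ, Q_ħ $ are finitely generated projective $ A_ħ $-modules deforming $ P, Q $, as constructed above via deformed idempotents. So we may write $ A_ħ^{⊕k} = P_ħ ⊕ P'_ħ $ and $ A_ħ^{⊕l} = Q_ħ ⊕ Q'_ħ $, deforming decompositions $ A^{⊕k} = P ⊕ P' $ and $ A^{⊕l} = Q ⊕ Q' $. As in the earlier lemmas, I will assume $ B = ℂ⟦ħ⟧ $ for simplicity.

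I will extend $ f_ħ $ to the map of free modules
\begin{equation*}
F_ħ = f_ħ ⊕ \id_{P'_ħ} : P_ħ ⊕ P'_ħ → Q_ħ ⊕ P'_ħ ⊂ A_ħ^{⊕(l+k)},
\end{equation*}
or more precisely to $ F_ħ: A_ħ^{⊕k} → A_ħ^{⊕ l} ⊕ A_ħ^{⊕ k} $ sending $ (p, p') ↦ (f_ħ(p), p') $, using the inclusion $ Q_ħ ⊂ A_ħ^{⊕l} $. This is a deformation of $ F = f ⊕ \id_{P'} $. Its kernel is $ \Ker(f_ħ) ⊕ 0 $ and its image is $ \Im(f_ħ) ⊕ P'_ħ $.

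The first step is to check that $ \Im(F_ħ) $ is a flat deformation of $ \Im(F) = \Im(f) ⊕ P' $. Take $ (y, p') ∈ \Im(F_ħ) ∩ ħ(A_ħ^{⊕l} ⊕ A_ħ^{⊕k}) $. Then $ p' ∈ P'_ħ ∩ (ħ) ⊂ ħ P'_ħ $ by the lemma on direct summands, and $ y ∈ \Im(f_ħ) ∩ (ħ) ⊂ ħ\Im(f_ħ) $ by hypothesis. Here I must make sure that "flat deformation of $ \Im f $" is read inside $ Q_ħ $ versus inside $ A_ħ^{⊕l} $. These readings agree because $ Q_ħ ∩ (ħ) = ħ Q_ħ $ by \autoref{th:perfect-perfect-flat}. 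I also need the lifting half of flatness: every element of $ \Im(F) $ lifts to $ \Im(F_ħ) $ up to $ ħ $. This is immediate from lifting preimages.

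The second step is to apply \autoref{th:perfect-free-im-ker} to $ F_ħ $. It yields that $ \Ker(F_ħ) = \Ker(f_ħ) ⊕ 0 $ is a flat deformation of $ \Ker(F) = \Ker(f) ⊕ 0 $ inside $ A_ħ^{⊕k} $. The final step is to transport this back to $ P_ħ $. Since $ \Ker(f_ħ) ⊂ P_ħ $, any $ ħ $-correction produced in the lifting part can be projected by $ π_{P_ħ} $ to stay inside $ P_ħ $. Alternatively, \autoref{th:perfect-subtraction} applied with $ V_ħ = \Ker(f_ħ) $ and the complement handled trivially gives the claim directly.

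The main obstacle I expect is bookkeeping: flatness must be verified in the correct ambient module ($ P_ħ $ versus $ A_ħ^{⊕k} $, and $ Q_ħ $ versus $ A_ħ^{⊕l} $). The key tool throughout is the identity $ \mathfrak{m} A_ħ^{⊕k} = \mathfrak{m}P_ħ ⊕ \mathfrak{m}P'_ħ $. Every such comparison should reduce to that identity, and I would check each inclusion explicitly rather than rely on the isomorphism $ φ $.
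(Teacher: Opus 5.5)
Your proof is correct, but it uses the dual of the paper's trick.

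\textbf{The paper's route.} The paper extends $f_\hbar$ by zero on the complement and applies \autoref{th:perfect-free-im-ker} to $\bar f_\hbar = f_\hbar \circ \pi_{P_\hbar}$. The image is then unchanged, so the hypothesis feeds in directly. If the hypothesis is read inside $Q_\hbar$, this tacitly uses $Q_\hbar \cap \hbar A_\hbar^{\oplus l} = \hbar Q_\hbar$, which you make explicit. The cost is that the kernel becomes $\Ker(f_\hbar) \oplus P'_\hbar$, so the paper needs \autoref{th:perfect-subtraction} to strip off $P'_\hbar$ afterwards.

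\textbf{Your route.} You extend by the identity instead, $F_\hbar = f_\hbar \oplus \id_{P'_\hbar}$ with target $A_\hbar^{\oplus(l+k)}$. Now the kernel is exactly $\Ker(f_\hbar)$, so the output of \autoref{th:perfect-free-im-ker} is already the statement. The cost moves to the image side: you must check that $\Im(f_\hbar) \oplus P'_\hbar$ is flat. Your componentwise check does this correctly, using $P'_\hbar \cap \hbar A_\hbar^{\oplus k} \subset \hbar P'_\hbar$ together with the hypothesis. In short, you trade the subtraction lemma for the direct-summand lemma and an enlarged target; the two arguments are of comparable length.

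\textbf{Two small remarks.} First, your final ``transport'' step is essentially vacuous. The lift produced by the free lemma already lies in $\Ker(f_\hbar) \subset P_\hbar$. Moreover, $\Ker(f_\hbar) \cap \hbar P_\hbar \subset \Ker(f_\hbar) \cap \hbar A_\hbar^{\oplus k} \subset \hbar \Ker(f_\hbar)$. So no projection by $\pi_{P_\hbar}$ is needed.

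Second, your suggested alternative via \autoref{th:perfect-subtraction} does not fit your construction. Your kernel carries no complementary summand to subtract; that lemma is what the paper's version needs, not yours.
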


\begin{proof}
Let $ A_ħ^{⊕k} = P_ħ ⊕ P'_ħ $ and $ A_ħ^{⊕l} = Q_ħ ⊕ Q'_ħ $. We regard the map $ \bar{f} = f ∘ π_P: A^{⊕k} → A^{⊕l} $ and $ \bar{f}_ħ = f_ħ ∘ π_{P_ħ} : A_ħ^{⊕k} → A_ħ^{⊕l} $.
The image of $ \bar{f}_ħ $ is equal to $ \Im(f_ħ) $ and thus a flat deformation of $ \Im(f) $, equivalently the image of $ \bar{f} $. By \autoref{th:perfect-free-im-ker}, we have $ \Ker(\bar{f_ħ}) $ being a flat deformation of $ \Ker(\bar{f}) $. Finally, our goal is to relate these kernels to $ \Ker(f_ħ) $ and $ \Ker(f) $, respectively. We have $ \Ker(\bar{f_ħ}) = \Ker(f_ħ) ⊕ P'_ħ $ and $ \Ker(\bar{f}) = \Ker(f) ⊕ P' $. By \autoref{th:perfect-subtraction}, we conclude that $ \Ker(f_ħ) $ is a flat deformation of $ \Ker(f) $. This proves the claim.
\end{proof}

\begin{lemma}
\label{th:perfect-ker-flat}
Let $ P $ be a finitely generated projective $ A $-module and $ P_ħ $ a deformation. Let $ M $ be an $ A $-module and $ M_ħ $ a flat deformation. If $ f_ħ: P_ħ → M_ħ $ is surjective, then $ \Ker(f_ħ) $ is a flat deformation of $ \Ker(f) $.
\end{lemma}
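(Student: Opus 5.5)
The statement reduces directly to the preceding lemma on kernels of maps between projective modules. That lemma says: if $ f_ħ: P_ħ → Q_ħ $ deforms $ f: P → Q $ and $ \Im(f_ħ) $ is a flat deformation of $ \Im(f) $, then $ \Ker(f_ħ) $ is a flat deformation of $ \Ker(f) $. The difficulty is that $ M_ħ $ is only a flat deformation of an arbitrary module, not a projective one. As in the other lemmas, I will assume $ B = ℂ⟦ħ⟧ $ for simplicity. Here $ f $ denotes the leading term of $ f_ħ $, transported to $ M $ via $ φ $. Since $ f_ħ $ is surjective, its leading term $ f: P → M $ is surjective as well: reduce $ f_ħ $ modulo $ ħ $ and use $ M_ħ/ħM_ħ ≅ M $, which holds because $ φ $ is a $ B $-linear isomorphism onto $ B \htensor M $.

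I will avoid realizing $ M_ħ $ inside a projective module and instead prove the two parts of flatness directly, mimicking \autoref{th:perfect-free-im-ker}. I will work with $ \Ker(f_ħ) ⊂ P_ħ $. By \autoref{th:perfect-perfect-flat}, $ P_ħ $ is a flat deformation of $ P $, so $ P_ħ ∩ ħ A_ħ^{⊕k} = ħ P_ħ $ and $ P_ħ/ħP_ħ ≅ P $.

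\emph{First part (lifting).} Let $ x ∈ \Ker(f) $ and choose any lift $ \tilde x ∈ P_ħ $. Then $ f_ħ(\tilde x) $ reduces to $ f(x) = 0 $ modulo $ ħ $, so $ f_ħ(\tilde x) ∈ M_ħ ∩ ħ(B\htensor M) = ħ M_ħ $ via $ φ $. Write $ f_ħ(\tilde x) = ħ m $ with $ m ∈ M_ħ $. Surjectivity of $ f_ħ $ gives $ m = f_ħ(p) $, and then $ \tilde x - ħ p ∈ \Ker(f_ħ) $ lifts $ x $. This is exactly where surjectivity replaces the flat-image hypothesis of the previous lemma, because $ \Im(f_ħ) = M_ħ $ is flat by assumption.

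\emph{Second part.} Let $ x ∈ \Ker(f_ħ) ∩ ħ P_ħ $ and write $ x = ħ x' $ with $ x' ∈ P_ħ $. Then $ ħ f_ħ(x') = 0 $ in $ M_ħ ≅ B \htensor M $, which is $ ħ $-torsion free, so $ f_ħ(x') = 0 $. Hence $ x ∈ ħ \Ker(f_ħ) $.

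Finally, $ \Ker(f_ħ) $ is $ ħ $-adically closed in $ P_ħ $ by continuity of $ f_ħ $. Together with the two parts, a standard successive-approximation argument, as in \autoref{th:perfect-generation}, shows that $ \Ker(f_ħ) $ reduces onto $ \Ker(f) $ with $ \Ker(f_ħ)/ħ\Ker(f_ħ) ≅ \Ker(f) $, i.e. that it is a flat deformation.

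The main obstacle is bookkeeping rather than ideas: the ambient space for flatness is $ A_ħ^{⊕k} ⊃ P_ħ $, while the target $ M_ħ $ is identified with $ B\htensor M $ only through $ φ $. One must check that "reduces to $ f(x)=0 $" is meaningful, using that $ φ $ preserves the action at zeroth order and that $ f_ħ $ is $ A_ħ $-linear. The general base $ B $ would require the filtration arguments referenced from \cite{Paper-III}; I would just remark this, as the paper does.
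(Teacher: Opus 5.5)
Your proof is correct and is essentially the paper's argument: lift $x \in \Ker(f)$, write $f_ħ(\tilde x) = ħ m$, use surjectivity to get $m = f_ħ(p)$ and correct by $ħ p$, then use $ħ$-torsion-freeness of the flat module $M_ħ$ to obtain $\Ker(f_ħ) \cap ħ P_ħ \subset ħ \Ker(f_ħ)$. Choosing an explicit lift $\tilde x \in P_ħ$ is slightly more careful than the paper, which treats $x$ as if it already lay in $P_ħ$; your closing successive-approximation step is unnecessary, since the two parts already give $\Ker(f_ħ)/ħ\Ker(f_ħ) \cong \Ker(f)$ directly.
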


\begin{proof}
For simplicity we assume $ B = ℂ⟦ħ⟧ $. We start by showing that $ \Ker(f_ħ) $ is a deformation of $ \Ker(f) $ as submodules of $ P_ħ $. Let $ x ∈ \Ker(f) $. Then $ f_ħ (x) = ħ y $. Since $ y ∈ \Im(f_ħ) $, we can write $ y = f_ħ (x') $. Therefore $ f_ħ (x - ħx') = ħ y - ħ y = 0 $. This shows that $ \Ker(f_ħ) $ is a deformation of $ \Ker(f) $ as submodules of $ P_ħ $.

We continue by showing that $ \Ker(f_ħ) $ is a flat deformation. Let $ x ∈ \Ker(f_ħ) ∩ ħP_ħ $. We can write $ x = ħ x' $ with $ x' ∈ P_ħ $. We have $ 0 = f_ħ (x) = ħ f_ħ (x') $ and since $ M_ħ $ is flat we conclude $ f_ħ (x') = 0 $. Therefore $ x ∈ ħ \Ker(f_ħ) $. We conclude $ \Ker(f_ħ) ∩ ħ P_ħ ⊂ ħ \Ker(f_ħ) $. This finishes the proof.
\end{proof}

\begin{lemma}
A module morphism $ f_ħ: M_ħ \to N_ħ $ which only takes values in $ \mathfrak{m}^k N_ħ $ lies in $ \mathfrak{m}^k \Hom(M_ħ, N_ħ) $.
\end{lemma}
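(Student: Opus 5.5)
Following the convention of the preceding lemmas, I take $B = ℂ⟦ħ⟧$ with $\mathfrak{m} = (ħ)$, so that $\mathfrak{m}^k = (ħ^k)$. I assume $M_ħ$ and $N_ħ$ are flat deformations, i.e.\ $N_ħ \cong B \htensor N$ as $B$-modules. The goal is to write $f_ħ = ħ^k g_ħ$ with $g_ħ: M_ħ → N_ħ$ again an $A_ħ$-module morphism. The idea is simply to divide by $ħ^k$, which is possible because $N_ħ$ has no $ħ$-torsion.

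First I check that $N_ħ$ is $ħ$-torsion free. Through the isomorphism $φ: N_ħ \isoto B \htensor N$, an element is a series $\sum_j ħ^j n_j$ with $n_j ∈ N$. Multiplication by $ħ^k$ shifts the coefficients, so it is injective. This torsion-freeness is the same flatness fact already used in the proofs of \autoref{th:perfect-free-im-ker} and \autoref{th:perfect-ker-flat}. As a consequence, for every $y ∈ \mathfrak{m}^k N_ħ = ħ^k N_ħ$ there is a \emph{unique} $y' ∈ N_ħ$ with $y = ħ^k y'$.

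Next I define $g_ħ(m)$ as the unique element with $f_ħ(m) = ħ^k g_ħ(m)$. This is well defined because $f_ħ(m) ∈ \mathfrak{m}^k N_ħ$ by hypothesis. To see that $g_ħ$ is $A_ħ$-linear, and in particular $B$-linear, I compute
\begin{equation*}
ħ^k g_ħ(am + m') = f_ħ(am + m') = a f_ħ(m) + f_ħ(m') = ħ^k \bigl(a g_ħ(m) + g_ħ(m')\bigr),
\end{equation*}
where $ħ^k$ commutes with $a$ because $ħ$ is central (the algebra $A_ħ$ is $B$-linear). Uniqueness of division then gives $g_ħ(am + m') = a g_ħ(m) + g_ħ(m')$. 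Continuity with respect to the $\mathfrak{m}$-adic topologies is automatic for $B$-linear maps, as recalled in the preliminaries. Therefore $f_ħ = ħ^k g_ħ ∈ \mathfrak{m}^k \Hom(M_ħ, N_ħ)$.

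The only real obstacle is the torsion-freeness, i.e.\ the flatness of $N_ħ$. If $N_ħ$ is only assumed to be some $A_ħ$-module, the statement can fail, for instance when $N_ħ$ has $ħ$-torsion. I would therefore state the flatness assumption explicitly. For a general deformation base $B$, the argument breaks because $\mathfrak{m}^k$ is not principal, so one cannot simply divide. In that case I would choose generators $t_1, …, t_r$ of $\mathfrak{m}^k$, write $f_ħ(m) = \sum t_i y_i(m)$, and then make this choice linear. The natural way to do that is to use a splitting of $B \htensor N$ coming from a basis of $N$, and to refer to the techniques of \cite[section 2.3]{Paper-III}, as the paper does elsewhere. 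Otherwise I would restrict to $B = ℂ⟦ħ⟧$, as in the preceding lemmas.
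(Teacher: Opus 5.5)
Your proof is correct and is the argument the paper has in mind: the paper calls the case $B = ℂ⟦ħ⟧$ obvious, which amounts to dividing by $ħ^k$ in the $ħ$-torsion-free module $N_ħ$ exactly as you do, and it defers general $B$ as classic but tedious. You are also right to state the flatness of $N_ħ$ explicitly. The paper uses this hypothesis only implicitly, and it is genuinely needed: for instance, the map $B/(ħ) \to B/(ħ^2)$, $1 \mapsto ħ$, takes values in $\mathfrak{m} N_ħ$, yet it does not lie in $\mathfrak{m}\Hom$, because that submodule is zero.
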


\begin{proof}
In case $ B = ℂ⟦ħ⟧ $ this is obvious. In the other cases, this is classic but a bit more tedious to spell out.
\end{proof}

\begin{lemma}
\label{th:perfect-lift-surjective}
Let $ P $ be a finitely generated projective $ A $-module and $ P_ħ $ a flat deformation. Let $ M $ be an $ A $-module and $ M_ħ $ a flat deformation. Then $ \Hom_{A_ħ} (P_ħ, M_ħ) ≅ B \htensor \Hom_A (P, M) $. An element $ f_ħ \in  \Hom_{A_ħ} (P_ħ, M_ħ) $ is a lift of $ f \in \Hom_A (P, M) $ if and only if its leading term with respect to this isomorphism is $ f $. For any lift $ f_ħ $, we have that $ f_ħ $ is surjective if and only if $ f $ is.
\end{lemma}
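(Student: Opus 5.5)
We work with $ B = ℂ⟦ħ⟧ $, as the earlier lemmas of this section do, and we identify $ P_ħ ≅ B \htensor P $ and $ M_ħ ≅ B \htensor M $ via the flatness isomorphisms. The plan is to reduce everything to the free case $ P = A^{⊕k} $ and then pass to a direct summand.

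\emph{Free case.} For $ P_ħ = A_ħ^{⊕k} $, an $ A_ħ $-linear map $ A_ħ^{⊕k} → M_ħ $ is determined by the images of the basis vectors $ e_1, …, e_k $. This gives a $ B $-linear isomorphism
\begin{equation*}
\Hom_{A_ħ} (A_ħ^{⊕k}, M_ħ) ≅ M_ħ^{⊕k} ≅ B \htensor M^{⊕k} ≅ B \htensor \Hom_A (A^{⊕k}, M).
\end{equation*}
By construction, the leading term of $ f_ħ $ under this isomorphism is the map $ e_i ↦ π(f_ħ(e_i)) $, which is exactly the reduction of $ f_ħ $ modulo $ ħ $. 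This settles the lift characterization in the free case.

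\emph{General projective $ P $.} Write $ A^{⊕k} = P ⊕ Q $. As recalled above, the projection $ π_P $ deforms to an idempotent $ π_{P_ħ} $ with $ P_ħ = π_{P_ħ} (A_ħ^{⊕k}) $, and by \autoref{th:perfect-perfect-flat} this $ P_ħ $ is flat. (If $ P_ħ $ is given abstractly, I would first lift the splitting using projectivity of $ P_ħ $ modulo $ ħ $, which is the only step needing care.) Then $ \Hom_{A_ħ}(P_ħ, M_ħ) $ is the image of the idempotent $ \rho_ħ = (-) ∘ π_{P_ħ} $ acting on the free-case hom module $ B \htensor \Hom_A(A^{⊕k}, M) $. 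The leading term of $ \rho_ħ $ is $ \rho = (-) ∘ π_P $, whose image is $ \Hom_A(P, M) $. The image of a deformed idempotent is a flat deformation of the image of its leading term: the argument is the same as in \autoref{th:perfect-perfect-flat}, and reducing modulo $ ħ $ gives the image of $ \rho $. A flat complete submodule with this reduction is isomorphic to $ B \htensor \Hom_A(P, M) $; to build the isomorphism, choose lifts of a basis via $ \rho_ħ $ and use completeness, as in \autoref{th:perfect-generation}. The leading-term compatibility is inherited from the free case. This is the step I expect to be the main obstacle, since one must check that the chosen identification is continuous and reduces correctly.

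\emph{Surjectivity.} If $ f_ħ $ is surjective, reducing modulo $ ħ $ shows that $ f $ is surjective. Conversely, suppose $ f $ is surjective. Pick elements $ p_i ∈ P $ whose images $ f(p_i) $ generate $ M $. Lift the $ p_i $ to $ P_ħ $; then $ f_ħ $ applied to these lifts has leading terms $ f(p_i) $. The proof of \autoref{th:perfect-generation} still works when the generators are only correct to leading order: successive approximation modulo $ ħ^n $ together with $ ħ $-adic completeness shows that these images generate $ M_ħ $. Hence $ f_ħ $ is surjective. Equivalently, one can argue directly: for $ y ∈ M_ħ $, solve $ y - f_ħ(x_0) ∈ ħ M_ħ $, divide by $ ħ $ using flatness, and iterate, summing the convergent series $ x = \sum ħ^n x_n $.
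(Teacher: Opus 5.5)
Your proposal is correct, and the comparison map it produces coincides with the paper's $\psi$. The paper extends $f\circ\pi_P$ from the basis vectors $e_i$ of $A_\hbar^{\oplus k}$ and restricts to $P_\hbar$. Under $\Hom_{A_\hbar}(P_\hbar,M_\hbar)\cong\Im(\rho_\hbar)$, this is exactly applying $\rho_\hbar=(-)\circ\pi_{P_\hbar}$ to $\tilde f$, i.e. your ``lift a basis via $\rho_\hbar$''.

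Where the two arguments differ is in proving bijectivity. The paper checks it by hand. Surjectivity comes from a leading-term approximation plus completeness. For injectivity, it evaluates $\psi(f)=0$ on lifts $p_\hbar$ of arbitrary $p\in P$ to kill the leading coefficient, then divides by $\hbar$ using flatness of $M_\hbar$ and iterates.

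You instead settle the free case, where $\Hom_{A_\hbar}(A_\hbar^{\oplus k},M_\hbar)\cong M_\hbar^{\oplus k}$ makes everything transparent. You then get flatness of the hom module from the general fact that the image of an idempotent on a flat complete module is flat, with reduction equal to the image of the leading term (the argument of \autoref{th:perfect-perfect-flat}). Bijectivity of $\rho_\hbar$ restricted to $B\htensor\Hom_A(P,M)$ then follows from its leading term being the identity. This packaging is more structural. Note that ``the leading term of $\rho_\hbar$ is $\rho$'' uses two things: that $\pi_{P_\hbar}$ lifts $\pi_P$, and the zeroth-order compatibility of the $A$-action built into the definition of a flat deformation.

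Your proposal also proves two clauses the paper's proof never addresses: the characterization of lifts by leading terms, and the surjectivity equivalence. Your successive-approximation argument for the latter is correct. It uses completeness of $P_\hbar$, continuity of $f_\hbar$, and the fact that elements of $M_\hbar$ with vanishing leading term are divisible by $\hbar$.

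Both arguments rely on a splitting $A_\hbar^{\oplus k}=P_\hbar\oplus Q_\hbar$ compatible with $A^{\oplus k}=P\oplus Q$. The paper assumes this silently; you rightly flag it as the one step needing a separate lifting argument.
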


\begin{proof}
Let us write $ A^{⊕k} = P ⊕ Q $ and $ A_ħ^{⊕k} = P_ħ ⊕ Q_ħ $. Now define the map $ ψ: B \htensor \Hom_A (P, M) \to \Hom_{A_ħ} (P_ħ, M_ħ) $ as the continuous $ B $-linear extension of the following assignment: An element $ f \in Hom_A (P, M) $ is sent to $ ψ(f)(e_i) = f(e_i) \in M $. Note that the resulting $ ψ(f) $ is automatically an $ A_ħ $-module morphism.

We claim that $ ψ $ is surjective. It suffices to show that any morphism $ f_ħ \in \Hom_{A_ħ} (P_ħ, M_ħ) $ agrees with an element in the image of $ ψ $ up to pointwise terms in $ \mathfrak{m} $. Let $ π: M_ħ \to M $ be the standard projection and define $ f(e_i) = π(f_ħ (e_i)) $ for $ i = 1, …, k $. We then have that $ ψ(f)(e_i) - f_ħ (e_i) = π(f_ħ (e_i)) - f_ħ (e_i) \in \mathfrak{m} \htensor M $. By an adoption of \autoref{th:perfect-generation}, we conclude that $ ψ $ is surjective.

We claim that $ ψ $ is injective. For simplicity we assume $ B = ℂ⟦ħ⟧ $. Regard an element of the form $ f = \sum b_i ħ^i f_i ∈ B \htensor \Hom_A (P, M) $ and assume $ ψ(f) = 0 $. We show that $ b_0 f_0 = 0 $. Pick any $ p ∈ P $ and a $ p_ħ ∈ P_ħ $ such that $ p - p_ħ ∈ ħ A_ħ^{⊕k} $. Since $ ψ(\sum b_i ħ^i f_i)(p_ħ) = 0 $, we conclude $ \sum b_i ħ^i f_i (p) ∈ ħ M_ħ $. Since $ p $ is arbitrary, we conclude $ b_0 f_0 = 0 $. Therefore we can write $ f = ħ f' $ and $ 0 = ψ(f) = ħ ψ(f') $, thus $ ψ(f') = 0 $. Continuing this way we conclude $ f = 0 $. Thus $ ψ $ is injective and we finish the proof.
\end{proof}

\begin{example}
Consider the case that $ M_ħ = M $, where $ ħ $ acts by zero. This $ A_ħ $-module is not a flat deformation of $ M $ and therefore \autoref{th:perfect-lift-surjective} does not apply. In fact, $ ψ $ is not injective, as for instance $ ψ(ħf) = 0 $.
\end{example}

\begin{lemma}
\label{th:perfect-image-extension}
Let $ A_ħ^{⊕k} = P_ħ ⊕ P'_ħ $ and $ A_ħ^{⊕l} = Q_ħ ⊕ Q'_ħ $. Let $ I_ħ ⊂ Q_ħ $ be a submodule and flat deformation of $ I ⊂ Q $. Let $ f: P → Q $ be an $ A $-linear map with image $ I = \Im(f) $. Then there exists a deformation $ f_ħ: P_ħ → Q_ħ $ with image $ I_ħ $.
\end{lemma}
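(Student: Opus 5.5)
The plan is to build $f_ħ$ by lifting generators. Since $I = \Im(f)$, the map $f$ sends generators of $P$ to elements of $I$. Flatness of $I_ħ$ lets us lift elements of $I$ to $I_ħ$, and surjectivity will then follow from the leading term together with \autoref{th:perfect-generation}. As in the preceding lemmas, I would assume $B = ℂ⟦ħ⟧$ for simplicity.

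\textbf{Construction.} First, $P_ħ$ is a flat deformation of $P$ by \autoref{th:perfect-perfect-flat}, and similarly $Q_ħ$ is one of $Q$.
\begin{itemize}
\item Write $π_{P_ħ}(e_1), …, π_{P_ħ}(e_k)$ for the images of the standard basis of $A_ħ^{⊕k}$. These generate $P_ħ$, and their leading terms $π_P(e_i)$ generate $P$.
\item Set $y_i = f(π_P(e_i)) ∈ I$.
\item Because $I_ħ$ is a flat deformation of $I$, each $y_i$ has a lift $y_{i,ħ} ∈ I_ħ$ with $y_{i,ħ} - y_i ∈ ħ A_ħ^{⊕l}$.
\item Define $\bar f_ħ: A_ħ^{⊕k} → A_ħ^{⊕l}$ on the free module by $e_i \mapsto y_{i,ħ}$, and put $f_ħ = \bar f_ħ|_{P_ħ}$.
\end{itemize}

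\textbf{Checking the construction.} The leading term of $\bar f_ħ$ is $f ∘ π_P$, so $f_ħ$ is a deformation of $f$. Its image lies in $I_ħ ⊂ Q_ħ$, since each $y_{i,ħ}$ lies in $I_ħ$ and $P_ħ$ is generated by the $π_{P_ħ}(e_i)$.

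One subtlety: restricting $\bar f_ħ$ to $P_ħ$ only agrees with $f$ at leading order if we replace $\bar f_ħ$ by $\bar f_ħ ∘ π_{P_ħ}$. This composite is still $A_ħ$-linear, still has image in $I_ħ$, and still has leading term $f∘π_P$. Alternatively, \autoref{th:perfect-lift-surjective} identifies $\Hom_{A_ħ}(P_ħ, Q_ħ)$ with $B \htensor \Hom_A(P,Q)$, which guarantees that a lift with the prescribed leading term exists. The composite with $π_{P_ħ}$ handles this cleanly.

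\textbf{Surjectivity onto $I_ħ$ (main obstacle).} This is the only delicate step, and it is a successive-approximation argument as in \autoref{th:perfect-generation}. Let $z ∈ I_ħ$.
\begin{enumerate}
\item Its leading term $z_0$ lies in $I$, because $I_ħ$ deforms $I$. So $z_0 = f(p)$ for some $p ∈ P$.
\item Choose a lift $p_ħ ∈ P_ħ$ of $p$. Then $z - f_ħ(p_ħ) ∈ I_ħ ∩ ħ A_ħ^{⊕l}$.
\item Flatness of $I_ħ$ gives $I_ħ ∩ ħ A_ħ^{⊕l} ⊂ ħ I_ħ$, so $z - f_ħ(p_ħ) = ħ z'$ with $z' ∈ I_ħ$.
\item Iterate on $z'$ to obtain $z = f_ħ\bigl(\sum_j ħ^j p_ħ^{(j)}\bigr)$.
\end{enumerate}
The series converges in the $ħ$-adic topology because $P_ħ$ is complete, and $f_ħ$ is continuous because it is $B$-linear. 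Hence $\Im(f_ħ) = I_ħ$.

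The flatness inclusion in step 3 is exactly where the hypothesis on $I_ħ$ is essential; without it the iteration stalls, as in the example $f_ħ = ħ\cdot$ given earlier.
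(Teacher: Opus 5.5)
Your proposal is correct and follows essentially the same route as the paper. Both arguments lift the images $f(π_P(e_i))$ into $I_ħ$, define $\bar f_ħ$ on the free module and restrict it to $P_ħ$, and then prove $\Im(f_ħ) = I_ħ$ by successive approximation, using $I_ħ ∩ ħ A_ħ^{⊕l} ⊂ ħ I_ħ$ and completeness of $P_ħ$. Your ``subtlety'' is unnecessary, though harmless: $\bar f_ħ|_{P_ħ}$ already has leading term $f$, because elements of $P_ħ$ have leading terms in $P$ where $π_P$ is the identity, and in any case $(\bar f_ħ ∘ π_{P_ħ})|_{P_ħ} = \bar f_ħ|_{P_ħ}$, so the replacement changes nothing.
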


\begin{proof}
We split the proof into four parts. In the first part, we construct the map $ f_ħ $. In the second part, we show that $ f_ħ $ is a deformation of $ f $. In the third part, we prove that any $ x ∈ ħ^k I_ħ $ can be written in the form $ x = x' + x'' $ with $ x' ∈ ħ^k \Im(f_ħ) $ and $ x'' ∈ ħ^{k+1} I_ħ $. In the fourth part, we conclude that $ \Im(f_ħ) = I_ħ $.

For the first part, regard the map $ \bar{f} = f ∘ π_P: A^{⊕k} → I $. Define $ v_i = \bar{f} (e_i) $ for $ i = 1, …, k $. By assumption that $ I_ħ $ is a deformation of $ I $, there exist elements $ w_i ∈ \mathfrak{m} A_ħ^{⊕l} $ such that $ v_i + w_i ∈ I_ħ $ for all $ i $. Now define the map $ \bar{f}_ħ: A_ħ^{⊕k} → I_ħ $ given by $ \bar{f}_ħ (e_i) = v_i + w_i ∈ I_ħ $. Define the map $ f_ħ = \bar{f}_ħ \restr_{P_ħ}: P_ħ → Q_ħ $. By construction we have $ \Im(f_ħ) ⊂ I_ħ $.

For the second part, pick any $ p ∈ P $ and $ p' ∈ \mathfrak{m} A_ħ^{⊕k} $ such that $ p + p' ∈ P_ħ $. Then we indeed obtain
\begin{equation*}
f_ħ (p + p') - f(p) = (\bar{f}_ħ (p) - \bar{f} (p)) + f_ħ (p') ∈ ħ A_ħ^{⊕l}.
\end{equation*}
For the third part, start by writing $ x = ħ^k x_0 $ with $ x_0 ∈ I_ħ $. Then we can write $ x_0 = f(p) + y $ with $ p ∈ P $ and $ y ∈ ħ A_ħ^{⊕l} $. Pick $ p' ∈ ħ A_ħ^{⊕k} $ such that $ p + p' ∈ P_ħ $. We have
\begin{equation*}
f_ħ (p + p') - x_0 = (f_ħ (p) - f(p)) + (f(p) - x_0) + f_ħ (p') ∈ ħ A_ħ^{⊕l}.
\end{equation*}
We observe that this difference lies in $ I_ħ $ as well and by flatness of $ I_ħ $ we conclude that the difference is of the form $ ħ z $ with $ z ∈ I_ħ $. We conclude the claim by observing
\begin{equation*}
x = ħ^k x_0 = ħ^k f_ħ (p + p') - ħ^{k+1} z ∈ ħ^k \Im(f_ħ) + ħ^{k+1} I_h.
\end{equation*}
For the fourth part, let $ x ∈ I_ħ $. Then through inductive applications of the third part we can write $ x = \sum x_i $ with $ x_i ∈ ħ^i \Im(f_ħ) $. Since $ P_ħ $ is complete, this proves the claim.
\end{proof}

\begin{proposition}
Let $ M $ be an $ A $-module and let $ … \overset{d^1}{\to} P^0 \overset{d^0}{\to} M $ be a resolution by finitely generated projective modules. Then there exist projective modules $ P_ħ^i $ with maps $ d^i_ħ: P_ħ^i \to P_ħ^{i-1} $ such that $ P_ħ^i $ are flat deformations of $ P^i $, the maps $ d^i_ħ $ are deformations of $ d^i $ and $ P_ħ^{•} $ is a resolution of $ M_ħ $.
\end{proposition}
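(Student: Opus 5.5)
Implicitly, $M_ħ$ is a flat deformation of $M$. I would build the deformed resolution step by step, from the right, using the lemmas above. Throughout I take $B = ℂ⟦ħ⟧$, as the earlier lemmas do. Each finitely generated projective $P^i$ is a direct summand $A^{⊕k_i} = P^i ⊕ Q^i$. As recalled before \autoref{th:perfect-perfect-flat}, the idempotent $π_{P^i}$ lifts to an idempotent $π_{P^i_ħ}$ on $A_ħ^{⊕k_i}$. Its image $P^i_ħ$ is projective, and by \autoref{th:perfect-perfect-flat} it is a flat deformation of $P^i$. So the modules themselves are easy; the work is in choosing the maps $d^i_ħ$.

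\textbf{Step 0.} The augmentation $d^0: P^0 → M$ is surjective. By \autoref{th:perfect-lift-surjective}, $\Hom_{A_ħ}(P^0_ħ, M_ħ) ≅ B \htensor \Hom_A(P^0, M)$, so $d^0$ has a lift $d^0_ħ$. That lemma also says $d^0_ħ$ is surjective because $d^0$ is. Then \autoref{th:perfect-ker-flat} shows that $K^0_ħ = \Ker(d^0_ħ)$ is a flat deformation of $K^0 = \Ker(d^0) = \Im(d^1)$.

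\textbf{Inductive step.} Suppose $d^0_ħ, …, d^{i-1}_ħ$ have been constructed, exact at each stage, and that $K^{i-1}_ħ = \Ker(d^{i-1}_ħ) ⊂ P^{i-1}_ħ$ is a flat deformation of $K^{i-1} = \Ker(d^{i-1}) = \Im(d^i)$. Apply \autoref{th:perfect-image-extension} with $P = P^i$, $Q = P^{i-1}$, $I = \Im(d^i)$ and $I_ħ = K^{i-1}_ħ$. This yields a deformation $d^i_ħ: P^i_ħ → P^{i-1}_ħ$ of $d^i$ with $\Im(d^i_ħ) = K^{i-1}_ħ$. Exactness at $P^{i-1}_ħ$ follows at once.

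Next, $\Im(d^i_ħ) = K^{i-1}_ħ$ is a flat deformation of $\Im(d^i)$. The lemma following \autoref{th:perfect-subtraction} (images flat implies kernels flat, for maps between projective summands) then gives that $\Ker(d^i_ħ)$ is a flat deformation of $\Ker(d^i) = \Im(d^{i+1})$. This is exactly the hypothesis needed for the next step, so induction produces the whole complex $P^\bullet_ħ → M_ħ$. The composites vanish and the complex is exact everywhere, so it is a projective resolution of $M_ħ$, and each $d^i_ħ$ deforms $d^i$.

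\textbf{Main obstacle.} The delicate point is keeping the flatness invariant alive through every step. Without it, the deformed kernel can be too small or too large, as in the example $f_ħ = ħ\,\Id$. In particular, \autoref{th:perfect-image-extension} requires $I_ħ$ to be a flat deformation, not merely a submodule whose reduction is $I$. I would check this carefully at step 0, where the input is surjectivity rather than an image condition. If there is an issue, I would redo step 0 by writing $M_ħ$ as the image of a map from a free module, using \autoref{th:perfect-generation}.

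A second subtle point is that the deformation $d^i_ħ$ produced by \autoref{th:perfect-image-extension} must really restrict to a map $P^i_ħ → P^{i-1}_ħ$ landing in $K^{i-1}_ħ$. This holds by construction, since the images of the basis vectors were chosen inside $I_ħ$.
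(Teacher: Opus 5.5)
Your proposal is correct and follows the paper's proof. It lifts the $P^i$, lifts $d^0$ via \autoref{th:perfect-lift-surjective} (which makes it automatically surjective), gets flatness of $\Ker(d^0_ħ)$ from \autoref{th:perfect-ker-flat}, builds $d^1_ħ$ with $\Im(d^1_ħ)=\Ker(d^0_ħ)$ via \autoref{th:perfect-image-extension}, and then inducts. Where the paper just says ``continuing this way inductively'', you make explicit that flatness of $\Ker(d^i_ħ)$ for $i\geq 1$ comes from the unlabeled ``flat image implies flat kernel'' lemma, which is a sound way to close the induction.
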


\begin{proof}
We start by picking lifts $ P_ħ^i $ of $ P^i $ for every $ i ∈ ℕ $. By \autoref{th:perfect-lift-surjective}, we can pick a lift $ d_ħ^0: P_ħ^0 → M_ħ $ which is automatically surjective. By \autoref{th:perfect-ker-flat}, we have that $ \Ker(d_ħ^0) ⊂ P_ħ^0 $ is a flat deformation of $ \Ker(d^0) ⊂ P^0 $. Since $ \Ker(d^0) = \Im(d^1) $, by \autoref{th:perfect-image-extension} there exists a lift $ d^1_ħ: P_ħ^1 → P_ħ^0 $ with $ \Im(d^1_ħ) = \Ker(d^0_ħ) $. The desired projective resolution is obtained by continuing this way inductively.
\end{proof}

In terms of the terminology established in \cite{Paper-III}, the category $ \Perf A_ħ $ is a loose object-cloning deformation of $ \Perf A $.

\subsection{Graded deformation theory}
\label{sec:prelimliou-gradeddefo}
In this section, we formulate a graded version of $ A_∞ $-deformation theory. We start by defining graded deformation bases and tensor products. We then examine graded $ L_∞ $-algebras and their graded Maurer-Cartan elements. Finally we recall graded $ A_∞ $-categories and examine their Maurer-Cartan elements with the help of their graded Hochschild DGLAs.

\begin{definition}
A \emph{grading group} is an abelian group $ (A, +) $. A \emph{completed $ A $-graded deformation base} is a deformation base $ (B, \mathfrak{m}) $ together with an $ A $-grading on each quotient algebra $ B/\mathfrak{m}^i $ such that the projection maps $ B/\mathfrak{m}^{i+1} → B/\mathfrak{m}^i $ are $ B $-linear.

Let $ M $ be an $ A $-graded vector space. A \emph{completed $ A $-graded $ B $-module} is a $ B $-module $ N $ together with an $ A $-grading on each quotient $ N/\mathfrak{m}^i N $ such that the projection maps $ N/\mathfrak{m}^{i+1} N → N/\mathfrak{m}^i N $ are $ A $-homogeneous. An element $ n ∈ N $ is \emph{$ A $-homogeneous} of degree $ a ∈ A $ if each projection of $ n $ along the projection maps $ N → N/\mathfrak{m}^i N $ is $ A $-homogeneous of degree $ a $.

If $ M $ is any $ A $-graded vector space, we have the standard completed $ A $-graded $ B $-module $ B \htensor M = \lim (B/\mathfrak{m}^i) ¤ M $. A completed $ A $-graded $ B $-module $ N $ is \emph{free} if there exists an $ A $-graded vector space $ M $ together with a $ B $-linear isomorphism $ φ: N \isoto B \htensor M $ such that the induced maps $ φ_i: N/\mathfrak{m}^i N = (B/\mathfrak{m}^i) ¤ M $ are $ A $-homogeneous isomorphisms.
\end{definition}

\begin{remark}
The notion of a completed $ A $-graded deformation base is different from the notion of deformation base with an $ A $-grading. While an $ A $-graded algebra $ B $ possesses a direct sum decomposition $ B = \bigoplus_{a ∈ A} B_a $ as vector spaces, a completed $ A $-graded deformation base does not possess such a direct sum decomposition. For example, regard the deformation base $ B = ℂ⟦h⟧ $. It is a completed $ A $-graded deformation base with assigning $ \deg(ħ) = 1 ∈ ℤ $. However, this assignment does not extend to a $ ℤ $-grading on $ B $, since there is no vector space decomposition $ B = \bigoplus_{a ∈ ℤ} B_a $ where each $ B_a $ is homogeneous in $ ħ $ of degree $ a $.
\end{remark}

\begin{example}
If $ B_0 $ is an $ A $-graded noetherian unital commutative algebra and $ \mathfrak{m} ⊂ B_0 $ is a maximal ideal, then the completion $ B = \hat{B_0}_{\mathfrak{m}} $ is a completed $ A $-graded deformation base. For instance, the $ ℤ $-graded deformation base $ ℂ⟦ħ⟧ $ is the completion of $ ℂ[ħ] $. It is unclear whether every completed $ A $-graded deformation base can be obtained this way.
\end{example}

\begin{lemma}
\label{th:prelim-gradeddefo-splithigher}
Let $ i ≥ 0 $ and $ 0 ≤ k ≤ i $ and let $ b ∈ \mathfrak{m}^k $. Then there is a decomposition $ b = \sum_{j = 1}^N b_j + m $ where $ b_j ∈ \mathfrak{m}^k $ and $ π_i (b_j) ∈ B/\mathfrak{m}^i $ is homogeneous and $ m ∈ \mathfrak{m}^i $.
\end{lemma}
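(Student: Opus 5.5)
Since the $ A $-grading lives on the quotient $ B/\mathfrak{m}^i $, the idea is to decompose there and lift back. Let $ π_i: B → B/\mathfrak{m}^i $ be the projection. Because $ B/\mathfrak{m}^i $ is an $ A $-graded algebra, i.e. a vector space with a direct sum decomposition $ B/\mathfrak{m}^i = \bigoplus_{a ∈ A} (B/\mathfrak{m}^i)_a $, the element $ π_i(b) $ has a finite homogeneous decomposition $ π_i(b) = \sum_{j=1}^N c_j $ with $ c_j ∈ (B/\mathfrak{m}^i)_{a_j} $. We choose arbitrary lifts $ b_j ∈ B $ with $ π_i(b_j) = c_j $ and set $ m = b - \sum_j b_j $. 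Then $ π_i(m) = 0 $, so $ m ∈ \mathfrak{m}^i $, and each $ π_i(b_j) $ is homogeneous by construction.

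The step that needs genuine input is ensuring $ b_j ∈ \mathfrak{m}^k $. It is enough to know that each $ c_j $ lies in $ \mathfrak{m}^k/\mathfrak{m}^i $, since then we may choose the lift $ b_j $ inside $ \mathfrak{m}^k $. So we must show that $ \mathfrak{m}^k/\mathfrak{m}^i ⊂ B/\mathfrak{m}^i $ is a graded subspace, meaning it is closed under taking homogeneous components. This is the main obstacle, because the definition of a completed $ A $-graded deformation base only demands an $ A $-grading on each quotient together with compatibility of the transition maps. I read the requirement in the definition that $ B/\mathfrak{m}^{i+1} → B/\mathfrak{m}^i $ be ``$ B $-linear'' as a typo for $ A $-homogeneous, which is the condition used for modules, and I will use it in that form.

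To prove the graded-subspace property, consider the composite projection $ p: B/\mathfrak{m}^i → B/\mathfrak{m}^k $. This is a composition of the transition maps, so by the corrected definition it is $ A $-homogeneous of degree $ 0 $. Its kernel is $ \mathfrak{m}^k/\mathfrak{m}^i $. The kernel of a degree-zero homogeneous map of graded vector spaces is a graded subspace, because $ p(x) = \sum_a p(x_a) $ with $ p(x_a) $ in distinct graded pieces, so $ p(x) = 0 $ forces $ p(x_a) = 0 $ for every $ a $. Applying this to $ x = π_i(b) ∈ \mathfrak{m}^k/\mathfrak{m}^i $ shows that each component $ c_j $ lies in $ \mathfrak{m}^k/\mathfrak{m}^i $, and the lifts $ b_j ∈ \mathfrak{m}^k $ exist.

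If one insists on the literal ``$ B $-linear'' reading, this argument breaks down, and I would instead add homogeneity of the transition maps as a standing convention.
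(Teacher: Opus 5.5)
Your proof is correct and is essentially the paper's argument: the paper also splits $\pi_i(b)$ into homogeneous components of pairwise distinct degrees, lifts them arbitrarily, and then projects to $B/\mathfrak{m}^k$, where $0=\pi_k(b)=\sum_j \pi_k(b_j)$ forces every $\pi_k(b_j)=0$, which is exactly your observation that $\mathfrak{m}^k/\mathfrak{m}^i$ is the kernel of a degree-zero homogeneous map and hence a graded subspace. Your reading of the definition's ``$B$-linear'' as ``$A$-homogeneous'' is also the assumption the paper's proof uses without comment at that step.
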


\begin{proof}
Regard the projection $ π_i (b) ∈ B/\mathfrak{m}^i $ and write $ π_i (b) = \sum_{j = 1}^N c_j $ where $ c_j ∈ B / \mathfrak{m}^i $ are homogeneous of pairwise different degrees. Now choose any lifts $ b_j $ of $ c_j $ such that $ π_i (b_j) = c_j $. Putting $ m = b - \sum_{j = 1}^N b_j $, we have $ π_i (m) = 0 $ thus $ m ∈ \mathfrak{m}^i $. Finally, regard the sum $ π_k (b) = \sum_{j = 1}^N π_k (b_j) + π_k (m) $. Since $ π_k (b) = π_k (m) = 0 $ and each element $ b_j ∈ B / \mathfrak{m}^k $ is homogeneous of different degree, we conclude that each $ π_k (b_j) $ is homogeneous of different degree and thus $ π_k (b_j) = 0 $ for all $ j $. This shows $ b_j ∈ \mathfrak{m}^k $ and finishes the proof.
\end{proof}

\begin{lemma}
\label{th:prelim-gradeddefo-lifting}
If $ b ∈ B / \mathfrak{m}^i $ is homogeneous, then there exists a homogeneous lift in $ B / \mathfrak{m}^{i+1} $ of the same degree.
\end{lemma}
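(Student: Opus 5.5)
The plan is to pick an arbitrary lift and then cut it down to its homogeneous component of the right degree using the grading on $ B/\mathfrak{m}^{i+1} $. The definition of a completed $ A $-graded deformation base requires the projection $ p: B/\mathfrak{m}^{i+1} → B/\mathfrak{m}^i $ to be compatible with the gradings. I read the word ``$ B $-linear'' in that definition as ``$ A $-homogeneous of degree zero'', mirroring the module definition that follows it. Under that reading, $ p $ maps the degree-$ a $ summand of $ B/\mathfrak{m}^{i+1} $ into the degree-$ a $ summand of $ B/\mathfrak{m}^i $.

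The steps are as follows.

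\begin{enumerate}
\item Let $ b ∈ B/\mathfrak{m}^i $ be homogeneous of degree $ a ∈ A $. Since $ B → B/\mathfrak{m}^i $ is surjective, so is $ p $. Choose any $ \tilde b ∈ B/\mathfrak{m}^{i+1} $ with $ p(\tilde b) = b $.
\item Decompose $ \tilde b = \sum_{c ∈ A} \tilde b_c $ into homogeneous components with respect to the $ A $-grading on $ B/\mathfrak{m}^{i+1} $. This is a finite sum.
\item Apply $ p $ to get $ b = p(\tilde b) = \sum_c p(\tilde b_c) $. Each $ p(\tilde b_c) $ is homogeneous of degree $ c $, and the grading on $ B/\mathfrak{m}^i $ is a direct sum decomposition. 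Comparing components therefore gives $ p(\tilde b_a) = b $ and $ p(\tilde b_c) = 0 $ for $ c ≠ a $.
\item Hence $ \tilde b_a $ is a homogeneous lift of degree $ a $, which proves the claim.
\end{enumerate}

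This is the same component-comparison argument used in the proof of \autoref{th:prelim-gradeddefo-splithigher}.

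The only real obstacle is interpretive. The definition as written only says the projections are ``$ B $-linear'', which they are automatically. The argument needs them to preserve degree. I would state explicitly that this is the intended meaning of the definition: it matches the analogous condition imposed on completed $ A $-graded $ B $-modules, and it is implicitly used in \autoref{th:prelim-gradeddefo-splithigher}, where degrees are compared after projecting to $ B/\mathfrak{m}^k $.

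Without a degree-preserving projection the statement can fail. The gradings on the successive quotients would then be unrelated, and no homogeneous lift of degree $ a $ need exist.
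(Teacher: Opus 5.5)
Your proof is correct and is essentially the paper's argument. The paper also takes an arbitrary lift, splits it into homogeneous pieces of pairwise distinct degrees modulo $\mathfrak{m}^{i+1}$ (via \autoref{th:prelim-gradeddefo-splithigher}, working with representatives in $B$ rather than directly in $B/\mathfrak{m}^{i+1}$), projects to $B/\mathfrak{m}^i$, and discards the pieces of the wrong degree; that comparison step silently relies on the same degree-preservation of $B/\mathfrak{m}^{i+1} \to B/\mathfrak{m}^i$ that you rightly flag as the intended reading of the definition.
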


\begin{proof}
Denote by $ a ∈ A $ the degree of $ b $ and recall that $ b $ is nonzero by assumption of homogeneity. Let $ \tilde b ∈ B $ be any lift of $ b $. By \autoref{th:prelim-gradeddefo-splithigher} we can write $ \tilde b = \sum_{j = 1}^N b_j + m $ where $ π_{i+1} (b_j) $ are homogeneous elements of pairwise distinct degrees $ d_j $ and $ m ∈ \mathfrak{m}^{i+1} $. Regard the sum
\begin{equation*}
b = π_i (\tilde b) = \sum_{j = 1} π_i (b_j) + π_i (m).
\end{equation*}
Since $ b $ is homogeneous of degree $ d $ and $ π_i (m) = 0 $, we that for all $ j $ with $ d_j ≠ d $ we have $ π_i (b_j) = 0 $, thus $ b_j ∈ \mathfrak{m}^i $. Now define 
\begin{equation*}
b' = \tilde b - \sum_{\substack{j = 1, …, N \\ d_j ≠ d}} b_j - m ∈ B.
\end{equation*}
We have
\begin{equation*}
π_i (b') = b - \sum_{\substack{j = 1, …, N \\ d_j ≠ d}} π_i (b_j) - π_i (m) = b.
\end{equation*}
and
\begin{equation*}
π_{i+1} (b') = \sum_{j = 1}^N π_{i+1} b_j - \sum_{\substack{j = 1, …, N \\ d_j ≠ d}} π_{i+1} (b_j).
\end{equation*}
The result of this sum is at most one element of the form $ π_{i+1} (b_j) $ that is of degree $ d $. This finishes the proof.
\end{proof}

\begin{lemma}
Any element $ b ∈ B $ can be written as a finite or countably infinite sum $ b = \sum b_j $ where $ b_j ∈ \mathfrak{m}^{→ ∞} $ and each $ b_j $ is homogeneous. If $ b ∈ \mathfrak{m}^k $ for some $ k ≥ 0 $, it can be achieved that $ b_j ∈ \mathfrak{m}^{≥k, → ∞} $.
\end{lemma}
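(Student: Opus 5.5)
The plan is an inductive splitting argument driven by \autoref{th:prelim-gradeddefo-splithigher}, with completeness of $ B $ handling the limit. Start with $ b ∈ \mathfrak{m}^k $; the case $ k = 0 $ gives the first claim. Set $ r_k = b $. Suppose inductively that we have a remainder $ r_i ∈ \mathfrak{m}^i $ for some $ i ≥ k $, together with finitely many homogeneous elements already chosen, all lying in $ \mathfrak{m}^k $, whose sum plus $ r_i $ equals $ b $.

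For the inductive step, apply \autoref{th:prelim-gradeddefo-splithigher} to $ r_i $ with the pair of indices $ (i+1, i) $. This gives
\begin{equation*}
r_i = \sum_{j=1}^{N_i} b_{i,j} + r_{i+1},
\end{equation*}
where each $ b_{i,j} ∈ \mathfrak{m}^i $, each $ π_{i+1}(b_{i,j}) $ is homogeneous, and $ r_{i+1} ∈ \mathfrak{m}^{i+1} $. Iterating, every element chosen at stage $ i $ lies in $ \mathfrak{m}^i $. Enumerating all $ b_{i,j} $ over $ i ≥ k $ therefore produces a countable sequence in $ \mathfrak{m}^{≥k, →∞} $. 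Since $ r_i → 0 $ in the $ \mathfrak{m} $-adic topology and $ B $ is complete Hausdorff, the series $ \sum_{i,j} b_{i,j} $ converges to $ b $.

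The main obstacle is homogeneity. The lemma only guarantees that $ π_{i+1}(b_{i,j}) $ is homogeneous, whereas the definition asks that every projection $ π_n(b_{i,j}) $ be homogeneous of a single degree.
\begin{itemize}
\item For $ n ≤ i $ this is automatic, because $ π_n(b_{i,j}) = 0 $.
\item For $ n > i+1 $ it can fail. The fix is to replace each $ b_{i,j} $ by an element that is homogeneous at every level.
\end{itemize}
To build such a replacement, use \autoref{th:prelim-gradeddefo-lifting} repeatedly. Take $ c = π_{i+1}(b_{i,j}) $, which is homogeneous of some degree $ a $. Lift it to a homogeneous element of degree $ a $ in $ B/\mathfrak{m}^{i+2} $, then lift that to degree $ a $ in $ B/\mathfrak{m}^{i+3} $, and so on. These compatible lifts define an element $ \tilde b_{i,j} ∈ \lim B/\mathfrak{m}^n = B $ that is homogeneous of degree $ a $ at every level. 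Because $ \tilde b_{i,j} $ and $ b_{i,j} $ have the same image in $ B/\mathfrak{m}^{i+1} $, the difference $ b_{i,j} - \tilde b_{i,j} $ lies in $ \mathfrak{m}^{i+1} $, and also $ \tilde b_{i,j} ∈ \mathfrak{m}^i $. These differences are absorbed into the next remainder, $ r_{i+1}' = r_{i+1} + \sum_j (b_{i,j} - \tilde b_{i,j}) ∈ \mathfrak{m}^{i+1} $, and the induction proceeds with $ \tilde b_{i,j} $ in place of $ b_{i,j} $.

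Finally, \autoref{th:prelim-gradeddefo-lifting} as stated assumes its input is nonzero. When $ π_{i+1}(b_{i,j}) = 0 $, simply drop that term and push $ b_{i,j} $ into the remainder, which is legitimate since it then lies in $ \mathfrak{m}^{i+1} $. With this adjustment the series $ b = \sum \tilde b_{i,j} $ has homogeneous terms in $ \mathfrak{m}^{≥k, →∞} $, as claimed.
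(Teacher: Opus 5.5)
Your proposal is correct and is essentially the paper's argument: you split the remainder level by level with \autoref{th:prelim-gradeddefo-splithigher}, upgrade homogeneity modulo $\mathfrak{m}^{i+1}$ to homogeneity at all levels with \autoref{th:prelim-gradeddefo-lifting}, and use completeness of $B$ to pass to the limit. The only difference is bookkeeping: the paper lifts every previously chosen piece by one level at each stage and takes the limits $b_{s,j} = \lim_i b^{(s)}_{i,j}$ at the end, whereas you complete each piece immediately by an infinite chain of lifts and absorb the correction into the next remainder.
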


\begin{proof}
We divide the proof into three parts. In the first part, we construct auxiliary data. In the second part, we construct the desired data $ b_j $. In the third part, we comment on the second statement.

For the first part of the proof, we construct the following data by induction over $ i ≥ 1 $: a sequence $ (N_i)_{i ≥ 1} $ together with elements $ b_{i, j}^{(t)} ∈ \mathfrak{m}^t $ for $ j = 1, …, N_t $ and $ t = 0, …, i-1 $ and elements $ m_i ∈ \mathfrak{m}^i $ such that $ π_i (b_{i,j}^{(t)}) ∈ B / \mathfrak{m}^i $ is homogeneous, the difference $ b_{i, j}^{(t)} - b_{i+1, j}^{(t)} $ lies in $ \mathfrak{m}^i $ and the following decomposition holds:
\begin{equation*}
b = \sum_{j = 1}^{N_0} b_{i,j}^{(0)} + … + \sum_{j = 1}^{N_{i-1}} b_{i,j}^{(i-1)} + m_i.
\end{equation*}
For $ i = 1 $, split $ π_1 (b) = \sum_{j = 1}^{N_0} π_1 (b_{1,j}^{(0)}) $ where $ π_1 (b_{1,j}^{(0)}) $ is homogeneous. Then define $ m_1 = b - \sum_{j = 1}^{N_0} b_{1,j}^{(0)} $.

Assume now that the data has been constructed up to index $ i $. For every $ 0 ≤ s ≤ i-1 $ and every $ j = 1, …, N_s $ recall that $ π_i (b_{i,j}^{(s)}) $ is homogeneous and by \autoref{th:prelim-gradeddefo-lifting} there exists an element $ b_{i+1,j}^{(s)} $ such that $ π_{i+1} (b_{i+1,j}^{(s)}) $ is homogeneous of the same degree and $ π_i (b_{i+1,j}^{(s)}) = π_i (b_{i,j}^{(s)}) $. Define
\begin{equation*}
m' = m_i + \sum_{s = 0}^{i-1} \sum_{j = 1}^{N_s} (b_{i,j}^{(s)} - b_{i+1,j}^{(s)}) ∈ \mathfrak{m}^i.
\end{equation*}
We conclude that
\begin{equation*}
b = \sum b_{i+1,j}^{(0)} + … + \sum b_{i+1,j}^{(i-1)} + m'.
\end{equation*}
Now regard $ m' ∈ \mathfrak{m}^i $ and decompose $ π_{i+1} (m') = \sum_{j = 1}^{N_i} π_{i+1} (b_{i+1,j}^{(i)}) $ with $ π_{i+1} (b_{i+1,j}^{(i)}) ∈ B / \mathfrak{m}^{i+1} $ homogeneous of pairwise distinct degrees. Now write $ m_{i+1} = m' - \sum b_{i+1,j}^{(i)} $ and deduce $ b_{i+1,j}^{(i)} ∈ \mathfrak{m}^i $ and $ m_{i+1} ∈ \mathfrak{m}^{i+1} $. We further have
\begin{equation*}
b = \sum b_{i+1,j}^{(0)} + … + \sum b_{i+1,j}^{(i-1)} + \sum b_{i+1,j}^{(i)} + m_{i+1}.
\end{equation*}
This decomposition satisfies the requirements of the induction statement for $ i+1 $. This finishes the induction argument.

For the second part of the proof, we shall finish the construction of the sequence $ b_j $ for a given element $ b $. Pick the data $ (N_s) $, $ (b_{i,j}^{(s)}) $ constructed in the first part of the proof. For $ s ≥ 0 $ and $ j = 1, …, N_s $ define $ b_{s, j} = \lim_{i → ∞} b_{i,j} $. This limit exists and is homogeneous by construction. We have
\begin{equation*}
b = \sum_{s = 0}^∞ \sum_{j = 1}^{N_s} b_{s, j}.
\end{equation*}
In this sum, the elements $ b_{s, j} $ are homogeneous and we have $ b_{s, j} ∈ \mathfrak{m}^s $. This finishes the second part of the proof.

For the third part of the proof, assume that $ b ∈ \mathfrak{m}^k $ for some $ k ≥ 0 $. Then by [lemma] the construction of the elements $ b_{i, j}^{(s)} $ can be carried out from the beginning with elements that lie purely in powers of $ \mathfrak{m} $ of order at least $ k $. This finishes the proof.
\end{proof}

\begin{lemma}
\label{th:prelimdefo-gradedtensor-tensordecomp}
Any element $ x ∈ B \htensor V $ can be written as a finite or countably infinite sum $ x = \sum b_i ¤ v_i $ where $ b_i ∈ \mathfrak{m}^{→ ∞} $ are homogeneous and $ v_i ∈ V $ are homogeneous.
\end{lemma}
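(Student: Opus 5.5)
The plan is to reduce to the previous lemma, which decomposes a single element $ b ∈ B $ into homogeneous pieces $ b = \sum b_j $ with $ b_j ∈ \mathfrak{m}^{→∞} $. For this we use a homogeneous basis of $ V $.

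\textbf{Step 1: write $ x $ as a series over a homogeneous basis.} Since $ V $ is $ A $-graded, choose a basis $ (e_λ)_{λ ∈ Λ} $ of $ V $ consisting of homogeneous vectors; this is possible because $ V = \bigoplus_a V_a $. By the recalled description of $ B \htensor V $, $ x $ is a series $ \sum_i m_i ¤ w_i $ with $ m_i ∈ \mathfrak{m}^{→∞} $ and $ w_i ∈ V $. Expanding each $ w_i $ as a finite combination of the $ e_λ $ and absorbing the scalars into $ m_i $ gives $ x = \sum_i m_i' ¤ e_{λ_i} $. Here the $ m_i' $ still satisfy $ m_i' ∈ \mathfrak{m}^{k_i} $ with $ k_i → ∞ $, since each $ w_i $ contributes only finitely many terms, all in the same power $ \mathfrak{m}^{k_i} $. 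The index set remains countable.

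\textbf{Step 2: decompose each coefficient.} For each $ i $, apply the previous lemma in its refined form. Since $ m_i' ∈ \mathfrak{m}^{k_i} $, we get $ m_i' = \sum_j b_{i,j} $ with $ b_{i,j} $ homogeneous and $ b_{i,j} ∈ \mathfrak{m}^{≥ k_i, →∞} $. Then
\begin{equation*}
x = \sum_{i,j} b_{i,j} ¤ e_{λ_i}.
\end{equation*}

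\textbf{Step 3: check that the double series is admissible.} The main point to verify is that the doubly indexed family, enumerated as a single countable sequence, still satisfies the condition $ \mathfrak{m}^{→∞} $. Concretely, we need that for every $ N $ only finitely many pairs $ (i,j) $ have $ b_{i,j} \notin \mathfrak{m}^N $. This holds for two reasons. First, $ k_i ≥ N $ for all but finitely many $ i $, and for those $ i $ every $ b_{i,j} $ lies in $ \mathfrak{m}^N $. Second, for each of the finitely many remaining $ i $, only finitely many $ j $ have $ b_{i,j} \notin \mathfrak{m}^N $, because the inner family is $ \mathfrak{m}^{→∞} $.

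Given this, any enumeration of the pairs yields a sequence in $ \mathfrak{m}^{→∞} $. The rearranged double sum converges to $ x $, because the $ \mathfrak{m} $-adic topology is non-archimedean and all partial sums agree modulo $ \mathfrak{m}^N $ once the finitely many low-order terms are included. This convergence and bookkeeping step is the only real obstacle; the rest is direct application of the previous lemma.
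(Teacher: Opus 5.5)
Your proof is correct and follows essentially the paper's route: split each vector into finitely many homogeneous pieces, decompose each coefficient with the refined previous lemma so that $b_{i,j} \in \mathfrak{m}^{\geq k_i}$ with orders tending to infinity, and reorder the resulting double sum. Your Step 3 spells out the summability check that the paper covers only with ``reorder the sum'', and using a homogeneous basis instead of homogeneous components is a harmless variation.
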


\begin{proof}
As a first step, we prove that for pure tensors of the form $ m ¤ v $ where $ m ∈ \mathfrak{m}^k $ and $ v ∈ V $, the statement holds with the additional data that $ b_i ∈ \mathfrak{≥k, → ∞} $. Write $ m = \sum m_j $ where $ m_j ∈ \mathfrak{m}^{≥k, → ∞} $ are homogeneous elements and write $ v = \sum v_l $ where $ v_l ∈ V $ are homogeneous elements. Then we have $ m ¤ v = \sum_j \sum_l m_j ¤ v_l $.

As a second step, let $ x ∈ B \htensor V $ be an arbitrary element. Write $ x = \sum b_j ¤ v_j $ where $ b_j ∈ \mathfrak{m}^{→ ∞} $ and $ v_j ∈ V $. Now decompose $ b_j ¤ v_j $ following the first step and reorder the sum. The result is a presentation of $ x $ as a countable sum of the desired form.
\end{proof}

Let us now treat graded $ L_∞ $-algebras and their Maurer-Cartan elements. The common situation entails that we are given an $ L_∞ $-algebra and a deformation base that are both graded over the same grading group. The task at hand is to identify whether two graded Maurer-Cartan elements are gauge-equivalent via a graded gauge-equivalence or not.

\begin{definition}
Let $ A $ be a grading group. An \emph{$ A $-graded $ L_∞ $-algebra} is an $ L_∞ $-algebra $ L = \bigoplus_{i ∈ ℤ} L^i $ together with an additional grading $ L^i = \bigoplus_{a ∈ A} L^i_a $ for $ i ∈ ℤ $, such that the higher product $ l^k $ is homogeneous with respect to $ A $ for every $ k ≥ 1 $. A \emph{(free) completed $ A $-graded $ B $-linear $ L_∞ $-algebra} is an $ L_∞ $-algebra $ L = \bigoplus_{i ∈ ℤ} L^i $ such that every $ L^i $ is equipped with the structure of (free) completed $ A $-graded $ B $-module and the brackets $ l^k $ are $ B $-linear and on each quotient $ \prod_{j = 1}^k L / \mathfrak{m}^{i_j} L $ the bracket $ l^k $ is $ A $-homogeneous.

Let $ B $ be a completed $ A $-graded deformation base and $ L $ be an $ A $-graded $ L_∞ $-algebra. Then the associated free completed $ A $-graded $ B $-linear $ L_∞ $-algebra is $ \mathfrak{m} \htensor L $ with its natural continuous multilinear extension of brackets. An \emph{$ A $-graded Maurer-Cartan element} of $ L $ over $ B $ is an element $ ν ∈ \MC(L, B) ⊂ B \htensor L^1 $ that is $ A $-homogeneous of degree $ 0 ∈ A $ in $ B \htensor L^1 $. The set of $ A $-graded Maurer-Cartan elements of $ L $ over $ B $ is denoted $ \MC(L, B)_0 $. Two $ A $-graded Maurer-Cartan elements $ ν, ν' ∈ \MC(L, B)_0 $ are \emph{$ A $-graded gauge-equivalent} if there exists an element $ η ∈ \mathfrak{m} \htensor L^0 $ such that $ η $ is $ A $-homogeneous of degree $ 0 ∈ A $ and we have $ η · ν = ν' $ where $ · $ denotes the gauge action of $ B \htensor L $.
\end{definition}

\begin{lemma}
\label{th:prelim-defo-gradedMCpush}
Let $ A $ be a grading group.
\begin{enumerate}
\item Let $ L $ be an $ A $-graded $ L_∞ $-algebra. Then there exists an $ A $-graded minimal $ L_∞ $-algebra $ \H L $ together with a $ A $-homogeneous $ L_∞ $-quasi-isomorphisms $ i: \H L → L $ and $ π: L → \H L $.
\item Let $ B $ be a completed $ A $-graded deformation base. If $ φ: L → L' $ is an $ A $-homogeneous $ L_∞ $-morphism and $ ν ∈ \MC(L, B)_0 $, then $ φ^{\MC} (ν) ∈ \MC(L', B)_0 $.
\end{enumerate}
\end{lemma}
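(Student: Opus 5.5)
For part 1, I would run the standard homological perturbation construction of the minimal model, but inside the category of $ A $-graded vector spaces. Each $ L^i = \bigoplus_{a ∈ A} L^i_a $ is a complex in the cohomological direction, and the differential $ l^1 $ is $ A $-homogeneous of degree $ 0 $ by assumption. So the complex $ (L, l^1) $ splits as a direct sum of subcomplexes $ L_a = \bigoplus_i L^i_a $.

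In each summand I would choose a splitting $ L^i_a = H^i_a ⊕ B^i_a ⊕ C^i_a $ into harmonic, boundary and complement parts. This gives an $ A $-homogeneous inclusion $ i_1: \H L → L $, projection $ π_1: L → \H L $ and contracting homotopy $ h $ with $ \Id - i_1 π_1 = l^1 h + h l^1 $. All three maps are $ A $-homogeneous of degree $ 0 $ because they are chosen componentwise in $ a $.

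Next I would write down the minimal $ L_∞ $-structure on $ \H L $ and the higher components of $ i $ and $ π $ by the usual sum over rooted trees. Internal vertices are decorated by brackets $ l^k $, internal edges by $ h $, leaves by $ i_1 $, and the root by $ π_1 $ (respectively $ h $ for the higher components of $ i $). Each term is a composition of $ A $-homogeneous maps of degree $ 0 $, so every resulting bracket and morphism component is $ A $-homogeneous. The $ L_∞ $-relations and the quasi-isomorphism property are the ungraded statements, which I take as known. I should note that $ π $ as an $ L_∞ $-morphism needs its own tree formula, again built only from homogeneous ingredients.

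For part 2, I would use the formula $ φ^{\MC}(ν) = \sum_{k ≥ 1} \frac{1}{k!} φ^k(ν, …, ν) $, with each $ φ^k $ extended $ B $-multilinearly and continuously to $ \mathfrak{m} \htensor L $. Convergence and the Maurer–Cartan property are the standard ungraded facts, so only homogeneity of degree $ 0 $ needs proof.

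By the definition of a completed $ A $-graded module, this has to be checked after projecting to each quotient $ (B/\mathfrak{m}^j) ¤ L' $. Modulo $ \mathfrak{m}^j $ the sum is finite, because $ ν ∈ \mathfrak{m} \htensor L^1 $ forces terms with $ k ≥ j $ to vanish. The projection of $ ν $ to $ (B/\mathfrak{m}^j) ¤ L $ is homogeneous of degree $ 0 $. Each $ φ^k $, extended over the $ A $-graded algebra $ B/\mathfrak{m}^j $, is homogeneous of degree $ 0 $, since the degree of $ b_1 ¤ x_1 ⋯ b_k ¤ x_k $ is additive. Hence each term is homogeneous of degree $ 0 $, and so is the finite sum. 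Compatibility of the gradings under the maps $ B/\mathfrak{m}^{j+1} → B/\mathfrak{m}^j $ then shows that the limit is homogeneous.

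The main obstacle is this reduction to finite quotients. One has to make sure the $ B $-multilinear extension of $ φ^k $ descends to $ B/\mathfrak{m}^j $ and is homogeneous there. \autoref{th:prelimdefo-gradedtensor-tensordecomp} is the tool I would use: write $ ν = \sum b_i ¤ v_i $ with homogeneous $ b_i $ and $ v_i $, and check the degree term by term modulo $ \mathfrak{m}^j $.
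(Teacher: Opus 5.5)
Your proposal is correct and follows essentially the same route as the paper. For part 1, both use an $A$-homogeneous splitting and homotopy $h$ fed into the Kadeishvili tree formulas. For part 2, both argue that each term of $\varphi^{\MC}(\nu)=\sum_k \frac{1}{k!}\varphi^k(\nu,\dots,\nu)$ is homogeneous of degree zero. Your reduction modulo $\mathfrak{m}^j$ just spells out the paper's definition of homogeneity in completed modules, which the paper applies implicitly. At that level the sum is finite and $(B/\mathfrak{m}^j)\otimes L$ carries an honest direct-sum grading, so the decomposition lemma is not actually needed there.
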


\begin{proof}
Regard the first statement. Define $ \H L $ to be the standard minimal model for $ L $ following the Kadeishvili construction, as in \cite[Chapter 6, 3.3.2]{Kontsevich-Soibelman}, \cite[Section 3.2]{Bocklandt-book} and \cite{Paper-IIA}. Since the differential $ d_L $ is $ A $-homogeneous, the cohomology of $ L $ in every degree is an $ A $-graded vector space. The codifferential $ h $ can be chosen to be $ A $-homogeneous, and the homological decomposition $ H ⊕ I ⊕ R $ is then $ A $-homogeneous as well. The expression for the higher products $ l^k_{\H L} $ by means of trees is $ A $-homogeneous, and the maps $ π: L → \H L $ and $ i: \H L → L $ as well. This proves the first statement.

Regard the second statement. It is a standard fact that $ φ^{\MC} (ν) $ is a Maurer-Cartan element. We have
\begin{equation*}
φ^{\MC} (ν) = \sum_{j = 0}^∞ \frac{φ^j (ν, …, ν)}{j!}.
\end{equation*}
Since $ ν $ is $ A $-homogeneous of degree zero as element of $ B \htensor L $ and each $ φ^j $ is $ A $-homogeneous, we conclude that $ φ^{\MC} (ν) $ is $ A $-homogeneous of degree zero in $ B \htensor L' $. This finishes the proof.
\end{proof}

The following is a graded version of a folklore statement for which we know no proof.

\begin{lemma}
\label{th:prelimliou-ainfty-pushpullbasic}
Let $ L $ be an $ A $-homogeneous $ L_∞ $-algebra and $ π: L → \H L $ and $ i: \H L → L $ its $ A $-homogeneous minimal model projections and inclusions. Let $ ν ∈ \MC(L, B)_0 $. Then $ ν $ and $ i^{\MC} (π^{\MC} (ν)) $ are $ A $-homogeneously gauge equivalent.
\end{lemma}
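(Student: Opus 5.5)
The plan is to prove the statement order by order in the $\mathfrak{m}$-adic filtration. The gauge element $\eta \in \mathfrak{m}\htensor L^0$ will be built as a convergent limit, and each correction will be kept $A$-homogeneous of degree $0$. Write $\nu' = i^{\MC}(\pi^{\MC}(\nu))$. By \autoref{th:prelim-defo-gradedMCpush}, both $\pi^{\MC}(\nu)$ and $\nu'$ are $A$-graded Maurer-Cartan elements of degree $0$. Since $\pi \circ i$ can be arranged to be the identity on $\H L$ (for the Kadeishvili construction, $\pi^1 i^1 = \id$ and the higher components of the composite vanish), we get $\pi^{\MC}(\nu') = \pi^{\MC}(\nu)$. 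The goal is therefore to prove the following claim.

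\emph{Claim.} Two degree-$0$ Maurer-Cartan elements $\nu, \nu'$ with $\pi^{\MC}(\nu) = \pi^{\MC}(\nu')$ are gauge equivalent via a degree-$0$ homogeneous $\eta$.

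The inductive step works modulo $\mathfrak{m}^{n+1}$. Suppose $\nu \equiv \nu'$ modulo $\mathfrak{m}^n$, after gauging by a homogeneous element already constructed. Then $\delta = \nu' - \nu$ is, modulo $\mathfrak{m}^{n+1}$, a $d_L$-cocycle in $(\mathfrak{m}^n/\mathfrak{m}^{n+1}) \tensor L^1$. This holds because the Maurer-Cartan equations for $\nu$ and $\nu'$ agree to this order: every higher bracket contributes only terms in $\mathfrak{m}^{n+1}$. The hypothesis $\pi^{\MC}(\nu) = \pi^{\MC}(\nu')$ gives, at leading order, $\pi^1(\delta) = 0$ modulo $\mathfrak{m}^{n+1}$. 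Using the homological decomposition $L = H \oplus I \oplus R$ with the homotopy $h$, a cocycle with vanishing harmonic part is exact, namely $\delta = d_L h \delta$. Set $\eta_n = h\delta$. By \autoref{th:prelimdefo-gradedtensor-tensordecomp}, $\delta$ can be written as a sum of homogeneous pieces $b_i \tensor v_i$. Since $h$ is $A$-homogeneous of degree $0$ (first part of \autoref{th:prelim-defo-gradedMCpush}), $\eta_n$ is homogeneous of degree $0$ in the completed sense, and it lies in $\mathfrak{m}^n \htensor L^0$. The gauge action satisfies $\exp(\eta_n)\cdot \nu \equiv \nu + d_L \eta_n$ modulo $\mathfrak{m}^{n+1}$, so the gauged element agrees with $\nu'$ modulo $\mathfrak{m}^{n+1}$. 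It remains homogeneous of degree $0$, because the gauge action is built from the $A$-homogeneous brackets $l^k$.

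After the induction, compose the gauge transformations by Baker–Campbell–Hausdorff; the product converges $\mathfrak{m}$-adically because $\eta_n \in \mathfrak{m}^n$. Each partial composite is homogeneous of degree $0$ on every quotient $B/\mathfrak{m}^i$, so the limit is homogeneous by the definition of homogeneity in a completed $A$-graded module. The limit gauges $\nu$ to $\nu'$.

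I expect the main obstacle to be the exactness step, namely showing that $\pi^{\MC}$ agreeing forces the order-$n$ discrepancy to have zero harmonic component. This requires the $\pi^{\ge 2}$ terms of $\pi^{\MC}(\nu') - \pi^{\MC}(\nu)$ to cancel modulo $\mathfrak{m}^{n+1}$. My first approach is to check that they do, because $\nu$ and $\nu'$ agree modulo $\mathfrak{m}^n$ and $\nu \in \mathfrak{m}L$, so those terms lie in $\mathfrak{m}^{n+1}$. If that fails, the fallback is the standard non-graded proof (for example in \cite{Paper-IIA}) with homogeneity checked at each step.
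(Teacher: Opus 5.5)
The paper gives no proof of this lemma; it is stated as a folklore fact ``for which we know no proof''. Your argument therefore has to stand on its own.

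Your reduction to the Claim is fine, and the Claim is true: it is injectivity of $\pi^{\MC}$ on gauge classes. Your inductive proof of the Claim, however, has a genuine gap. In the inductive step you apply the hypothesis $\pi^{\MC}(\nu)=\pi^{\MC}(\nu')$ to the \emph{already gauged} element, and gauging does not preserve $\pi^{\MC}$.

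An $L_\infty$-morphism only sends gauge-equivalent Maurer--Cartan elements to gauge-equivalent ones. Gauging by $\eta\in\mathfrak{m}^n\htensor L^0$ moves $\pi^{\MC}(\nu)$ along the gauge flow in $\H L$ of the induced parameter $\xi=\sum_{k\geq 0}\frac{1}{k!}\pi^{k+1}(\nu,\dots,\nu,\eta)$. Since $\H L$ has zero differential, this flow is $x\mapsto x+l^2_{\H L}(x,\xi)+\dots$, which is nontrivial whenever $\H^0L$ acts nontrivially on $\H^1L$.

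For $\eta_n=h\delta$, the side condition $\pi^1h=0$ gives $\xi\in\mathfrak{m}^{n+1}$, so the next step still works. Later, though, terms like $\pi^2(\nu_m,h\delta)$, with $\nu_m$ a component of the current element, generally create a harmonic discrepancy. At that order your $\delta$ is a cocycle with $\pi^1(\delta)\neq 0$, and no $d\eta$ can remove it. The true gauge equivalence needs an $\H^0L$-component in the gauge parameter at lower order, which your scheme never introduces.

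This already happens for the DGLA with $L^0=\mathfrak{sl}_2$, $L^1=\mathfrak{sl}_2\oplus\mathbb{C}^2$, $L^{\geq 2}=0$ and $da=[a,c']$, where $c'$ is the Cartan element of the summand $\mathfrak{sl}_2\subset L^1$. There the non-closed $e,f$ bracket to the class $c$ spanning $\H^0L$, and $c$ acts with weights $\pm1$ on $\mathbb{C}^2\subset\H^1L$. With the usual homotopy-transfer formulas, $\nu=t(e'+v_+)+t^2f'$ over $\mathbb{C}\llbracket t\rrbracket$ gets stuck at order $4$; here $e',f'$ are the copies of $e,f$ in $L^1$ and $v_+$ is a highest weight vector. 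So the obstacle you flagged is harmless at the first step; the real problem is that your invariant is not inherited.

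The fix is close to what you wrote. With the side conditions $h^2=0$, $hi^1=0$, $\pi^1h=0$ (hence $hdh=h$), one has $h\nu'=0$ for $\nu'=i^{\MC}(\pi^{\MC}(\nu))$. So your $\eta_n=h\delta$ is just $-h\nu_{\mathrm{cur}}$, and your iteration is Kuranishi gauge fixing.

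Run it without reference to $\nu'$. The condition $h\nu_{\mathrm{cur}}\equiv 0$ modulo $\mathfrak{m}^{n+1}$ is not disturbed by later corrections, unlike equality of $\pi^{\MC}$. In the limit you get a degree-$0$ homogeneous gauge equivalence $\nu\sim\tilde\nu$ with $h\tilde\nu=0$.

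For such $\tilde\nu$, the identity $\tilde\nu=i^1\pi^1\tilde\nu+hd\tilde\nu$ together with the Maurer--Cartan equation is exactly the fixed-point equation that characterises $i^{\MC}$ via the tree formula. Hence $\tilde\nu=i^{\MC}(\pi^1\tilde\nu)$, and $\pi\circ i=\id$ gives $\pi^1\tilde\nu=\pi^{\MC}(\tilde\nu)$.

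What you still need is the standard fact that homogeneous $L_\infty$-morphisms carry degree-$0$ gauge equivalences to degree-$0$ gauge equivalences; the induced parameter $\xi$ above visibly has degree $0$. This yields $\nu\sim\tilde\nu=i^{\MC}(\pi^{\MC}(\tilde\nu))\sim i^{\MC}(\pi^{\MC}(\nu))$. Alternatively, keep your scheme but carry a gauge equivalence in $\H L$ along and allow harmonic corrections $i^1(\xi)$ in the gauge parameters; this is the usual Goldman--Millson argument.

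Finally, for a genuine $L_\infty$-algebra, $\mathfrak{m}\htensor L^0$ has no BCH group law, so successive gauges should be composed as gauge flows. This is harmless in the paper's application, where $L$ is the Hochschild DGLA.
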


Let us now treat graded $ A_∞ $-categories. Simply speaking, a graded $ A_∞ $-category has a grading on its hom spaces in addition to the structural $ ℤ $- or $ ℤ/2ℤ $-grading. We observe that the grading of the $ A_∞ $-category gives rise to an additional grading on its Hochschild DGLA as well.

\begin{definition}
Let $ A $ be a grading group. An \emph{$ A $-graded $ A_∞ $-category} is an $ A_∞ $-category $ \cat C $ with additional $ A $-gradings on each graded component $ \Hom^i (X, Y) $ for $ i ∈ ℤ $ and $ X, Y ∈ \cat C $, such that the $ A_∞ $-products are $ A $-homogeneous. The \emph{$ A $-graded Hochschild DGLA} of $ \cat C $ is the $ A $-graded $ L_∞ $-algebra $ \HC(\cat C) $ with $ A $-grading inherited from $ \cat C $.
\end{definition}

\begin{remark}
Let $ \cat C $ be an $ A $-graded $ A_∞ $-category and $ B $ a completed $ A $-graded deformation base. Then $ A $-graded Maurer-Cartan elements $ ν, ν' ∈ \MC(L, B)_0 $ are exactly the same as $ A $-graded (curved) $ A_∞ $-deformations of $ \cat C $. An $ A $-homogeneous gauge equivalence $ η · ν = ν' $ amounts precisely to an isomorphism functor between $ (B \htensor \cat C, μ_{\cat C} + ν) $ and $ (B \htensor \cat C, μ_{\cat C} + ν') $ that is $ A $-homogeneous in the sense that every reduction modulo $ \mathfrak{m}^i $ is $ A $-homogeneous.
\end{remark}

When $ L $ is the Hochschild DGLA of a graded $ A_∞ $-category, the push-pull lemma \autoref{th:prelimliou-ainfty-pushpullbasic} has a graded version which we state in \autoref{th:prelim-defo-pushpull}. In this lemma, the functor $ F $ is a priori an infinitesimally curved functor.

\begin{lemma}
\label{th:prelim-defo-pushpull}
Let $ \cat C $ be an $ A $-homogeneous $ A_∞ $-category and $ ν ∈ \MC(L, B)_0 $. Then there is an $ A $-homogeneous gauge equivalence $ F: ν → i^{\MC} (π^{\MC} (ν)) $.
\end{lemma}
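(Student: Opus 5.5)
The plan is to reduce the statement to the graded push--pull lemma \autoref{th:prelimliou-ainfty-pushpullbasic}, applied to the $A$-graded Hochschild DGLA $L = \HC(\cat C)$. We then translate the resulting $A$-homogeneous gauge equivalence into an $A$-homogeneous (possibly infinitesimally curved) $A_\infty$-functor, using the remark identifying graded Maurer--Cartan elements with graded deformations.

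\textbf{Step 1.} We check that $\HC(\cat C)$ is an $A$-graded $L_\infty$-algebra. The Hochschild cochains are multilinear maps on the hom spaces, and these carry $A$-gradings. A cochain has $A$-degree $a$ if it raises the total $A$-degree of its inputs by $a$. The Gerstenhaber bracket and the differential $[\mu_{\cat C},-]$ preserve this grading, since $\mu_{\cat C}$ has $A$-degree $0$. If there are infinitely many objects, we use cochains that are finite sums of homogeneous components in each arity and object tuple, so that the product over components is compatible with the grading.

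\textbf{Step 2.} We apply \autoref{th:prelim-defo-gradedMCpush}. The first part gives $A$-homogeneous maps $i$ and $\pi$, with $\H L$ taken as the Kadeishvili minimal model built from an $A$-homogeneous splitting. The second part then shows that $\pi^{\MC}(\nu)$ and $i^{\MC}(\pi^{\MC}(\nu))$ are degree-zero graded Maurer--Cartan elements. Next, \autoref{th:prelimliou-ainfty-pushpullbasic} supplies an element $\eta \in \mathfrak m \htensor L^0$ that is $A$-homogeneous of degree $0$ and satisfies $\eta\cdot\nu = i^{\MC}(\pi^{\MC}(\nu))$.

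\textbf{Step 3.} We convert $\eta$ into the functor $F$. An element of $\HC^0$ consists of components $\eta^k$ for $k\ge 0$, where $\eta^0$ gives the curvature term $F^0$. Exponentiating the gauge action gives a functor with $F^1 = \Id + \landau(\mathfrak m)$ and higher $F^k$ expressed as sums of compositions of the $\eta^j$. Equivalently, we integrate the flow $\frac{d}{dt}F_t = \eta\circ F_t$ formally in $\mathfrak m$. Each such term is $A$-homogeneous of degree $0$ after reduction modulo $\mathfrak m^i$, because $\eta$ is. The identity leading term makes $F$ an isomorphism of deformations, and by the remark it intertwines $\mu+\nu$ with $\mu + i^{\MC}\pi^{\MC}(\nu)$.

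The main obstacle is \autoref{th:prelimliou-ainfty-pushpullbasic} itself, which is stated without proof. If it must be justified, the first approach to try is to work modulo $\mathfrak m^n$ by induction on $n$. At each stage the discrepancy between $\nu$ and $i\pi\nu$ is a cocycle, since $i\pi$ is homotopic to $\Id$ via the homogeneous contracting homotopy $h$. It is killed by applying $h$, which preserves the $A$-degree. The resulting gauge corrections are then summed in the $\mathfrak m$-adic topology, with the truncation lemmas \autoref{th:prelim-gradeddefo-splithigher} and \autoref{th:prelimdefo-gradedtensor-tensordecomp} ensuring that homogeneity is preserved in the limit.
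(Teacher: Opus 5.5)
Your proposal follows the paper's route: the paper gives no separate proof, but presents this lemma as the specialization of the folklore graded push--pull lemma \autoref{th:prelimliou-ainfty-pushpullbasic} to $L = \HC(\cat C)$, read through the remark identifying $A$-homogeneous gauge equivalences with $A$-homogeneous (possibly infinitesimally curved) isomorphism functors, which is exactly your Steps 1--3. Like the paper, you leave that folklore lemma unproved, and your fallback induction skips its one nontrivial point: $h$ kills a cocycle discrepancy $\delta_n$ only if $\pi^1(\delta_n) = 0$, which is automatic at first order but needs an argument at higher orders (one way around this is to gauge-fix into $\Ker h$ first, then use that $i^{\MC}$ and $\pi^{\MC}$ preserve homogeneous gauge equivalence).
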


Let us fix the following terminology of morphisms. It is directly inherited from the notion of morphisms of local rings, which are required to satisfy $ ψ(\mathfrak{m}_B) ⊂ \mathfrak{m}_{B'} $.

\begin{definition}
Let $ B, B' $ be deformation bases. Then a \emph{morphism of deformation bases} $ ψ: B → B' $ is an algebra morphism such that $ ψ(\mathfrak{m}_B) ⊂ \mathfrak{m}_{B'} $.
\end{definition}

\begin{lemma}
\label{th:prelim-defo-substitution}
Let $ B $ be a deformation base and $ ψ: B → B $ be an $ A $-homogeneous morphism of deformation bases. Let $ \cat C $ be an $ A $-homogeneous $ A_∞ $-category and let $ ν ∈ \MC(\HC(\cat C), B)_0 $. Then there exists a unique element $ ν_ψ ∈ \MC(\HC(\cat C), B)_0 $ such that there is a strict $ A $-homogeneous $ A_∞ $-functor $ F: ν → ν_ψ $ where $ F^1: B \htensor \Hom(X, Y) → B \htensor \Hom(X, Y) $ is given by the $ B $-linear continuous extension of $ ψ ¤ \Id $.
\end{lemma}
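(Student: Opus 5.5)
The idea is to transport the deformed products along $ψ$ and define $ν_ψ$ as the unique Maurer-Cartan element for which $ψ ¤ \Id$ becomes a strict functor. Write $ν = \sum_{k ≥ 0} ν^k$ with $ν^k: \Hom(X_k, X_{k+1}) ¤ … ¤ \Hom(X_1, X_2) → \mathfrak{m} \htensor \Hom(X_1, X_{k+1})$, understood as continuous $B$-multilinear maps. Every such map is determined by its restriction to $\Hom_{\cat C}$ (not $B \htensor \Hom_{\cat C}$). That restriction is a multilinear map into $\mathfrak{m} \htensor \Hom$, which we can expand via \autoref{th:prelimdefo-gradedtensor-tensordecomp} in the form $\sum_j b_j ¤ φ_j$ with $b_j ∈ \mathfrak{m}^{→∞}$. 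Set
\begin{equation*}
ν_ψ^k := (ψ \htensor \Id) ∘ ν^k \restr_{\Hom_{\cat C}},
\end{equation*}
meaning $\sum_j ψ(b_j) ¤ φ_j$, extended $B$-multilinearly and continuously.

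The first step is to check that this is well defined. Since $ψ(\mathfrak{m}) ⊂ \mathfrak{m}$, $ψ$ is $\mathfrak{m}$-adically continuous, with $ψ(\mathfrak{m}^i) ⊂ \mathfrak{m}^i$. Hence $ψ \htensor \Id: B \htensor V → B \htensor V$ is a well-defined continuous ring-semilinear map, and $ν_ψ$ takes values in $\mathfrak{m} \htensor \Hom$.

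The second step is to show that $F^1 = ψ \htensor \Id$ intertwines the products. For $x_i = \sum b_{i,j} v_{i,j}$ we compute
\begin{equation*}
F^1\bigl(μ_ν(x_k, …, x_1)\bigr) = μ_{ν_ψ}\bigl(F^1 x_k, …, F^1 x_1\bigr).
\end{equation*}
Both sides equal $\sum \prod ψ(b_{i,j}) · (ψ \htensor \Id)(μ_ν(v_{k,j_k}, …))$, using that $ψ$ is an algebra morphism and that $μ_{\cat C}$ is fixed by $ψ ¤ \Id$ on the leading part. The same computation applies to the curvature $k = 0$. Thus $F$ is a strict functor, with $F^0 = 0$ and $F^{≥2} = 0$.

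The third step is the Maurer-Cartan equation for $ν_ψ$. Apply $ψ \htensor \Id$ to the $cA_∞$-relations of $μ_{\cat C} + ν$ evaluated on elements of $\Hom_{\cat C}$; by the intertwining identity this yields exactly the $cA_∞$-relations of $μ_{\cat C} + ν_ψ$ on $\Hom_{\cat C}$. By multilinearity and continuity the relations then hold everywhere.

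The fourth step is homogeneity. Since $ψ$ is $A$-homogeneous, so is each reduction modulo $\mathfrak{m}^i$ of $ψ \htensor \Id$. Because $ν$ has degree $0$ modulo every $\mathfrak{m}^i$, so does $ν_ψ$, and therefore $ν_ψ ∈ \MC(\HC(\cat C), B)_0$ and $F$ is $A$-homogeneous.

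Uniqueness follows from the strict functor identity alone. If $F$ is strict from $ν$ to $ν'$, then for $v_i ∈ \Hom_{\cat C}$ we have $F^1 v_i = v_i$. Therefore
\begin{equation*}
ν'(v_k, …, v_1) = F^1(ν(v_k, …, v_1)) = ν_ψ(v_k, …, v_1),
\end{equation*}
and a continuous $B$-multilinear map is determined by its values on $\Hom_{\cat C}$.

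The main obstacle I expect is the bookkeeping for $ψ \htensor \Id$ on completed tensor products. One must show that it is well defined independently of the chosen series presentation and that it is compatible with composition of multilinear maps. This should follow by working modulo $\mathfrak{m}^i$, where everything is a finite-dimensional algebraic statement, and then passing to the limit.
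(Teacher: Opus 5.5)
Your proposal is correct and follows essentially the paper's route: transport $\nu$ along $F^1 = \psi \htensor \Id$, check the strict-functor identity on pure tensors $b \otimes v$ using that $\psi$ is an algebra morphism and that $F^1$ fixes $\Hom_{\cat C}$, and deduce $A$-homogeneity from that of $\psi$ modulo each $\mathfrak{m}^i$. The only difference favours your version. The paper defines $\nu_\psi(F^1 a_k, \dots, F^1 a_1) := F^1 \nu(a_k, \dots, a_1)$ and so needs $F^1$ to be bijective, which holds when $\psi$ is an automorphism as in \autoref{th:unique-unique-iso}. Your definition, on $\Hom_{\cat C}$ followed by $B$-multilinear extension, works for any morphism $\psi$, and you also spell out the Maurer--Cartan equation and uniqueness, which the paper leaves implicit.
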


\begin{proof}
We start by noting $ F^1 $ is bijective as explained in \cite{Paper-IIA}. We define $ ν_ψ ∈ \mathfrak{m} \htensor \HC^1 (\cat C) $ via the following rule:
\begin{equation*}
ν_ψ (F^1 (a_k), …, F^1 (a_1)) = F^1 (ν(a_k, …, a_1)).
\end{equation*}
Let now $ b_1 ¤ v_1, …, b_k ¤ v_k $ be elements where $ b_i ∈ B $ and $ v_i ∈ \Hom_{\cat C} (X_i, X_{i+1}) $. We have
\begin{align*}
(μ + ν_ψ) (F^1 (b_k ¤ v_k), …, F^1 (b_1 ¤ v_1)) &= μ(ψ(b_k) ¤ v_k, …, ψ(b_1) ¤ v_1) + ν_ψ (ψ(b_k) ¤ v_k, …, ψ(b_1) ¤ v_1) \\
&= ψ(b_k) … ψ(b_1) ¤ μ(v_k, …, v_1) + F^1 (ν(b_k ¤ v_k), …, ν(b_1 ¤ v_1)), \\
F^1 ((μ + ν) (b_k ¤ v_k, …, b_1 ¤ v_1)) &= F^1 (μ(b_k ¤ v_k, …, b_1 ¤ v_1)) + F^1 (ν (b_k ¤ v_k, …, b_1 ¤ v_1)) \\
&= ψ(b_k … b_1) ¤ μ(v_k, …, v_1) + F^1 (ν (b_k ¤ v_k, …, b_1 ¤ v_1)).
\end{align*}
Since $ ψ $ is an algebra morphism, these two are equal. We conclude that $ F = F^1 $ is a morphism of $ A_∞ $-categories. Finally, we observe that $ F^1 $ is indeed $ A $-homogeneous as claimed. This finishes the proof.
\end{proof}

\subsection{Graded quiver algebras with reduction system}
In this section, we examine the graded deformation theory of graded quiver algebras with reduction system. We start by recalling the deformation theory of quiver algebras with reduction system from \cite{Barmeier-Wang}. Then we define graded quiver algebras with reduction system and explain that their grading carries over to the deformation theory.

We start by recalling the notion of quiver algebras with reduction system. In this subsection, the tensor product $ ¤ $ specifically denotes the tensor product over $ ℂQ_0 $ where $ Q $ is a quiver.

\begin{definition}
Let $ Q $ be a quiver. A \emph{reduction system} for $ Q $ consists of a set of pairs
\begin{equation*}
R = \{(s, φ_s) \running s ∈ S \text{ and } φ_s ∈ ℂQ\}
\end{equation*}
where
\begin{enumerate}
\item $ S $ is a subset of $ Q_{≥2} $ such that $ s $ is not a subpath of $ s' $ when $ s ≠ s' ∈ S $,
\item $ s $ and $ φ_s $ are parallel in $ ℂQ $ for all $ s ∈ S $,
\item $ φ_s $ is irreducible for all $ s ∈ S $.
\end{enumerate}
Here a path $ p $ in $ Q $ is \emph{irreducible} if it does not contain any element $ s ∈ S $ as a subpath. An element $ p ∈ ℂQ $ is \emph{irreducible} if it is a linear combination of irreducible paths. The set of irreducible paths is denoted $ \Irr_S $.
\end{definition}

\begin{remark}
Barmeier and Wang use a different notation for paths in quivers. In their notation, a path in a quiver is denoted $ a_1 … a_k $ where $ h(a_i) = t(a_{i+1}) $. Moreover, the concatenation $ p · q $ of paths is defined whenever $ h(p) = t(q) $. They equip $ ℂQ $ with the structure of left-$ ℂQ_0 $-module (right-$ ℂQ_0 $-module) by concatenating vertices on the left or right, effectively tail and head of paths. This contrasts with our convention that the structure of left-$ ℂQ_0 $-module (right-$ ℂQ_0 $-module) on $ ℂQ $ is given by concatenation at the head and tail of paths. In consequence, any expression that utilizes tensor products over $ ℂQ_0 $ differs from ours, such as $ A ¤_{ℂQ_0} R ¤_{ℂQ_0} A $. In order to accurately reflect all their definitions in our present notation, we would have to make all definitions with reference to the opposite quiver, which would place a high notational burden. Instead, we have opted to introduce analogs of their definitions that work in our present context to keep the presentation streamlined.
\end{remark}

\begin{definition}
Let $ Q $ be a quiver and $ R = \{(s, φ_s)\}_{s ∈ S} $ be a reduction system. Let $ s ∈ S $ and let $ q, r $ be two paths in $ Q $ such that $ q s r ≠ 0 $. Then the \emph{basic reduction} $ τ_{q, s, r}: ℂQ → ℂQ $ is the linear map defined by sending the path $ q s r $ to $ q φ_s r $ and any other path to itself. A \emph{reduction} is any finite composition $ τ = τ_{q_n, s_n, r_n} ∘ … ∘ τ_{q_1, s_1, r_1} $ of basic reductions.

A path $ p $ in $ Q $ is \emph{reduction-finite} if for any infinite sequence of reductions $ (τ_i)_{i ∈ ℕ} $ there exists $ n_0 ∈ ℕ $ such that for all $ n ≥ n_0 $ we have $ (τ_n ∘ … ∘ τ_1)(p) = (τ_{n_0} ∘ … ∘ τ_1)(p) $. A path $ p $ in $ Q $ is \emph{reduction-unique} if $ p $ is reduction finite and for any two reductions $ τ $ and $ τ' $ such that $ τ(p) $ and $ τ'(p) $ are irreducible, we have $ τ(p) = τ'(p) $. The reduction system $ R $ is \emph{reduction-unique} if every path $ p $ in $ Q $ is reduction-unique with respect to $ R $.
\end{definition}

\begin{definition}
Let $ Q $ be a quiver and $ R = \{(s, φ_s)\}_{s ∈ S} $ be a reduction system. The ideal associated to $ R $ is $ I = (s - φ_s)_{s ∈ S} $, and the associated quiver algebra with relations is $ A = ℂQ / I $. The standard projection is denoted $ π: ℂQ → A $.
\end{definition}

\begin{remark}
Let $ R $ be reduction-finite. Then the set of irreducible paths provides a basis for $ A $. In other words, the restriction of the map $ π $ to the linear span of irreducible paths provides a linear isomorphism $ π: ℂ\Irr_S → A $.
\end{remark}

When an algebra $ A $ is given as a quiver algebra with relations given by a reduction system $ A = ℂQ / I $, an explicit bimodule resolution of $ A $ can be provided and the Hochschild cohomology of $ A $ can be expressed explicitly due to the work of Chouhy-Solotar \cite{Chouhy-Solotar} and Barmeier-Wang \cite{Barmeier-Wang}. We start by recalling the notion of $ n $-ambiguities for a reduction system $ R $.

\begin{definition}
Let $ Q $ be a quiver and $ R $ a reduction system. If $ p $ is a path and $ p = qr $ for some paths $ q, r $, then $ q $ is a \emph{proper left subpath} of $ p $ if $ p ≠ q $. For $ n ≥ 0 $, a path $ p $ is an $ n $-ambiguity if there exist $ u_0 ∈ Q_1 $ and irreducible paths $ u_1, …, u_{n+1} $ such that
\begin{enumerate}
\item $ p = u_0 … u_{n+1} $,
\item for all $ i $, the path $ u_i u_{i+1} $ is reducible, and $ u_i d $ is irreducible for any proper left subpath $ d $ of $ u_{i+1} $.
\end{enumerate}
The set of n-ambiguities is denoted $ S_{n+2} $. The set $ S_0 $ is defined as $ Q_0 $ and the set $ S_1 $ is defined as $ Q_1 $.
\end{definition}

\begin{remark}
The 0-ambiguities of $ R $ correspond precisely to the elements $ s ∈ R $. The 1-ambiguities correspond to what is known as overlap ambiguities.
\end{remark}

\begin{definition}
Let $ R $ be a reduction-finite reduction system. The three maps $ \pathsplit_n, \pathsplitR_n, \pathsplitL_n: ℂQ → A ¤ ℂS_n ¤ A $ are defined as
\begin{align*}
\pathsplit_n (p) &= \sum_{\substack{w ∈ S_n \\ qwr = p}} π(q) ¤ w ¤ π(r), \\
\pathsplitR_n (p) &= π(q) ¤ w_R ¤ π(r), \\
\pathsplitL_n (p) &= π(q) ¤ w_L ¤ π(r).
\end{align*}
The sum on the first row runs over all paths $ q, w, r $ such that $ w ∈ S_n $ and $ qwr = p $. In the second row, the paths $ q, w_R, r $ are defined such that $ p = q w_R r $ and $ w_R $ is the rightmost subpath of $ p $ which is an element of $ S_n $. Similarly in the third row, $ q, w_L, r $ are defined such that $ p = q w_L r $ and $ w_L $ is the leftmost subpath of $ p $ which is an element of $ S_n $.
\end{definition}

We are now ready to recall the definition of the bimodule resolution for $ A $. We define
\begin{equation*}
P_{-1} = A, \quad P_n = A ¤ ℂS_n ¤ A \text{ for } n ≥ 0.
\end{equation*}

\begin{definition}
For $ n ≥ 1 $, the map $ δ_n: P_n → P_{n-1} $ is the $ A $-bimodule morphism determined by
\begin{equation*}
δ_n (1 ¤ w ¤ 1) = \begin{cases} π(w) ¤ 1 - 1 ¤ π(w) & \text{if } n = 1 \\ \pathsplitR_{n-1} (w) - \pathsplitL_{n-1} (w) & \text{if } n > 2 \text{ is odd}, \\ \pathsplit_{n-1} (w) & \text{ if } n ≥ 2 \text{ is even}. \end{cases}
\end{equation*}
For $ n ≥ 1 $, the map $ γ_{n-1}: P_{n-1} → P_n $ is the morphism of left $ A $-modules determined by
\begin{equation*}
γ_{n-1} (1 ¤ w ¤ π(u)) = (-1)^n \pathsplit_n (wu) \text{ for any } u ∈ \Irr_S.
\end{equation*}
The maps $ ∂_n: P_n → P_{n-1} $ for $ n ≥ 0 $ and $ ρ_{n-1}: P_{n-1} → P_n $ for $ n ≥ 1 $ are defined by
\begin{align*}
∂_0 (a ¤ b) &= ab, \\
∂_1 (a ¤ x ¤ b) &= aπ(x) ¤ b - a ¤ π(x) b, \\
∂_n (a ¤ w ¤ b) &= a((\id - ρ_{n-2} ∂_{n-1}) δ_n)(1 ¤ w ¤ 1)b, \quad a, b ∈ A, w ∈ S_n, n ≥ 2 \\
ρ_{n-1} &= γ_{n-1} + \sum_{i ≥ 1} γ_{n-1} (δ_n γ_{n-1} - ∂_n γ_{n-1})^i.
\end{align*}
\end{definition}

\begin{todo}
Correct for order issues.
\begin{itemize}
\item Reverse order in: notation $ u_{n+1} … u_0 $ in definition of $ S_n $
\item we keep Barmeier-Wang exactly but only change the notation
\item we will switch the naming of $ \pathsplitL $ to $ \pathsplitR $ and vice versa
\item the projective resolution map will thus be called $ \pathsplitL - \pathsplitR $
\end{itemize}
\end{todo}

\begin{theorem}[\cite{Chouhy-Solotar, Barmeier-Wang}]
Let $ Q $ be a quiver and $ R $ a reduction-finite reduction system. Then the following is a projective $ A $-bimodule resolution:
\begin{equation*}
… \overset{∂_3}{→} P_2 \overset{∂_2}{→} P_1 \overset{∂_1}{→} P_0 \overset{∂_0}{→} P_{-1} = A.
\end{equation*}
\end{theorem}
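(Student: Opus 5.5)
The plan follows the Chouhy--Solotar strategy: build a contracting homotopy of the complex one degree at a time, using a filtration on paths that reduction-finiteness makes well-founded.

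\textbf{Step 1: the terms are projective.} Each $ P_n = A ¤ ℂS_n ¤ A $ is a direct sum, over $ w ∈ S_n $, of bimodules of the form $ Ae_{h(w)} ¤ e_{t(w)}A $. These are summands of $ A ¤ A $, hence projective $ A $-bimodules. It remains to prove $ ∂_{n-1}∂_n = 0 $ and exactness.

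\textbf{Step 2: the low degrees.} I would prove exactness by exhibiting a contracting homotopy $ ρ_{n-1}: P_{n-1} → P_n $ that is left $ A $-linear. Two identities are needed: $ ∂_n ρ_{n-1} + ρ_{n-2}∂_{n-1} = \id $ on $ P_{n-1} $, and $ ∂_{n-1}∂_n = 0 $.
\begin{itemize}
\item The identity $ ∂_0∂_1 = 0 $ is immediate.
\item The right inverse $ ρ_{-1}(a) = a ¤ 1 $ gives $ ∂_0ρ_{-1} = \id $.
\item For $ ρ_0 $, write $ π(u) $ for an irreducible path $ u = x_1⋯x_m $ as a telescoping sum. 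This shows $ \id - ρ_{-1}∂_0 = ∂_1ρ_0 $ on $ P_0 $.
\item Exactness at $ P_1 $ uses the generators $ 1 ¤ s ¤ 1 $, $ s ∈ S = S_2 $. Modulo $ \Im ∂_2 $, any $ a ¤ x ¤ b $ can be rewritten with $ b $ irreducible and then moved entirely to the left. Reduction-uniqueness then shows that the only relations are the ones coming from $ s - φ_s $.
\end{itemize}

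\textbf{Step 3: the induction in higher degrees.} Here I would use the recursive definitions literally. Assume the identities hold up to $ n-1 $. Then
\[ ∂_{n-1}∂_n(1¤w¤1) = ∂_{n-1}(\id - ρ_{n-2}∂_{n-1})δ_n(1¤w¤1), \]
and the induction hypothesis gives $ ∂_{n-1}ρ_{n-2}∂_{n-1} = ∂_{n-1} - ρ_{n-3}∂_{n-2}∂_{n-1} = ∂_{n-1} $. Hence $ ∂_{n-1}∂_n = 0 $ automatically.

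For the homotopy identity, it suffices by left $ A $-linearity to check it on the elements $ 1¤w¤π(u) $ with $ u $ irreducible. Expanding $ ρ_{n-1} $ as a geometric series in $ (δ_n - ∂_n)γ_{n-1} $ reduces the identity to a combinatorial statement about $ γ $ and $ δ $. Namely, $ δ_nγ_{n-1}(1¤w¤π(u)) $ reproduces $ 1¤w¤π(u) $ up to terms that are strictly smaller in a well-founded order on paths.

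\textbf{Step 4: the main obstacle.} The hardest part is making this order precise and proving it is well-founded, so that the series defining $ ρ_{n-1} $ is locally finite. I would use the order generated by reductions together with the length of the leftover right factor. Reduction-finiteness of $ R $, via a König-type argument, gives that there are no infinite descending chains starting from a fixed path. This is exactly where reduction-finiteness and the leftmost/rightmost choices in the definition of $ S_n $ (the splitting maps $ \pathsplitL_n $, $ \pathsplitR_n $) enter. Once the order is in place, $ (δ_n - ∂_n)γ_{n-1} $ is strictly decreasing, $ ρ_{n-1} $ is well defined, and a direct check gives $ ∂_nρ_{n-1} = \id - ρ_{n-2}∂_{n-1} $. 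This yields exactness at $ P_{n-1} $ and closes the induction.
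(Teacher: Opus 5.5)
The paper gives no proof of this statement. It quotes it from Chouhy--Solotar and Barmeier--Wang, and its formulas for $\partial_n$ and $\rho_{n-1}$ encode the inductive contracting-homotopy construction you outline. Your derivation of $\partial_{n-1}\partial_n=0$ from the lower homotopy identity is correct.

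The gap is the homotopy identity itself. You reduce it to the claim that $\delta_n\gamma_{n-1}(1\otimes w\otimes\pi(u))$ equals $1\otimes w\otimes\pi(u)$ plus strictly smaller terms, and then call the rest a direct check. That claim is false on generators killed by $\gamma_{n-1}$, and it is not what is needed.

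Put $E=\id-\rho_{n-2}\partial_{n-1}$, $T=(\delta_n-\partial_n)\gamma_{n-1}$ and $\Sigma=\sum_{i\ge 0}T^i$, so that $\rho_{n-1}=\gamma_{n-1}\Sigma$. Since $\Im\partial_n\subseteq\Ker\partial_{n-1}$ and $E$ is the identity on $\Ker\partial_{n-1}$, you get $\partial_n\rho_{n-1}=E(\delta_n\gamma_{n-1}-T)\Sigma$. Using $T\Sigma=\Sigma-\id$, this gives $E-\partial_n\rho_{n-1}=E(\id-\delta_n\gamma_{n-1})\Sigma$. As $\Sigma$ is invertible, the homotopy identity is equivalent to $E\,\delta_n\gamma_{n-1}=E$. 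This is a statement about the previously built $\rho_{n-2}$, not about $\gamma$ and $\delta$ alone, and the smaller terms do not vanish on their own.

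Concretely, take $x=1\otimes s\otimes 1\in P_2$ with $s\in S$. No element of $S_3$ is a subpath of $s$, so $\gamma_2(x)=0$ and $\rho_2(x)=0$, and the identity at $x$ reads $\rho_1\partial_2(x)=x$. This does hold, but for a specific reason:
\begin{itemize}
\item $\partial_2(x)=\pathsplit_1(s)-\pathsplit_1(\varphi_s)$;
\item $\rho_1$ kills the terms $\pi(q)\otimes y\otimes\pi(r)$ of $\pathsplit_1(s)$ with $q$ non-trivial, since there $yr$ is irreducible;
\item on the remaining term, $\gamma_1$ returns $x$ and the correction $T$ returns exactly $\pathsplit_1(\varphi_s)$, so $\rho_1$ of that term is $x+\rho_1(\pathsplit_1(\varphi_s))$, which cancels the contribution of $-\pathsplit_1(\varphi_s)$.
\end{itemize}
Your inductive hypothesis consists only of the homotopy identity and $\partial\partial=0$ in lower degrees, and it gives no analogous facts in higher degrees.

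What is missing is an argument that $E$ annihilates $(\id-\delta_n\gamma_{n-1})(P_{n-1})$. One way to organise it:
\begin{itemize}
\item With $T'=(\delta_{n-1}-\partial_{n-1})\gamma_{n-2}$ one has $\gamma_{n-2}=\rho_{n-2}(\id-T')$, so $\Im\gamma_{n-2}=\Im\rho_{n-2}$.
\item The lower homotopy identity gives $E\rho_{n-2}=\rho_{n-2}\rho_{n-3}\partial_{n-2}$.
\item If you also carry $\gamma_{n-2}\gamma_{n-3}=0$ through the induction (to be checked from the definition of the $S_n$), then $\rho_{n-2}\rho_{n-3}=0$.
\end{itemize}
The homotopy identity then becomes equivalent to the purely combinatorial inclusion $\Im(\id-\delta_n\gamma_{n-1})\subseteq\Im\gamma_{n-2}$. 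This is the analogue, for $A$, of the identity $\delta\gamma+\gamma\delta=\id$ behind exactness of Bardzell's resolution of $\mathbb{C}Q/(S)$. It must hold exactly, not modulo smaller terms, and it is where the defining conditions on the ambiguities are actually used.

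The well-founded order of Step 4 is needed only to make $\Sigma$ locally finite, so it is the routine part. Even so, obtaining it from reduction-finiteness takes more care than a K\"onig argument, because coefficients can cancel in linear combinations.

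Finally, you silently use reduction-uniqueness, so that $\Irr_S$ is a basis of $A$ and $\gamma$ is well defined. That is necessary and is assumed in the cited sources, even though the statement only says reduction-finite.
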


The datum of a projective $ A $-bimodule resolution $ (P_{•})_{• ≥ 0} $ gives rise to a description of the Hochschild cohomology of $ A $. Indeed, denote the dual chain complex by $ (P^{•}, ∂^{•})_{• ≥ 0} = \Hom_{\Bimod A} (P_{•}, A) $. Denote by $ (P^{•}, ∂^{•})[1] $ the chain complex with entries in degrees $ -1, 0, 1, … $. Recall that the Hochschild cohomology $ \HH^{•} (A) $ has values in degrees $ -1, 0, 1, … $ as well. With these conventions, there is an isomorphism of graded vector spaces $ \HH^{•} (A) ≅ \H(P^{•}, ∂^{•})[1] $. The standard definition of Hochschild cohomology uses the bar resolution $ P_{•} = \Barcon_{•} $ for $ A $, which is inefficient in terms of size. Meanwhile, the resolution of Chouhy-Solotar in the case of quiver algebras with reduction system provides a simple way of evaluating the Hochschild cohomology. We record this as follows.

\begin{lemma}
Let $ Q $ be a quiver and $ R $ be a reduction-unique reduction system. There is an explicit $ L_∞ $-structure on the chain complex $ (P^{•}, ∂^{•})[1] $ together with an explicit $ L_∞ $-quasi-isomorphism $ φ: P^{•} → \HC(A, A) $.
\end{lemma}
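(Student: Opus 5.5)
The plan is to obtain the $ L_∞ $-structure on $ (P^{•}, ∂^{•})[1] $ by homotopy transfer from the Hochschild DGLA $ \HC(A, A) $. The transfer runs along an explicit homotopy retraction between the bar resolution and the Chouhy--Solotar resolution. This is the strategy of Barmeier--Wang, and I would adapt it to our conventions for paths.

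In the first step, I would construct comparison morphisms of $ A $-bimodule complexes $ F_{•}: P_{•} → \Barcon_{•} $ and $ G_{•}: \Barcon_{•} → P_{•} $, both lifting the identity of $ A $, with $ G F = \Id $. The map $ F_n $ sends $ 1 ¤ w ¤ 1 $ with $ w = u_0 … u_{n-1} ∈ S_n $ to an explicit sum over splittings of $ w $ into arrows. The map $ G_n $ is built recursively from the contracting homotopy $ ρ $: we put $ G_n (1 ¤ \bar a_1 ¤ … ¤ \bar a_n ¤ 1) = ρ_{n-1}(G_{n-1}(1 ¤ \bar a_1 ¤ … ¤ \bar a_{n-1}) · a_n) $.

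Reduction-finiteness guarantees that all the sums defining $ ρ $, $ ∂ $ and $ \pathsplit $ terminate. It also guarantees that $ \Irr_S $ is a basis of $ A $, so these formulas are well defined. Reduction-uniqueness is what makes $ G F = \Id $ hold: it lets us replace any path by its unique irreducible form, and the normal form of $ π(u_0 … u_{i}) $ interacts with $ \pathsplit $ exactly as needed. In the second step, I would construct a bimodule homotopy $ H: \Barcon_{•} → \Barcon_{•+1} $ with $ F G - \Id = ∂ H + H ∂ $, again recursively from the standard contraction of the bar complex. One can also side-condition it so that $ H F = 0 $, $ G H = 0 $ and $ H^2 = 0 $.

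In the third step, I would dualize by applying $ \Hom_{\Bimod A}(-, A) $. This gives a deformation retract $ F^*: \HC(A, A) → P^{•} $, $ G^*: P^{•} → \HC(A, A) $ with homotopy $ H^* $. The homotopy transfer theorem for $ L_∞ $-algebras then yields brackets $ l^k $ on $ P^{•}[1] $, given by sums over rooted trees with vertices labelled by the Gerstenhaber bracket, internal edges labelled by $ H^* $, leaves by $ G^* $ and root by $ F^* $. The same transfer yields an $ L_∞ $-quasi-isomorphism $ φ $ whose linear part is $ G^* $. Since the Gerstenhaber bracket is quadratic, only binary trees occur. The result is explicit in the sense that every ingredient is given by the finite formulas above. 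Finally, $ φ^1 = G^* $ is a quasi-isomorphism because $ G $ is a homotopy equivalence of resolutions, so $ φ $ is an $ L_∞ $-quasi-isomorphism.

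The main obstacle will be convergence of the tree sums. Because $ H^* $ does not necessarily decrease any degree, the sum over trees with $ k $ leaves must be shown to be finite when evaluated on elements of $ P^{•} $. I would first try to use reduction-finiteness as a well-founded order on paths, as Barmeier--Wang do, and show that each application of $ H^* $ strictly lowers the input in that order. A secondary nuisance is translating the path-orientation conventions, as flagged in the Todo above. I would handle it by running the argument for the opposite quiver and transporting the result back.
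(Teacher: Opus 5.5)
Your plan is correct and follows the same route as the paper. The paper does not reprove the lemma; it relies on Barmeier--Wang, who build comparison maps $F_\bullet : P_\bullet \to \Barcon_\bullet$ and $G_\bullet : \Barcon_\bullet \to P_\bullet$, dualize them to $F^\bullet, G^\bullet$, and obtain the brackets on $P^\bullet[1]$ and $\varphi$ (with linear part $G^\bullet$) by homotopy transfer. The convergence problem you single out as the main obstacle does not exist: for fixed $k$ there are only finitely many binary trees with $k$ leaves, so each $l^k$ is a finite sum of composites of well-defined maps. Reduction-finiteness is needed only to make the series defining $\rho$ (and hence $G$ and the homotopy) terminate, and completeness enters only later, when summing $\sum_k l^k(\nu, \dots, \nu)/k!$ for Maurer--Cartan elements.
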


We shall denote the $ L_∞ $-algebra given by $ (P^{•}, ∂^{•})[1] $ and its $ L_∞ $-structure simply by $ P^{•}[1] $. The graded part $ \HH^{-1} (A) $ is simply the center of $ A $ and $ \HH^0 (A) $ is the quotient of the derivations on $ A $ by the inner derivations $ [a, -] $ on $ A $. The graded part $ \HH^1 (A) $ describes the infinitesimal deformations of $ A $. We shall now describe how to interpret the map $ φ^1 $ on the graded component of degree one in both $ P^{•}[1] $ and $ \HC(A) $.

\begin{definition}
Let $ Q $ be a quiver and $ R $ a reduction-unique reduction system. Let $ \tilde{φ} ∈ P^2 $ and write $ \tilde{φ} = (\tilde{φ}_s)_{s ∈ S}: S → A $. Denote by $ \tilde{φ}': S → ℂ\Irr_S $ the lift to irreducible. Then the \emph{deformed reduction system} is the set of pairs $ \{(s, φ_s + ε\tilde{φ}')\}_{s ∈ S} $. The notion of \emph{reduction-finite} and \emph{reduction-unique} elements $ x ∈ ℂQ[ε] $ is defined in similarly to the case of classical reduction systems. The deformed reduction system is reduction-unique if all paths are reduction-unique.
\end{definition}

We shall now explain how these deformed reduction systems give rise to deformations of $ A = ℂQ/I $. Recall that $ π: ℂ\Irr_S → A $ provides a linear isomorphism and we can thus interpret the algebra structure of $ A $ as an algebra structure $ (ℂ\Irr_S, *) $ instead, where multiplication is given by concatenation of paths and subsequent reduction with respect to $ R $.

\begin{lemma}[\cite{Barmeier-Wang}]
Let $ Q $ be a quiver and $ R $ a reduction-unique reduction system. An element $ \tilde{φ} $ lies in the kernel of the differential $ ∂^2 $ if and only if the deformed reduction system $ \{(s, φ_s + ε \tilde{φ}')\}_{s ∈ S} $ is reduction-unique. For such an element, define the $ ℂ[ε]/(ε^2) $-bilinear operation $ *_ε $ on $ ℂ\Irr_S[ε] $ by concatenation of paths and subsequent reduction with respect to $ \{(s, φ_s + \tilde{φ}'_s)\}_{s ∈ S} $. Denote by $ *_ε $ the induced operation on $ A[ε]/(ε^2) $. Then $ (ℂ\Irr_S[ε], *_ε) $ and $ (A[ε]/(ε^2), *_ε) $ are associative algebras and we have $ φ^1 (\tilde{φ}) = *_ε - * $.
\end{lemma}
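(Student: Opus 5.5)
We prove the statement in two directions. First we identify the cocycle condition with confluence of the deformed reductions. Then we show that the resulting product $*_ε$ is associative and agrees with $φ^1(\tilde φ)$. Throughout we work over $ℂ[ε]/(ε^2)$, so every deformed reduction of a path $p$ has the form $τ(p) = τ_0(p) + ε\,τ_1(p)$. Here $τ_0$ is the corresponding undeformed reduction. The first-order term $τ_1(p)$ is a sum of contributions: each one inserts $\tilde φ'_s$ at a single step where $s$ was replaced, and continues with undeformed reductions on either side.

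Reduction-finiteness of the deformed system is automatic. Since $ε^2 = 0$, a deformed reduction sequence differs from an undeformed one only by finitely many branching points, each contributing one $ε$-term. Each such term is then reduced by $R$, which is reduction-finite. So the content of the statement is uniqueness.

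By a Bergman-type diamond lemma argument, uniqueness holds for all paths once it holds for all overlap ambiguities, i.e. the elements of $S_3$. An overlap ambiguity is a word $u_0u_1u_2$ with $u_0u_1 = s \cdot$ and $u_1u_2 = \cdot\, s'$. Its two one-step reductions give two different $ε$-order results after full reduction to irreducibles. Their difference is the expression obtained by evaluating $\tilde φ$ on $\pathsplitR_2(u_0u_1u_2) - \pathsplitL_2(u_0u_1u_2)$, corrected by the $ρ_1∂_2$ term inside $∂_3$. Up to sign, this is exactly $(∂^3 \tilde φ)(u_0u_1u_2)$, viewed as an element of $A$, where $∂^3 = \Hom(∂_3, A)$. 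The correction term $-ρ_1∂_2δ_3$ is precisely what accounts for the reductions that happen after $\tilde φ'$ has been inserted. Hence the following are equivalent:
\begin{itemize}
\item the deformed system is reduction-unique;
\item both sides of every ambiguity agree at order $ε$;
\item $\tilde φ$ lies in the kernel of the coboundary leaving $P^2$.
\end{itemize}
The kernel of $∂^3$ is what the statement calls $\Ker ∂^2$ in its indexing.

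Given reduction-uniqueness, define $x *_ε y$ as the unique irreducible form of the concatenation $xy$. Associativity follows from a standard argument: both $(x*_ε y)*_ε z$ and $x*_ε(y*_ε z)$ are reductions of $xyz$, so they coincide by uniqueness. The map $π$ transports this product to $A[ε]/(ε^2)$.

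It remains to compare $*_ε - *$ with $φ^1(\tilde φ)$. The comparison map from the Chouhy–Solotar resolution to the bar resolution is built from the contracting homotopy $ρ$. Evaluating it on $P^2$ gives the following recipe. For irreducible $u, v$, the cochain $φ^1(\tilde φ)(u,v)$ is obtained by applying $\tilde φ$ to each element of $S_2$ that occurs while reducing $uv$ via $γ$ and $ρ$, i.e. a sum of $π(q)\tilde φ(s)π(r)$ over the reduction steps. This equals the $ε$-coefficient of $u *_ε v$, by the description of $τ_1$ above.

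The main obstacle is the sign and index bookkeeping. Concretely, one must check that the ambiguity difference matches $∂^3\tilde φ$ including the $ρ$ correction, and that $φ^1$ unwinds to the reduction recipe. I would do this first on a single overlap $s = u_0u_1$, $s' = u_1u_2$ by direct expansion, and then extend by $A$-bilinearity.
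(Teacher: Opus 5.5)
The paper gives no proof of this lemma; it only cites Barmeier--Wang. Your outline follows their strategy: overlaps correspond to $\partial_3$, associativity comes from uniqueness, and $\varphi^1$ is computed through a comparison map built from $\rho$. One step, however, does not go through as written.

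\textbf{The diamond-lemma reduction.} Bergman's implication ``all ambiguities resolvable $\Rightarrow$ reduction-unique'' is proved by Noetherian induction along a semigroup partial order with DCC that is compatible with the rules. Your reduction-finiteness check does not supply such an order. Over $\mathbb{C}[\varepsilon]/(\varepsilon^2)$ the deformed rules need not be compatible with any such order. The reason is that $\tilde\varphi'_s$ may contain paths that are not below $s$ (for instance longer ones), so chains of one-step reducts can climb through $\varepsilon$-terms. You therefore cannot simply cite the lemma.

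Because $\varepsilon^2=0$, there is a direct argument that needs no order. Let $\tau$ be a complete deformed reduction of a path $p$ with basic steps $\tau_{q_j,s_j,r_j}$. Let $c_j$ be the coefficient of $q_js_jr_j$ in the $\varepsilon^0$-part just before step $j$, and put $C_\tau=\sum_j c_j\,\pi(q_j)\otimes s_j\otimes\pi(r_j)\in P_2$.
\begin{itemize}
\item The $\varepsilon^0$-part of the result is the $R$-normal form of $p$.
\item The $\varepsilon$-part is the irreducible representative of $\tilde\varphi(C_\tau)$, with $\tilde\varphi$ viewed as a bimodule map $P_2\to A$. This holds because steps acting on the $\varepsilon$-part are undeformed and do not change its class in $A$.
\item From $\partial_2(1\otimes s\otimes 1)=\pathsplit_1(s)-\pathsplit_1(\varphi_s)$ and the Leibniz rule for $\pathsplit_1$, the sum telescopes to $\partial_2(C_\tau)=\pathsplit_1(p)-\pathsplit_1(\text{normal form of }p)$.
\end{itemize}
Hence the chains of two complete reductions of $p$ differ by an element of $\Ker\partial_2=\Im\partial_3$, on which a cocycle vanishes. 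This proves ``cocycle $\Rightarrow$ reduction-unique'' with no order and no $\rho$-bookkeeping.

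Your overlap computation is then needed only for the converse. There, reduction-uniqueness lets you complete both branches along the particular routes encoded by $\rho_1$. In that computation, $\rho_1\partial_2\delta_3$ records the further insertions of $\tilde\varphi'$ made while reducing the $\varepsilon^0$-parts $\varphi_s u_2$ and $q\varphi_{s'}$ (where $u_0u_1u_2=qs'$). Reducing the $\varepsilon$-part itself contributes nothing new.

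\textbf{A smaller slip.} $\varphi^1$ on $P^2$ is precomposition with the comparison map $\Barcon_\bullet\to P_\bullet$ ($G_\bullet$ in the paper), which is the one built from the contracting homotopy $\rho$ of $P_\bullet$. It is not a map from $P_\bullet$ to the bar resolution, as you wrote. The recipe you then state is the correct one for $G_2$. However, it follows one specific reduction route of $uv$, so identifying it with the $\varepsilon$-coefficient of $u*_\varepsilon v$ uses the route-independence established in the first part; you should say so explicitly.
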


\begin{remark}
Interpreting the element $ φ^1 (\tilde{φ}) $ as a deformation of $ A $, we can simply speaking say that the deformation $ φ^1 (\tilde{φ}) $ is exactly the deformation obtained from using $ φ_s + \tilde{φ}'_s $ as reduction rules instead of $ φ_s $.
\end{remark}

Let us now introduce graded quiver algebras with relations and examine their graded deformation theory. The approach builds on the notions of graded $ L_∞ $-algebras from \autoref{sec:prelimliou-gradeddefo}. We start by explaining how the construction of Barmeier and Wang behaves when $ Q $ is a graded quiver and the relations of the reduction system are homogeneous.

\begin{definition}
Let $ Σ $ be a grading group and let $ Q $ be a quiver with $ Σ $-graded arrows. Let $ R = \{(s, φ_s)\}_{s ∈ S} $ be a reduction system. Then $ R $ is \emph{$ Σ $-homogeneous} if $ φ_s $ is homogeneous of degree $ \deg(s) $ for every $ s ∈ S $.
\end{definition}

When $ R $ is a $ Σ $-graded reduction system, the quotient algebra $ A = ℂQ / I $ is $ A $-graded as well. In consequence, the Hochschild DGLA $ \HC(A) $ inherits a $ Σ $-grading. As we shall now explain, also the bimodule resolution $ P_{•} $ and the $ L_∞ $-algebra $ P^{•}[1] $ are $ Σ $-graded.

\begin{lemma}
Let $ Q $ be a quiver and $ R $ be reduction-unique $ Σ $-graded reduction system. Then the projective resolution $ (P_{•}, ∂_{•}) $ is $ Σ $-graded and the $ L_∞ $-algebra $ P^{•}[1] $ is $ Σ $-graded as well. Moreover, the $ L_∞ $-quasi-isomorphism $ φ: P^{•}[1] → \HC(A) $ is $ Σ $-homogeneous.
\end{lemma}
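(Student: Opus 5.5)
The plan is to check homogeneity at every stage of the Chouhy--Solotar/Barmeier--Wang construction. Put the $Σ$-grading on the generating spaces: a path in $Q$ gets the sum of the degrees of its arrows. Each set $S_n$ consists of paths, so $ℂS_n$ is $Σ$-graded. Then $P_n = A ¤ ℂS_n ¤ A$ carries the tensor product grading, and $P_{-1} = A$ is graded because the ideal $I = (s - φ_s)$ is generated by homogeneous elements. This is exactly where $Σ$-homogeneity of $R$ is used.

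First I show that all the auxiliary maps are homogeneous of degree zero. A basic reduction $τ_{q,s,r}$ replaces $s$ by $φ_s$, which has the same degree, so it preserves degrees. Hence the projection $π$ and the identification $π: ℂ\Irr_S → A$ are homogeneous. The maps $\pathsplit_n$, $\pathsplitR_n$ and $\pathsplitL_n$ send a path $p = qwr$ to $π(q) ¤ w ¤ π(r)$, which has total degree $\deg p$, so they are homogeneous too. From this, $δ_n$ and $γ_{n-1}$ are homogeneous of degree zero. Here one uses that $w$ and $wu$ have degrees $\deg w$ and $\deg w + \deg u$, matching the degree of $1 ¤ w ¤ π(u)$.

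Next, $∂_0$ and $∂_1$ are visibly homogeneous. I then argue by induction on $n$ that $∂_n$ and $ρ_{n-1}$ are homogeneous, using their defining formulas. The difficulty is the series $ρ_{n-1} = γ_{n-1} + \sum_{i ≥ 1} γ_{n-1}(δ_nγ_{n-1} - ∂_nγ_{n-1})^i$, which must be handled with care. Here $∂_n$ is itself defined through $ρ_{n-2}$ and $∂_{n-1}$, so the induction goes in the order $∂_{n-1}, ρ_{n-2} \Rightarrow ∂_n \Rightarrow ρ_{n-1}$. Every term of the series is a composite of degree-zero maps, hence degree zero. The series is locally finite on each basis element, by reduction-finiteness as in \cite{Barmeier-Wang}, so the sum is well-defined and homogeneous. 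Thus $(P_\bullet, ∂_\bullet)$ is a complex of $Σ$-graded bimodules with degree-zero differentials.

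For the dual complex, I grade $P^n = \Hom_{\Bimod A}(P_n, A) ≅ \Hom_{ℂQ_0\text{-}ℂQ_0}(ℂS_n, A)$ by letting a map have degree $σ$ if it shifts degrees by $σ$. The dual differentials $∂^n$ are precomposition with $∂_n$ and so are homogeneous. The $L_∞$-structure on $P^\bullet[1]$ and the quasi-isomorphism $φ$ to $\HC(A)$ are built, following \cite{Barmeier-Wang}, from explicit comparison morphisms between $P_\bullet$ and the bar resolution together with a homotopy (via homotopy transfer). I expect the main obstacle to be verifying that these comparison maps and the homotopy are homogeneous. I would argue this by noting that they are defined by the same kind of formulas as above: splitting paths, reducing, and composing with the $ρ$'s. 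So each is a degree-zero map by the same induction. The transferred brackets and $φ$ are then sums over trees of homogeneous maps, and the argument of \autoref{th:prelim-defo-gradedMCpush}(1) gives that they are $Σ$-homogeneous.
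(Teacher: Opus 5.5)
Your proposal follows essentially the same route as the paper: homogeneity of $\delta_n$ and $\gamma_{n-1}$, then inductively of $\partial_n$ and $\rho_{n-1}$, then homogeneity of the Barmeier--Wang comparison maps between $P_\bullet$ and $\Barcon_\bullet$, from which the $L_\infty$-structure on $P^\bullet[1]$ and the quasi-isomorphism $\varphi$ inherit $\Sigma$-homogeneity. You are slightly more explicit than the paper, since you also address the homotopy used in the transfer and the local finiteness of the series defining $\rho_{n-1}$.
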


\begin{proof}
We comment on all three statements individually. For the first statement, note that $ δ_n $ is $ Σ $-homogeneous, $ γ_{n-1} $ is $ Σ $-homogeneous and thus inductively also $ ∂_n $ and $ ρ_{n-1} $ are $ Σ $-homogeneous. For the second statement, we regard the comparison maps $ F_{•}: (P_{•}, ∂_{•}) → (\Barcon_{•}, d_{•}) $ and $ G_{•}: (\Barcon_{•}, d_{•}) → (P_{•}, ∂_{•}) $ from \cite[section 5.1]{Barmeier-Wang}. These comparison maps are defined in terms of the maps $ \varOmega_n $ which are $ Σ $-homogeneous, and thus $ F $ and $ G $ are $ Σ $-homogeneous as well. The maps of chain complexes $ F^{•}: \HC(A) → P^{•}[1] $ and $ G^{•}: P^{•}[1] → \HC(A) $ in \cite[section 7.2]{Barmeier-Wang} and therefore $ Σ $-homogeneous as well. In \cite[Theorem 7.10]{Barmeier-Wang}, the $ L_∞ $-structure on $ P^{•}[1] $ and the $ L_∞ $-quasi-isomorphism $ φ: P^{•}[1] → \HC(A) $ are defined explicitly in terms of $ F^{•} $ and $ G^{•} $. Both are evidently $ Σ $-homogeneous. We finish the proof.
\end{proof}

We are now ready to comment on graded deformations of these algebras. Let $ B $ be a completed $ Σ $-graded deformation base.

\begin{lemma}
Let $ \tilde{φ} ∈ \mathfrak{m} \htensor P^2 $ be a $ Σ $-homogeneous Maurer-Cartan element. Then $ φ^{\MC} (\tilde{φ}) ∈ \mathfrak{m} \htensor \HC(A) $ is a $ Σ $-homogeneous Maurer-Cartan element. The deformed product of the deformation $ φ^{\MC} (\tilde{φ}) $ is given by concatenation and subsequent reduction according to the deformed reduction system $ \{(s, φ_s + \tilde{φ}'_s)\} $ on $ B \htensor ℂ\Irr_S $.
\end{lemma}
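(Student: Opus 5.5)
The statement has two parts: homogeneity of $φ^{\MC}(\tilde{φ})$, and the identification of the deformed product with reduction by the deformed reduction system. I would treat them separately.

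\textbf{First part.} By the preceding lemma, the $L_∞$-quasi-isomorphism $φ: P^{•}[1] → \HC(A)$ is $Σ$-homogeneous. The element $\tilde{φ}$ is a $Σ$-homogeneous Maurer-Cartan element of degree $0$ over $B$, so it lies in $\MC(P^{•}[1], B)_0$. The second statement of \autoref{th:prelim-defo-gradedMCpush} then gives $φ^{\MC}(\tilde{φ}) ∈ \MC(\HC(A), B)_0$ directly.

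For completeness I would recall why that lemma applies. The series
\begin{equation*}
φ^{\MC}(\tilde{φ}) = \sum_{j ≥ 1} \frac{φ^j(\tilde{φ}, …, \tilde{φ})}{j!}
\end{equation*}
converges $\mathfrak{m}$-adically, since $\tilde{φ} ∈ \mathfrak{m} \htensor P^2$ and hence the $j$-th term lies in $\mathfrak{m}^j$. Each term is homogeneous of degree $0$ modulo every $\mathfrak{m}^i$, because each $φ^j$ is $Σ$-homogeneous and $\tilde{φ}$ has degree $0$.

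\textbf{Second part.} This is the content of Barmeier–Wang's theorem relating Maurer-Cartan elements of $P^{•}[1]$ to deformed reduction systems. They show that for a Maurer-Cartan element $\tilde{φ} ∈ \mathfrak{m} \htensor P^2$, the deformed reduction system $\{(s, φ_s + \tilde{φ}'_s)\}$ over $B$ is reduction-unique. They also show that $μ + φ^{\MC}(\tilde{φ})$ is exactly the star product obtained by concatenation followed by reduction. I would cite this result and check that it transfers to our opposite path conventions; as discussed in the remark on notation, this amounts to passing to the opposite quiver throughout.

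If a self-contained argument is preferred, I would work order by order modulo $\mathfrak{m}^i$. Both sides are associative $B/\mathfrak{m}^i$-linear products on $(B/\mathfrak{m}^i) \tensor ℂ\Irr_S$. They agree modulo $\mathfrak{m}$, and by the first-order lemma above their first-order parts agree. The higher-order agreement would come from the explicit tree formula for the $φ^j$ in \cite[Theorem 7.10]{Barmeier-Wang}, which is built from the comparison maps $F$, $G$ and encodes iterated reductions.

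\textbf{Main obstacle.} The main difficulty is this higher-order identification. Rather than redoing the combinatorics, I would rely on Barmeier–Wang and observe that every construction in it is $Σ$-homogeneous. Homogeneity of the resulting deformed product is then automatic: the rules $φ_s + \tilde{φ}'_s$ are homogeneous of degree $\deg(s)$, so each reduction step preserves degree modulo each $\mathfrak{m}^i$.
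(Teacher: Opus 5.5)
Your proposal is correct and follows the paper's proof: homogeneity comes from the $\Sigma$-homogeneity of the $L_\infty$-quasi-isomorphism $P^{\bullet}[1] \to \HC(A)$ combined with the second part of \autoref{th:prelim-defo-gradedMCpush}, and the identification of the deformed product with reduction by $\{(s, \varphi_s + \tilde{\varphi}'_s)\}$ is cited from Barmeier--Wang. Your optional order-by-order argument is only a sketch, but nothing depends on it, since the paper also relies on Barmeier--Wang for the ungraded statement.
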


\begin{proof}
The ungraded statement is due to Barmeier and Wang. The graded version is an immediate consequence of \autoref{th:prelim-defo-gradedMCpush}.
\end{proof}

\subsection{Liouville manifolds, domains and sectors}
\label{sec:prelimliou-sectors}
In this section, we recall the notions of Liouville manifolds, domains and sectors. In particular, we recall several standard Liouville structures and explain how the notion of potentials and stops can be used to turn elementary Liouville manifolds into Liouville sectors. We explain how to form products of Liouville sectors with special attention to the standard examples and their stop structure. For further reading on Liouville sectors, we refer to \cite{Bocklandt-book} and \cite{GPS-I}.

\begin{definition}
A \emph{Liouville domain} $ (X, ω, λ) $ is a compact exact symplectic manifold with boundary such that $ ω = dλ $ and the Liouville vector field $ X_λ $ given by $ ω(X_λ, -) = λ $ is outward-pointing at the boundary $ ∂X $. The \emph{symplectic collar extension} of $ (X, ω, λ) $ consists of attaching $ [1, ∞) × ∂X $ to $ X $ together with the Liouville structure $ λ = t λ\restr_{∂X} $. A \emph{Liouville manifold} is an exact symplectic manifold with boundary that is isomorphic to a symplectic collar extension of a Liouville domain.
\end{definition}

\begin{todo}
Insert definition of Liouville manifold-with-boundary here – it refers to having additional non-completed boundary parts.
\end{todo}

\begin{definition}
A \emph{Liouville sector} is a Liouville manifold-with-boundary $ (X, ω, λ) $ for which there exists a function $ I: ∂X → ℝ $ such that
\begin{itemize}
\item $ I $ is linear at infinity, meaning $ X_λ I = I $ outside a compact set.
\item $ dI \restr_{C} > 0 $ where $ C $ is the characteristic foliation of $ ∂X $ oriented so that $ ω(N, C) > 0 $ for inward-pointing vectors $ N $.
\end{itemize}
\end{definition}

\paragraph*{Standard Liouville structures}
Let us recall several standard Liouville manifolds. These concern the space $ ℂ $ together with the standard symplectic structure but two different Liouville structures, and the cylinder $ ℂ^* $ together with its standard Liouville structure.

The Liouville domain $ R = [-1, 1] + i[-1, 1] ⊂ ℂ ≅ T^* ℝ $ has coordinates $ z = x + iy $. Its Liouville form $ λ = y dx $ arises from the cotangent bundle construction and we have $ ω = dy ∧ dx $. The Liouville vector field $ X_λ = y ∂_y $ points outward at the boundary $ ∂R $. Its symplectic collar extension is the Liouville manifold given by $ \hat R = [-1, 1] + iℝ ⊂ ℂ $ together with the same expressions for $ λ $ and $ ω $.

The Liouville domain $ A_{r_1, r_2} ⊂ ℂ^* ≅ T^* S^1 $ for small $ r_1 ∈ (0, 1) $ and large $ r_2 ∈ (1, ∞) $ has complex coordinates $ z $ and alternatively polar coordinates $ (t, θ) $ via $ z = e^t e^{iθ} $. Its Liouville form $ λ = t dθ = \frac{-i}{2|z|^2} \log{|z|} (\bar{z} dz - z d\bar{z}) $ arises from the cotangent bundle construction and we have $ ω = dt ∧ dθ $. The Liouville vector field $ X_λ = t ∂_t $ is outward-pointing at the boundary $ ∂A_{r_1, r_2} $. Its symplectic collar extension is the Liouville manifold given by $ \hat A_{r_1, r_2} ≅ ℂ^* $ together with the same expressions for $ λ $ and $ ω $.

\paragraph*{Stops and potentials}
There are two standard ways to modify the behavior of the Reeb chords of a Liouville manifold $ \hat X $. They are relevant to the definition of Fukaya categories, as they reduce the number of morphisms between the objects. The first operation is the addition of stops on the boundary, while the second amounts to removing the area $ \{\Re W < -N\} $ for a given potential $ W: \hat X → ℂ $. While both operations have been developed in different contexts, they achieve approximately the same goal.

The addition of stops to a Liouville manifold $ \hat X $ is due to Sylvan \cite{Sylvan} and Ekholm-Lekili \cite{Ekholm-Lekili} and elaborated in the language of Weinstein manifolds by Eliashberg \cite{Eliashberg}. Simply speaking, this procedure consists of demarcating a codimension-1 portion of the boundary $ ∂X $ as a stop. More precisely, a \emph{Weinstein hypersurface} in a contact manifold $ (Y, ξ) $ is a codimension-1 submanifold $ Σ ⊆ Y $ with boundary such that there exists a contact form $ λ $ for $ ξ $ such that $ (Σ, λ\restr_{Σ}) $ is part of a Weinstein structure on $ Σ $. A \emph{Weinstein pair} is a Weinstein domain $ (X, λ, φ) $ together with a Weinstein hypersurface in its boundary $ ∂X $. In precise terms, this Weinstein hypersurface in $ ∂X $ is known as the stop. Eliashberg demonstrates how to attach a symplectic chimney to the stop area and how to adapt the Liouville structure near the chimney to take the attachment into account. The resulting geometrical shape is a Liouville sector. This is depicted in \autoref{fig:prelimliou-sectors-stop}.

\begin{figure}
\centering
\begin{tikzpicture}
\path[draw] (0, 0) circle[x radius=1.5, y radius=0.5];
\path[draw] (0, 3) circle[x radius=1.5, y radius=0.5];
\path[fill] (0, 2.5) circle[radius=0.1];
\path[draw] (-1.5, 0) -- (-1.5, 3);
\path[draw] (1.5, 0) -- (1.5, 3);
\path[] (0, 2.5) circle[radius=0.1] coordinate (A);
\path[draw] ($ (A) + (-0.5, 0.2) $) to[bend right] coordinate[pos=0.8] (X1) coordinate[pos=0.6] (X2) coordinate[pos=0.3] (X3) ($ (A) + (-0.2, 1.5) $);
\path[draw] ($ (A) + (0.5, 0.2) $) to[bend left] coordinate[pos=0.8] (X4) coordinate[pos=0.6] (X5) coordinate[pos=0.3] (X6) ($ (A) + (0.2, 1.5) $);
\path[draw, very thick] (A) -- ++(up:1.5) coordinate[pos=0.8] (X7) coordinate[pos=0.6] (X8) coordinate[pos=0.3] (X9);
\path[draw, ->] (X1) -- (X7);
\path[draw, ->] (X2) -- (X8);
\path[draw, ->] (X3) -- (X9);
\path[draw, ->] (X4) -- (X7);
\path[draw, ->] (X5) -- (X8);
\path[draw, ->] (X6) -- (X9);
\path[draw, very thick] (A) -- (0, 1) coordinate (B);
\path[draw, very thick] (0, 1.5) circle[x radius=1.5, y radius=0.5];
\path[draw, ->] ($ (A) + (left:0.5) $) -- ++(down:0.3);
\path[draw, ->] ($ (A) + (right:0.5) $) -- ++(down:0.3);
\path[draw, ->] ($ (A) + (left:1) $) -- ++(0, -0.3);
\path[draw, ->] ($ (A) + (right:1) $) -- ++(0, -0.3);
\path[draw, ->] ($ (A) + (-1, -1) $) -- ++(0, -0.2);
\path[draw, ->] ($ (A) + (1, -1) $) -- ++(0, -0.2);
\end{tikzpicture}
\caption{This figure depicts the attachment of a chimney around a stop. The result is a Liouville sector.}
\label{fig:prelimliou-sectors-stop}
\end{figure}

The removal of area according to a potential originates in Seidel's study of Lefschetz fibrations and the definition of Fukaya-Seidel categories \cite{Seidel-Lefschetz-I}. It is fitted into the framework of Liouville sectors by Ganatra-Pardon-Shende \cite[Example 1.4]{GPS-I}. Let $ X $ be a Liouville manifold and $ π: X → ℂ $ an exact symplectic (“Liouville”) Lefschetz fibration. Then removing the locus $ π^{-1} (ℂ_{\Re < -N}) ⊂ X $ for some large $ N $ produces a Liouville sector which we may denote by $ X_W $. The sector $ X_W $ has boundary along the line $ \Re W = -N $.

The wrapping behavior of Liouville sectors that arise from stopped Liouville domains can also be understood in a classical fashion. Let $ X $ be a Liouville domain with stops $ S ⊂ ∂X $. Simply speaking, the prescription for the wrapping behavior of a Lagrangian $ L ⊂ X $ is that the stops $ S $ prevents the wrapping. More precisely, the Lagrangian $ L^{(i)} $ does move along the boundary, but only by a small movement that never crosses the stop set $ S $.


\paragraph*{Kähler structures}
The adaptation of the classical notion of Kähler manifold to the setting of Liouville sectors is rich and complex. For the purposes of this paper, it suffices to know that our Liouville sectors $ X $ are Kähler in the sense that there exists a Kähler potential $ φ: X → ℝ $ such that $ λ = d^c φ = φ_y dx - φ_x dy $. For instance, the rectangle $ R ⊂ T^* ℝ $ is equipped with $ φ = y^2 / 2 $.


\paragraph*{Products and smoothing procedure}
Ganatra, Pardon and Shende define a product operation for Liouville sectors in \cite[section 7.1]{GPS-III}. They also offer an interpretation of this product in case the sectors arise from adding stops to Liouville manifolds. The basic observation is that the naive product as manifolds with boundary does not yield cylindrical ends and therefore the operation has to be performed on the level of the underlying Liouville domains. If $ (X_1, λ_1) $ and $ (X_2, λ_2) $ are Liouville domains, then the boundary of the cornered manifold $ X_1 × X_2 $ is given by $ ∂X_1 × ∂X_2 $. The neighborhood of this cornered manifold inside $ X_1 × X_2 $ is smoothened out by a simple procedure, and the result is a new Liouville sector.

\subsection{Fukaya categories}
In this section, we recall wrapped Fukaya categories from \cite{GPS-I}. The starting point for this construction is a Liouville sector $ (X, λ) $ together with a set $ I $ of (not necessarily transverse) cylindrical exact Lagrangians in $ X $ disjoint from $ ∂X $ and equipped with spin structures. For a complete construction of the Fukaya category, the set $ I $ is meant to contain at least one representative in every isotopy class of such Lagrangians, however it is possible to proceed without this assumption. For the purpose of defining the higher products, one chooses strip-like coordinates and $ ω $-compatible almost complex structures on $ X $.

The wrapped Floer chain complex $ \CF(L_1, L_2) $ for transverse $ L_1, L_2 ∈ I $ is essentially defined as the free vector space $ \CF(L_1, L_2) = \vspan(L_1 ∩ L_2) $. More precisely, the intersection points are given individual $ ℤ/2ℤ $-degrees and come with an associated \emph{orientation line} that is used for the determination of the signs in the higher products. For every $ L ∈ I $ one chooses a cofinal wrapping sequence $ L = L^{(0)}, L^{(1)}, … $ such that any collection $ L_1^{(i_1)}, …, L_N^{(i_N)} $ with $ i_1 < … < i_N $ is mutually transverse. The partial order on the set $ ℤ_{≥0} × I = \{L^{(i)}\}_{L ∈ I, i ∈ ℤ_{≥0}} $ is defined by $ L^{(i)} ≤ K^{(j)} $ if and only if $ i ≤ j $.

\begin{definition}
The \emph{ordered Fukaya category} $ \ordFuk(X) $ is the following strictly unital $ ℤ/2ℤ $-graded $ A_∞ $-category:
\begin{itemize}
\item The set of objects is $ ℤ × I $.
\item The hom spaces are given by:
\begin{equation*}
\Hom(L_1, L_2) = \begin{cases}
\CF(L_1, L_2) & \text{if } L_1 > L_2, \\
ℂ & \text{if } L_1 = L_2, \\
0 & \text{else}.
\end{cases}
\end{equation*}
\item The higher products are given by counting holomorphic disks:
\begin{equation*}
μ^k (p_k, …, p_1) = \sum_{\substack{q ∈ L_1 ∩ L_{k+1} \\ D ∈ \Disk(p_1, …, p_k; q)}} (-1)^{\sgn(D)} q.
\end{equation*}
\end{itemize}
\end{definition}

Let us recall the construction of continuation elements. Simply speaking, these play the same role as identity morphisms and as intersection points they can be found with the same rules as identity points. More precisely, $ L^{(i+1)} $ is obtained from $ L^{(i)} $ by means of a wrapping operation and thus there is a family of intermediate Lagrangians $ L_t $. For a sufficiently fine sequence $ 0 = t_0 < … < t_N = 1 $ there are identifications $ \HF(L_{t_i}, L_{t_i}) ≅ \HF(L_{t_{i+1}}, L_{t_i}) $ and thus elements $ c_i ∈ \HF(L_{t_{i+1}}, L_{t_i}) $ corresponding to the identities $ \id_{L_{t_i}} $. The continuation element associated with $ (i, L) $ is then $ c_{i, L} = c_{N-1} ∘ … ∘ c_0 ∈ \HF(L^{(i+1)}, L^{(i)}) $. These continuation elements are gathered in the set $ C = \{c_{i, L}\}_{i ∈ ℤ_{≥0}, L ∈ I} $. The (wrapped) Fukaya category of $ X $ is defined by localizing $ \ordFuk(X) $ with respect to these continuation elements. This localization is carried out via the $ A_∞ $-localization procedure of Drinfeld, which is reproduced explicitly in \cite[section 3.1.3]{GPS-I}. This localization procedure equips the cones of all continuation elementts $ c_{i, L} $ with a formal morphism $ ε $ in degree $ -1 $ such that $ dε = \id_{\cone(c_{i, L})} $. In effect, in the localized category all these cones are quasi-isomorphic to the zero object. In other words, the morphisms $ c_{i, L} $ become quasi-isomorphisms.

\begin{definition}
The \emph{(wrapped) Fukaya category} $ \Fuk(X) $ is the strictly unital $ A_∞ $-category defined by $ \Fuk(X) = \H (\Tw\ordFuk(X)[C^{-1}]) $.
\end{definition}

Wrapped Floer cohomology $ \HW(L_1, L_2) $ is an invariant that counts intersection points of $ L_1 $ and $ L_2 $ after an infinite amount of wrapping. While it seems tempting to use this as the definition for hom spaces of the wrapped Fukaya category, it has the disadvantage of requiring additional contributions from popsicle maps to the $ A_∞ $-products which cannot be expressed as holomorphic disks, a fact illustrated in the appendix of \cite{Bocklandt}. In contrast, the construction via localization of Ganatra, Pardon and Shende is much simpler on the level of moduli spaces. As it turns out, the hom spaces after localization agree with wrapped Floer cohomology as expected. This fact is proven in \cite[Lemma 3.37]{GPS-I} in slightly different notation.

\begin{lemma}[{\cite[Lemma 3.37]{GPS-I}}]
For $ L_1, L_2 ∈ ℤ × I $ we have a canonical identification $ \Hom(L_1, L_2) ≅ \HW(L_1, L_2) $.
\end{lemma}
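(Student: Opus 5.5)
The plan is to follow the strategy of Ganatra--Pardon--Shende. First compute morphisms in the Drinfeld localization $ \ordFuk(X)[C^{-1}] $ by means of $ C $-local modules. Then identify the result with the colimit defining wrapped Floer cohomology. Passing to $ \H\Tw $ does not change cohomology of hom spaces between objects of $ \ordFuk(X) $, so it suffices to show
\begin{equation*}
\H \Hom_{\ordFuk(X)[C^{-1}]} (L_1, L_2) ≅ \varinjlim_{j} \HF(L_1^{(j)}, L_2),
\end{equation*}
where the colimit runs over the positive wrapping of $ L_1 $ (with the ordering convention above, morphisms go from higher to lower wrapping index). The transition maps are multiplication by continuation elements, and the right-hand side is by definition $ \HW(L_1, L_2) $.

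The first step is a general algebraic statement about Drinfeld localization. Call a right module $ M $ over $ \ordFuk(X) $ \emph{$ C $-local} if $ M(c) $ is a quasi-isomorphism for every $ c ∈ C $, equivalently $ M(\cone(c)) $ is acyclic. For such $ M $ the module extends to $ \ordFuk(X)[C^{-1}] $, since one may send the formal element $ ε $ to a nullhomotopy. Moreover, for any object $ K $ the natural map
\begin{equation*}
\Hom_{[C^{-1}]}(-, K) \to \Hom_{[C^{-1}]}(-, K)\otimes \ldots
\end{equation*}
is replaced by a simpler principle: any module map $ N \to M $ into a local $ M $ that is a quasi-isomorphism after localization identifies $ \Hom_{[C^{-1}]}(L, K) $ with $ M(L) $ up to quasi-isomorphism. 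I will invoke the standard facts recorded in \cite[section 3.1.3]{GPS-I}. In particular, localization is computed by the bar-type quotient by $ \cone(C) $, and the Yoneda module of a local module is quasi-isomorphic to the module itself.

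The second step is the geometric construction. Fix $ L_2 $ and consider the module
\begin{equation*}
M = \operatorname{hocolim}_j \Hom_{\ordFuk}(-^{(j)}, L_2),
\end{equation*}
that is, the homotopy colimit along the wrapping sequence $ L^{(0)} ← L^{(1)} ← \ldots $ of the test object, with transition maps $ μ^2(-, c_{j,L}) $ together with the usual telescope corrections. The natural map from the Yoneda module of $ L_2 $ into $ M $ becomes a quasi-isomorphism after localization, because its cone is built iteratively from cones of continuation elements. Once $ M $ is shown to be local, the first step gives $ \H\Hom_{[C^{-1}]}(L_1, L_2) ≅ \H M(L_1) ≅ \varinjlim_j \HF(L_1^{(j)}, L_2) = \HW(L_1, L_2) $, since cohomology commutes with filtered homotopy colimits. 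Canonicity comes from the fact that the continuation elements are canonical in cohomology.

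The main obstacle is proving that $ M $ is $ C $-local. That is, for every continuation element $ c_{i,K} ∈ \HF(K^{(i+1)}, K^{(i)}) $, multiplication by $ c_{i,K} $ must induce an isomorphism $ \varinjlim_j \HF(K^{(i)}\text{-wrapped}, L_2) \to \varinjlim_j \HF(K^{(i+1)}\text{-wrapped}, L_2) $. My approach is a cofinality argument. Since $ K^{(i+1)} $ is a small positive wrapping of $ K^{(i)} $, the two wrapping sequences are mutually cofinal: each term of one is isotopic, through a positive wrapping, to a later term of the other. Using the isotopy $ L_t $ and the identifications $ \HF(L_{t_{k+1}}, L_{t_k}) ≅ \HF(L_{t_k}, L_{t_k}) $ that define the continuation elements, one checks that the composite maps between the two directed systems are given by continuation elements. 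One then checks that these continuation elements compose to the transition maps of each system; this uses the compatibility $ μ^2(c, c') \simeq c'' $ of continuation elements in cohomology. This gives mutually inverse maps on colimits. Verifying this compatibility and cofinality rigorously, including the transversality needed to realize all terms in the ordered category, is the delicate part; the rest is formal.
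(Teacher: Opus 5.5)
The paper gives no argument of its own here; it simply cites Ganatra--Pardon--Shende. Your architecture matches theirs: a localization principle for $C$-local modules, a telescope serving as a local replacement of a representable module, and a geometric locality check. However, you have put the homotopy colimit in the wrong variable, and the argument fails at three points.

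\begin{enumerate}
\item $M(K)=\operatorname{hocolim}_j \Hom(K^{(j)}, L_2)$ is not a module over $\ordFuk(X)$. The assignment $K \mapsto K^{(j)}$ is not functorial, so a general morphism in $\CF(K', K)$ has no induced action, and $M(c)$ is not even defined.
\item Even objectwise, the cone of $\Hom(K, L_2) \to M(K)$ is assembled from $\Hom(\cone(c_{j,K}), L_2)$, whose cone moves with the argument $K$. It is not built from representables $\Hom(-, \cone(c))$, so nothing forces it to vanish under localization.
\item The locality statement you isolate as the main obstacle is tautological. With the wrapping sequences fixed in the paper, the wrappings of $K^{(i)}$ are $K^{(i)}, K^{(i+1)}, \dots$. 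Your two colimits are therefore the same directed system with its first term dropped, and no cofinality is needed.
\end{enumerate}

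The correct setup fixes $L_1$ and takes the covariant telescope $M = \operatorname{hocolim}_j \Hom(L_1^{(j)}, -)$, formed along precomposition with $c_{j, L_1}$. The cone of $\Hom(L_1, -) \to M$ is then filtered by shifts of the representables $\Hom(\cone(c_{j,L_1}), -)$, which localization kills. So once $M$ is local, your first step does give $\H\Hom_{\ordFuk(X)[C^{-1}]}(L_1, L_2) \cong \varinjlim_j \HF(L_1^{(j)}, L_2)$.

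Locality, however, now means that for every $K$ and $i$, postcomposition with $c_{i,K}$ induces an isomorphism
\begin{equation*}
\varinjlim_j \HF(L_1^{(j)}, K^{(i+1)}) \to \varinjlim_j \HF(L_1^{(j)}, K^{(i)}).
\end{equation*}
This is the genuine geometric input, and it is exactly the hypothesis of the localization lemma of Ganatra--Pardon--Shende. Proving it requires three things:
\begin{enumerate}
\item trade the small positive pushoff from $K^{(i)}$ to $K^{(i+1)}$ in the target for a small negative pushoff of the source, since Floer cohomology depends only on relative position;
\item absorb that pushoff using the cofinal wrapping of $L_1$;
\item check that the resulting maps compose to the transition maps.
\end{enumerate}
Your cofinality argument compares two wrapping sequences in the same slot. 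It never addresses how a perturbation of the second argument is absorbed by wrapping the first.
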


\subsection{Relative Fukaya categories}
\label{sec:prelimfuk-grading}
In this section, we explain a procedure of constructing relative Fukaya categories as $ ℤ $-graded deformations.This procedure is based on a standard topological approach which automatically makes the $ A_∞ $-products homogeneous with respect to the grading.

\begin{center}
\begin{tikzpicture}
\path (0, 0) node[align=center] (A) {\emph{Fukaya category} $ \Fuk(X) $ \\ Base category};
\path (6, 0) node[right, align=center] (B) {\emph{Relative Fukaya category} $ \relFuk(X, f) $ \\ $ ℤ $-graded deformation over $ ℂ⟦ħ⟧_1 $};
\path[draw, -{To[scale=1.5]}, decorate, decoration={snake, amplitude=0.5pt}] ($ (A.east) + (right:0.3) $) to node[above, midway] {Function $ f: X → ℂ $} ($ (B.west) + (left:0.3) $);
\end{tikzpicture}
\end{center}
The procedure starts with the datum of a function $ f: X → ℂ $. We fix the terminology for these functions as follows.

\begin{definition}
Let $ X $ be a Liouville sector with complex structure. A \emph{grading function} is a holomorphic function $ f: X → ℂ $. We write $ \mathring{X} = f^{-1} (ℂ^*) $.
\end{definition}

\begin{definition}
Let $ X $ be a Liouville sector and $ f: X → ℂ $ a grading function.
\begin{itemize}
\item Let $ L ⊂ \mathring{X} $ be a contractible Lagrangian and $ L^{(1)} ⊂ \mathring{X} $ a transversal deformation of $ L $. Denote by $ \id_L ∈ L ∩ L^{(1)} $ the identity morphism. Then the \emph{$ f $-degree} of an intersection point $ p ∈ L ∩ L^{(1)} $ is defined as
\begin{equation*}
\deg_f (p) = ∫_{\id_L \overset{L^{(1)}}{→} p \overset{L}{→} \id_L} \frac{1}{2πi} d \log f ∈ ℤ.
\end{equation*}
\item Let $ L^{(0)}, …, L^{(k)} ⊂ \mathring{X} $ be an increasing sequence of deformations of the Lagrangian $ L = L^{(0)} $ and let $ p_1, …, p_k $ be intersection points $ p_i ∈ L^{(i-1)} ∩ L^{(i)} $. Let $ D ∈ \Disk(p_1, …, p_k) $ be a holomorphic disk with boundary $ p_1, …, p_k $. Then the \emph{$ f $-degree} of the disk is
\begin{equation*}
\deg_f (D) = ∫_{∂D} \frac{1}{2πi} d \log f ∈ ℤ_{≥0}.
\end{equation*}
\end{itemize}
\end{definition}

\begin{remark}
The degrees of morphisms and disks are well-defined. Indeed, the integrals are independent of the paths since the Lagrangians $ L^{(i)} $ are contractible. The degree is integral since $ d \log f $ is the pullback of the standard integral 1-form $ dz/(2πiz) $ on $ ℂ^* $. The degree of a holomorphic disk $ u: \mathbb{D} → X $ is non-negative since the composition $ f ∘ u $ is holomorphic and thus $ f ∘ u: ∂\mathbb{D} → ℂ^* $ has non-negative winding number.
\end{remark}

\begin{definition}
$ ℂ⟦ħ⟧_1 $ is the completed $ ℤ $-graded deformation base given by $ ℂ⟦ħ⟧ $ with grading naturally induced from $ \deg(ħ) = 1 $ on all subquotients.
\end{definition}

\begin{definition}
Let $ X $ be a Liouville sector and $ f: X → ℂ $ a grading function. Then the $ ℤ $-graded deformation $ \ordrelFuk(X) $ over $ ℂ⟦ħ⟧_1 $ is defined as follows:
\begin{itemize}
\item The objects are the same as in $ \ordFuk(X) $.
\item The hom spaces are $ \Hom_{\ordrelFuk(X)} (L_1, L_2) = ℂ⟦ħ⟧_1 \htensor \ordFuk(L_1, L_2) $.
\item $ A_∞ $-products are weighted with $ ħ^{\deg_f (D)} $.
\item Curvature of objects is given by $ ħ $-weighted counting of tear drops.
\end{itemize}
\end{definition}

\begin{remark}
\label{rem:prelimfuk-grading-basecat}
When defining a deformation of an $ A_∞ $-category, one usually departs from a given $ A_∞ $-category and deforms the $ A_∞ $-products. In the case of $ \ordrelFuk(X) $ it is not immediately clear what the underlying $ A_∞ $-category is. However, in case $ \mathring{X} $ can be turned into a Liouville sector, it is expected that $ \ordFuk(\mathring{X}) $ is the underlying $ A_∞ $-category. Indeed, let us compare the $ A_∞ $-products of $ \ordrelFuk(X) $ and $ \ordFuk(\mathring{X}) $ on zeroth $ ħ $-order. If $ u: \mathbb{D} → X $ is any holomorphic disk contributing to $ \ordFuk(X) $ with $ \deg(u) = 0 $, then the boundary is a contractible loop in $ ℂ^* $, thus its interior does not touch the zero point and the disk lies entirely in $ \mathring{X} $ and contributes to $ \ordFuk(\mathring{X}) $. Conversely, if a disk lies entirely in $ \mathring{X} $, then it is also counted in $ \ordrelFuk(X) $ and it clearly has zero winding number thus $ \deg(u) = 0 $. In this paper, we shall however stick to the terminology that $ \ordrelFuk(X) $ is a deformation of an anonymous underlying category which we denote by $ \ordrelFuk(X)_{ħ=0} $.
\end{remark}

\begin{definition}
Let $ \mathcal{O} $ be an ordered $ A_∞ $-category and $ C ⊂ \mathcal{O} $ a set of closed morphisms in degree zero. Let $ A $ be a grading group and $ B $ a completed $ A $-graded deformation base. Let $ \mathcal{O}_q $ be a deformation of $ \mathcal{O} $ over $ B $. Then the deformed localization $ \mathcal{O}_q [C^{-1}] $ is the $ A $-graded deformation of $ \mathcal{O} [C^{-1}] $ whose deformed $ A_∞ $-products are given by the Drinfeld localization construction.
\end{definition}

\begin{definition}
\label{def:prelimfuk-grading-relfuk}
Let $ X $ be a Liouville sector and $ f: X → ℂ $ a smooth function. Denote the set of continuation morphisms in $ \ordFuk(X, f) $. Then the relative Fukaya category $ \relFuk(X, f) $ is the $ ℤ $-graded deformation $ \relFuk(X, f) = \ordrelFuk(X, f) [C^{-1}] $ over $ ℂ⟦ħ⟧_1 $.
\end{definition}

\section{Preliminaries on the horizontal Hilbert scheme}
\label{sec:prelimhilbhor}
In this section, we recall horizontal Hilbert schemes, and how they can be turned into Liouville sectors. There are three relevant Liouville sectors, and we recall their construction separately. We also explain the relationships between these Liouville sectors. 
In this section, we recall preliminaries on the Fukaya category of the horizontal Hilbert scheme from \cite{ADLSZ}. We recall a set of standard Lagrangians and their wrappings, as well as the specific construction of the Fukaya category. We recall several basic properties and calculations within this Fukaya category and the equivalence with the $ ħ = 0 $ version of the NilHecke algebras. Finally, we explain how to equip the Fukaya category with an additional grading.

\subsection{The horizontal Hilbert scheme}
\begin{definition}
The \emph{horizontal Hilbert scheme} $ \Hilbhor_k (ℂ^* × ℂ) $ is the affine algebraic variety defined by
\begin{align*}
\Hilbhor_k (ℂ^* × ℂ) &= \Spec\big(ℂ\big[x_1^{±1}, …, x_k^{±1}, y_1, …, y_k, \big\{\frac{x_i - x_j}{y_i - y_j}\}_{i≠j}\big]^{S_k}\big) \\
& ≅ \bigg(\frac{ℂ[x_1^{±1}, …, x_k^{±1}, y_1, …, y_k, \{q_{ij}\}_{i≠j}]}{(q_{ij} (y_i - y_j) - (x_i - x_j))_{i≠j}}\bigg)^{S_k}.
\end{align*}
\end{definition}

A point in $ \Hilbhor_k (ℂ^* × ℂ) $ is given by the datum of $ x ∈ (ℂ^*)^k $, $ y ∈ ℂ^k $ and $ q = (q_{ij})_{i≠j} ∈ ℂ^{k(k-1)/2} $ such that $ q_{ij} (y_i - y_j) = x_i - x_j $, and up to equivalence under the $ S_k $-action given by $ σ(x, y, q) = (\tilde x, \tilde y, \tilde q) $ with $ \tilde{x}_i = x_{σ^{-1} (i)} $, $ \tilde{y}_i = y_{σ^{-1} (i)} $ and $ \tilde{q}_{ij} = q_{σ^{-1} (i) σ^{-1} (j)} $.

The horizontal Hilbert scheme comes with a projection map $ π: \Hilbhor_k (ℂ^* × ℂ) → \Sym^k (ℂ) $ to the symmetric product variety $ \Sym^k (ℂ) $ by means of the coordinates $ y_1, …, y_k $. The symmetric product has an open subset $ \Sym^k_{≠} (ℂ) ⊂ \Sym^k (ℂ) $ given by the complement of the big diagonal $ Δ = \{[y_1, …, y_k] \running ∃i ≠ j: ~ y_i = y_j\} $. The open part $ \Hilbhor_{k, ≠} (ℂ^* × ℂ) $ is defined as the open part given by the preimage of $ \Sym^k_{≠} (ℂ) $ under the projection map.

There are three approaches to defining coordinates on $ \Hilbhor_{k, ≠} (ℂ^* × ℂ) $.
\begin{enumerate}
\item Regard the $ S_k $-covering $ φ_1: (ℂ^*)^k × ℂ^k_{≠} → \Hilbhor_{k, ≠} (ℂ^* × ℂ) $ given by $ (x, y) ↦ [(x, y, q)] $ with $ q_{ij} = (x_i - x_j) / (y_i - y_j) $. This covering provides local coordinates around every point $ [(x, y, q)] $.
\item There is an $ S_k $-covering $ φ_2: (ℂ^*)^k × ℂ^k_{≠} → \Hilbhor_{k, ≠} (ℂ^* × ℂ) $ given by $ (u, y) ↦ [(x, y, q)] $ with $ x_i = u_i (y_i - y_1) … \widehat{(y_i - y_i)} … (y_i - y_k) $ and  $ q_{ij} = (x_i - x_j) / (y_i - y_j) $.
\item There is an identification
\begin{equation*}
φ_3: \Hilbhor_k (ℂ^* × ℂ) \isoto \{Q(y) / P(y) \running \deg(P) = k, ~ \deg Q < k, ~ P \text{ monic}, ~ \gcd(P, Q) = 1\}
\end{equation*}
given by sending $ [(x, y, q)] $ to $ (P, Q) $ given by the polynomial $ Q $ with the property that $ Q(y_i) = x_i $ and the polynomial $ P = \prod_{i = 1}^k (y - y_i) $.
\end{enumerate}

\begin{example}
In the case $ k = 1 $, the transformation $ \Hilbhor_1 (ℂ^* × ℂ) → ℂ^2 $ is given by $ (x, y, q) ↦ (y, x) $.
\end{example}

\begin{example}
In the case $ k = 2 $, the transformation $ \Hilbhor_2 (ℂ^* × ℂ) → ℂ^4 $ is given by
\begin{equation*}
(x_1, x_2, y_1, y_2, q) ↦ (y_1 y_2, - y_1 - y_2, - q y_2 + x_2, q).
\end{equation*}
\end{example}


\subsection{The Liouville sectors $ Y_W $ and $ Y_{O, W} $}
\label{sec:prelim-hilbhor-sectors}
In this section, we define the Liouville sectors $ Y_W $ and $ Y_{O, W} $. Simply speaking, these Liouville sectors consist of the horizontal Hilbert scheme $ \Hilbhor_k (ℂ^* × ℂ) $ and its version with removed diagonal $ \Hilbhor_{k, ≠} (ℂ^* × ℂ) $ under addition of stops provided by a potential $ W $.

\paragraph*{The Liouville manifold $ Y $}
We start by equipping the manifold $ \Hilbhor_k (ℂ^* × ℂ) $ with the structure of a Liouville manifold. As a smooth affine complex variety, there exists a non-canonical choice of embedding $ i: \Hilbhor_k (ℂ^* × ℂ) \embeds ℂ^N $. The space $ ℂ^N $ comes with a standard Liouville structure
\begin{align*}
λ = \sum_{j = 1}^N \frac{-i z_j}{4} dz_j + \frac{i z_j}{4} d\bar{z_j}, \qquad ω = \sum_{j = 1}^N \frac{i}{2} dz_j ∧ d\bar{z_j}.
\end{align*}

\begin{definition}
The Liouville manifold $ Y $ is the complex manifold (without boundary) $ \Hilbhor_k (ℂ^* × ℂ) $ together with the pullback form $ i^* λ $.
\end{definition}

\paragraph*{The Liouville sector $ Y_W $}
We shall now define the Liouville sector $ Y_W $. Simply speaking, it is given by cutting away the stop area given by the potential $ W $. Let us make this precise as follows. The Liouville manifold $ Y $ comes with the potential $ W: Y → ℂ $ given by
\begin{equation*}
W(x, y, q) = \sum_{i = 1}^k u_i + \sum_{i = 1}^k y_i^2 = \sum_{i = 1}^k x_i \prod_{j ≠ i} (y_i - y_j) + \sum_{i = 1}^k y_i^2.
\end{equation*}
The standard procedure recalled in \autoref{sec:prelimliou-sectors} applies and amounts to cutting away the locus of points $ (x, y, q) $ with $ \Re W(x, y, q) < -N $ for a large number $ N $ and performing a chimney extension around the boundary near those pounts. We note that the stop area $ \Re W < -N $ is somewhat unpredictably distributed among the $ u $-coordinates.

\begin{definition}
The Liouville sector $ Y_W $ is obtained from $ Y $ as the chimney extension after cutting away the locus $ \Re W < -N $.
\end{definition}

\paragraph*{The Liouville domain $ Y_O $}
Simply speaking, this Liouville domain is obtained from $ \Hilbhor_k (ℂ^* × ℂ) $ by removing a neighborhood of the big diagonal, cutting off infinite ends, and modeling the remainder as a symmetric product of small standard Liouville domains.

Let us start by removing a neighborhood of the big diagonal. Regard the diagonal $ Δ ⊂ \Sym^k (ℂ) $ and the complement $ \Hilbhor_{k, ≠} (ℂ^* × ℂ) = \Hilbhor_k (ℂ^* × ℂ) \setminus π^{-1} (Δ) $. Regard the $ (u, y) $ coordinate mapping $ φ_2: (ℂ^*)^k × ℂ^k → \Hilbhor_{k, ≠} (ℂ^* × ℂ) $. Fix an $ ε > 0 $ sufficiently small. We choose the neighborhood $ U $ of the big diagonal in $ (ℂ^*)^k × ℂ^k $ given by all points such that $ \sum_{i, j = 1}^k |y_i - y_j|^2  ≤ ε $. The preimage of the boundary $ π^{-1} (∂U) $ consists of the points $ (x, y) ∈ (ℂ^*)^k × ℂ^k $ such that $ f(y) = ε $ where $ f(y) = \sum_{i, j = 1}^k |y_i - y_j|^2 = ε $. Given that $ ∂f/∂y ≠ 0 $ on the boundary, the boundary is smooth.

The space $ ((ℂ^*)^k × ℂ^k) \setminus U $ is a complex manifold with boundary. It sits inside $ (ℂ^*)^k × ℂ^k $ which is a Liouville manifold with the standard Liouville form on $ ℂ^* $ and the standard Liouville form on $ ℂ ≅ T^* ℝ $:
\begin{align*}
λ &= \sum_{j = 1}^k \frac{-i}{2|x_j|^2} \log{|x_j|} (\bar{x_j} dx_j - x_j d\bar{x_j}) + \sum_{j = 1}^k \frac{-\Im(z)}{2} (dy_j + d\bar{y_j}) \\
ω &= \sum_{j = 1}^k \frac{i}{2|x_j|^2} dx_j ∧ d\bar{x_j} + \sum_{j = 1}^k \frac{i}{2} dy_j ∧ d\bar{y_j}.
\end{align*}
We shall now demonstrate how to cut off the infinite ends in the $ u $- and $ y $-coordinates. We start by treating the $ y $-coordinates $ y_1, …, y_k $. The idea is to model these coordinates on the standard plane $ ℂ ≅ ℝ^2 $ and cut off the infinite ends in the imaginary axis. In addition to this standard procedure, the approach of \cite{ADLSZ} also inserts one stop each in the positive and negative end of the real axis, which entails in particular cutting off also the infinite ends in the real axis. We shall make this precise as follows. Regard a rectangle $ R ⊂ ℂ $ with smoothened corners that includes an interval of the real axis, such as a smoothing of the rectangle $ [-1, 1] + [-1, 1]i ⊂ ℂ $. As a subset of $ ℂ ≅ T^* ℝ $, this rectangle comes with the standard Liouville structure inherited from $ T^* ℝ $.

We shall now treat the cutting in the $ u $-coordinates. Choose $ r_1 > 0 $ small and $ r_2 > r_1 $ large and regard the closed annulus $ A_{r_1, r_2} ⊂ ℂ^* $ given by all points $ u $ with $ r_1 ≤ |u| ≤ r_2 $. This annulus inherits the standard Liouville form from the ambient space $ ℂ^* ≅ T^* S^1 $.

Finally, let us assemble the set obtained after cutting away the ends in the $ u $- and $ y $-coordinates:
\begin{equation*}
S = (A_{r_1, r_2}^k × R^k) \setminus U ⊂ (ℂ^*)^k × ℂ^k.
\end{equation*}
Explicitly, the set $ S $ consists of all points $ (u, y) $ where the $ y_i $ coordinates lie inside the rectangle $ R $ and differ sufficiently and where $ x $ lies in the product of the annuli. The set $ S $ is a manifold with boundary with corners. The boundary consists of those points $ (x, y) $ such that at least one of the boundary conditions $ |u_j| ∈ \{r_1, r_2\} $ for $ j = 1, …, k $ and $ f(y) = ε $ is satisfied, and the corners consist of those points where at least two of these conditions are satisfied.

We shall now explain how to turn $ S $ into a Liouville domain. The set $ S $ is a manifold with boundary with corners. While it inherits the Liouville form from $ (ℂ^*) × ℂ^k $ as described above, it is not a Liouville domain due to the cornered nature of its boundary. The idea is to invoke the smoothing procedure recalled in \autoref{sec:prelimliou-sectors} to cure this. We start by commenting on the outward-pointing nature of the Liouville vector field $ X_λ $. At a corner in which $ l $-many boundary conditions are satisfied, the notion of outward-pointing shall mean that $ X_λ $ is outward-pointing with respect to all of the outward directions of the $ l $-many involved boundary hyperplanes.

\begin{lemma}
The Liouville vector field $ X_λ $ is outward-pointing on the boundary $ ∂S $.
\end{lemma}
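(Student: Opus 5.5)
The plan is to compute $ X_λ $ explicitly in the $ (u, y) $-coordinates of $ φ_2 $, in which $ S $ is defined, and then check the sign of $ X_λ $ against the outward conormal of each boundary hyperplane separately. This suffices because at a corner, outward-pointing is required with respect to each of the $ l $ active hyperplanes, and each hyperplane condition only involves its own defining function: $ |u_j| $ for the annulus faces, $ y_j ∈ ∂R $ for the rectangle faces, and $ f(y) $ for the diagonal face. A key point is that $ λ $ is written in the $ x $-coordinates, so the first summand is $ \sum_j \log|x_j| \, d\arg x_j $. Substituting $ x_i = u_i \prod_{j ≠ i} (y_i - y_j) $ gives
\begin{equation*}
λ = \sum_i \Big(\log|u_i| + \sum_{j ≠ i} \log|y_i - y_j|\Big)\Big(d\arg u_i + \sum_{j ≠ i} d\arg(y_i - y_j)\Big) - \sum_j \Im(y_j)\, d\Re(y_j).
\end{equation*}
I will invert $ ω = dλ $ in these coordinates to solve $ ι_{X_λ} ω = λ $. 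The logarithmic coordinates $ (\log|x_i|, \arg x_i) $ are symplectically conjugate, so the $ x $-part of $ X_λ $ is simply $ \sum_i \log|x_i| \, ∂_{\log|x_i|} $. In terms of $ (u, y) $, this field is the radial scaling in each $ \log|u_i| $, combined with a correction in the $ y $-directions coming from the mixed terms. The $ y $-part contributes $ \Im(y_j) ∂_{\Im y_j} $.

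For the annulus faces $ |u_j| = r_1, r_2 $, I will show that $ X_λ(\log|u_j|) = \log|u_j| + (\text{bounded terms}) $. On $ |u_j| = r_2 $ this is positive, and on $ |u_j| = r_1 $ it is negative, once $ r_1 $ is chosen small and $ r_2 $ large relative to the bound on $ \log|y_i - y_j| $ over $ R $ minus $ U $. For the rectangle faces, the term $ \Im(y_j)∂_{\Im y_j} $ points outward on the horizontal edges. On the vertical edges this term is tangent, so the sign there is decided by the residual $ y $-component of the $ x $-part. I will check that this residual component is nonnegative against the normal after smoothing the corners of $ R $. I expect the rectangle face to be the least clean step. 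If the residual has the wrong sign there, I will use the freedom in choosing the shape of $ R $ (a smoothing that bulges slightly), so that the normal has a nonzero imaginary component aligned with $ \Im(y_j)∂_{\Im y_j} $.

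The main obstacle is the diagonal face $ f(y) = ε $, where outward means pointing into $ U $, i.e. $ X_λ f < 0 $. The naive term $ \Im(y)∂_{\Im y} $ alone gives $ X f = 2\sum (\Im(y_i - y_j))^2 ≥ 0 $, which has the wrong sign. The point of the argument is that the pulled-back $ x $-part dominates near the diagonal. In each difference plane $ w = y_i - y_j $, the term $ \log|w| \, d\arg w $ is the cylindrical Liouville form, with Liouville field $ \log|w| \, ∂_{\log|w|} = |w|\log|w| \, ∂_{|w|} $. Since $ \log|w| ≪ 0 $ for $ |w| ≤ \sqrt{ε} $, this field contributes roughly $ \sum |w|^2 \log|w| $ to $ X f $, which is of order $ ε \log ε $ and negative. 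This beats the positive contribution, which is $ O(ε) $. I will make this precise by bounding the cross terms with $ \log|u_i| $, which is bounded on the annulus, and the mixing between different pairs $ (i, j) $. This gives $ X_λ f ≤ C ε + c\, ε \log ε < 0 $ for $ ε $ sufficiently small. The constants must be uniform at corners where $ f = ε $ meets the other faces, so I will fix $ r_1 $, $ r_2 $ and $ R $ first and then choose $ ε $.
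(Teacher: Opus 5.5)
Your argument breaks on the diagonal face, and no sharper estimate can repair it. Write $\Phi(u,y)=(x,y)$ with $x_i=u_i\prod_{j\neq i}(y_i-y_j)$. Where the $y_i$ are distinct this is a diffeomorphism, and your $\lambda$ equals $\Phi^*\lambda'$ for the product form $\lambda'=\sum_i\log|x_i|\,d\arg x_i-\sum_j\Im(y_j)\,d\Re(y_j)$. Hence $X_\lambda=(\Phi^{-1})_*X_{\lambda'}$. Since $\Phi$ does not change the $y$-coordinates, $dy_j(X_\lambda)=dy_j(X_{\lambda'})=i\,\Im(y_j)$ exactly. Therefore
\begin{equation*}
X_\lambda f=2\sum_{i,j}\big(\Im(y_i-y_j)\big)^2\geq 0
\end{equation*}
identically, so the ``naive term'' you set aside is the whole answer.

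The mixed terms in your expansion of $\lambda$ only produce $u$-components of $X_\lambda$. At fixed $y$, the field $\log|x_i|\,\partial_{\log|x_i|}$ equals $\log|x_i|\,\partial_{\log|u_i|}$, with no $y$-correction. You also cannot give the summand $\log|w|\,d\arg w$ its own Liouville field $\log|w|\,\partial_{\log|w|}$, because $d\lambda$ is not a product in these coordinates. Consequently, on $\{f=\varepsilon\}$ the field points into $S=\{f\geq\varepsilon\}$ or is tangent to it, for every $\varepsilon$. The bound $C\varepsilon+c\,\varepsilon\log\varepsilon$ therefore cannot hold.

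The same invariance defeats your fallback for the faces of $R$. At points of the vertical edges with $\Im y_j=0$ one has $dy_j(X_\lambda)=0$, so $X_\lambda$ is tangent to the boundary there, whatever smoothing or bulge you choose. These edges carry the stops and are sectorial rather than convex boundary.

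Your plan also has a quantifier problem on the annulus faces. The ``bounded terms'' include $\log|y_i-y_j|$, which on $S$ is bounded below only in terms of $\varepsilon$. When $k\geq 3$ it is not bounded below at all, since $U$ only surrounds the small diagonal. So $r_2$ cannot be fixed before $\varepsilon$.

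For comparison, the paper reads $\lambda$ as the product form in $(u,y)$ with the radial field $\tfrac12 y_j\partial_{y_j}$ in $y$. It checks the annulus faces as you intend and does not discuss the faces of $R$. On $\{f=\varepsilon\}$ it computes $df(v)=2f>0$ for $v=\sum_j y_j\partial_{y_j}$ and concludes that the field is outward. But $S$ lies on the side $f\geq\varepsilon$, so that computation also shows the field pointing into $S$. Your suspicion about the sign is correct, and neither argument establishes the statement on this face. Making $\{f=\varepsilon\}$ convex would require changing the Liouville form near the diagonal, for instance to one that is cylindrical in the differences $y_i-y_j$.
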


\begin{proof}
At any point $ (u, y) ∈ (ℂ^*)^k × ℂ^k \setminus π^{-1} (U) $, the Liouville field is given by
\begin{align*}
X_λ = \sum_{j = 1}^k \log{|u_j|} u_j ∂_{u_j} + \log{|u_j|} \bar{u_j} ∂_{\bar{u_j}} + \sum_{j = 1}^k \frac{y_j}{2} ∂_{y_j}.
\end{align*}
For any point $ (u, y) $ that lies in the boundary with respect to any of the $ k+1 $ boundary conditions, we shall check that $ X_λ $ is outward-pointing with respect to that boundary condition.

We start by regarding the case that $ (u, y) $ lies in the boundary of the annulus $ A_{r_1, r_2} $ with respect to coordinate $ u_j $, that is, $ |u_j| ∈ \{r_1, r_2\} $. If $ |u_j| = r_1 $, then the vector $ \log{|u_j|} u_j ∂_{u_j} $ is outward-pointing at the inner boundary of the annulus since $ |u_j| = r_1 $ is a small positive number. If $ |u_j| = r_2 $, then the vector $ \log{|u_j|} u_j ∂_{u_j} $ is outward-pointing at the outer boundary of the annulus since $ |u_j| = r_2 $ is a large positive number.

We now regard the case that $ (u, y) $ lies on the boundary with respect to $ U $, that is, $ f(y) = ε $. Regard the Liouville vector component $ v = \sum_{j = 1}^k y_j ∂_{y_j} $. We have $ f(y + tv) = (1+t)^2 f(y) $ and conclude $ df (v) = 2f(y) > 0 $, thus $ X_λ $ is outward-pointing at $ (u, y) $. This finishes the proof.
\end{proof}

We are now ready to smoothen these boundaries and simultaneously turns $ S $ into a Liouville domain. We start by observing that $ S $ is a product of Liouville domains. Therefore the smoothing procedure recalled in of \cite[section 7.1]{GPS-III} applies and yields a Liouville domain $ \tilde S $. The difference between $ \tilde S $ and $ S $ is entirely cosmetic and lies in a smoothing of the corners and interpolation of the Liouville form near those points.

\begin{definition}
The Liouville domain $ Y_O $ is the quotient of $ \tilde S $ by the $ S_k $-action.
\end{definition}

This quotient is again a manifold since the $ S_k $-action does not have fixed points. The Liouville structure on $ \tilde S $ need not, a priori, be $ S_k $-invariant. Yet, it is rather clear from the construction in \cite[section 7.1]{GPS-III} that this can be achieved, and therefore $ λ_{\tilde S} $ descends to a Liouville form on $ Y_O $. Alternatively, one can obtain the Liouville domain $ Y_O $ by first taking the quotient of $ S $ by the $ S_k $-action and then smoothing out the corners via a procedure analogous analogous to \cite[section 7.1]{GPS-III}. Both options provide the structure of Liouville domain to $ Y_O $.

The Liouville domain $ Y_O $ can be extended to a Liouville manifold $ \widehat{Y_O} $ by means of the symplectic collar extension. In formulas, we have
\begin{align*}
S &= (A_{r_1, r_2}^k × R^k) \setminus U ⊂ (ℂ^*)^k × ℂ^k, \\
\tilde S &= \text{smoothing of } S, \\
Y_O &= \tilde S / S_k, \\
\widehat{Y_O} &= \text{collar extension of } Y_O.
\end{align*}
We shall note that the Liouville domain $ Y_O $ differs from $ \Hilbhor_k (ℂ^* × ℂ) $, since its ends in the $ y $ direction are noncompact.

\paragraph*{The Liouville sector $ Y_{O, W} $}
This Liouville sector is obtained from $ Y_O $ by taking the potential $ W $ into account. The standard procedure recalled in \autoref{sec:prelimliou-sectors} applies and amounts to cutting away the locus of points $ (u, y) $ with $ \Re W(u, y) < -N $ for a large number $ N $ and performing a chimney extension around the boundary near those pounts. We note that the stop area $ \Re W < -N $ is somewhat unpredictably distributed among the $ u $-coordinates.

\begin{definition}
The Liouville sector $ Y_{O, W} $ is obtained from $ Y_O $ by removing the locus $ \Re W < -N $ and performing the chimney construction.
\end{definition}

\paragraph*{Simplified model for $ Y_{O, W} $}
There is a simplified model for this Liouville sector in which the stop loci are equally and predictably distributed among all $ u $-coordinates. This model is advantageous for computing with the Fukaya category $ \Fuk(Y_{O, W}) $.

Simply speaking, the model is given by imposing the potential $ W $ on each of the factors before taking the quotient by $ S_k $, rather than the other way around. Denote by $ A_{r_1, r_2, W} $ the Liouville sector arising from the Liouville domain $ A_{r_1, r_2} $ together with the potential $ W = y^2 $, where $ y $ is the standard coordinate of $ A_{r_1, r_2} ⊂ ℂ^* $. Denote by $ R_W $ the Liouville sector arising from the Liouville domain $ R $ with the potential $ W = u $, where $ u $ is the standard coordinate of $ R ⊂ ℂ $. These base and fiber spaces with their stops are depicted in \autoref{fig:prelimhilbhor-sectors-simplified}. We record the construction as follows:

\begin{definition}
The \emph{simplified model for} $ Y_{O, W} $ is the quotient of the product of Liouville sectors $ A_{r_1, r_2, W}^k × R_W^k $ by the group $ S_k $.
\end{definition}

Points in $ Y_{O, W} $ may be specified through the set of coordinates $ (u, y) $ or $ (x, y) $. In case of $ (u, y) $ coordinates, the requirements are $ u_i ∈ A_{r_1, r_2} $ and $ y_i ∈ R $, and a different special set of conditions is imposed for points near the boundary at $ \Re W = -N $ and the boundary given by products of individual boundaries $ ∂A_{r_1, r_2} $ or $ ∂R $ due to the smoothing procedure. In case of $ (x, y) $ coordinates, the requirements are that $ (x, y) ∈ ((ℂ^*)^k × ℂ^k) \setminus U $ and any lift to $ (u, y) $ satisfies the requirements for $ (u, y) $ coordinates.

\begin{remark}
The Fukaya categories of $ Y_{O, W} $ and its simplified model are in fact equivalent. This fact can be checked by comparing the skeleta of these Liouville sectors. By the chimney construction, these skeleta are explicitly given by the skeleta of the underlying Liouville domains joint with connectors to the stop loci.
\end{remark}

\begin{figure}
\centering
\begin{subfigure}{0.45\linewidth}
\centering
\begin{tikzpicture}
\path[draw] (0, 0) -- (1.5, 0);
\path[draw] (0, 1) -- (1.5, 1);
\path[draw, ultra thick] (0, 0) -- (0, 1);
\path[draw, ultra thick] (1.5, 0) -- (1.5, 1);
\end{tikzpicture}
\caption{Base with stops}
\end{subfigure}
\hspace{0.05\linewidth}
\begin{subfigure}{0.45\linewidth}
\centering
\begin{tikzpicture}
\begin{scope}
\path[draw] (0, 0) ellipse (1 and 0.3);
\path[draw] (0, 2) ellipse (1 and 0.3);
\path[draw] ($ (0, 0) + (180:1 and 0.3) $) to coordinate[pos=0.6] (L) ($ (0, 2) + (180:1 and 0.3) $);
\path[draw] ($ (0, 0) + (0:1 and 0.3) $) to coordinate[pos=0.4] (R) ($ (0, 2) + (0:1 and 0.3) $);
\path[fill] ($ (0, 2) + (230:1 and 0.3) $) circle[radius=0.1];
\end{scope}
\begin{scope}[shift={(2.5, 1)}]
\path[draw] (0, 0) circle[radius=0.3];
\path[draw] (0, 0) circle[radius=1];
\path[fill] (330:1) circle[radius=0.1];
\end{scope}
\end{tikzpicture}
\caption{Fiber with stops}
\end{subfigure}
\caption{The simplified model of $ Y_{O, W} $ is given as a symmetric product of $ k $ base rectangles and $ k $ fiber cylinders. Each of the base rectangles has two stops whereas the fiber cylinders have one stop. The stops of the fiber cylinders are located at zero polar angle and are depicted with a slight offset to make space for later visual convenience.}
\label{fig:prelimhilbhor-sectors-simplified}
\end{figure}

\subsection{Comparison of Liouville structures}
The functionality of the two Liouville manifolds $ \Hilbhor_k (ℂ^* × ℂ) $ and $ Y_O $ is similar in several aspects. We shall make this explicit through a comparison theorem.

\begin{definition}
A Liouville form $ λ $ on a complex manifold $ M $ is a Kähler Liouville form if the Hermitian form $ g(u, v) = ω(u, iv) $ is positive-definite.
\end{definition}

\begin{remark}
We shall remark that both $ λ_{Y_O} $ and $ λ_{\Hilbhor_k (ℂ^* × ℂ)} $ are Kähler Liouville structures. For $ \Hilbhor_k (ℂ^* × ℂ) $ this holds by the fact that $ λ_{\Hilbhor_k (ℂ^* × ℂ)} $ is pulled back from the standard Liouville form on $ ℂ^N $ which is Kähler Liouville. For $ Y_O $ this holds by the fact that $ λ_{Y_O} $ is descended from the standard Liouville form on $ (ℂ^*)^k × ℂ^k $ which is Kähler Liouville, and the smoothed product and collar extension procedures do not substantially affect the property of being Kähler Liouville.
\end{remark}

\begin{lemma}
There is a Liouville domain $ Y ⊂ \Hilbhor_k (ℂ^* × ℂ) $ which completes to $ \Hilbhor_k (ℂ^* × ℂ) $ upon symplectic collar extension. In consequence, $ \Hilbhor_k (ℂ^* × ℂ) $ is also denoted $ Y $.
\end{lemma}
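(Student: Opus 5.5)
The strategy is the standard fact that a smooth affine variety with the Liouville form pulled back from $ ℂ^N $ is a Liouville manifold. Using the embedding $ i: \Hilbhor_k (ℂ^* × ℂ) \embeds ℂ^N $ from \autoref{sec:prelim-hilbhor-sectors}, consider the exhaustion function $ ρ = \frac{1}{4} |z|^2 \circ i $. It is proper, because $ i $ is a closed embedding of an affine variety, and it is strictly plurisubharmonic. Moreover $ i^* λ = -d^c ρ $ up to the sign conventions fixed for $ λ $ on $ ℂ^N $. So the Liouville vector field $ X_λ $ on $ Y $ is the $ ω $-dual of $ -d^c ρ $, i.e.\ the gradient of $ ρ $ with respect to the Kähler metric $ g = ω(-, i -) $. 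This is positive-definite by the remark preceding the lemma.

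I would then take $ Y = ρ^{-1} ([0, c]) $ for a regular value $ c $ beyond all critical values of $ ρ $ and set $ \partial Y = ρ^{-1} (c) $. Along this boundary $ dρ(X_λ) = |\nabla ρ|^2_g > 0 $, so $ X_λ $ points outward and $ Y $ is a Liouville domain. Integrating the gradient flow of $ X_λ $ from $ \partial Y $ then identifies $ \Hilbhor_k(ℂ^* × ℂ) \setminus \mathrm{int}(Y) $ with $ [1, ∞) × \partial Y $, with $ λ = t λ\restr_{\partial Y} $. This is exactly the symplectic collar extension, once completeness of the flow and absence of critical points outside a compact set are established.

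The main obstacle is this last point: the critical points of $ ρ $ must be contained in a compact set, so that a regular value $ c $ above all of them exists and $ ρ^{-1} ([c, ∞)) $ contains no critical points. I would prove this by the classical Milnor curve-selection argument for real algebraic functions. The critical locus of $ ρ $ restricted to the algebraic set $ i(\Hilbhor_k) $ is real semialgebraic, hence has finitely many connected components, and $ ρ $ is constant on each component. So the critical values are finite and bounded. The second part of the argument concerns the outward flow. Since $ X_λ $ is a gradient-like vector field of a proper function with no critical points beyond level $ c $, its flow rescaled by $ 1/dρ(X_λ) $ is complete on $ ρ^{-1}([c, ∞)) $. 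It remains to show that the unrescaled flow reaches every level, i.e.\ that $ X_λ $ is complete. For this I would invoke the standard estimate that $ |X_λ|_g $ grows at most linearly in $ |z| $, since it is the tangential projection of the radial field on $ ℂ^N $. This gives completeness by Gronwall.
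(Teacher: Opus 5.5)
The paper states this lemma without proof and relies on the standard fact that a smooth affine variety, with the Liouville form restricted from $\mathbb{C}^N$, is the completion of a large sublevel set of $|z|^2$; your argument supplies exactly that standard proof and is correct. Its ingredients are the proper strictly plurisubharmonic exhaustion $\rho$, the identification $X_\lambda = \nabla_g \rho$, finitely many critical values by semialgebraicity, and completeness of $X_\lambda$ via $|X_\lambda| \le \tfrac12 |z|$ (it is the orthogonal projection of the radial field). One phrase needs tightening: the forward flow reaching every level of $\rho$ is automatic, and what the collar map $(t,y)\mapsto \varphi^{\log t}(y)$ on $[1,\infty)\times\partial Y$ actually needs is that the Liouville flow exists for all positive time, which is precisely what your Gronwall estimate provides.
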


We have a map from the collar extension $ Y_O $ to the collar extension $ Y $.

\begin{lemma}
Denote by $ (Y_O, λ_{Y_O}) $ and $ (Y, λ_Y) $ the Liouville domains. Then the Liouville forms on the Liouville domain $ Y $ and the partial collar extension of $ Y_O $ are isotopic in the sense that there is a continuous path of Liouville Kähler forms connecting them.
\end{lemma}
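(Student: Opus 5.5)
The plan is to interpolate linearly on the level of Kähler potentials rather than Liouville forms. Both Liouville forms are Kähler Liouville with respect to the same complex structure on the overlap, so we can write them as $ λ_Y = d^c φ_Y $ and $ λ_{Y_O} = d^c φ_{Y_O} $. Here $ φ_Y $ is the pullback of $ \sum_j |z_j|^2/4 $ under the embedding $ i: Y \embeds ℂ^N $, and $ φ_{Y_O} $ is the descended potential of the standard potentials on $ (ℂ^*)^k × ℂ^k $: $ (\log|u_j|)^2/2 $ on the $ ℂ^* $ factors and $ (\Im y_j)^2/2 $ on the $ ℂ $ factors, modified by the smoothing procedure. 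We then put $ φ_t = (1-t) φ_{Y_O} + t φ_Y $ and $ λ_t = d^c φ_t $. Since $ ω_t = -dd^c φ_t $ is a convex combination of positive $ (1,1) $-forms, it is positive, so every $ λ_t $ is again a Kähler Liouville form on the common region. The statement is really about the Liouville domain $ Y $ and the \emph{partial} collar extension of $ Y_O $, so we first fix the region on which both forms are defined.

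The key steps, in order, are as follows.

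\begin{enumerate}
\item Identify a compact region $ K $ inside $ \Hilbhor_{k,≠}(ℂ^* × ℂ) $ that is at once a partial collar extension of $ Y_O $ and a Liouville domain $ Y $ for $ φ_Y $. Concretely, $ Y $ should be a sublevel set $ \{φ_Y ≤ c\} $ with $ c $ a large regular value, and the collar of $ Y_O $ should be extended until its boundary agrees with, or is $ C^1 $-close to, $ ∂Y $ away from the diagonal neighborhood $ U $.
\item Check plurisubharmonicity of $ φ_t $ for all $ t ∈ [0,1] $. This is immediate from convexity, as noted above.
\item Check that $ X_{λ_t} $ points outward along $ ∂K $ for all $ t $. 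This makes $ (K, λ_t) $ a continuous path of Liouville domains and proves the claim.
\end{enumerate}

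Step 3 reduces to showing $ dφ_Y(X_{λ_{Y_O}}) > 0 $ and $ dφ_{Y_O}(X_{λ_Y}) > 0 $ near $ ∂K $. Both potentials are exhausting, and each grows along the gradient flow of the other at infinity. We would verify this coordinatewise in the $ (u,y) $ chart $ φ_2 $: as $ |\log|u_j|| → ∞ $ or $ |\Im y_j| → ∞ $, the embedding coordinates grow polynomially in $ u_j, y_j $, so the two radial directions have positive pairing.

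The main obstacle is Step 3 near the big diagonal and the noncompact $ y $-directions. There $ φ_{Y_O} $ is not exhausting: $ Y_O $ has the diagonal neighborhood $ U $ removed and uses only the $ \Im y $ potential, whereas $ φ_Y $ is exhausting on all of $ \Hilbhor_k $. In these directions the boundaries of the two domains do not match, and $ dφ_Y(X_{λ_{Y_O}}) $ can fail to be positive along $ ∂U $.

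The first remedy would be to modify $ φ_{Y_O} $ before interpolating. We would add a term $ ρ(\sum_{i,j}|y_i - y_j|^2) $ with $ ρ $ convex and increasing, together with $ |\Re y|^2 $ cut off inside the region. This makes $ φ_{Y_O} $ exhausting on the partial collar extension without leaving the Kähler Liouville class; the change is itself an isotopy of Kähler Liouville forms because it is again a convex combination. With both potentials exhausting and plurisubharmonic, the standard argument finishes the proof: a convex combination of exhausting psh functions has Liouville field outward on large sublevel sets. If this argument fails, the fallback is a Moser-type argument on the skeleta, but I expect the potential-level convex interpolation to suffice.
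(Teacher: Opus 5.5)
The paper states this lemma without proof, so your argument must stand on its own. It does not, because its Step~1 cannot be carried out. For large $c$ the sublevel set $Y=\{\varphi_Y\le c\}$ contains points of the matter divisor $\pi^{-1}(\Delta)$. By contrast, $Y_O$ and all its partial collar extensions lie in $\Hilbhor_{k,\neq}(\mathbb{C}^*\times\mathbb{C})$, away from that divisor.

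No cleverer choice of $K$ helps. A meridian loop of the divisor lying in $Y_O$ is contractible in $\Hilbhor_k(\mathbb{C}^*\times\mathbb{C})$. It is not contractible in any region of $\Hilbhor_{k,\neq}(\mathbb{C}^*\times\mathbb{C})$ containing it, because the closed form $\frac{1}{2\pi i}\,d\log f_\hbar$ has nonzero period on it. Hence no region containing $Y_O$ and avoiding the divisor can be a Liouville domain for $\lambda_Y$: its inclusion into the completion $\Hilbhor_k(\mathbb{C}^*\times\mathbb{C})$ would have to be a homotopy equivalence. This is the very form that defines the $\hbar$-grading, so the difference between $Y$ and $Y_O$ is exactly what the deformation $\relFuk(Y_W)$ of $\Fuk(Y_{O,W})$ records.

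Your closing step hits the same wall. Near the divisor $|u_j|\to\infty$, so $\varphi_{Y_O}\to+\infty$. Thus for every $t<1$ the sets $\{\varphi_t\le c\}$ stay away from the divisor, while $\{\varphi_1\le c\}=Y$ contains divisor points. These $t$-dependent domains jump at $t=1$ and do not form a continuous family of Liouville domains. The only tenable reading of the lemma is the one in the lemma that follows it in the paper: fix a region $K$ (a partial collar of $Y_O$) and compare $\lambda_{Y_O}$ with $\lambda_Y\restr_K$.

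On such a fixed $K$, your Step~2 gives a path of exact Kähler forms at no cost. So does plain linear interpolation of the forms, since Kähler forms for a fixed complex structure form a convex cone. If that is all the lemma claims, you are done. If, as your Step~3 intends, every $\lambda_t$ must also make $K$ a Liouville domain, the crux is the face of $\partial K$ coming from $\partial U$. There, outward means decreasing $\sum_{i,j}|y_i-y_j|^2$.

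\begin{itemize}
\item At $t=1$ this is a condition on $\varphi_Y$ alone. You never verify it, and no modification of $\varphi_{Y_O}$ can affect it.
\item At $t=0$ it already fails. In the $(u,y)$-chart the $u$-part of $X_{\lambda_{Y_O}}$ leaves $\sum|y_i-y_j|^2$ unchanged, and the $y$-part does not decrease it. Indeed, the paper's computation $df(v)=2f(y)>0$ shows that $v$ points into $S$, not out of it.
\end{itemize}

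Your remedy goes in the wrong direction. The term $\rho(\sum|y_i-y_j|^2)$ with $\rho$ increasing grows away from the diagonal. So its gradient points into $K$ along that face, and it does not make $\varphi_{Y_O}$ exhausting toward the diagonal. At $t=0$ you would need a term growing toward the diagonal, such as the plurisubharmonic function $(\log|f_\hbar|)^2$ on $\Hilbhor_{k,\neq}(\mathbb{C}^*\times\mathbb{C})$. Even then, the $t=1$ endpoint would still require a separate argument.
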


\begin{lemma}
Let $ Z $ be a smooth affine complex algebraic variety and $ f: Z → ℂ $ a smooth function. Let $ O ⊂ Z $ be an open subset. Let $ λ_Z $ be a Kähler Liouville form on $ Z $ and $ λ_O $ a Kähler Liouville form on $ O $. Then there exists an isotopy of Kähler Liouville forms between $ λ_O $ and $ λ_Z \restr_{O} $.
\end{lemma}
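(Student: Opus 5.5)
The natural approach is to interpolate linearly between the two primitives and then check that every form along the path is Kähler. I would set
\begin{equation*}
λ_t = (1-t)\, λ_O + t\, λ_Z\restr_{O}, \qquad t ∈ [0, 1],
\end{equation*}
on $ O $. Then $ ω_t = dλ_t = (1-t) ω_O + t ω_Z\restr_O $. The key observation is that being Kähler for a fixed complex structure $ J $ is a convex condition. If $ ω(u, Ju) > 0 $ and $ ω'(u, Ju) > 0 $ for all nonzero $ u $, and both $ ω $ and $ ω' $ are $ J $-invariant, that is of type $ (1,1) $, then every convex combination has the same two properties. Hence $ ω_t $ is a $ J $-compatible symplectic form for every $ t $: nondegeneracy follows from positivity of $ g_t(u, v) = ω_t(u, Jv) $. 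So $ λ_t $ is a continuous path of exact Kähler forms joining $ λ_O $ and $ λ_Z\restr_O $.

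The remaining task is to see that each $ λ_t $ is actually a \emph{Liouville} form, i.e. that its Liouville vector field $ X_{λ_t} $ is complete and outward pointing near infinity, giving convex cylindrical ends. Here I would work with Kähler potentials rather than with the primitives directly. In the situations of interest, $ λ_O = d^c φ_O $ and $ λ_Z = d^c φ_Z $ for exhausting plurisubharmonic functions. Then $ λ_t = d^c φ_t $ with $ φ_t = (1-t) φ_O + t φ_Z $, which is again strictly plurisubharmonic. The Liouville vector field of $ d^c φ $ is the gradient of $ φ $ with respect to the Kähler metric, up to a factor of $ 1/2 $. So the Liouville condition reduces to $ φ_t $ being exhausting with no critical points outside a compact set. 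I would handle this either by cutting off: replace $ φ_Z $ by $ h ∘ φ_Z $ for a convex increasing function $ h $ so that one potential dominates near the ends of $ O $. Or I would use the Eliashberg--Gromov observation that the space of finite-type Stein structures on a fixed complex manifold with a common exhausting skeleton is connected.

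The expected main obstacle is precisely this behaviour at infinity. The convex combination is Kähler, but $ O $ is only an open subset of $ Z $. Hence $ φ_Z\restr_O $ need not be exhausting on $ O $: it stays bounded near $ ∂O $, for instance near the removed diagonal in our application. As a result the gradient flow of $ φ_t $ for $ t $ near $ 1 $ need not be complete. I would try first to compose $ φ_O $ with a rapidly growing convex function near $ ∂O $. This makes $ φ_t $ exhausting for all $ t < 1 $ while keeping $ t = 0 $ unchanged up to a Liouville homotopy. The endpoint $ t = 1 $ would then be interpreted as a Liouville structure on the partial collar, as in the preceding lemma comparing $ Y_O $ and $ Y $.

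If that fails, I would weaken the conclusion to an isotopy of Kähler Liouville forms on every compact subdomain of $ O $. This already suffices for comparing Floer data on the compact Lagrangians used later.
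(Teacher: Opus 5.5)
The paper states this lemma without proof, so there is no argument to compare against. Judged on its own, your proposal establishes only the easy half.

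Your first paragraph is correct. $J$-invariance and positivity of $\omega(u,Ju)$ are preserved under convex combinations. Hence $\lambda_t=(1-t)\lambda_O+t\,\lambda_Z|_O$ is a path of exact $J$-compatible forms on $O$. If ``Liouville form'' only means a primitive of a symplectic form, this already proves the lemma.

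The genuine gap is the one you flag yourself: none of your remedies produces a path of Liouville forms ending at $\lambda_Z|_O$. No argument can, because the endpoint is in general not a Liouville structure on $O$ at all. A counterexample:
\begin{itemize}
\item Take $Z=\mathbb{C}$, $O=\mathbb{C}^*$, $z=e^{t+i\theta}$, with $\lambda_Z=\tfrac12 e^{2t}\,d\theta$ and $\lambda_O=t\,d\theta$ (the form the paper itself uses on $\mathbb{C}^*$).
\item Your interpolation is $\lambda_s=g_s(t)\,d\theta$ with $g_s=(1-s)t+\tfrac{s}{2}e^{2t}$. Since $g_s'>0$, it is Kähler for every $s$.
\item Its Liouville field is $(g_s/g_s')\,\partial_t$. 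For $s<1$ this points out of both ends, so $\lambda_s$ is Liouville.
\item At $s=1$ the field is $\tfrac12\partial_t$, which points into $O$ at the puncture. Backward flow carries every compact $K\subset O$ toward $0$.
\item So no compact domain has outward-pointing Liouville field. Otherwise the nested compact sets $\phi_{-\tau}(K)$ would have nonempty intersection, consisting of points whose backward orbits stay in $K$, and there are none.
\end{itemize}
Thus $\lambda_Z|_O$ is not a Liouville structure on $O$, consistent with $\mathbb{C}$ and $\mathbb{C}^*$ not being Liouville equivalent. The same failure near $Z\setminus O$ is what you observed at the removed diagonal, where $\varphi_Z|_O$ stays bounded.

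Your proposed fixes reproduce this picture rather than curing it. Composing $\varphi_O$ with a convex function gives Liouville structures only for $t<1$. The Eliashberg--Gromov result you cite concerns exhausting potentials on a fixed manifold, and $\varphi_Z|_O$ is not exhausting on $O$.

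So the lemma must be reformulated before it can be proved. One option is a path of exact Kähler forms, which your first paragraph proves. Another is a Liouville homotopy for $t\in[0,1)$, or a statement on compact subdomains of $O$.

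If you use the compact-subdomain version for Floer theory, your remark ``this already suffices'' is not automatic. You would additionally need a confinement argument keeping the relevant disks in a compact subset of $O$. For example, in the application, combine Gromov compactness with positivity of intersections with the divisor $Z\setminus O$ for degree-zero disks.
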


\subsection{Lagrangians and Fukaya categories}
\label{sec:prelimhilbhor-lag}
In this section, we recall the standard Lagrangian $ L ⊂ Y_{O, W} $ and the Fukaya categories of $ Y_{O, W} $ and $ Y_W $. The starting point is the simplified model for $ Y_{O, W} $ from \autoref{sec:prelimliou-sectors}. We recall that points in $ Y_{O, W} $ may be specified through either $ (u, y) $ or $ (x, y) $ coordinates.

We start by defining the standard Lagrangian $ L $. The Lagrangian is the projection of a direct product of Lagrangians, with the base Lagrangians lying sufficiently apart from each other that their $ y $ coordinates differ sufficiently and the projection lies in the interior of $ Y_{O, W} $. Pick $ -1 < s_1 < … < s_k < 1 $ and $ θ_1 < … < θ_k ∈ (-π/2, π/2) $. Both sequences are supposed to lie close to zero but sufficiently far apart from each other. The precise requirements depend on the choices of $ r_1, r_2 $ and the neighborhood $ U $ of the diagonal. Regard the standard projection $ π: A_{r_1, r_2}^k × R^k → Y_{O, W} $, which sends $ (u, y) $ coordinates to their corresponding point in $ Y_{O, W} $. We define $ L $ as the image
\begin{equation*}
L = π([r_1, r_2] e^{i θ_1} × … × [r_1, r_2] e^{i θ_k} × (s_1 + [-1, 1]i) × … × (s_k + [-1, 1]i)).
\end{equation*}
This Lagrangian is depicted in \autoref{fig:prelimhilbhor-lag-std}. The wrapped Lagrangian $ L^{(i)} $ for $ i ∈ ℤ_{≥0} $ is defined similarly to $ L $. It can be described explicitly by wrapping positively along the standard Hamiltonian $ H = r $ in the symplectic collar of $ \hat Y_{O, W} $ and squeezing the result back into $ Y_{O, W} $. The wrapped Lagrangian $ L^{(i)} $ is depicted in \autoref{fig:prelimhilbhor-lag-wrapped}.

\begin{figure}
\centering
\begin{subfigure}{0.45\linewidth}
\centering
\begin{tikzpicture}
\begin{scope}[shift={(2.25, 0)}, scale=1.5]
\path[draw] (0, 0) -- (1.5, 0);
\path[draw] (0, 1) -- (1.5, 1);
\path[draw, ultra thick] (0, 0) -- (0, 1);
\path[draw, ultra thick] (1.5, 0) -- (1.5, 1);
\path[draw, thick] (0.65, 0) to[out=90, in=270] (0.65, 1);
\path[draw, thick] (0.75, 0) to[out=90, in=270] (0.75, 1);
\path[draw, thick] (0.85, 0) to[out=90, in=270] (0.85, 1);
\path (0.75, -0.1) node {\small base};
\end{scope}
\begin{scope}[shift={(5.75, 0)}]
\path[draw] (0, 0) ellipse (1 and 0.3);
\path[draw] (0, 2) ellipse (1 and 0.3);
\path[draw] ($ (0, 0) + (180:1 and 0.3) $) to ($ (0, 2) + (180:1 and 0.3) $);
\path[draw] ($ (0, 0) + (0:1 and 0.3) $) to ($ (0, 2) + (0:1 and 0.3) $);
\path[draw, thick] ($ (0, 0) + (260:1 and 0.3) $) to coordinate[pos=0.52] (I1) coordinate[pos=0.41] (S1) ($ (0, 2) + (260:1 and 0.3) $);
\path[draw, thick] ($ (0, 0) + (270:1 and 0.3) $) to ($ (0, 2) + (270:1 and 0.3) $);
\path[draw, thick] ($ (0, 0) + (280:1 and 0.3) $) to coordinate[pos=0.63] (S2) coordinate[pos=0.51] (I2) ($ (0, 2) + (280:1 and 0.3) $);
\path (0.0, -0.5) node {\small fiber};
\end{scope}
\end{tikzpicture}
\caption{The Lagrangian $ L $}
\label{fig:prelimhilbhor-lag-std}
\end{subfigure}
\hspace{0.05\linewidth}
\begin{subfigure}{0.45\linewidth}
\centering
\begin{tikzpicture}
\begin{scope}[shift={(2.25, 0)}, scale=1.5]
\path[draw] (0, 0) -- (1.5, 0);
\path[draw] (0, 1) -- (1.5, 1);
\path[draw, ultra thick] (0, 0) -- (0, 1);
\path[draw, ultra thick] (1.5, 0) -- (1.5, 1);
\path[draw, thick] (0.35, 0) to[out=90, in=270] (0.85, 1);
\path[draw, thick] (0.45, 0) to[out=90, in=270] (0.95, 1);
\path[draw, thick] (0.55, 0) to[out=90, in=270] (1.05, 1);
\path (0.75, -0.1) node {\small base};
\end{scope}
\begin{scope}[shift={(5.75, 0)}]
\path[draw] (0, 0) ellipse (1 and 0.3);
\path[draw] (0, 2) ellipse (1 and 0.3);
\path[draw] ($ (0, 0) + (180:1 and 0.3) $) to ($ (0, 2) + (180:1 and 0.3) $);
\path[draw] ($ (0, 0) + (0:1 and 0.3) $) to ($ (0, 2) + (0:1 and 0.3) $);
\path[draw, thick] ($ (0, 0) + (235:1 and 0.3) $) to[out=90, in=190] (1, 1);
\path[draw, thick, gray] (1, 1) to[out=160, in=340] (-1, 1.5);
\path[draw, thick] (-1, 1.5) to[out=10, in=270] ($ (0, 2) + (285:1 and 0.3) $);
\path[draw, thick] ($ (0, 0) + (245:1 and 0.3) $) to[out=90, in=190] (1, 0.9);
\path[draw, thick, gray] (1, 0.9) to[out=160, in=340] (-1, 1.4);
\path[draw, thick] (-1, 1.4) to[out=10, in=270] ($ (0, 2) + (295:1 and 0.3) $);
\path[draw, thick] ($ (0, 0) + (255:1 and 0.3) $) to[out=90, in=190] (1, 0.8);
\path[draw, thick, gray] (1, 0.8) to[out=160, in=340] (-1, 1.3);
\path[draw, thick] (-1, 1.3) to[out=10, in=270] ($ (0, 2) + (305:1 and 0.3) $);
\path (0.0, -0.5) node {\small fiber};
\end{scope}
\end{tikzpicture}
\caption{The wrapped Lagrangian $ L^{(1)} $}
\label{fig:prelimhilbhor-lag-wrapped}
\end{subfigure}
\caption{This figure depicts the standard Lagrangian $ L $ and its wrappings $ L^{(1)} $ in the case $ k = 3 $. The higher wrappings $ L^{(i)} $ are given by successively further turns around the polar axis in the fiber. The start and end point of each Lagrangian $ L^{(i)} $ is offset in horizontal direction from the vertical axis of the base and fiber, with increasing offsets for increasing $ i $ so that the $ L^{(i)} $ form a cofinal positive wrapping sequence.}
\end{figure}

Let us now recall the construction of the Fukaya categories of $ Y_W $ and $ Y_{O, W} $. The focus lies on the standard Lagrangian $ L $, therefore we shall construct these Fukaya categories with the Lagrangian set $ I = \{L\} $ and use notation in which this restriction is manifest. The Fukaya categories $ \Fuk(Y_{O, W}) $ and $ \Fuk(Y_W) $ are the Fukaya categories of these two Liouville sectors with respect to $ I = \{L\} $. Their objects are the wrapped Lagrangians $ L^{(i)} $, all of which are quasi-isomorphic. We denote by $ \Fuk(Y_W)_L $ and $ \Fuk(Y_{O, W})_L $ the subcategories of these categories with the single elements $ L $. We shall later introduce a relative version of the Fukaya category which encompasses both $ \Fuk(Y_{O, W})_L $ and $ \Fuk(Y_W)_L $ as its specializations at $ ħ = 0 $ and $ ħ = 1 $, respectively.

\subsection{NilHecke algebras}
\label{sec:prelim-fuk-NHalgebra}
In this section, we recall NilHecke algebras. In the visual sense, the NilHecke algebra is a strand algebra with dots and bigon vanishing relation over the base ring $ ℂ⟦ħ⟧ $. In the algebraic sense, the algebra is similar to the smash product of a polynomial algebra with the symmetric group but with modified multiplication operation.

The NilHecke algebra $ \NH_k $ is a $ ℂ⟦ħ⟧ $-linear algebra given as vector space by $ ℂ[X_1, …, X_n] ¤ ℂ[S_n]⟦ħ⟧ $. The strand diagram associated with $ X_1^{k_1} … X_n^{k_n} ¤ s ∈ \NH_k $ is the diagram on $ n $ strands which permutes the strands according to $ s $ and above that applies $ k_1, …, k_n $ dots to the strands. This correspondence is depicted in \autoref{fig:diag-nh-stranddiag}. The multiplication operation of the algebra is different from the semidirect product $ ℂ[X_1, …, X_n] \rtimes ℂ[S_n] $ and incorporates the nil aspect of the algebra. For two basis elements $ f ¤ s $ and $ g ¤ t $, the product $ (f ¤ s) (g ¤ t) $ is given by stacking the corresponding strand diagrams on top of each other and applying a set of rules to arrive at a $ ℂ⟦ħ⟧ $-linear combination of standard strand diagrams. The reduction rules are illustrated in \autoref{fig:diag-nh-diagrules}. We shall fix terminology as follows:

\begin{definition}
Let $ k ≥ 1 $. The \emph{NilHecke algebra} $ \NH_k $ is $ ℂ[X_1, …, X_n] ¤ ℂ[S_n]⟦ħ⟧ $ with the following resolving resolving rules:
\begin{itemize}
\item Bigons vanish in the fashion $ s_i^2 = 0 $.
\item Local dot-pass crossings are resolved in the fashion $ X_i s_i - s_i X_{i+1} = ħ \id $ and $ s_i X_i - X_{i+1} s_i = ħ \id $.
\item Braid crossings are resolved in the form $ s_i s_{i+1} s_i = s_{i+1} s_i s_{i+1} $.
\end{itemize}
For $ τ = (τ_1, …, τ_k) ∈ \Comp(n) $ the \emph{NilHecke algebra} $ \NH_τ $ is defined as
\begin{equation*}
\NH_τ ≔ \NH_{τ_1} ¤ … ¤ \NH_{τ_k}.
\end{equation*}
\end{definition}

\begin{example}
For $ σ = (1, …, 1) ∈ \Comp(n) $ we have $ \NH_σ = ℂ[X_1, …, X_n]⟦ħ⟧ $. For $ σ = (2) ∈ \Comp(2) $ we have $ \NH_σ = ℂ[X_1, X_2] ¤ \vspan(e, s)⟦ħ⟧ $ with $ s X_1 = X_2 s + ħ $ and $ s X_2 = X_1 s - ħ $ and $ s^2 = 0 $.
\end{example}

\begin{convention}
\label{conv:nh-algebra-strandconv}
Strand diagrams are read from bottom to top, and composition $ A · B $ denotes the diagram $ A $ sitting on top of $ B $ as illustrated in \autoref{fig:diag-nh-mult}.
\end{convention}

\begin{figure}
\centering
\begin{subfigure}{0.65\linewidth}
\centering
\begin{tikzpicture}
\path[draw] (0.5, 0) -- (7.5, 0);
\path[draw] (0.5, 2) -- (7.5, 2);
\path[draw] (1, 0) node[below] {1} -- (2, 1) -- (2, 2) coordinate[pos=0.3] (A1) coordinate[pos=0.5] (A2) coordinate[pos=0.7] (A3);
\path[draw] (2, 0) node[below] {2} -- (3, 1) -- (3, 2) coordinate[pos=0.5] (B1);
\path[draw] (3, 0) node[below] {3} -- (4, 1) -- (4, 2) coordinate[pos=0.4] (C1) coordinate[pos=0.6] (C2);
\path[draw] (4, 0) node[below] {4} -- (1, 1) -- (1, 2) coordinate[pos=0.3] (D1) coordinate[pos=0.5] (D2) coordinate[pos=0.7] (D3);
\path[draw] (6, 0) node[below] {5} -- (7, 1) -- (7, 2) coordinate[pos=0.4] (E1) coordinate[pos=0.6] (E2);
\path[draw] (7, 0) node[below] {6} -- (6, 1) -- (6, 2) coordinate[pos=0.5] (F1);
\foreach \i in {A1,A2,A3,B1,C1,C2,D1,D2,D3,E1,E2,F1} {\path[fill] (\i) circle[radius=0.05];};
\end{tikzpicture}
\caption{$ X_1^3 X_2 X_3^2 X_4^3 X_5^2 X_1 (1234)(56) ∈ S_{(4, 2)} $ as strand diagram.}
\label{fig:diag-nh-stranddiag}
\end{subfigure}
\hspace{0.05\linewidth}
\begin{subfigure}{0.2\linewidth}
\centering
\begin{tikzpicture}[yscale=1]
\path[draw] (0.5, 0) -- (2.5, 0);
\path[draw] (0.5, 2) -- (2.5, 2);
\path[draw] (1, 0) -- (2, 1) -- (1, 2);
\path[draw] (2, 0) -- (1, 1) -- (2, 2);
\path (3, 1) node {$ = 0 $};
\end{tikzpicture}
\caption{Bigons vanish.}
\label{fig:diag-nh-bigons}
\end{subfigure}
\\[2ex]
\begin{subfigure}{0.64\linewidth}
\centering
\begin{tikzpicture}
\begin{scope}[scale=0.5]
\path[draw] (0.5, 0) -- (2.5, 0);
\path[draw] (0.5, 2) -- (2.5, 2);
\path[draw] (1, 0) -- (2, 2);
\path[draw] (2, 0) -- (1, 2) coordinate[pos=0.7] (A);
\path[fill] (A) circle[radius=0.1];
\path (3, 1) node {$ - $};
\begin{scope}[shift={(3, 0)}]
\path[draw] (0.5, 0) -- (2.5, 0);
\path[draw] (0.5, 2) -- (2.5, 2);
\path[draw] (1, 0) -- (2, 2);
\path[draw] (2, 0) -- (1, 2) coordinate[pos=0.3] (A);
\path[fill] (A) circle[radius=0.1];
\end{scope}
\path (6.5, 1) node {$ = ~~ ħ $};
\begin{scope}[shift={(6.5, 0)}]
\path[draw] (0.5, 0) -- (2.5, 0);
\path[draw] (0.5, 2) -- (2.5, 2);
\path[draw] (1, 0) -- (1, 2);
\path[draw] (2, 0) -- (2, 2);
\end{scope}
\end{scope}
\begin{scope}[scale=0.5, shift={(11, 0)}]
\path[draw] (0.5, 0) -- (2.5, 0);
\path[draw] (0.5, 2) -- (2.5, 2);
\path[draw] (1, 0) -- (2, 2) coordinate[pos=0.3] (A);
\path[draw] (2, 0) -- (1, 2);
\path[fill] (A) circle[radius=0.1];
\path (3, 1) node {$ - $};
\begin{scope}[shift={(3, 0)}]
\path[draw] (0.5, 0) -- (2.5, 0);
\path[draw] (0.5, 2) -- (2.5, 2);
\path[draw] (1, 0) -- (2, 2) coordinate[pos=0.7] (A);
\path[draw] (2, 0) -- (1, 2);
\path[fill] (A) circle[radius=0.1];
\end{scope}
\path (6.5, 1) node {$ = ~~ ħ $};
\begin{scope}[shift={(6.5, 0)}]
\path[draw] (0.5, 0) -- (2.5, 0);
\path[draw] (0.5, 2) -- (2.5, 2);
\path[draw] (1, 0) -- (1, 2);
\path[draw] (2, 0) -- (2, 2);
\end{scope}
\end{scope}
\end{tikzpicture}
\caption{Resolution of dot-pass crossings.}
\label{fig:diag-nh-dotpass}
\end{subfigure}
\\[2ex]

\begin{subfigure}{0.45\linewidth}
\centering
\begin{tikzpicture}[scale=0.5]
\path[draw] (0.5, 0) -- (3.5, 0);
\path[draw] (0.5, 2) -- (3.5, 2);
\path[draw] (1, 0) -- (3, 2);
\path[draw] (3, 0) -- (1, 2);
\path[draw, bend right] (2, 0) to (2, 2);
\begin{scope}[shift={(4, 0)}]
\path[draw] (0.5, 0) -- (3.5, 0);
\path[draw] (0.5, 2) -- (3.5, 2);
\path[draw] (1, 0) -- (3, 2);
\path[draw] (3, 0) -- (1, 2);
\path[draw, bend left] (2, 0) to (2, 2);
\end{scope}
\path (4, 1) node {$ = $};
\end{tikzpicture}
\caption{Resolution of braids.}
\label{fig:diag-nh-nobraid}
\end{subfigure}
\hspace{0.05\linewidth}
\begin{subfigure}{0.2\linewidth}
\centering
\begin{tikzpicture}
\path node {$ A · B = \begin{array}{l} A \\ B \end{array} $};
\end{tikzpicture}
\caption{Multiplication.}
\label{fig:diag-nh-mult}
\end{subfigure}
\caption{This figure illustrates how to interpret elements of the NilHecke algebra as strand diagrams. It also illustrates the relations of the NilHecke algebra in terms of strand diagrams. Multiplication of strand diagrams is given by stacking them upon each other. It is worth remembering that strand diagrams are read from bottom to top. In particular, a crossings in a strand diagrams correspond to the permutation given by reading the strands from bottom to top. For instance, in \autoref{fig:diag-nh-stranddiag} the depicted crossing corresponds to the permutation $ (1234)(56) $.}
\label{fig:diag-nh-diagrules}
\end{figure}

\subsection{Endomorphism algebra of standard Lagrangian}
\label{sec:prelimhilbhor-calc}
In this section, we recall the description of $ \Fuk(Y_{O, W})_L $ in terms of the undeformed NilHecke algebras $ \NH_{k, ħ=0} $ from \cite[section 7]{ADLSZ}. In \autoref{sec:unique}, we use this description to establish the desired equivalence between $ \Fuk(Y_W) $ and the NilHecke algebras $ \NH_k $.

We start by regarding the category $ \Fuk(Y_{O, W})_L $. Its single object $ L $ has a large endomorphism space obtained from the localization procedure. After passing to the minimal model, its endomorphism space can be described explicitly as wrapped Floer homology. Simply speaking, its endomorphism space is spanned by intersections between $ L $ and the wrapped version $ L^{(1)} $. Thanks to the big diagonal being removed, it is doable to compute the $ A_∞ $-products among these intersection points, and this calculation was carried out in \cite[section 7]{ADLSZ}. We recall this description as follows:

\begin{theorem}[{\cite[Theorem 7.1]{ADLSZ}}]
\label{th:prelimhilbhor-calc-iso}
There exists an isomorphism of $ A_∞ $-algebras $ \NH_{k, ħ=0} → \H\End_{\Fuk(Y_{O, W})} (L) $.
\end{theorem}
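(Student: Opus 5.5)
This theorem is cited from \cite[Theorem 7.1]{ADLSZ}, so the plan is to reconstruct that argument in three steps: compute the hom space, compute the products, and then show that no higher products survive.

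\textbf{Step 1: the hom space.} First I would compute $ \HW(L, L) $ via \cite[Lemma 3.37]{GPS-I}, reducing to the intersection points $ L^{(1)} ∩ L $ in the simplified model $ A^k_{r_1,r_2,W} × R^k_W / S_k $.

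Since $ L $ is the image of a product, an intersection point upstairs is a tuple of pointwise intersections, together with a permutation $ σ ∈ S_k $. The permutation records which base strand $ s_i + [-1,1]i $ of $ L^{(1)} $ meets which strand $ s_{σ(i)} + [-1,1]i $ of $ L $. In the base rectangle the tilted strands of $ L^{(1)} $ (see \autoref{fig:prelimhilbhor-lag-wrapped}) meet each strand of $ L $ at most once.

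In the fiber cylinder, each further turn of wrapping produces one more intersection. Wrapping only once and then passing to the localization, the $ n $-fold winding survives as the power $ X_i^{n} $ of a continuation-type generator; the stop at zero angle prevents negative windings.

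This gives a basis of $ \H\End(L) $ indexed by monomials $ X_1^{a_1}… X_k^{a_k} ¤ σ $, matching the vector space $ ℂ[X_1,…,X_k] ¤ ℂ[S_k] $. Gradings would be fixed by a Maslov computation, with all generators sitting in degree zero. Formality of the minimal model would then follow if everything is concentrated in degree zero, since then $ μ^{≠2} = 0 $ for degree reasons.

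\textbf{Step 2: the product $ μ^2 $.} Next I would determine $ μ^2 $ by counting holomorphic triangles in the product of the $ S_k $-cover, using the tautological correspondence between disks in the symmetric product and $ k $-tuples of disks in the factors (possibly branched). Because the big diagonal is removed, the $ y $-coordinates never collide. So a disk contributing to $ μ^2 $ must be a product of triangles in each factor, with the permutations composing.

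Three consequences should come out of this count.
\begin{enumerate}
\item Dots compose additively in each fiber factor.
\item A bigon $ s_i^2 $ would require two base strands to cross twice in the rectangle. No such disk avoids the removed diagonal region, so $ s_i^2 = 0 $.
\item The correction term $ ħ \id $ in the dot-pass relation comes exactly from disks passing through the diagonal. These are absent here, giving $ X_i s_i = s_i X_{i+1} $.
\end{enumerate}
Braid relations hold because both sides correspond to the same intersection point with identical triangle counts.

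\textbf{Main obstacle.} I expect the hard part to be checking that no higher $ μ^{≥3} $ survive in the minimal model and that signs match, so that the identification is an honest $ A_∞ $-isomorphism rather than just an algebra isomorphism on cohomology. I would first try to show that all of $ \HW(L,L) $ is concentrated in degree zero. If that fails, I would instead use the grading by winding numbers in the fiber and permutation length to show that the higher products are homogeneous of a degree that forces them to vanish.
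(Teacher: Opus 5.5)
The paper does not prove this statement. It imports it from \cite[Theorem 7.1]{ADLSZ}, remarking only that the $A_\infty$-products become computable once the big diagonal is removed. Your outline reconstructs that direct computation, and its skeleton is right: off the diagonal the $S_k$-cover is unbranched, so disks in $Y_{O,W}$ are $k$-tuples of disks in the factors whose $y$-coordinates never collide.

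The first gap is that Step~2 only produces relations. That gives at best an algebra map $\NH_{k,\hbar=0}\to\HW(L,L)$, and you never show it is bijective. Topology pins down the output of $\mu^2(p,q)$ up to a scalar: permutations compose, and each $u_a$ and each $y_a-y_b$ is holomorphic and nonvanishing on the disk, so their boundary windings vanish. But that scalar could be zero.

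What you need is that the iterated product along a reduced word $w$, with dots, is $\pm$ the unique generator with permutation $w$ and those windings. This is also where the powers $X_i^n$ come from: the $n$-fold windings are separate generators of $\CF(L^{(n)},L)$, not continuation elements. Likewise, Step~1 needs each tilted strand to meet \emph{every} vertical strand, so that all of $S_k$ occurs.

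Here is how to close the gap.
Each component is the unique triangle with the prescribed corners in its two-dimensional factor; you must check it exists with the correct boundary orientation. The tuple misses the diagonal for two reasons. For $a\neq b$, $y_a-y_b$ has no zeros on the boundary, since distinct strands of one Lagrangian are disjoint. And the boundary winding of $(y_a-y_b)^2$ is the net number of crossings of that pair, which vanishes for every pair exactly when $\ell(ww')=\ell(w)+\ell(w')$. When it equals $2$, as for $s_i\cdot s_i$, a zero is forced; that is the precise form of your bigon argument. Once signs are checked, the map sends the basis $\{X^a\sigma_w\}$ of $\NH_{k,\hbar=0}$ to $\pm$ your Step~1 basis. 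This count, not formality, is where the work lies.

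Second, your primary formality route is correct, but you must set up an honest $\mathbb{Z}$-grading. The paper's Fukaya categories are only $\mathbb{Z}/2$-graded, and ``degree $0$ modulo $2$'' kills only the odd $\mu^k$.

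Your fallback cannot substitute for this. Every disk in $Y_{O,W}$ misses $\{f_q=0\}\cup\{f_\hbar=0\}$. So every $\mu^k$, and every product of the minimal model, is homogeneous of degree $(0,0)$ for $(\deg_q,\deg_\hbar)$, and these gradings constrain $\mu^3$ exactly as they constrain $\mu^2$.

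Use instead the holomorphic volume form $\bigwedge_i d\log u_i\wedge dy_i$. It is $S_k$-invariant, being a wedge of $2$-forms, and so descends to $Y_{O,W}$. For this form $L$ is special Lagrangian. Every generator of $\CF(L^{(n)},L)$ is locally a product of $k$ crossings of the same type as those of the unit: a monotone tilted strand across a vertical one in the base, and a spiral whose angle is monotone in $\log|u|$ across a radial ray in the fiber. So all generators have degree $0$, $\HW(L,L)$ is concentrated in degree $0$, and $\mu^k=0$ for $k\neq 2$ in the minimal model.
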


\begin{figure}
\centering
\begin{subfigure}{0.45\linewidth}
\centering
\begin{tikzpicture}
\begin{scope}
\path[draw] (0, 0) -- (1.5, 0);
\path[draw] (0, 1.5) -- (1.5, 1.5);
\path[draw] (0.2, 0) -- (0.2, 1.5);
\path[draw] (0.4, 0) -- (0.4, 1.5);
\path[draw] (0.6, 0) -- (0.6, 1.5);
\path[draw] (0.8, 0) -- (1.0, 1.5);
\path[draw] (1.0, 0) -- (0.8, 1.5);
\path[draw] (1.3, 0) -- (1.3, 1.5);
\path (0.2, 0) node[below, shift={(0, 0)}] {\tiny $ 1 $};
\path (0.4, 0) node[below, shift={(0.02, -0.1)}] {\tiny $ … $};
\path (0.8, 0) node[below, shift={(-0.1, 0)}] {\tiny $ i $};
\path (1.0, 0) node[below, shift={(0.15, 0)}] {\tiny $ i+1 $};
\end{scope}
\path[draw, <->] (1.5, 0.75) to (2, 0.75);
\begin{scope}[shift={(2.25, 0)}, scale=1.5]
\path[draw] (0, 0) -- (1.5, 0);
\path[draw] (0, 1) -- (1.5, 1);
\path[draw, ultra thick] (0, 0) -- (0, 1);
\path[draw, ultra thick] (1.5, 0) -- (1.5, 1);
\path[draw, thick] (0.7, 0) to[out=90, in=270] (0.7, 1);
\path[draw, thick] (0.8, 0) to[out=90, in=270] (0.8, 1);
\path[draw, thick] (0.45, 0) to[out=90, in=270] (0.95, 1);
\path[draw, thick] (0.55, 0) to[out=90, in=270] (1.05, 1);
\path[fill] (0.7, 0.38) circle[radius=0.03] node[shift={(-0.12, -0.15)}] {\tiny $ σ_i $};
\path[fill] (0.8, 0.62) circle[radius=0.03] node[shift={(0.12, 0.15)}] {\tiny $ σ_i $};
\path (0.75, -0.1) node {\small base};
\end{scope}
\begin{scope}[shift={(5.75, 0)}]
\path[draw] (0, 0) ellipse (1 and 0.3);
\path[draw] (0, 2) ellipse (1 and 0.3);
\path[draw] ($ (0, 0) + (180:1 and 0.3) $) to ($ (0, 2) + (180:1 and 0.3) $);
\path[draw] ($ (0, 0) + (0:1 and 0.3) $) to ($ (0, 2) + (0:1 and 0.3) $);
\path[draw, thick] ($ (0, 0) + (270:1 and 0.3) $) to coordinate[pos=0.52] (I1) coordinate[pos=0.41] (S1) ($ (0, 2) + (270:1 and 0.3) $);
\path[draw, thick] ($ (0, 0) + (280:1 and 0.3) $) to coordinate[pos=0.63] (S2) coordinate[pos=0.51] (I2) ($ (0, 2) + (280:1 and 0.3) $);
\path[draw, thick] ($ (0, 0) + (245:1 and 0.3) $) to[out=90, in=270] ($ (0, 2) + (295:1 and 0.3) $);
\path[draw, thick] ($ (0, 0) + (255:1 and 0.3) $) to[out=90, in=270] ($ (0, 2) + (305:1 and 0.3) $);
\path[fill] (S1) circle[radius=0.05] node[below, shift={(-0.1, 0)}] {\tiny $ σ_i $};
\path[fill] (S2) circle[radius=0.05] node[above, shift={(0.1, 0)}] {\tiny $ σ_i $};
\path (0.0, -0.5) node {\small fiber};
\end{scope}
\end{tikzpicture}
\caption{The case of $ σ_i $}
\end{subfigure}
\hspace{0.05\linewidth}
\begin{subfigure}{0.45\linewidth}
\centering
\begin{tikzpicture}
\begin{scope}
\path[draw] (0, 0) -- (1.4, 0);
\path[draw] (0, 1.5) -- (1.4, 1.5);
\path[draw] (0.2, 0) -- (0.2, 1.5);
\path[draw] (0.4, 0) -- (0.4, 1.5);
\path[draw] (0.6, 0) -- (0.6, 1.5);
\path[draw] (0.8, 0) -- (0.8, 1.5) coordinate[pos=0.5] (A);
\path[draw] (1.0, 0) -- (1.0, 1.5);
\path[draw] (1.2, 0) -- (1.2, 1.5);
\path[fill] (A) circle[radius=0.05];
\path (0.2, 0) node[below, shift={(0, 0)}] {\tiny $ 1 $};
\path (0.5, 0) node[below, shift={(0.0, -0.1)}] {\tiny $ … $};
\path (0.8, 0) node[below, shift={(0, 0)}] {\tiny $ i $};
\path (1.0, 0) node[below, shift={(0.15, -0.1)}] {\tiny $ … $};
\end{scope}
\path[draw, <->] (1.5, 0.75) to (2, 0.75);
\begin{scope}[shift={(2.25, 0)}, scale=1.5]
\path[draw] (0, 0) -- (1.5, 0);
\path[draw] (0, 1) -- (1.5, 1);
\path[draw, ultra thick] (0, 0) -- (0, 1);
\path[draw, ultra thick] (1.5, 0) -- (1.5, 1);
\path[draw, thick] (0.75, 0) to[out=90, in=270] (0.75, 1);
\path[draw, thick] (0.55, 0) to[out=90, in=270] (1.05, 1);
\path[fill] (0.75, 0.38) circle[radius=0.03] node[left] {$ \id $};
\path (0.75, -0.1) node {\small base};
\end{scope}
\begin{scope}[shift={(5.75, 0)}]
\path[draw] (0, 0) ellipse (1 and 0.3);
\path[draw] (0, 2) ellipse (1 and 0.3);
\path[draw] ($ (0, 0) + (180:1 and 0.3) $) to coordinate[pos=0.6] (L) ($ (0, 2) + (180:1 and 0.3) $);
\path[draw] ($ (0, 0) + (0:1 and 0.3) $) to coordinate[pos=0.4] (R) ($ (0, 2) + (0:1 and 0.3) $);
\path[draw, thick] ($ (0, 0) + (270:1 and 0.3) $) to coordinate[pos=0.35] (S) coordinate[pos=0.81] (I) ($ (0, 2) + (270:1 and 0.3) $);
\path[draw, thick] ($ (0, 0) + (250:1 and 0.3) $) to[out=90, in=190] (R);
\path[draw, thick, gray, dashed] (R) to[out=160, in=340] (L);
\path[draw, thick] (L) to[out=10, in=270] ($ (0, 2) + (290:1 and 0.3) $);
\path[fill] (S) circle[radius=0.05] node[left] {$ X_i $};
\path (0.0, -0.5) node {\small fiber};
\end{scope}
\end{tikzpicture}
\caption{The case of $ X_i $}
\end{subfigure}
\caption{This figure depicts the correspondence between the generators $ X_i $ and $ σ_i $ of $ \NH_{k, ħ=0} $ with specific self-intersection points in $ L ∩ L^{(1)} $. In the case of $ σ_i $, only the $ i $-th and $ i+1 $-th Lagrangian are depicted. In the case of $ X_i $, only the $ i $-th Lagrangian is depicted. In both cases, the intersection is trivial in all other coordinates.}
\label{fig:prelimfuk-nh-correspondence}
\end{figure}

The correspondence of morphisms established by this theorem is depicted in \autoref{fig:prelimfuk-nh-correspondence}. Following this correspondence, we denote individual morphisms in $ \H\End(L) $ by the same labels as in $ \NH_{k, ħ=0} $. In particular we have the morphisms $ X_i ∈ \H\End(L) $ for $ i = 1, …, k $ and $ σ_i ∈ \H\End(L) $ for $ i = 1, …, k-1 $.

\section{Uniqueness of deformations}
\label{sec:unique}
In this section, we construct the relative Fukaya category $ \relFuk(Y_W)_L $ and show that it is isomorphic to the NilHecke algebra $ \NH_k $. In \autoref{sec:unique-grading}, we construct $ \relFuk(Y_W)_L $ as a $ ℤ^2 $-graded deformation over the deformation base $ ℂ⟦ħ⟧_{(0, 1)} $. In \autoref{sec:unique-HH}, we compute the $ (0, -1) $-graded component of Hochschild cohomology of $ \NH_{k, ħ=0} $. In \autoref{sec:unique-classification}, we classify the $ ℤ^2 $-graded deformations of $ \NH_{k, ħ=0} $ over $ ℂ⟦ħ⟧_{(0, 1)} $. In \autoref{sec:unique-main}, we state the main theorem.

\subsection{Relative $ ℤ^2 $-graded Fukaya category}
\label{sec:unique-grading}
In this section, we present the algebra $ \NH_k $ as a $ ℤ^2 $-graded deformation of $ \NH_{k, ħ=0} $, present the relative Fukaya category $ \relFuk(Y_W)_L $ as a $ ℤ^2 $-graded deformation of $ \Fuk(Y_{O, W}) $, and establish equality of the gradings between $ \NH_{k, ħ=0} $ and $ \Fuk(Y_{O, W}) $. In particular, we extend the comparison theorem \autoref{th:prelimhilbhor-calc-iso} to the $ ℤ^2 $-graded setting.

\begin{equation*}
\begin{tikzcd}
\relFuk(Y_W)_L \arrow[d, dash, "ℤ^2\text{-defo}"] & \NH_k \arrow[d, dash, "ℤ^2\text{-defo}"] \\
\Fuk(Y_{O, W})_L \arrow[r, "\sim", "ℤ^2"'] & \NH_{k, ħ=0}.
\end{tikzcd}
\end{equation*}

\begin{definition}
$ ℂ⟦ħ⟧_{(0, 1)} $ is the completed $ ℤ^2 $-graded deformation base given by $ ℂ⟦ħ⟧ $ with grading on all subquotients induced naturally from $ \deg(ħ) = (0, 1) $.
\end{definition}

We start by presenting $ \NH_k $ as a $ ℤ^2 $-graded deformation of $ \NH_{k, ħ=0} $ over $ ℂ⟦ħ⟧_{(0, 1)} $. The $ ℤ^2 $-grading $ \deg = (\deg_q, \deg_{ħ}) $ consists of two individual $ ℤ $-components, which we refer to as q-grading and ħ-grading. These degrees are given on the standard generators as follows:
\begin{equation*}
\deg(X_i) = (-1, 0), \quad \deg(σ_i) = (1, 1).
\end{equation*}

\begin{lemma}
$ \NH_k $ is a $ ℤ^2 $-graded deformation of $ \NH_{k, ħ=0} $ over $ ℂ⟦ħ⟧_{(0, 1)} $.
\end{lemma}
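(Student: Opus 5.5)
Three things have to be checked: that $\NH_k$ is a deformation of $\NH_{k,ħ=0}$ in the sense of \autoref{sec:prelim-liou-defo}, that the $ℤ^2$-grading is well defined on $\NH_{k,ħ=0}$, and that the deformed product is homogeneous over $ℂ⟦ħ⟧_{(0,1)}$. The natural tool is the reduction-system formalism of Barmeier--Wang from the graded quiver algebra subsection.

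First I present $\NH_{k,ħ=0}$ as a one-vertex quiver algebra. The arrows are $X_1,\dots,X_k$ of degree $(-1,0)$ and $σ_1,\dots,σ_{k-1}$ of degree $(1,1)$. I fix a reduction system $R$ whose rules are:
\begin{itemize}
\item $σ_i σ_i \mapsto 0$;
\item the braid rule $σ_{i+1}σ_iσ_{i+1}\mapsto σ_iσ_{i+1}σ_i$, plus the commutations $σ_iσ_j\mapsto σ_jσ_i$ for $|i-j|\ge 2$ and $X_jX_i\mapsto X_iX_j$ for $j>i$;
\item dot-passing rules moving dots above crossings: $σ_iX_i\mapsto X_{i+1}σ_i$, $σ_iX_{i+1}\mapsto X_iσ_i$, and $σ_iX_j\mapsto X_jσ_i$ for $j\ne i,i+1$.
\end{itemize}
The $σ$-part should be chosen as in a standard Gröbner basis for the nilCoxeter algebra. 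The irreducible paths are then the monomials $X^{a}\,σ_{w}$ with $σ_w$ a reduced word for $w\in S_k$, and these match the basis $ℂ[X]\otimes ℂ[S_k]$. Every rule has both sides of the same $ℤ^2$-degree: both sides of the dot-passing rules have degree $(0,1)$, and the degree is invariant under permuting letters. So $R$ is $ℤ^2$-homogeneous, and $\NH_{k,ħ=0}$ is $ℤ^2$-graded.

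Next I deform the rules. Only the dot-passing rules change, and only for $j\in\{i,i+1\}$, exactly as prescribed by the defining relations of $\NH_k$: $σ_iX_i\mapsto X_{i+1}σ_i+ħ$ and $σ_iX_{i+1}\mapsto X_iσ_i-ħ$. The added term $ħ\cdot 1$ has degree $(0,1)+(0,0)=(0,1)$, which equals the degree of $σ_iX_i$. So the deformed rules are homogeneous once $ħ$ is given degree $(0,1)$, and the product of $\NH_k$, computed by concatenation followed by reduction, is $ℤ^2$-homogeneous on each quotient modulo $ħ^n$. Setting $ħ=0$ recovers $\NH_{k,ħ=0}$.

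The main obstacle is showing that the deformed system is reduction-unique, that is, that $\NH_k$ is flat over $ℂ⟦ħ⟧$ with the same irreducible basis. Rather than resolving all overlap ambiguities by hand, I would appeal to the classical fact that $\NH_k$ has PBW basis $\{X^a σ_w\}$ over $ℂ[ħ]$. One proof is via its faithful polynomial representation on $ℂ[ħ][X_1,\dots,X_k]$, where $σ_i$ acts by $ħ$ times the divided difference $\partial_i$; these operators satisfy all the relations and are linearly independent. This gives reduction-uniqueness, so by the Barmeier--Wang lemma the deformed product is an associative deformation. Completing $ħ$-adically from $ℂ[ħ]$ to $ℂ⟦ħ⟧$, with homogeneity checked modulo each $ħ^n$, yields the $ℤ^2$-graded deformation over $ℂ⟦ħ⟧_{(0,1)}$. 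If needed, I would check the critical overlaps $σ_iσ_iX_i$ and $σ_iσ_{i+1}σ_iX_j$ directly; the first gives $σ_i(X_{i+1}σ_i+ħ)\to X_iσ_iσ_i-ħσ_i+ħσ_i=0$, which is consistent.
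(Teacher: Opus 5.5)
Your proposal is correct and follows the paper's proof. That proof consists only of observing that the defining relations of $ \NH_k $ are $ ℤ^2 $-homogeneous for $ \deg X_i = (-1, 0) $, $ \deg σ_i = (1, 1) $, $ \deg ħ = (0, 1) $: nilpotence, the braid and commutation relations, and $ σ_i X_i = X_{i+1} σ_i + ħ $, $ σ_i X_{i+1} = X_i σ_i - ħ $.

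The paper takes flatness for granted by defining $ \NH_k $ directly on the basis $ ℂ[X_1, \dots, X_k] \otimes ℂ[S_k]⟦ħ⟧ $, so your extra material strengthens the argument rather than changing the route. This covers your verification via the faithful representation $ σ_i \mapsto ħ \partial_i $ on $ ℂ[ħ][X_1, \dots, X_k] $. It also covers your caveat that the $ σ $-rules must be completed to a genuine Gröbner basis, which is well placed: once $ k \ge 4 $ the naive braid and commutation rules alone are not confluent, since $ (σ_1 σ_3 σ_2)^n $ or $ (σ_3 σ_1 σ_2)^n $ stays irreducible for every $ n $, depending on orientation.
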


\begin{proof}
It suffices to observe that the reduction relations $ σ_i^2 = 0 $, $ X_i X_j = X_j X_i $, $ σ_i X_i = X_{i+1} σ_i + ħ $, $ σ_i X_{i+1} = X_i σ_i - ħ $ and $ σ_i σ_{i+1} σ_i = σ_{i+1} σ_i σ_{i+1} $ are $ ℤ^2 $-homogeneous.
\end{proof}

Let us now construct the relative Fukaya category $ \relFuk(Y_W)_L $ as a $ ℤ^2 $-graded deformation of $ \Fuk(Y_{O, W})_L $ over $ ℂ⟦ħ⟧_{(0, 1)} $. The construction of this deformation follows the procedure outlined in \autoref{sec:prelimfuk-grading}. We pick the following grading functions:
\begin{align*}
f_q &= \prod_{i = 1}^k x_i, \\
f_ħ &= \prod_{i < j} (y_i - y_j)^2.
\end{align*}
Two applications of \autoref{def:prelimfuk-grading-relfuk} provide for the construction of a $ ℤ^2 $-graded deformation $ \relFuk(Y_W, (f_q, f_ħ))_L $ over $ ℂ⟦q, ħ⟧_{(1, 0), (0, 1)} $ with $ \deg(q) = (1, 0) $ and $ \deg(ħ) = (0, 1) $. Taking the quotient by $ q $ produces the $ ℤ^2 $-graded deformation $ \relFuk(Y_W)_L $ over $ ℂ⟦ħ⟧_{(0, 1)} $.

\begin{lemma}
The category $ \relFuk(Y_W)_L $ is a $ ℤ^2 $-graded deformation of $ \Fuk(Y_{O, W})_L $ over $ ℂ⟦ħ⟧_1 $.
\end{lemma}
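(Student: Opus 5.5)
We first make the construction explicit and then check three things: the $ ℤ^2 $-grading on hom spaces, the homogeneity of the deformed products, and the identification of the $ ħ = 0 $ specialization with $ \Fuk(Y_{O, W})_L $.

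\emph{Gradings.} Every intersection point $ p ∈ L^{(i)} ∩ L^{(j)} $ receives the two degrees $ \deg_{f_q}(p) $ and $ \deg_{f_ħ}(p) $ from the $ f $-degree definition. This requires $ L $ to be contractible and to lie in $ \mathring{Y} = f_q^{-1}(ℂ^*) ∩ f_ħ^{-1}(ℂ^*) $. Both conditions hold. $ L $ is the image of a product of intervals, so it is a cube. Every point of $ Y $ has $ x_i ≠ 0 $. The $ y $-coordinates of $ L $ and of all its wrappings stay in disjoint vertical strips $ s_i + [-1,1]i $, so $ f_ħ ≠ 0 $ along them.

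\emph{Homogeneity of the products.} A disk $ D $ with boundary on $ L^{(i_0)}, …, L^{(i_k)} $ and corners $ p_1, …, p_k, q $ has
\begin{equation*}
\deg_f(D) = \deg_f(q) - \sum_j \deg_f(p_j)
\end{equation*}
for both $ f = f_q $ and $ f = f_ħ $. The reason is that the boundary loop of $ D $ decomposes into path segments, and the integral of $ \frac{1}{2πi} d\log f $ is additive. Here the path-independence from the earlier remark is essential. Weighting the disk by $ q^{\deg_{f_q}(D)} ħ^{\deg_{f_ħ}(D)} $ therefore makes each $ μ^k $ homogeneous of degree $ (0,0) $ for the bigrading (hom degree) $ - $ (power of $ q, ħ $). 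The same holds for the curvature teardrops, by the same additivity.

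Next we pass to $ q = 0 $. Disks of positive $ f_q $-winding are killed, which is consistent because $ x_i ≠ 0 $ on all of $ Y $. In fact $ f_q $ never vanishes, so every disk has $ \deg_{f_q} = 0 $. The $ q $-direction therefore acts purely as a grading, and the quotient by $ q $ keeps the full $ \deg_q $ grading on morphisms while remaining a deformation in $ ħ $ only, over $ ℂ⟦ħ⟧_{(0,1)} $.

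The Drinfeld localization at the continuation elements preserves the grading provided the $ c_{i,L} $ are homogeneous of degree $ (0,0) $. This holds because each $ c_{i,L} $ corresponds to the identity under small isotopies, and the isotopy $ L_t $ stays in $ \mathring{Y} $. Passing to the minimal model then stays graded by \autoref{th:prelim-defo-gradedMCpush}.

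\emph{Identification at $ ħ = 0 $.} By \autoref{rem:prelimfuk-grading-basecat}, a disk with $ \deg_{f_ħ} = 0 $ avoids $ f_ħ^{-1}(0) $, the big diagonal. Such disks are exactly those counted in $ \Fuk(Y \setminus π^{-1}(Δ))_W $. This is the step I expect to be the main obstacle: this category must be identified with $ \Fuk(Y_{O, W})_L $. My plan is to use the comparison lemmas of the previous section, namely the isotopy of Kähler Liouville forms between $ λ_{Y_O} $ and $ λ_Y $ restricted to the diagonal complement. Under such an isotopy, the moduli spaces of degree-zero disks with boundary on the fixed Lagrangians $ L^{(i)} $ vary by cobordism. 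This should give a quasi-isomorphism of the $ ħ = 0 $ ordered categories compatible with continuation elements, hence of their localizations.

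Disks escaping toward the removed neighbourhood $ U $ must be excluded. I would do this with an energy or maximum-principle argument: $ |f_ħ| $ is bounded below on all the Lagrangians, and $ \log|f_ħ| $ is pluriharmonic on $ \mathring{Y} $. Consequently a degree-zero disk, on which $ \log|f_ħ ∘ u| $ is harmonic, attains its minimum on its boundary and so cannot enter the small region $ U $.

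Combining the bigrading, homogeneity and this identification yields the statement.
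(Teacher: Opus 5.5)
Your proposal is correct and follows the paper's route: build the bigraded deformation from the two grading functions $ f_q, f_ħ $, reduce modulo $ q $, and identify the $ ħ = 0 $ part with $ \Fuk(Y_{O, W})_L $ via \autoref{rem:prelimfuk-grading-basecat}. Along the way you supply details the paper only asserts, notably that $ f_q $ never vanishes on $ Y $ (so the $ q $-direction is a pure grading) and the maximum-principle argument keeping degree-zero disks out of $ U $.

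There is one sign slip. Grading $ ħ^b \otimes p $ by $ \deg(p) - (0, b) $ gives $ \deg(ħ) = (0, -1) $. To land over $ ℂ⟦ħ⟧_{(0, 1)} $ with the paper's $ \deg(σ_i) = (1, 1) $ and $ \deg(X_i) = (-1, 0) $, the additivity should read $ \deg_f(D) = \sum_j \deg_f(p_j) - \deg_f(q) $, and one should grade by $ \deg(p) + (0, b) $. For instance, the disk producing the $ ħ $-term of $ σ_i X_i $ has $ \deg_{f_ħ} = 1 + 0 - 0 $.
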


\begin{proof}
We start by observing that $ \relFuk(Y_W, (f_q, f_ħ))_L $ is indeed a $ ℤ^2 $-graded deformation over $ ℂ⟦q, ħ⟧_{(1, 0), (0, 1)} $. After taking the quotient by $ q $, the $ ℤ^2 $-grading survives while the deformation base reduces from $ ℂ⟦q, ħ⟧_{(1, 0), (0, 1)} $ to $ ℂ⟦ħ⟧_{(0, 1)} $. Therefore $ \relFuk(Y_W)_L $ is a $ ℤ^2 $-graded deformation over $ ℂ⟦ħ⟧_{(0, 1)} $. It remains to comment on the base category of $ \relFuk(Y_W)_L $. As discussed in \autoref{rem:prelimfuk-grading-basecat}, the base category is not in general obvious, however in the present case of $ Y_W $, the Liouville sector $ Y_{O, W} $ is the desired incarnation of $ \mathring{X} $. We finish the proof.
\end{proof}

\begin{figure}
\centering
\begin{subfigure}{0.45\linewidth}
\centering
\begin{tikzpicture}
\path[draw] (0, 0) ellipse (1 and 0.3);
\path[draw] (0, 2) ellipse (1 and 0.3);
\path[draw] ($ (0, 0) + (180:1 and 0.3) $) to ($ (0, 2) + (180:1 and 0.3) $);
\path[draw] ($ (0, 0) + (0:1 and 0.3) $) to ($ (0, 2) + (0:1 and 0.3) $);
\path[draw, thick] ($ (0, 0) + (270:1 and 0.3) $) to coordinate[pos=0.52] (I1) coordinate[pos=0.41] (S1) ($ (0, 2) + (270:1 and 0.3) $);
\path[draw, thick] ($ (0, 0) + (280:1 and 0.3) $) to coordinate[pos=0.63] (S2) coordinate[pos=0.51] (I2) ($ (0, 2) + (280:1 and 0.3) $);
\path[draw, thick] ($ (0, 0) + (245:1 and 0.3) $) to[out=90, in=270] ($ (0, 2) + (295:1 and 0.3) $);
\path[draw, thick] ($ (0, 0) + (255:1 and 0.3) $) to[out=90, in=270] ($ (0, 2) + (305:1 and 0.3) $);
\path[fill] (I1) circle[radius=0.05] node[above left] {\tiny $ \id $};
\path[fill] (I2) circle[radius=0.05] node[below right] {\tiny $ \id $};
\path[fill] (S1) circle[radius=0.05] node[below, shift={(-0.1, 0)}] {\tiny $ σ_i $};
\path[fill] (S2) circle[radius=0.05] node[above, shift={(0.1, 0)}] {\tiny $ σ_i $};
\begin{scope}[shift={(2.5, 1)}]
\path[draw] (0, 0) circle[radius=0.3];
\path[draw] (0, 0) circle[radius=1];
\path[draw, thick] (-10:0.3) to coordinate[pos=0.36] (S1) coordinate[pos=0.51] (I1) (-3:1);
\path[draw, thick] (10:0.3) to coordinate[pos=0.51] (I2) coordinate[pos=0.67] (S2) (3:1);
\path[draw, thick] (-30:0.3) to[out=-20, in=189] (15:1);
\path[draw, thick] (-50:0.3) to[out=-50, in=186] (9:1);
\path[fill] (I1) circle[radius=0.03] node[shift={(0.1, -0.1)}] {\tiny $ \id $};
\path[fill] (I2) circle[radius=0.03] node[shift={(-0.1, 0.1)}] {\tiny $ \id $};
\path[fill] (S1) circle[radius=0.03];
\path[fill] (S2) circle[radius=0.03];
\end{scope}
\begin{scope}[shift={(5, 1.2)}]
\path (0, 0) coordinate (S1);
\path (0, 0.5) coordinate (I1);
\path (-0.5, 0.25) coordinate (I2);
\path (-0.5, 0.75) coordinate (S2);
\path[fill] (I1) circle[radius=0.05] node[right] {$ \id $};
\path[fill] (I2) circle[radius=0.05] node[left] {$ \id $};
\path[fill] (S1) circle[radius=0.05] node[right] {$ σ_i $};
\path[fill] (S2) circle[radius=0.05] node[left] {$ σ_i $};
\path[draw, rounded corners, ->] ($ (I2) + (330:0.1) $) to (S1) to ($ (I1) + (270:0.1) $);
\path[draw, rounded corners, ->] ($ (I1) + (150:0.1) $) to (S2) to ($ (I2) + (90:0.1) $);
\path[draw, ->] (0, -0.5) arc(60:420:0.4);
\path[fill] ($ (0, -0.5) + (240:0.4) $) circle[radius=0.03] node[below] {\tiny 0};
\path (-0.2, -1.5) node {\small $ (y_i - y_{i+1})^2 $};
\end{scope}
\end{tikzpicture}
\caption{The case of $ σ_i $}
\end{subfigure}
\hspace{0.05\linewidth}
\begin{subfigure}{0.45\linewidth}
\centering
\begin{tikzpicture}
\path[draw] (0, 0) ellipse (1 and 0.3);
\path[draw] (0, 2) ellipse (1 and 0.3);
\path[draw] ($ (0, 0) + (180:1 and 0.3) $) to coordinate[pos=0.6] (L) ($ (0, 2) + (180:1 and 0.3) $);
\path[draw] ($ (0, 0) + (0:1 and 0.3) $) to coordinate[pos=0.4] (R) ($ (0, 2) + (0:1 and 0.3) $);
\path[draw, thick] ($ (0, 0) + (270:1 and 0.3) $) to coordinate[pos=0.35] (S) coordinate[pos=0.81] (I) ($ (0, 2) + (270:1 and 0.3) $);
\path[draw, thick] ($ (0, 0) + (250:1 and 0.3) $) to[out=90, in=190] (R);
\path[draw, thick, gray, dashed] (R) to[out=160, in=340] (L);
\path[draw, thick] (L) to[out=10, in=270] ($ (0, 2) + (290:1 and 0.3) $);
\path[fill] (I) circle[radius=0.05] node[above left] {$ \id $};
\path[fill] (S) circle[radius=0.05] node[left] {$ X_i $};
\begin{scope}[shift={(2.5, 1)}]
\path[draw] (0, 0) circle[radius=0.3];
\path[draw] (0, 0) circle[radius=1];
\path[draw, thick] (0:0.3) to coordinate[pos=0.16] (S) coordinate[pos=0.62] (I) (0:1);
\path[draw, thick] (-20:0.3) to[out=330, in=290] (45:0.5) to[out=120, in=0] (90:0.5) to[out=180, in=90] (180:0.55) to[out=-90, in=180] (270:0.7) to[out=0, in=200] (10:1);
\path[fill] (I) circle[radius=0.05] node[above] {\tiny $ \id $};
\path[fill] (S) circle[radius=0.05] node[below] {\tiny $ X_i $};
\end{scope}
\begin{scope}[shift={(4.5, 1)}]
\path[draw] (0:1) to[out=270, in=0] (270:0.8) to[out=180, in=270] (180:0.7) to[out=90, in=180] (90:0.55) to[out=0, in=90] (0:0.45);
\path[draw, ->] (0:0.45) to (0:0.9);
\path[fill] (0, 0) circle[radius=0.05] node[below] {0};
\end{scope}
\end{tikzpicture}
\caption{The case of $ X_i $}
\end{subfigure}
\caption{The degree of the standard morphisms $ p = X_i, σ_i ∈ L ∩ L^{(1)} $ is the turning number of the functions $ f_q $ and $ f_ħ $ when moving on the composite path from the identity along $ L^{(1)} $ to $ p $ and back along $ L $. Since the Lagrangians are symmetric products themselves, the composite path is drawn as two individual paths. In the case of $ σ_i $, the difference $ y_i - y_{i+1} $ makes a 180 degrees positive turn around zero, therefore $ \deg_q (σ_i) = \deg_ħ (σ_i) = 1 $. In the case of $ X_i $, the coordinate $ u_i $ makes a 360 degrees negative turn, therefore $ \deg_ħ (X_i) = -1 $. For convenience, each self-intersection is depicted in two equivalent graphics, representing the cylindrical and the annulus model.}
\label{fig:unique-grading-fukdeg}
\end{figure}

\begin{lemma}
The identification $ \Fuk(Y_W)_L ≅ \NH_{k, ħ=0} $ is $ ℤ^2 $-graded.
\end{lemma}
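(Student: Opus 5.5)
The plan is to reduce the claim to a check on algebra generators, using that the isomorphism of \autoref{th:prelimhilbhor-calc-iso} is compatible with products. The $ ℤ^2 $-grading on $ \H\End_{\Fuk(Y_{O, W})} (L) $ comes from the grading functions $ f_q = \prod x_i $ and $ f_ħ = \prod_{i<j} (y_i - y_j)^2 $. The $ A_∞ $-products there are homogeneous of degree zero: a disk contributing at $ ħ = 0 $ has $ \deg_{f_ħ} (D) = 0 $, and since we divided out $ q $, also $ \deg_{f_q} (D) = 0 $. The degree of a disk is the winding of $ f $ along $ ∂D $, and it decomposes as the degree of the output minus the sum of the degrees of the inputs. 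Hence $ μ^2 $ is additive in degrees. The grading on $ \NH_{k, ħ=0} $ is likewise multiplicative by the homogeneity of its relations. Because of this, and because $ X_1, …, X_k, σ_1, …, σ_{k-1} $ generate $ \NH_{k, ħ=0} $, it suffices to show that the intersection points labelled $ X_i $ and $ σ_i $ in \autoref{fig:prelimfuk-nh-correspondence} carry the degrees $ (-1, 0) $ and $ (1, 1) $.

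There is one caveat about higher products. If the minimal model carries nonzero $ μ^{≥3} $, the isomorphism could fail to be strict. In that case I would use the version of the statement from \cite[section 7]{ADLSZ}, which identifies the minimal model as formal with basis given by intersection points. I would then check homogeneity basis element by basis element: each basis element $ f ¤ s $ is the intersection point obtained by superposing the dot and crossing loops. Since the degree is an integral of $ d\log $, it is additive over the product decomposition of $ L $ into $ k $ base and $ k $ fiber factors.

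The core step is computing the winding numbers. The Lagrangians are products, so the loop from $ \id_L $ along $ L^{(1)} $ to $ p $ and back along $ L $ projects to separate loops in each factor. I then evaluate $ \frac{1}{2πi} ∮ d\log f $ as a sum of contributions.

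For $ σ_i $, the $ x $-coordinates do not wind. The $ y_i $ and $ y_{i+1} $ strands swap, so $ y_i - y_{i+1} $ turns by $ π $ positively and $ (y_i - y_{i+1})^2 $ winds once. This gives $ \deg_ħ = 1 $. For the $ q $-component, I would track $ x_i = u_i \prod_{j≠i} (y_i - y_j) $: the half-turns of the factors $ (y_i - y_{i+1}) $ and $ (y_{i+1} - y_i) $ contribute to $ x_i x_{i+1} $ for a total of one full turn, giving $ \deg_q = 1 $.

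For $ X_i $, only the $ u_i $ factor moves, making one full negative turn, while the $ y $'s stay fixed. So $ f_ħ $ does not wind and $ \deg_ħ = 0 $. Since $ x_i = u_i · (\text{constant along the loop}) $, we get $ \deg_q = -1 $. This matches \autoref{fig:unique-grading-fukdeg}, up to the figure's swapped labels.

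I expect the main obstacle to be orientation and sign bookkeeping. I need to confirm the direction of the wrapping in the fiber, which decides whether $ X_i $ gets $ -1 $ or $ +1 $, and the direction of the half-turn in the base under the $ S_k $-quotient. I also need to confirm that after passing to the $ S_k $-quotient the loop is closed, which is why the square in $ f_ħ $ is needed. I would fix conventions using the $ k = 1 $ case for $ X $ and the $ k = 2 $ case for $ σ $, and then transport them to general $ k $ via the product structure.
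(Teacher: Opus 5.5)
Your proposal is correct and follows essentially the paper's route: the paper likewise only computes the windings of $f_q$ and $f_\hbar$ along the loops defining the degrees of the generator intersection points $X_i$ and $\sigma_i$, read off from \autoref{fig:unique-grading-fukdeg}, and your reduction to generators and formality caveat just make explicit what the paper leaves implicit. One small correction: for $\sigma_i$ it is the $u$-coordinates, not the $x$-coordinates, that do not wind, and the paper packages your $x_i x_{i+1}$ bookkeeping as the identity $f_q = \pm \prod_i u_i \cdot f_\hbar$ away from the big diagonal, which gives $\deg_q(\sigma_i) = \deg_\hbar(\sigma_i) = 1$ and $\deg_q(X_i) = -1$, $\deg_\hbar(X_i) = 0$ at once.
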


\begin{proof}
We start by observing that the standard Lagrangian and its wrappings by construction do not intersect the preimage $ π^{-1} (Δ) $ of the big diagonal. In particular, they do not touch the zero locus of $ f_q $ and $ f_ħ $ and we can use the $ u_i $ coordinates to express $ f_q $:
\begin{equation*}
f_q = \prod_{i = 1}^k u_i \prod_{i < j} (y_i - y_j)^2.
\end{equation*}
The degrees can now be read off graphically, depicted in \autoref{fig:unique-grading-fukdeg}.
\end{proof}

Along the identification $ \H\End_{\Fuk(Y_{O, W})} (L) ≅ \NH_{k, ħ=0} $, both $ \relFuk(Y_W)_L $ and $ \NH_k $ are $ ℤ^2 $-graded deformations of $ \NH_{k, ħ=0} $. We shall finally remark that the $ q $-degree $ \deg_q $ is already present in \cite[section 6]{ADLSZ}, while we have introduced the $ ħ $-degree in order to guarantee the uniqueness of deformations.

\subsection{Hochschild cohomology of $ \NH_{k, ħ=0} $ in degree $ (0, -1) $}
\label{sec:unique-HH}
In this section, we determine the degree zero part $ \HH(\NH_{k, ħ=0})_0 $ of the Hochschild cohomology of the algebra $ \NH_{k, ħ=0} $. The idea is to present this algebra as a quiver algebra with relations given by a reduction system. As it turn out, this degree zero part is one-dimensional. We provide explicit representation of the deformations.

We start by presenting $ \NH_{k, ħ=0} $ as a quiver algebra with reduction system. We have
\begin{equation*}
\NH_{k, ħ=0} = ℂ⟨X_1, …, X_k, σ_1, …, σ_{k-1}⟩ / I.
\end{equation*}
The ideal $ I $ is generated by the following elements of the form $ s - φ_s $:
\begin{alignat*}{2}
σ_i X_i &- X_{i+1} σ_i, \quad && i ∈ \{1, …, k-1\}, \\
σ_i X_{i+1} &- X_i σ_i, \quad && i ∈ \{1, …, k-1\}, \\
σ_i X_j &- X_j σ_i \quad && i ∈ \{1, …, k-1\}, j \notin \{i, i+1\}, \\
σ_i^2 &- 0, \quad && i ∈ \{1, …, k-1\}, \\
σ_i σ_{i+1} σ_i &- σ_{i+1} σ_i σ_{i+1}, \quad && i ∈ \{1, …, k-1\}, \\
σ_i σ_j &- σ_j σ_i, \quad && i ∈ \{1, …, k-1\}, j ≤ i-2, \\
X_i X_j &- X_j X_i, \quad && i ∈ \{1, …, k\}, j < i.
\end{alignat*}
These elements together form a reduction-unique reduction system $ R = \{(s, φ_s)\}_{s ∈ S} $. An irreducible path with respect to $ R $ has the form $ X_1^{a_1} … X_k^{a_k} σ_{b_1} … σ_{b_l} $ where the word $ σ_{b_1} … σ_{b_l} $ contains none of the terms $ σ_i^2 $ or $ σ_i σ_{i+1} σ_i $ or $ σ_i σ_j $ with $ j ≤ i-2 $.

Let us now determine the sets $ S_n $ of ambiguities. We have $ S_0 = \{*\} $ and $ S_1 = \{X_1, …, X_k, σ_1, …, σ_{k-1}\} $ and $ S_2 = S $. The set $ S_3 $ is the set of 1-ambiguities, namely
\begin{align*}
S_3 &= \{σ_i X_i X_j\}_{j < i} ∪ \{σ_i X_{i+1} X_j\}_{1 ≤ i ≤ k-1; j < i} ∪ \{σ_i X_j X_l\}_{j ≠ i, i+1; l < j} ∪ \{σ_i^2 X_i\} ∪ \{σ_i^2 X_{i+1}\} \\
& \quad ∪ \{σ_i^2 X_j\}_{j ≠ i, i+1} ∪ \{σ_i^3\} ∪ \{σ_i^2 σ_{i+1} σ_i\} ∪ \{σ_i^2 σ_j\}_{j ≤ i-2} ∪ \{σ_i σ_{i+1} σ_i X_i\} ∪ \{σ_i σ_{i+1} σ_i X_{i+1}\} \\
& \quad ∪ \{σ_i σ_{i+1} σ_i X_j\}_{j ≠ i, i+1} ∪ \{σ_i σ_{i+1} σ_i^2\} ∪ \{σ_i σ_{i+1} σ_i σ_j\}_{j ≤ i-2} ∪ \{σ_i σ_j X_j\}_{j ≤ i-2} ∪ \{σ_i σ_j X_{j+1}\}_{j ≤ i-2} \\
& \quad ∪ \{σ_i σ_j X_l\}_{j ≤ i-2; l ≠ j, j+1} ∪ \{σ_i σ_j^2\}_{j ≤ i-2} ∪ \{σ_i σ_j σ_{j+1} σ_j\}_{j ≤ i-2} ∪ \{σ_i σ_j σ_l\}_{j ≤ i-2; l ≤ j-2} \\
& \quad ∪ \{X_i X_j X_l\}_{1 ≤ i < j < l ≤ k}.
\end{align*}
The indices $ i, j, l $ in these enumerations all run from $ 1 $ to $ k $, or to $ k-1 $ or $ k-2 $ in case of obvious subscript constraints.

The Hochschild cohomology of $ \NH_{k, ħ=0} $ inherits a $ ℤ^2 $-grading. We shall be interested primarily in the component $ \HH^1 (\NH_{k, ħ=0})_{(0, -1)} $ of degree $ (0, -1) $. We have
\begin{equation*}
\HH^1 (\NH_{k, ħ=0})_{(0, -1)} = \H^1 (P^{•}[1]_{(0, -1)}).
\end{equation*}
We shall now determine the part of $ \Hom_{\Bimod A} (A ¤ ℂS_n ¤ A, A) $ in degree $ (0, -1) $. Observe that this space is isomorphic to $ \Map(S_n, A) $ as $ ℤ^2 $-graded space. Let $ f ∈ \Map(S_n, A)_{(0, -1)} $. Pick $ s ∈ S_n $ and regard an element $ s ∈ S_n $ of the form $ X^a σ^b $ and a linear component $ X^c σ^d $ of $ f(s) $. We have $ \deg(X^a σ^b) = (2a-2b, b) $ thus $ (2c-2d, d) = (2a-2b, b) + (0, -1) $, thus $ d = b-1 $ and $ c-d = a-b $ thus $ c-(b-1) = a-b $ thus $ c = a-b+(b-1) = a-1 $ thus $ (c, d) = (a, b) + (-1, -1) $. For $ (a, b) = (1, 1) $ for example we get $ (c, d) = (0, 0) $.

It is our task to compute the kernel modulo image in degree $ (0, -1) $ of the sequence $ \Map(S_1, A) → \Map(S_2, A) → \Map(S_3, A) $. We have $ \Map(S_1, A)_{(0, -1)} = 0 $.


We have $ ∂_3: A ¤ S_3 ¤ A → A ¤ S_2 ¤ A $ given by
\begin{align*}
∂_3 (1 ¤ w ¤ 1) &= ((\id - ρ_1 ∂_2)δ_3)(1 ¤ w ¤ 1) \\
&= (\id - ρ_1 ∂_2) (1 ¤ u_2 u_1 ¤ u_0 - u_2 ¤ u_1 u_0 ¤ 1) \\
&= 1 ¤ u_2 u_1 ¤ u_0 - u_2 ¤ u_1 u_0 ¤ 1 - ρ_1 (∂_2 (1 ¤ u_2 u_1 ¤ u_0 - u_2 ¤ u_1 u_0 ¤ 1)) \\
&= 1 ¤ u_2 u_1 ¤ u_0 - u_2 ¤ u_1 u_0 ¤ 1 - ρ_1 (\pathsplit_1 (u_2 u_1 - φ_{u_2 u_1}) u_0 - u_2 \pathsplit_1 (u_1 u_0 - φ_{u_1 u_0}))
\end{align*}
For $ j < i $ we have
\begin{align*}
∂_3 (1 ¤ σ_i X_i X_j ¤ 1) &= 1 ¤ σ_i X_i ¤ X_j - 1 ¤ σ_i X_j ¤ X_i - X_j ¤ σ_i X_i ¤ 1 \\
& \quad - σ_i ¤ X_i X_j ¤ 1 + 1 ¤ X_{i+1} X_j ¤ σ_i + X_{i+1} ¤ σ_i X_j ¤ 1.
\end{align*}
For $ j < i $ we have
\begin{align*}
∂_3 (1 ¤ σ_i X_{i+1} X_j ¤ 1)
&= 1 ¤ σ_i X_{i+1} ¤ X_j
- σ_i ¤ X_{i+1} X_j ¤ 1
- 1 ¤ σ_i X_j ¤ X_{i+1} \\
& \quad
- X_j ¤ σ_i X_{i+1} ¤ 1
+ 1 ¤ X_i X_j ¤ σ_i
+ X_i ¤ σ_i X_j ¤ 1
\end{align*}
For $ j ≠ i,i+1 $ and $ l < j $ we have
\begin{alignat*}{2}
∂_3 (1 ¤ σ_i X_j X_l ¤ 1)
&=
1 ¤ σ_i X_j ¤ X_{i+1}
- σ_i ¤ X_j X_{i+1} ¤ 1
- 1 ¤ σ_i X_{i+1} ¤ X_j \\
& \quad - X_i ¤ σ_i X_j ¤ 1
+ 1 ¤ X_j X_i ¤ σ_i
+ X_j ¤ σ_i X_{i+1} ¤ 1
&&
\quad \text{ if } l = i+1, \\
∂_3 (1 ¤ σ_i X_j X_l ¤ 1) &=
1 ¤ σ_i X_j ¤ X_i
- σ_i ¤ X_j X_i ¤ 1
- 1 ¤ σ_i X_i ¤ X_j \\
& \quad - X_{i+1} ¤ σ_i X_j ¤ 1
+ 1 ¤ X_j X_{i+1} ¤ σ_i
+ X_j ¤ σ_i X_i ¤ 1
&&
\quad \text{ if } l = i, \\
∂_3 (1 ¤ σ_i X_j X_l ¤ 1) &=
1 ¤ σ_i X_j ¤ X_l
- σ_i ¤ X_j X_l ¤ 1
- 1 ¤ σ_i X_l ¤ X_j \\
& \quad - X_l ¤ σ_i X_j ¤ 1
+ 1 ¤ X_j X_l ¤ σ_i
+ X_j ¤ σ_i X_l ¤ 1
&&
\quad \text{ if } l ≠ i,i+1.
\end{alignat*}
For all $ i $ we have
\begin{align*}
∂_3 (1 ¤ σ_i^2 X_i ¤ 1) &=
1 ¤ σ_i^2 ¤ X_i
- σ_i ¤ σ_i X_i ¤ 1
- 1 ¤ σ_i X_{i+1} ¤ σ_i
- X_i ¤ σ_i^2 ¤ 1
\end{align*}
For all $ i $ we have
\begin{align*}
∂_3 (1 ¤ σ_i^2 X_{i+1} ¤ 1)
&= 1 ¤ σ_i^2 ¤ X_{i+1} ¤ 1
- σ_i ¤ σ_i X_{i+1} ¤ 1
- 1 ¤ σ_i X_i ¤ σ_i
- X_{i+1} ¤ σ_i σ_i ¤ 1
\end{align*}
For $ j ≠ i,i+1 $ we have
\begin{align*}
∂_3 (1 ¤ σ_i^2 X_j ¤ 1)
&=
1 ¤ σ_i^2 ¤ X_j
- σ_i ¤ σ_i X_j ¤ 1
- 1 ¤ σ_i X_j ¤ σ_i
- X_j ¤ σ_i σ_i ¤ 1
\end{align*}
For all $ i $ we have
\begin{align*}
∂_3 (1 ¤ σ_i σ_{i+1} σ_i X_i ¤ 1) &=
1 ¤ σ_i σ_{i+1} σ_i ¤ X_i
- σ_i σ_{i+1} ¤ σ_i X_i ¤ 1
- 1 ¤ σ_i X_{i+2} ¤ σ_{i+1} σ_i \\
& \quad
- X_{i+2} ¤ σ_i σ_{i+1} σ_i ¤ 1
- σ_i ¤ σ_{i+1} X_{i+1} ¤ σ_i
+ 1 ¤ σ_{i+1} X_{i+1} ¤ σ_i σ_{i+1} \\
& \quad
+ σ_{i+1} ¤ σ_i X_i ¤ σ_{i+1}
+ σ_{i+1} σ_i ¤ σ_{i+1} X_i ¤ 1
\end{align*}
For all $ i $ we have
\begin{align*}
∂_3 (1 ¤ σ_i σ_{i+1} σ_i X_{i+1} ¤ 1)
&= 1 ¤ σ_i σ_{i+1} σ_i ¤ X_{i+1}
- σ_i σ_{i+1} ¤ σ_i X_{i+1} ¤ 1
- 1 ¤ σ_i X_i ¤ σ_{i+1} σ_i \\
& \quad
- X_{i+1} ¤ σ_i σ_{i+1} σ_i ¤ 1
- σ_i ¤ σ_{i+1} X_i ¤ σ_i
- σ_i σ_{i+1} ¤ σ_i X_{i+1} ¤ 1 \\
& \quad
+ 1 ¤ σ_{i+1} X_{i+2} ¤ σ_i σ_{i+1}
+ σ_{i+1} ¤ σ_i X_{i+2} ¤ σ_{i+1}
+ σ_{i+1} σ_i ¤ σ_{i+1} X_{i+1} ¤ 1 \\
& \quad
+ σ_i σ_{i+1} ¤ σ_i X_{i+1} ¤ 1
\end{align*}
For $ j ≠ i,i+1 $ we have
\begin{align*}
∂_3 (1 ¤ σ_i σ_{i+1} σ_i X_j ¤ 1)
&=
1 ¤ σ_i σ_{i+1} σ_i ¤ X_j
- σ_i σ_{i+1} ¤ σ_i X_j ¤ 1
- 1 ¤ σ_i X_{i+1} ¤ σ_{i+1} σ_i \\
& \quad
- X_i ¤ σ_i σ_{i+1} σ_i ¤ 1
- σ_i ¤ σ_{i+1} X_{i+2} ¤ σ_i
- σ_i σ_{i+1} ¤ σ_i X_{i+2} ¤ 1 \\
& \quad
+ 1 ¤ σ_{i+1} X_i ¤ σ_i σ_{i+1}
+ σ_{i+1} ¤ σ_i X_{i+1} ¤ σ_{i+1}
+ σ_{i+1} σ_i ¤ σ_{i+1} X_{i+2} ¤ 1 \\
& \quad
+ σ_i σ_{i+1} ¤ σ_i X_{i+2} ¤ 1
\quad \text{ if } j = i+2, \\
∂_3 (1 ¤ σ_i σ_{i+1} σ_i X_j ¤ 1)
&= 1 ¤ σ_i σ_{i+1} σ_i ¤ X_j
- σ_i σ_{i+1} ¤ σ_i X_j ¤ 1
- 1 ¤ σ_i X_j ¤ σ_{i+1} σ_i \\
& \quad
- X_j ¤ σ_i σ_{i+1} σ_i ¤ 1
- σ_i ¤ σ_{i+1} X_j ¤ σ_i
- σ_i σ_{i+1} ¤ σ_i X_j ¤ 1 \\
& \quad
+ 1 ¤ σ_{i+1} X_j ¤ σ_i σ_{i+1}
+ σ_{i+1} ¤ σ_i X_j ¤ σ_{i+1}
+ σ_{i+1} σ_i ¤ σ_{i+1} X_j ¤ 1 \\
& \quad
+ σ_i σ_{i+1} ¤ σ_i X_j ¤ 1
\quad \text{ if } j ≠ i+2.
\end{align*}
For $ j ≤ i-2 $ we have
\begin{align*}
∂_3 (1 ¤ σ_i σ_j X_j ¤ 1)
&= 1 ¤ σ_i σ_j ¤ X_j
- σ_i ¤ σ_j X_j ¤ 1
- 1 ¤ σ_i X_{j+1} ¤ σ_j \\
& \quad
- X_{j+1} ¤ σ_i σ_j ¤ 1
- σ_i ¤ σ_j X_j ¤ 1
+ 1 ¤ σ_j X_j ¤ σ_i \\
& \quad
+ σ_j ¤ σ_i X_j ¤ 1
+ σ_i ¤ σ_j X_j ¤ 1
\end{align*}
For $ j ≤ i-2 $ we have
\begin{align*}
∂_3 (1 ¤ σ_i σ_j X_{j+1} ¤ 1) &=
1 ¤ σ_i σ_j ¤ X_{j+1}
- σ_i ¤ σ_j X_{j+1} ¤ 1
- 1 ¤ σ_i X_j ¤ σ_j \\
& \quad
- X_j ¤ σ_i σ_j ¤ 1
- σ_i ¤ σ_j X_{j+1} ¤ 1
+ 1 ¤ σ_j X_{j+1} ¤ σ_i \\
& \quad
+ σ_j ¤ σ_i X_{j+1} ¤ 1
+ σ_i ¤ σ_j X_{j+1} ¤ 1
\end{align*}
For $ j ≤ i-2 $ and $ l ≠ j,j+1 $ we have
\begin{alignat*}{2}
∂_3 (1 ¤ σ_i σ_j X_l ¤ 1) &=
1 ¤ σ_i σ_j ¤ X_i
- σ_i ¤ σ_j X_i ¤ 1
- 1 ¤ σ_i X_i ¤ σ_j \\
& \quad 
- X_{i+1} ¤ σ_i σ_j ¤ 1
- σ_i ¤ σ_j X_i ¤ 1
+ 1 ¤ σ_j X_{i+1} ¤ σ_i \\
& \quad
+ σ_j ¤ σ_i X_i ¤ 1
+ σ_i ¤ σ_j X_i ¤ 1
&& \quad \text{if } l = i, \\
&= 1 ¤ σ_i σ_j ¤ X_{i+1}
- σ_i ¤ σ_j X_{i+1} ¤ 1
- 1 ¤ σ_i X_{i+1} ¤ σ_j \\
& \quad
- X_i ¤ σ_i σ_j ¤ 1
- σ_i ¤ σ_j X_{i+1} ¤ 1
+ 1 ¤ σ_j X_i ¤ σ_i \\
& \quad
+ σ_j ¤ σ_i X_{i+1} ¤ 1
+ σ_i ¤ σ_j X_{i+1} ¤ 1
&& \quad \text{if } l = i+1, \\
&= 1 ¤ σ_i σ_j ¤ X_l
- σ_i ¤ σ_j X_l ¤ 1
- 1 ¤ σ_i X_l ¤ σ_j \\
& \quad
- X_l ¤ σ_i σ_j ¤ 1
- σ_i ¤ σ_j X_l ¤ 1
+ 1 ¤ σ_j X_l ¤ σ_i \\
& \quad
+ σ_j ¤ σ_i X_l ¤ 1 
+ σ_i ¤ σ_j X_l ¤ 1 
&& \quad \text{if } l ≠ i,i+1.
\end{alignat*}

We are now ready to compute the graded component of Hochschild cohomology.

\begin{lemma}
An element $ ν ∈ \Map(S_2, A)_{(0, -1)} $ is of the following form with scalars $ A_i, B_i, C_{i, j} $:
\begin{align*}
ν(σ_i X_i) = A_i, \quad ν(σ_i X_{i+1}) = B_i, \quad ν(σ_i X_j) = C_{i, j}, \quad ν(σ_i^2) = ν(σ_i σ_{i+1} σ_i) = ν(σ_i σ_j) = ν(X_i X_j) = 0.
\end{align*}
The kernel of $ ∂^2: \Map(S_2, A)_{(0, -1)} → \Map(S_3, A)_{(0, -1)} $ is one-dimensional and spanned by the following element $ ν_0 $:
\begin{equation*}
ν_0 (σ_i X_i) = 1, \quad ν_0 (σ_i X_{i+1}) = -1, \quad ν_0 (σ_i X_j) = 0, \quad ν_0 (σ_i^2) = ν_0 (σ_i σ_{i+1} σ_i) = ν_0 (σ_i σ_j) = ν_0 (X_i X_j) = 0.
\end{equation*}
\end{lemma}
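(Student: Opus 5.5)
The plan is to compute directly with the Chouhy--Solotar/Barmeier--Wang resolution. I would first pin down the shape of $ν$ by degree bookkeeping, then evaluate the cocycle condition $ν ∘ ∂_3 = 0$ on a few well-chosen $1$-ambiguities in $S_3$, and finally check that $ν_0$ really is a cocycle.

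\emph{Step 1 (shape of $ν$).} A word $s ∈ S_2$ containing $a$ letters $X$ and $b$ letters $σ$ has degree $(b-a, b)$, since $\deg X_i = (-1,0)$ and $\deg σ_i = (1,1)$. Hence $ν(s)$ must be a combination of irreducible words $X^c σ^d$ with $d = b-1$ and $d - c = b - a - 1$, that is $c = a-1$. I would go through the elements of $S$ one by one:
\begin{itemize}
\item For $σ_i^2$, $σ_iσ_{i+1}σ_i$ and $σ_iσ_j$ we have $a = 0$, so $c = -1$ and the component vanishes.
\item For $X_iX_j$ we have $b = 0$, so $d = -1$ and the component vanishes.
\item For $σ_iX_i$, $σ_iX_{i+1}$ and $σ_iX_j$ we have $(a,b) = (1,1)$, so $(c,d) = (0,0)$ and $ν(s)$ is a scalar multiple of the unit.
\end{itemize}
This gives the stated form with scalars $A_i, B_i, C_{i,j}$.

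\emph{Step 2 (cocycle equations).} Since $∂^2 ν = ν ∘ ∂_3$, I would apply $ν$ termwise to the explicit formulas for $∂_3$ listed above. Only tensor factors of the form $σX$ contribute, and each contributes a scalar. I would use three families of ambiguities.
\begin{itemize}
\item From $σ_iX_iX_j$ with $j<i$ the result is $A_iX_j - C_{i,j}X_i - X_jA_i + X_{i+1}C_{i,j} = C_{i,j}(X_{i+1}-X_i)$. This forces $C_{i,j} = 0$. The cases $j > i+1$ are handled in the same way using $σ_iX_jX_l$ with $l ∈ \{i, i+1\}$.
\item From $σ_i^2X_i$ (or $σ_i^2X_{i+1}$) the surviving terms are $-σ_iA_i - B_iσ_i$, which gives $B_i = -A_i$.
\item From the braid ambiguities $σ_iσ_{i+1}σ_iX_i$ and $σ_iσ_{i+1}σ_iX_{i+1}$ the surviving terms are scalars times $σ_iσ_{i+1}$ and $σ_{i+1}σ_i$. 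Comparing coefficients should give $A_i = A_{i+1}$.
\end{itemize}
Together these force $ν = A_1 ν_0$, so the kernel is at most one-dimensional.

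\emph{Step 3 (main obstacle).} The hard part is the braid ambiguity, and more precisely getting its signs right. The displayed formulas for $∂_3$ on $σ_iσ_{i+1}σ_iX_\bullet$ are long, and some terms appear to repeat or cancel, so the sign bookkeeping there is error-prone. To check that $ν_0$ lies in the kernel I would not verify every element of $S_3$ by hand. Instead I would use the Barmeier--Wang lemma: $ν_0$ is a cocycle if and only if the deformed reduction system $\{(s, φ_s + εν_0(s))\}$ is reduction-unique. That deformed system consists exactly of the $\NH_k$ relations $σ_iX_i = X_{i+1}σ_i + ε$ and $σ_iX_{i+1} = X_iσ_i - ε$ taken modulo $ε^2$. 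These define an associative algebra with the same PBW basis as $\NH_{k,ħ=0}$, so the system is reduction-unique and $ν_0$ is a cocycle. As a safeguard I would cross-check the claimed relation $A_i = A_{i+1}$ against this: $ν_0$ has $A_i = 1$ for every $i$, so it satisfies the relation, which confirms the sign conventions in the braid computation.
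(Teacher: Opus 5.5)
Your proposal is correct. For the upper bound it is the paper's own computation, restricted to the ambiguities that actually constrain $\nu$. The genuine difference is in how you show $\nu_0 \in \Ker(\partial^2)$.

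The paper evaluates $\nu \circ \partial_3$ on every listed element of $S_3$, producing the full linear system in the $A_i, B_i, C_{i,j}$, and reads off its solution space $\mathbb{C}\nu_0$. The fact that $\nu_0$ lies in the kernel is only implicit there: every entry of the table happens to vanish at $C_{i,j}=0$, $A_i=-B_i=1$.

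You use only $\sigma_iX_iX_j$, $\sigma_iX_jX_l$, $\sigma_i^2X_i$ and the braid ambiguities, and your values agree with the paper's. In particular, $\sigma_i\sigma_{i+1}\sigma_iX_i$ gives $(A_{i+1}-A_i)\,\sigma_i\sigma_{i+1}+(C_{i+1,i}-C_{i,i+2})\,\sigma_{i+1}\sigma_i$. So once $C=0$ the relation $A_{i+1}=A_i$ holds outright, and you can drop the hedge ``should give''. For the other inclusion you invoke the Barmeier--Wang criterion, observing that the deformed system is exactly $\NH_k$ modulo $\hbar^2$.

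What your route buys is independence from the long, typo-prone $\partial_3$ formulas and from the exhaustiveness of the $S_3$ list. That list omits, for example, $\sigma_i\sigma_{i+1}\sigma_i\sigma_{i+1}\sigma_i$, which is harmless in this degree. The price is importing the flatness (PBW) theorem for $\NH_k$ over $\mathbb{C}[\hbar]$ and Bergman's diamond lemma over $\mathbb{C}[\varepsilon]/(\varepsilon^2)$. For the diamond lemma you should add that the deformed system is still reduction-finite, which holds because the new terms $\pm\varepsilon$ are shorter words.

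Two smaller remarks.

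First, in Step 1, since $\deg\nu(s)=(b-a,\,b-1)$ and $\deg X^c\sigma^d=(d-c,\,d)$, the second condition is $d-c=b-a$. As you wrote it, $d-c=b-a-1$ together with $d=b-1$ would give $c=a$. The conclusion $c=a-1$ that you draw is nevertheless the right one.

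Second, the reduction-uniqueness of $R$, which both you and the paper take as given, fails as stated for $k\ge 4$. The word $\sigma_3\sigma_1\sigma_2\sigma_1$ reduces to the two distinct irreducible words $\sigma_1\sigma_3\sigma_2\sigma_1$ and $\sigma_3\sigma_2\sigma_1\sigma_2$, so the $\sigma$-part of $R$ must be completed. The added rules are pure $\sigma$-words, on which every element of $\Map(S_2,A)_{(0,-1)}$ vanishes by your degree count. Hence your upper-bound equations and your flatness argument for $\nu_0$ go through verbatim. The paper's exhaustive table, by contrast, would have to be extended by the new mixed ambiguities.
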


\begin{proof}
We shall ignore the overall signs of $ ∂^2 $ for this proof. The element $ ∂^2 (ν) ∈ \Map(S_3, A)_{(0, -1)} $ is determined by the following equation for $ w ∈ S_3 $:
\begin{align*}
∂^2 (ν)(1 ¤ w ¤ 1) &= ν(∂_3 (1 ¤ w ¤ 1)).
\end{align*}
We shall now calculate $ ν(∂_3 (1 ¤ w ¤ 1)) $ in terms of $ A_i $, $ B_i $, $ C_{i,j} $ for all $ w ∈ S_3 $.
For $ j < i $ we have
\begin{align*}
∂^2 (ν)(1 ¤ σ_i X_i X_j ¤ 1) &= ν(∂_3 (1 ¤ σ_i X_i X_j ¤ 1)) \\
&= A_i X_j - C_{i,j} X_i - A_i X_j - C_{i,j} X_{i+1} = - C_{i,j} (X_i - X_{i+1}).
\end{align*}
For $ j < i $ we have
\begin{align*}
∂^2 (ν)(1 ¤ σ_i X_{i+1} X_j ¤ 1) &= ν(∂_3 (1 ¤ σ_i X_{i+1} X_j ¤ 1)) \\
&= B_i X_j - C_{i,j} X_{i+1} - B_i X_j + C_{i,j} X_i = C_{i,j} (X_i - X_{i+1}).
\end{align*}
For $ j ≠ i,i+1 $ and $ l < j $ we have
\begin{align*}
∂^2 (ν)(1 ¤ σ_i X_j X_l ¤ 1) &= ν(∂_3 (1 ¤ σ_i X_j X_l ¤ 1)) \\
&= \begin{cases}
C_{i,j} X_{i+1} - B_i X_j - C_{i,j} X_i + B_i X_j = C_{i,j} (X_{i+1} - X_i) & \text{if } l = i+1, \\
C_{i,j} X_i - A_i X_j - C_{i,j} X_{i+1} + A_i X_j = C_{i,j} (X_i - X_{i+1}) & \text{if } l = i, \\
C_{i,j} X_l - C_{i,l} X_j - C_{i,j} X_l + C_{i,l} X_j = 0 & \text{if } l ≠ i,i+1.
\end{cases}
\end{align*}
For all $ i $ we have
\begin{align*}
∂^2 (ν)(1 ¤ σ_i^2 X_i ¤ 1) &= ν(∂_3 (1 ¤ σ_i^2 X_i ¤ 1)) \\
&= - A_i σ_i - B_i σ_i = - (A_i + B_i) σ_i.
\end{align*}
For all $ i $ we have
\begin{align*}
∂^2 (ν)(1 ¤ σ_i^2 X_{i+1} ¤ 1) &= ν(∂_3 (1 ¤ σ_i^2 X_{i+1} ¤ 1)) \\
&= - B_i σ_i - A_i σ_i = -(A_i + B_i) σ_i.
\end{align*}
For $ j ≠ i,i+1 $ we have
\begin{align*}
∂^2 (ν)(1 ¤ σ_i^2 X_j ¤ 1) &= ν(∂_3 (1 ¤ σ_i^2 X_j ¤ 1)) \\
&= - C_{i,j} σ_i - C_{i,j} σ_i = 0.
\end{align*}
For all $ i $ we have
\begin{align*}
∂^2 (ν)(1 ¤ σ_i σ_{i+1} σ_i X_i ¤ 1) &= ν(∂_3 (1 ¤ σ_i σ_{i+1} σ_i X_i ¤ 1)) \\
&= (A_{i+1} - A_i) σ_i σ_{i+1} + (C_{i+1, i} - C_{i, i+2}) σ_{i+1} σ_i.
\end{align*}
For all $ i $ we have
\begin{align*}
∂^2 (ν)(1 ¤ σ_i σ_{i+1} σ_i X_{i+1} ¤ 1) &= ν(∂_3 (1 ¤ σ_i σ_{i+1} σ_i X_{i+1} ¤ 1)) \\
&= (A_{i+1} - A_i) σ_{i+1} σ_i + (B_{i+1} - B_i) σ_i σ_{i+1}.
\end{align*}
For $ j ≠ i,i+1 $ we have
\begin{align*}
∂^2 (ν)(1 ¤ σ_i σ_{i+1} σ_i X_j ¤ 1) &= ν(∂_3 (1 ¤ σ_i σ_{i+1} σ_i X_j ¤ 1)) \\
&= \begin{cases}
(C_{i+1,i} - C_{i,j}) σ_i σ_{i+1} + (B_{i+1} - B_i) σ_{i+1} σ_i & \text{ if } j = i+2, \\
(C_{i+1,j} - C_{i,j}) σ_i σ_{i+1} + (C_{i+1,j} - C_{i,j}) σ_{i+1} σ_i & \text{ if } j ≠ i+2.
\end{cases}
\end{align*}
For $ j ≤ i-2 $ we have
\begin{align*}
∂^2 (ν)(1 ¤ σ_i σ_j X_j ¤ 1) &= ν(∂_3 (1 ¤ σ_i σ_j X_j ¤ 1)) \\
&= (C_{i,j} - C_{i,j+1}) σ_j.
\end{align*}
For $ j ≤ i-2 $ we have
\begin{align*}
∂^2 (ν)(1 ¤ σ_i σ_j X_{j+1} ¤ 1) &= ν(∂_3 (1 ¤ σ_i σ_j X_{j+1} ¤ 1)) \\
&= (C_{i,j+1} - C_{i,j}) σ_j.
\end{align*}
For $ j ≤ i-2, l ≠ j,j+1 $ we have
\begin{align*}
∂^2 (ν)(1 ¤ σ_i σ_j X_l ¤ 1) &= ν(∂_3 (1 ¤ σ_i σ_j X_l ¤ 1)) \\
&= \begin{cases}
(- 2 C_{j,i} + C_{j,i+1} + C_{j,i}) σ_i & \text{if } l = i, \\
(- 2 C_{j,i+1} + C_{j,i} + C_{j,i+1}) σ_i & \text{if } l = i+1, \\
(- 2 C_{j,l} + C_{j,l} + C_{j,l}) σ_i & \text{if } l ≠ i,i+1, \\
\end{cases}
\end{align*}
Assuming that $ ν ∈ \Ker(∂^2) $ yields that $ C_{i,j} = 0 $ for all $ i ∈ \{1, …, k-1\} $ and $ j ≠ i,i+1 $, and further $ A_i = - B_i = A_0 $ for all $ i ∈ \{1, …, k\} $ for a certain scalar $ A_0 $. Finally, we have that $ ν = A_0 ν_0 $ where $ ν_0 ∈ \Map(S_2, A) $ is the map given by sending $ σ_i X_i $ to $ 1 $, $ σ_i X_{i+1} $ to $ -1 $ and all other elements to zero.
\end{proof}

The space $ \Map(S_3, A)_{(0, -1)} $ is spanned by the functions $ f $ which take zero values everywhere except on a single element, namely one of the following:
\begin{align*}
f(σ_i X_i X_j) &= X_l, \\
f(σ_i X_{i+1} X_j) &= X_l, \\
f(σ_i X_j X_l) &= X_m, \\
f(σ_i^2 X_i) &= σ_j, \\
f(σ_i^2 X_{i+1}) &= σ_j, \\
f(σ_i^2 X_j) &= σ_l, \\
f(σ_i σ_{i+1} σ_i X_i) &= σ_j σ_l, \\
f(σ_i σ_{i+1} σ_i X_{i+1}) &= σ_j σ_l, \\
f(σ_i σ_{i+1} σ_i X_j) &= σ_l σ_m, \\
f(σ_i σ_j X_j) &= σ_l, \\
f(σ_i σ_j X_{j+1}) &= σ_l, \\
f(σ_i σ_j X_l) &= σ_m.
\end{align*}

\subsection{Classification of $ ℤ^2 $-graded deformations of $ \NH_{k, ħ=0} $}
\label{sec:unique-classification}
In this section, we classify the $ ℤ^2 $-graded deformations of $ \NH_{k, ħ=0} $. More precisely, we determine the gauge equivalence classes of $ ℤ^2 $-graded deformations over $ ℂ⟦ħ⟧_{(0, 1)} $ and show how to determine the equivalence class for a given deformation. We also explain that all non-trivial deformations are isomorphic as $ A_∞ $-categories.

\begin{lemma}
Denote by $ ν_0 ∈ \HH(\NH_{k, ħ=0})_{(0, -1)} $ the standard element of Hochschild cohomology. We have
\begin{equation*}
\MC(\HH(\NH_{k, ħ=0}), ℂ⟦ħ⟧_{(0, 1)})_0 = \{A ħ ν_0 \running A ∈ ℂ\}.
\end{equation*}
\end{lemma}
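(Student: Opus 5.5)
The plan is to reduce everything to a degree count in the $ ℤ^2 $-graded Chouhy--Solotar/Barmeier--Wang complex $ P^{•}[1] $. By the graded lemmas of \autoref{sec:prelimliou-gradeddefo} and the $ Σ $-homogeneity of $ φ: P^{•}[1] → \HC(A) $, the graded pieces of $ \HH(\NH_{k, ħ=0}) $ agree with $ \H(P^{•}[1]) $ degree by degree. Here $ \HH $ carries its homogeneous minimal $ L_∞ $-structure from \autoref{th:prelim-defo-gradedMCpush}.

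\textbf{Step 1: shape of a candidate.} An element of $ ℂ⟦ħ⟧_{(0,1)} \htensor \HH^1 $ that is homogeneous of degree $ 0 $ and lies in $ \mathfrak{m} $ has the form $ ν = \sum_{n ≥ 1} ħ^n ν_n $ with $ ν_n ∈ \HH^1_{(0,-n)} $. This uses \autoref{th:prelimdefo-gradedtensor-tensordecomp} and the fact that $ \deg ħ^n = (0,n) $.

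\textbf{Step 2: only $ n = 1 $ survives.} I will show $ \Map(S_2, A)_{(0,-n)} = 0 $ for $ n ≥ 2 $, so these cohomology groups vanish. With $ \deg X_i = (-1,0) $ and $ \deg σ_i = (1,1) $, a path with $ a $ letters $ X $ and $ b $ letters $ σ $ has degree $ (b-a, b) $. A target monomial $ X^c σ^d $ of degree $ (b-a, b) + (0,-n) $ must therefore satisfy $ d = b-n $ and $ c = a-n $. Every element of $ S_2 $ has either $ a ≤ 1 $ and $ b ≤ 1 $ (the $ σ X $ relations) or $ a = 0 $ (the pure-$ σ $ and pure-$ X $ relations), so $ c < 0 $ or $ d < 0 $ whenever $ n ≥ 2 $. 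Hence $ ν = ħ ν_1 $. For $ n = 1 $, the preceding lemma identifies $ \Ker ∂^2 $ in degree $ (0,-1) $ with $ ℂ ν_0 $. Since $ \Map(S_1, A)_{(0,-1)} = 0 $, there is nothing to quotient by, so $ \HH^1_{(0,-1)} = ℂ ν_0 $ and $ ν = A ħ ν_0 $.

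\textbf{Step 3: the Maurer--Cartan equation holds automatically.} The minimal model has $ l^1 = 0 $, so the equation reads $ \sum_{k ≥ 2} \frac{1}{k!} ħ^k l^k(ν_0, …, ν_0) = 0 $. Because $ l^k $ is $ ℤ^2 $-homogeneous, $ l^k(ν_0, …, ν_0) ∈ \HH^2_{(0,-k)} $. I will show that $ \Map(S_3, A)_{(0,-k)} = 0 $ for $ k ≥ 2 $ by the same count $ c = a-k $, $ d = b-k $, going through the explicit list of $ S_3 $. Families with $ b ≤ 1 $ give $ d < 0 $, and this includes $ X_i X_j X_l $. Families with $ b ≥ 2 $ contain at most one $ X $, so $ c ≤ 1-k < 0 $. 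Hence every term of the Maurer--Cartan equation vanishes, and each $ A ħ ν_0 $ is a Maurer--Cartan element. Together with Step 2 this gives the claimed equality.

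The main obstacle is not the degree bookkeeping but making precise that the graded $ \HH $ with its transferred $ L_∞ $-structure is faithfully computed by $ P^{•}[1] $ in each bidegree. This is what licenses reading off $ \HH^2_{(0,-k)} = 0 $ from $ \Map(S_3, A)_{(0,-k)} = 0 $, and here I would lean on the homogeneity lemma for $ φ $. I would also double-check the degree convention, since the paper's intermediate formula $ (2a-2b, b) $ disagrees with the generator degrees. The vanishing argument only needs $ d = b-n $ together with the low $ X $-count of the relevant ambiguities, so it should be robust to that discrepancy.
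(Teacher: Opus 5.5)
Your proposal is correct and follows the paper's argument. You write $\nu = \sum_{i \geq 1} \hbar^i \nu_i$ with $\nu_i \in \HH^1(\NH_{k,\hbar=0})_{(0,-i)}$ and use that this space is $\mathbb{C}\nu_0$ for $i = 1$ and zero otherwise. Your degree count $(c,d) = (a-n,\, b-n)$ on $S_2$ and $S_3$ adds two things the paper leaves implicit: the vanishing of these spaces for $i \geq 2$, and the reverse inclusion (every $A\hbar\nu_0$ is Maurer--Cartan because $\HH^2_{(0,-k)} = 0$ for $k \geq 2$). The only slip is that the pure-$X$ relations $X_iX_j$ have $b = 0$ rather than $a = 0$, but this still forces $d < 0$, so the argument stands.
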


\begin{proof}
Let $ ν ∈ \MC(\HH(\NH_{k, ħ=0}), ℂ⟦ħ⟧_{(0, 1)})_0 $. By \autoref{th:prelimdefo-gradedtensor-tensordecomp}, we can have a decomposition
\begin{equation*}
ν = \sum_{i = 1}^∞ ħ^i ν_i, \quad \text{where } ν_i ∈ \HH^1 (\NH_{k, ħ=0})_{(0, -i)}.
\end{equation*}
Since $ \HH^1 (\NH_{k, ħ=0})_{(0, -i)} $ is $ \vspan(ν_0) $ in case $ i = 1 $ and zero otherwise, we finish the proof.
\end{proof}

\begin{remark}
As Maurer-Cartan elements, the elements $ A ħ ν_0 $ are all pairwise non-gauge-equivalent.
\end{remark}

Let us comment now on the algebraicity question, asking which deformations of $ \NH_{k, ħ=0} $ can be defined over $ ℂ[ħ] $ instead of $ ℂ⟦ħ⟧ $. As it turns out, all graded deformations and graded gauge functors are at least locally algebraic:

\begin{lemma}
Any element $ ν ∈ \MC(\NH_{k, ħ=0}, ℂ⟦ħ⟧_{(0, 1)})_0 $ is automatically locally algebraic. Similarly, any $ ℤ^2 $-graded functor $ F: ν → ν' $ between any deformations of $ \NH_{k, ħ=0} $ is automatically locally algebraic.
\end{lemma}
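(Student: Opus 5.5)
The plan is to use the $ℤ^2$-grading to show that every homogeneous component of a graded deformation, or of a graded functor, is forced to carry a bounded power of $ħ$. The ingredient is finite-dimensionality of the $ℤ^2$-graded pieces of $\NH_{k, ħ=0}$. Here \emph{locally algebraic} means: for any fixed tuple of homogeneous inputs, the output of $ν$ (respectively of $F^n$) is a polynomial in $ħ$ rather than a genuine power series.

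First, I would record that, with $\deg(X_i) = (-1, 0)$ and $\deg(σ_i) = (1, 1)$, the basis element $X^a σ_w$ (with $w$ a reduced word of length $\ell$) has $ħ$-degree $\ell$. Since $\ell ≤ \binom{k}{2}$, the $ħ$-degree of every homogeneous element of $\NH_{k, ħ=0}$ lies in the bounded range $[0, \binom{k}{2}]$.

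Second, I would write an arbitrary graded Maurer–Cartan element as $ν = \sum_{i ≥ 1} ħ^i ν_i$ via \autoref{th:prelimdefo-gradedtensor-tensordecomp}. Homogeneity of total degree zero, together with $\deg ħ = (0, 1)$, forces each $ν_i$ to be a Hochschild cochain of degree $(0, -i)$. Fix homogeneous inputs $a_n, …, a_1$ with total $ħ$-degree $d$. Then $ν_i(a_n, …, a_1)$ lands in $ħ$-degree $d - i$, and this vanishes as soon as $d - i < 0$. Hence only $i ≤ d$ contribute, so $ν(a_n, …, a_1)$ is a polynomial in $ħ$ of degree at most $d$, which is bounded by $n\binom{k}{2}$. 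The same argument, with the Hochschild degree shifted by one, applies to each component $F^n = \sum_i ħ^i F^n_i$ of a $ℤ^2$-graded functor $F: ν → ν'$, and also to the curvature terms. In particular, a nonzero $ħ^i$-coefficient of $F^n$ evaluated on inputs of total $ħ$-degree $d$ requires $i ≤ d$.

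Third, I would conclude that each $μ^n_ν$ and each $F^n$ restricts to a map $\Hom^{⊗n} → ℂ[ħ] ⊗ \Hom$ on homogeneous inputs, extended $ℂ[ħ]$-linearly. This is precisely local algebraicity. The $A_∞$- and functor relations then hold over $ℂ[ħ]$ because they hold over $ℂ⟦ħ⟧$ and $ℂ[ħ] ⊂ ℂ⟦ħ⟧$ injectively.

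The main obstacle is the second step, specifically justifying the decomposition $ν = \sum ħ^i ν_i$ with each $ν_i$ homogeneous of degree $(0, -i)$ in the graded sense of completed deformation bases. I would argue modulo $ħ^{m}$ for each $m$ and check compatibility of the homogeneous components along the projections. A subsidiary point is that inputs may be infinite sums of homogeneous elements. For this reason I would phrase local algebraicity as polynomiality on homogeneous inputs (or on finite sums of them), not on all inputs, since the degree bound grows with the input degree $d$.
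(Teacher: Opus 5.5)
Your proposal is correct and follows essentially the same route as the paper. The paper also fixes homogeneous inputs and uses that $\NH_{k, ħ=0}$ is concentrated in nonnegative $ħ$-degrees, which forces every $ħ^i$-coefficient with $i$ larger than the total $ħ$-degree of the inputs to vanish; it differs only cosmetically, splitting the output coefficients $c_i$ into homogeneous pieces instead of splitting $ν$ into cochains $ν_i$ of degree $(0,-i)$. Note that the ingredient your argument actually uses is this lower bound on $ħ$-degrees, not finite-dimensionality of the graded pieces; the upper bound $\binom{k}{2}$ only adds the uniform bound $n\binom{k}{2}$ per arity.
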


\begin{proof}
We start with the first statement. Pick any sequence of $ ℤ^2 $-homogeneous elements $ a_1, …, a_m ∈ \NH_{k, ħ=0} $. Let us write the element $ ν(a_m, …, a_1) ∈ ℂ⟦ħ⟧_{(0, 1)} \htensor \NH_{k, ħ=0} $ in the form
\begin{equation*}
ν(a_m, …, a_1) = \sum_{i = 0}^∞ ħ^i c_i, \quad \text{where } c_i ∈ \NH_{k, ħ=0}.
\end{equation*}
The elements $ c_i $ are not a priori homogeneous. Let us write $ c_i = \sum_{a ∈ ℤ^2} c_i^a $ where $ \deg(c_i^a) = a $. Since $ ν $ is homogeneous and $ a_1, …, a_m $ are homogeneous, the right-hand side of the equation is homogeneous of degree $ D = \deg ν + \deg a_1 + … + \deg a_m $. In terms of the $ c_i^a $ elements, the component of this degree is
\begin{equation*}
\sum_{i = 0}^∞ \sum_{\substack{a ∈ ℤ^2 \\ (0, i) + a = D}}  ħ^i c_i^a.
\end{equation*}
Since $ \NH_{k, ħ=0} $ is concentrated in nonnegative $ ħ $-degrees, the inner sum is empty for $ i > D_2 $. Since $ ν(a_m, …, a_1) $ is exactly this sum, we conclude that $ ν(a_1, …, a_1) ∈ ℂ[ħ] ¤ \NH_{k, ħ=0} $. This finishes the first statement. The second statement is proven in an analogous fashion. This finishes the proof.
\end{proof}

We are now ready to show that any nontrivial graded deformation is $ A_∞ $-isomorphic to the standard deformation $ \NH_k $ of $ \NH_{k, ħ=0} $. As preparation, recall that gauge equivalence of deformations refers to the existence of a $ ℂ⟦ħ⟧_{(0, 1)} $-linear (curved) functor between the two deformations that is the identity up to higher $ ħ $-orders. Meanwhile, the statement we prove establishes an $ A_∞ $-equivalence of a given deformation with the standard deformation. This refers to an $ A_∞ $-functor that is merely linear over the base field.

\begin{proposition}
\label{th:unique-unique-iso}
Let $ ν ∈ \MC(\NH_{k, ħ=0}, ℂ⟦ħ⟧_{(0, 1)})_0 $ and assume that $ ν $ is a nontrivial deformation. Then there exists a $ ℤ^2 $-homogeneous $ A_∞ $-isomorphism $ F: ν → \NH_k $.
\end{proposition}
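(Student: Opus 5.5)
The plan has three stages: reduce $ ν $ to a one-parameter normal form, identify the normal form with $ \NH_k $, and remove the parameter by rescaling $ ħ $.

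\textbf{Reduction to a normal form.} By the classification lemma above, the $ ℤ^2 $-graded Maurer-Cartan elements of the minimal model over $ ℂ⟦ħ⟧_{(0, 1)} $ are exactly $ A ħ ν_0 $ with $ A ∈ ℂ $. To transport $ ν $ there, push it along the $ ℤ^2 $-homogeneous $ L_∞ $-quasi-isomorphisms between $ \HC(\NH_{k, ħ=0}) $ and $ P^{•}[1] $ given by Barmeier-Wang. These preserve graded Maurer-Cartan elements by \autoref{th:prelim-defo-gradedMCpush}. The push-pull statements \autoref{th:prelimliou-ainfty-pushpullbasic} and \autoref{th:prelim-defo-pushpull} then give a $ ℤ^2 $-homogeneous gauge equivalence between $ ν $ and the image of some $ A ħ ν_0 $. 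Nontriviality of $ ν $ forces $ A ≠ 0 $: if $ A = 0 $, then $ ν $ would be gauge equivalent to the trivial deformation, contrary to assumption.

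\textbf{Identification with $ \NH_k $.} By the lemma on deformed reduction systems, the deformation given by $ A ħ ν_0 $ is the algebra obtained by reducing with the deformed rules $ σ_i X_i → X_{i+1} σ_i + Aħ $ and $ σ_i X_{i+1} → X_i σ_i - Aħ $, all other rules unchanged. The grading forces this to be exactly linear in $ ħ $, so no higher corrections appear. For $ A = 1 $ this is precisely $ \NH_k $, since the NilHecke relations are of this form. Hence $ \NH_k $ itself corresponds to $ ħ ν_0 $ up to a nonzero normalization, which can be absorbed into the rescaling in the next stage.

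\textbf{Removing the parameter $ A $.} Use the $ A $-homogeneous morphism of deformation bases $ ψ: ℂ⟦ħ⟧ → ℂ⟦ħ⟧ $, $ ħ ↦ A^{-1} ħ $, which is homogeneous of degree $ (0, 1) $. By \autoref{th:prelim-defo-substitution}, $ ψ ¤ \Id $ gives a strict $ ℤ^2 $-homogeneous $ A_∞ $-functor from the $ A ħ ν_0 $-deformation to the $ ħ ν_0 $-deformation. It is bijective because $ A ≠ 0 $. It is $ ℂ $-linear but not $ ℂ⟦ħ⟧ $-linear, which is why the statement claims an $ A_∞ $-isomorphism rather than a gauge equivalence. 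Composing the three stages (gauge equivalence, substitution, identification with $ \NH_k $) gives the desired $ F: ν → \NH_k $.

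\textbf{Main obstacle.} The delicate step is the first one. It needs a graded gauge equivalence between $ ν $ and the pullback of its image in the minimal model, which relies on the graded folklore push-pull statement \autoref{th:prelim-defo-pushpull}. It also needs the check that $ A = 0 $ really corresponds to the trivial class, and that the correspondence between the Barmeier-Wang Maurer-Cartan element and the deformed reduction system is compatible with the gradings. I would first verify directly that the degree-$ (0, -1) $ cocycle $ ν_0 $ produces the NilHecke dot-crossing relations, and then argue that the higher $ ħ $-orders vanish because $ \HH^1 $ is zero in degrees $ (0, -i) $ for $ i ≥ 2 $.
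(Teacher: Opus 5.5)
Your proposal is correct and follows essentially the paper's route. Push-pull gives $\mathbb{Z}^2$-homogeneous gauge equivalences $\nu \to i^{\operatorname{MC}}(A\hbar\nu_0)$ and $\NH_k \to i^{\operatorname{MC}}(A'\hbar\nu_0)$, and the substitution lemma with $\psi(\hbar) = (A'/A)\hbar$ links them; this needs $A, A' \neq 0$, which you state explicitly and the paper leaves implicit. The only cosmetic differences are that the paper keeps $A'$ abstract in this proof and fixes $A' = 1$ in later lemmas, and that it notes the gauge functors have no curvature, which you should also say; this is automatic here because $\NH_{k,\hbar=0}$ sits in degree $0$.
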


\begin{proof}
We can write $ π^{\MC} (ν) = A ħ ν_0 $ for some scalar $ A $. Denote by $ ν' ∈ \MC(\HH(\NH_{k, ħ=0}), ℂ⟦ħ⟧_{(0, 1)})_0 $ the Maurer-Cartan element associated with the standard deformation $ \NH_k $ of $ \NH_{k, ħ=0} $ and write $ π^{\MC} (ν') = A' ħ ν_0 $. Define the automorphism of deformation bases $ ψ: ℂ⟦ħ⟧_{(0, 1)} → ℂ⟦ħ⟧_{(0, 1)} $ given by the natural substitution $ ψ(A ħ) = A' ħ $. By \autoref{th:prelim-defo-substitution} we have have a $ ℤ^2 $-homogeneous isomorphism $ F^1: i^{\MC} (A ħ ν_0) → i^{\MC} (A' ħ ν_0) $.

By \autoref{th:prelim-defo-pushpull}, there are $ ℤ^2 $-homogeneous isomorphisms $ ν → i^{\MC} (A ħ ν_0) $ and $ ν' → i^{\MC} (A' ħ ν_0) $. The curvature of these functors is of the form $ F^0 = ħ F^0_1 + ħ^2 F^0_2 + … $ and is a $ ℤ^2 $-homogeneous element of degree zero in $ ℂ⟦ħ⟧_{(0, 1)} \htensor \NH_{k, ħ=0} $. In consequence, the curvature vanishes. Finally combining all three $ A_∞ $-isomorphisms into a diagram, we obtain
\begin{equation*}
\begin{tikzcd}
i^{\MC} (A ħ ν_0) \arrow[r, "\sim"] & i^{\MC} (A' ħ ν_0) \\
ν \arrow[u, "\sim"] & ν' \arrow[u, "\sim"].
\end{tikzcd}
\end{equation*}
All three maps are $ ℤ^2 $-homogeneous. This finishes the proof.
\end{proof}

Let us now parametrize the $ ℤ^2 $-graded deformations of $ \NH_{k, ħ=0} $ explicitly. In particular, we provide an explicit description of the deformation $ i^{\MC} (A ħ ν_0) $ for scalars $ A $. We also provide an explicit way to determine for a given deformation $ ν $ the scalar $ A $ such that $ π^{\MC} (ν) = A ħ ν_0 $.

As a starting point, write $ A = \NH_{k, ħ=0} $ and consider the $ L_∞ $-mapping $ φ: P^{•}[1] → \HC(A) $ together with the $ L_∞ $-mapping $ i': \HH(A) → P^{•}[1] $. Our task is to compute the composition $ φ ∘ i' $ on Maurer-Cartan elements. While we could in principle compute these $ L_∞ $-maps explicitly by hand, it is more economical in the given circumstances to identify the correct Maurer-Cartan elements via their components of first order in $ ħ $. 

\begin{lemma}
\label{th:unique-param-exist}
Let $ ν ∈ \MC(\HC(\NH_{k, ħ=0}), ℂ⟦ħ⟧_{(0, 1)})_0 $. Let $ i ∈ \{1, …, k-1\} $. Then there exists a scalar $ A $ such that
\begin{equation*}
ν(X_i, σ_i) - ν(σ_i, X_{i+1}) = A ħ.
\end{equation*}
If $ ν, ν' $ are two such elements with scalars $ A $ and $ A' $, then $ ν $ and $ ν' $ are $ ℤ^2 $-graded gauge-equivalent if and only if $ A = A' $.
\end{lemma}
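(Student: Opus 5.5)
The plan is to get the first claim from degree counting alone, and the second by identifying $A$ with the coefficient of $ν_0$ in $π^{\MC}(ν)$.

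\emph{Existence of $A$.} Write the $μ^2$-component of $ν$ as $ν(a, b) = \sum_{m ≥ 1} ħ^m c_m(a, b)$ with $c_m(a,b) ∈ \NH_{k, ħ=0}$. The sum starts at $m = 1$ because $ν ∈ \mathfrak{m} \htensor \HC$. Since $ν$ is $ℤ^2$-homogeneous of degree $0$, both $ν(X_i, σ_i)$ and $ν(σ_i, X_{i+1})$ are homogeneous of degree $\deg X_i + \deg σ_i = (0, 1)$. Because $\deg ħ = (0,1)$, this forces $c_m ∈ (\NH_{k, ħ=0})_{(0, 1-m)}$.

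The algebra $\NH_{k, ħ=0}$ has its $ħ$-degree equal to the number of $σ$'s, so it lives in nonnegative $ħ$-degrees. Hence $c_m = 0$ for $m ≥ 2$, and $c_1$ has degree $(0,0)$. An irreducible monomial $X^a σ^b$ of degree $(0,0)$ must have $b = 0$ and then $a = 0$, so $(\NH_{k, ħ=0})_{(0,0)} = ℂ$. Therefore $ν(X_i, σ_i) - ν(σ_i, X_{i+1}) = A_i ħ$ for some scalar $A_i$.

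\emph{Identifying $A$.} By the same degree count, $ν = ħ ν_1$ exactly, and the Maurer--Cartan equation at order $ħ$ shows that $ν_1$ is a Hochschild $2$-cocycle of degree $(0,-1)$.

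Coboundaries in this degree vanish. A Hochschild $1$-cochain $D$ of degree $(0,-1)$ would need $D(X_i)$ of degree $(-1,-1)$ and $D(σ_i)$ of degree $(1,0)$. No nonzero elements of either degree exist, since $ħ$-degree $0$ means no $σ$'s and hence $q$-degree $≤ 0$. This is the Hochschild-complex version of $\Map(S_1, A)_{(0,-1)} = 0$.

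The same vanishing means the gauge action by degree-$0$ elements $η ∈ ħ \HC^0$ cannot change the first-order term. Hence the linear functional $\ell(c) = c(X_i, σ_i) - c(σ_i, X_{i+1})$ is a well-defined gauge invariant. Since $ν_1 = φ^1(Aν_0)$ exactly in the absence of coboundaries, I will evaluate $\ell$ on $φ^1(ν_0)$ using the Barmeier--Wang description, where $φ^1(ν_0) = *_ε - *$ is obtained by reducing with the deformed rules:
\begin{itemize}
\item $X_i σ_i$ is irreducible, so it contributes $0$;
\item $σ_i X_{i+1}$ reduces to $X_i σ_i - ε$, so it contributes $-1$.
\end{itemize}
Thus $\ell(φ^1(ν_0)) = 1$. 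Writing $π^{\MC}(ν) = A ħ ν_0$, this gives $A_i = A$ for every $i$; in particular $A$ is independent of $i$.

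\emph{Gauge classification.} If $A = A'$, then $π^{\MC}(ν) = π^{\MC}(ν')$. By \autoref{th:prelim-defo-pushpull}, both $ν$ and $ν'$ are $ℤ^2$-homogeneously gauge equivalent to $i^{\MC}(Aħν_0)$. Conversely, $A$ is a gauge invariant by the previous paragraph, so gauge-equivalent elements have equal $A$.

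The main obstacle I expect is the sign and ordering bookkeeping in evaluating $φ^1(ν_0)$. One must confirm that, with the paper's reversed path conventions, the pair $(X_i, σ_i)$ is the irreducible one and $(σ_i, X_{i+1})$ the reducible one. A wrong choice would only flip the sign of $A$, which does not affect the statement.
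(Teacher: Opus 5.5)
\textbf{Verdict.} Your existence argument is correct; it is the same degree count the paper uses. The invariance step, however, rests on a false claim and needs a repair, which is easy.

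\textbf{The gap.} You assert that coboundaries of degree $(0,-1)$ vanish in $\HC(\NH_{k,\hbar=0})$. That is false. A Hochschild $1$-cochain there is an arbitrary linear map $D$, not a derivation, so it is not determined by $D(X_j)$ and $D(\sigma_j)$; degree only forces those values to be zero. For instance, set $D(X_i\sigma_i)=1$ and $D=0$ on all other normal-form monomials. This $D$ has degree $(0,-1)$, yet $\delta D(X_i,\sigma_i) = -D(X_i\sigma_i) = -1 \neq 0$.

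The vanishing $\Map(S_1,A)_{(0,-1)}=0$ is special to the reduced complex $P^\bullet$, whose cochains are determined by their values on $S_1$. It does not transfer to the bar model. Consequently two of your assertions are wrong:
\begin{itemize}
\item that degree-$0$ gauge transformations leave the first-order term unchanged: a gauge transformation changes $\nu_1$ by $\pm\delta\eta_1$;
\item that $\nu_1=\varphi^1(A\nu_0)$ on the nose: $\nu_1$ is only cohomologous to a multiple of $\varphi^1(\nu_0)$.
\end{itemize}
Likewise, $\nu=\hbar\nu_1$ does not follow from degrees, which only constrain special inputs such as $(X_i,\sigma_i)$. In $\NH_k$ for $k\ge 3$ one has $\sigma_1\sigma_2X_3^2 = X_1^2\sigma_1\sigma_2-\hbar(X_1+X_2)\sigma_2-\hbar(X_1+X_3)\sigma_1+\hbar^2$. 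You never actually need this claim, though.

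\textbf{The repair.} The functional $\ell$ annihilates every coboundary of degree $(0,-1)$. Indeed $\ell(\delta D) = -D(X_i\sigma_i)+D(\sigma_iX_{i+1}) = 0$, because $D$ kills $X_i,\sigma_i,X_{i+1}$ by degree and $X_i\sigma_i=\sigma_iX_{i+1}$ in $\NH_{k,\hbar=0}$. From this:
\begin{itemize}
\item $\ell$ descends to $\HH^1_{(0,-1)}$.
\item $\ell$ is a gauge invariant, since $\ell(\nu)=\hbar\,\ell(\nu_1)$ holds exactly and gauge moves $\nu_1$ only by a coboundary.
\item Write $\pi^{\MC}(\nu)=A_\pi\hbar\nu_0$. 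Its $\hbar$-linear term is $\hbar\,\pi^1(\nu_1)=\hbar[\nu_1]$, so $\ell(\nu_1)=A_\pi\,\ell(\varphi^1(\nu_0))=A_\pi$.
\end{itemize}
After this, your classification paragraph goes through unchanged.

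\textbf{Comparison with the paper.} Once repaired, your route genuinely differs from the paper's. The paper proves invariance by pushing the identity $\nu(X_i,\sigma_i)-\nu(\sigma_i,X_{i+1})=A\hbar$ through the gauge functor $F$. This $F$ is an algebra isomorphism that is the identity to leading order; degree in fact forces it to fix $X_i$ and $\sigma_i$ exactly, so it carries $A\hbar$ to $A'\hbar$. The paper then gets the converse from the push--pull lemma. That step tacitly needs $A$ to equal the coefficient of $\pi^{\MC}(\nu)$, which the paper only supplies in the following lemma via Barmeier--Wang. Your cohomological functional, together with the evaluation $\ell(\varphi^1(\nu_0))=1$, settles both directions at once.
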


\begin{proof}
We start with the observation that $ ν(X_i, σ_i) $ and $ ν(σ_i, X_{i+1}) $ have degree $ (0, 1) $. Therefore both are scalar multiples of $ ħ $. Let now $ F: ν → ν' $ be a $ ℤ^2 $-graded gauge-equivalence. Since $ \NH_{k, ħ=0} $ is an ordinary algebra, $ F $ is a strict unital functor and an ordinary morphism of algebra deformations. We have
\begin{align*}
A ħ &= F((μ + ν)(X_i, σ_i) - (μ + ν)(σ_i, X_{i+1})) \\
&= (μ + ν')(F(X_i), F(σ_i)) - (μ + ν')(F(σ_i), F(X_{i+1})) \\
&= (μ + ν')(X_i + \landau(ħ), σ_i + \landau(ħ)) - (μ + ν')(σ_i + \landau(ħ), X_{i+1} + \landau(ħ)) \\
&= ħ \landau(X_j, σ_j) + A' ħ + \landau(ħ^2).
\end{align*}
We conclude that $ A = A' $. The converse statement follows from \autoref{th:prelim-defo-pushpull}. This finishes the proof.
\end{proof}

\begin{lemma}
The scalar associated with the deformation $ i^{\MC} (A ħ ν_0) $ is $ A $.
\end{lemma}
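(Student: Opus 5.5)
To prove the lemma, I will compute the invariant $ ν(X_i, σ_i) - ν(σ_i, X_{i+1}) $ of \autoref{th:unique-param-exist} for $ ν = i^{\MC} (A ħ ν_0) $, working only to first order in $ ħ $. This suffices because the invariant is a multiple of $ ħ $ by the degree argument in \autoref{th:unique-param-exist}. Write $ i^{\MC} (A ħ ν_0) = \sum_{n ≥ 1} ħ^n \nu_n $. The Maurer-Cartan formula $ i^{\MC} (x) = \sum_j i^j(x, …, x)/j! $ shows that the $ ħ^1 $-coefficient is $ \nu_1 = A\, i^1 (ν_0) $, since all terms with $ j ≥ 2 $ are $ \landau(ħ^2) $. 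Here $ i^1 $ is the linear part of the composite inclusion $ \HH(\NH_{k,ħ=0}) → P^{•}[1] → \HC(\NH_{k,ħ=0}) $. The task therefore reduces to evaluating the Hochschild $ 2 $-cocycle $ i^1(ν_0) $ on the pairs $ (X_i, σ_i) $ and $ (σ_i, X_{i+1}) $.

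Next I use the freedom in choosing the minimal model. The linear part $ i^1 $ on cohomology may be taken to be the map sending a class to a chosen cocycle representative. Up to a Hochschild coboundary, this representative is $ φ^1 $ of the reduction-system cocycle $ ν_0 ∈ \Ker(∂^2) $ computed in the previous section. A coboundary $ δg $ with $ g $ of degree $ (0,-1) $ contributes nothing to the invariant. Indeed, $ \Map(S_1, A)_{(0,-1)} = 0 $ already holds at the level of $ P^{•} $, and more directly $ δg(X_i,σ_i) - δg(σ_i, X_{i+1}) $ is a combination of terms $ g(X_i σ_i) $, $ X_i g(σ_i) $, and so on. These vanish because $ g $ lowers the $ ħ $-degree by one while having $ q $-degree $ 0 $, so it maps into $ ħ $-degree $ -1 $, which is zero in $ \NH_{k,ħ=0} $. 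I will still double check this degree bookkeeping.

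The main step is the Barmeier--Wang identification $ φ^1(ν_0) = *_ε - * $: the first-order deformed product is concatenation followed by reduction using the rules $ σ_i X_i ↦ X_{i+1} σ_i + ε ν_0(σ_i X_i) $ and $ σ_i X_{i+1} ↦ X_i σ_i + ε ν_0(σ_i X_{i+1}) $. The product $ X_i * σ_i $ concatenates to the irreducible path $ X_i σ_i $, so no reduction occurs and the correction vanishes. The product $ σ_i * X_{i+1} $ reduces in one step, so $ φ^1(ν_0)(σ_i, X_{i+1}) = ν_0(σ_i X_{i+1}) = -1 $. Taking care with the order convention of \autoref{conv:nh-algebra-strandconv} and the reversed path conventions noted in the remark about Barmeier--Wang, the invariant becomes $ A(0-(-1))ħ = Aħ $; with the opposite ordering the roles of the two terms swap and the result is again $ Aħ $ via $ ν_0(σ_iX_i)=1 $.

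I expect the main obstacle to be these sign and ordering conventions, since the paper itself flags unresolved order issues. As a consistency check, I will verify that the standard NilHecke deformation, defined by $ σ_i X_{i+1} = X_i σ_i - ħ $, yields scalar $ 1 $ under the same computation.
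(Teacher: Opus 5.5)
Your proposal is correct and follows the paper's route. You reduce to first order in $\hbar$ and identify the linear part of the inclusion with $\varphi^1$ applied to the reduction-system cocycle $\nu_0$. You then read off the correction from the Barmeier--Wang deformed reduction rules, which gives $A\hbar$. The paper's displayed computation uses the reversed path convention and puts the correction on $(X_i,\sigma_i)$ with value $1$; this is your ``opposite ordering'' case.

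There is one local slip in your coboundary step. The term $g(X_i\sigma_i)$ does not vanish by degree. Since $X_i\sigma_i$ has degree $(0,1)$, $g(X_i\sigma_i)$ has degree $(0,0)$ and can be a nonzero scalar. The invariant is still unaffected: this term cancels against $g(\sigma_i X_{i+1})$ because $X_i\sigma_i=\sigma_i X_{i+1}$ in $\NH_{k,\hbar=0}$. The remaining terms $g(X_j)$ and $g(\sigma_i)$ do vanish by degree.

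You can also avoid this step, as the paper does, by taking the inclusion to be $\varphi$ composed with the minimal-model inclusion into $P^{\bullet}[1]$. Since $\Map(S_1,A)_{(0,-1)}=0$, the class is then sent to $\nu_0$ itself, with no coboundary ambiguity.
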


\begin{proof}
Regard the deformation $ ν = i^{\MC} (A ħ ν_0) $. Then $ ν $ can be determined explicitly according to \cite{Barmeier-Wang} or alternatively its shape up to $ ħ^2 $ terms can be determined via the first components of the $ L_∞ $-morphisms $ i' $ and $ φ $. We arrive at
\begin{align*}
(μ + ν)(X_i, σ_i) &= σ_i X_{i+1} + A ħ ν_0 (X_i σ_i) + \landau(ħ^2) \\
(μ + ν)(σ_i, X_{i+1}) &= σ_i X_{i+1}.
\end{align*}
Recalling that $ ν_0 (X_i σ_i) = 1 $, we conclude that the scalar associated with $ ν $ is $ A $.
\end{proof}

\begin{lemma}
Let $ ν ∈ \MC(\HC(\NH_{k, ħ=0}), ℂ⟦ħ⟧_{(0, 1)})_0 $ be the Maurer-Cartan element defined by the deformation $ \NH_k $ of $ \NH_{k, ħ=0} $. Then we have $ π^{\MC} (ν) = ħ ν_0 $.
\end{lemma}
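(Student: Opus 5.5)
The plan is to compute the invariant scalar of \autoref{th:unique-param-exist} for the standard deformation $ \NH_k $, and then match it with the scalar of the explicit family $ i^{\MC} (A ħ ν_0) $ from the preceding lemma. Write $ ν $ for the Maurer-Cartan element of $ \NH_k $. To do so, I first identify $ \NH_k $ with $ ℂ⟦ħ⟧ \htensor \NH_{k, ħ=0} $ via the basis of irreducible paths $ X_1^{a_1} … X_k^{a_k} σ_{b_1} … σ_{b_l} $. In the deformed reduction system these are still the normal forms, since only the right-hand sides of $ σ_i X_i $ and $ σ_i X_{i+1} $ acquire the additional terms $ ±ħ $. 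Under this identification, $ μ + ν $ is concatenation followed by reduction with respect to the deformed rules
\begin{equation*}
σ_i X_i → X_{i+1} σ_i + ħ, \qquad σ_i X_{i+1} → X_i σ_i - ħ,
\end{equation*}
with all other rules unchanged. This identification is $ ℤ^2 $-homogeneous, because the deformed relations are homogeneous.

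Next I evaluate the invariant $ ν(X_i, σ_i) - ν(σ_i, X_{i+1}) $ directly; it must be computed with the same argument convention that the preceding lemma used. In the strand convention one product is already irreducible and the other reduces by one deformed rule. Concretely, $ (μ + ν)(σ_i, X_{i+1}) $ corresponds to the word $ σ_i X_{i+1} $, which reduces to $ X_i σ_i - ħ $, while $ μ(σ_i, X_{i+1}) = X_i σ_i $. Hence $ ν(σ_i, X_{i+1}) = -ħ $. The other term, corresponding to an already reduced word, gives $ ν(X_i, σ_i) = 0 $. The invariant is therefore $ ħ $, possibly up to the sign and ordering conventions fixed in the previous lemma. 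To be safe, I would double-check the convention against the relation $ σ_i X_i - X_{i+1} σ_i = ħ $ used there, and confirm that the value is $ +ħ $ and not $ -ħ $. If the sign turns out to be $ -ħ $, the bookkeeping of $ ν_0 $ (namely $ ν_0(σ_i X_i) = 1 $ and $ ν_0(σ_i X_{i+1}) = -1 $) would need to be re-read consistently.

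Then I conclude as follows. By the preceding lemma, $ i^{\MC} (ħ ν_0) $ has scalar $ 1 $, and by the computation above $ ν $ also has scalar $ 1 $. By \autoref{th:unique-param-exist}, $ ν $ and $ i^{\MC} (ħ ν_0) $ are therefore $ ℤ^2 $-graded gauge equivalent. By \autoref{th:prelim-defo-pushpull}, $ ν $ is in turn gauge equivalent to $ i^{\MC} (π^{\MC} (ν)) $, and \autoref{th:prelim-defo-gradedMCpush} shows that $ π^{\MC} (ν) $ is graded. By the classification lemma, $ π^{\MC} (ν) = A ħ ν_0 $ for some scalar $ A $. Applying \autoref{th:unique-param-exist} once more, together with the preceding lemma, gives $ A = 1 $.

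The main obstacle is the sign and ordering convention. The paper's $ A_∞ $ convention (composition $ μ(a_2, a_1) = a_2 a_1 $) has to be reconciled with the strand convention, and ambiguities of this kind are the only real content of the computation. The rest follows from results already available.
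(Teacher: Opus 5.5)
Your proposal is correct and is essentially the paper's argument: write $ π^{\MC} (ν) = A ħ ν_0 $, note that $ i^{\MC} (A ħ ν_0) $ has scalar $ A $ and is graded gauge-equivalent to $ ν $ by \autoref{th:prelim-defo-pushpull}, and conclude $ A = 1 $ from \autoref{th:unique-param-exist}. Your explicit evaluation of the scalar of $ \NH_k $ supplies what the paper only asserts, and your sign worry is moot, since $ ν(X_i, σ_i) - ν(σ_i, X_{i+1}) $ is just the relation $ X_i σ_i - σ_i X_{i+1} = ħ $ computed in $ \NH_k $ (here $ μ $ contributes zero), while the step deducing that $ ν $ and $ i^{\MC} (ħ ν_0) $ are gauge equivalent is superfluous, because only the direction ``gauge equivalent implies equal scalars'' is used.
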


\begin{proof}
Write $ π^{\MC} (ν) = A ħ ν_0 $. Then the scalar associated with $ i^{\MC} (π^{\MC} (ν)) $ is $ A $ and simultaneously there is a $ ℤ^2 $-graded gauge-equivalence with $ ν $ which has scalar $ 1 $. Thanks to \autoref{th:unique-param-exist} we conclude $ A = 1 $.
\end{proof}

\begin{definition}
For a scalar $ A $, define the deformation $ ν_A ∈ \MC(\HC(\NH_{k, ħ=0}), ℂ⟦ħ⟧_{(0, 1)})_0 $ as the deformation arising from the deformed reduction system with
\begin{align*}
φ_s + g_s &= X_{i+1} σ_i + A ħ, \quad s = σ_i X_i, \quad i ∈ \{1, …, k-1\}, \\
φ_s + g_s &= φ_s, \quad s ≠ σ_i X_i.
\end{align*}
\end{definition}

\begin{lemma}
A full set of non-gauge-equivalent $ ℤ^2 $-graded deformations of $ \NH_{k, ħ=0} $ is given by the elements $ ν_A $.
\end{lemma}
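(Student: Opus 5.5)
The plan is to show three things: each $ ν_A $ is a genuine $ ℤ^2 $-graded deformation, the invariant of \autoref{th:unique-param-exist} takes the value $ A $ on $ ν_A $, and every $ ℤ^2 $-graded deformation has some value of this invariant. Once these are in place, \autoref{th:unique-param-exist} says that $ A ↦ ν_A $ meets every gauge class exactly once.

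The first step is to check that the deformed reduction system defining $ ν_A $ is reduction-unique, so that it defines a deformation at all. I would not verify the overlap ambiguities in $ S_3 $ by hand. Instead I would use the substitution $ ψ(ħ) = Aħ $. For $ A ≠ 0 $, the system for $ ν_A $ is obtained from the presentation of $ \NH_k $ by this substitution. Indeed, the relation $ σ_i X_i = X_{i+1} σ_i + ħ $ becomes $ σ_i X_i = X_{i+1}σ_i + Aħ $. The companion relation $ σ_i X_{i+1} = X_i σ_i - ħ $ must then also carry the factor $ A $.

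\textbf{This is the main obstacle.} As literally stated, the definition of $ ν_A $ leaves $ σ_i X_{i+1} $ undeformed. That is incompatible with the kernel computation for $ ∂^2 $, which forces $ B_i = -A_i $. I would therefore read the definition as also deforming $ φ_{σ_i X_{i+1}} = X_i σ_i - Aħ $, in line with $ ν_0 $. If the literal reading is intended, I would instead try to realize it as a gauge transform of this system, but I expect that to fail, so I would adopt the corrected reading.

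With that reading, reduction-uniqueness follows from the known PBW basis of $ \NH_k $ via \autoref{th:prelim-defo-substitution}, and the case $ A = 0 $ is the trivial deformation. Homogeneity holds because each deformed relation is $ ℤ^2 $-homogeneous, since $ \deg(σ_i X_i) = (0,1) = \deg(ħ) $. The graded version of the Barmeier–Wang lemma then gives $ ν_A ∈ \MC(\HC(\NH_{k, ħ=0}), ℂ⟦ħ⟧_{(0, 1)})_0 $.

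The second step is to evaluate the invariant on $ ν_A $. In the deformed algebra, $ X_i σ_i $ is irreducible, since the reduction rules only rewrite words beginning with $ σ_i $. The word $ σ_i X_{i+1} $ reduces to $ X_i σ_i - Aħ $. So $ ν_A(X_i, σ_i) - ν_A(σ_i, X_{i+1}) = Aħ $, up to the paper's sign and ordering conventions, and the associated scalar of $ ν_A $ is $ A $. By \autoref{th:unique-param-exist}, $ ν_A $ and $ ν_{A'} $ are therefore non-gauge-equivalent whenever $ A ≠ A' $.

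The third step is completeness. Let $ ν $ be any graded Maurer–Cartan element, and let $ A $ be its scalar from \autoref{th:unique-param-exist}. That lemma says gauge classes are determined exactly by the scalar, so $ ν $ is $ ℤ^2 $-graded gauge-equivalent to $ ν_A $. As a cross-check, the classification lemma gives $ π^{\MC}(ν) = A'' ħ ν_0 $, and \autoref{th:prelim-defo-pushpull} relates $ ν $ to $ i^{\MC}(A'' ħ ν_0) $, whose scalar is $ A'' $ by the preceding lemma. Hence $ A'' = A $, and both $ ν $ and $ ν_A $ lie in the class of $ i^{\MC}(Aħν_0) $. Together the three steps show that the $ ν_A $ form a complete set of pairwise non-gauge-equivalent $ ℤ^2 $-graded deformations.
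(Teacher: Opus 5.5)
Your argument follows the paper's route: classify graded deformations by the scalar of \autoref{th:unique-param-exist}, use that lemma for completeness, and note that $\nu_A$ realizes the scalar $A$. The paper's proof asserts this last point ``by definition'' and gets distinctness from the non-gauge-equivalence of the classes $A\hbar\nu_0$. You instead get distinctness from the scalar invariant itself, and you also check that $\nu_A$ is a deformation at all.

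That extra check matters, and your diagnosis is correct. If only $\sigma_i X_i$ is deformed, the ambiguity $\sigma_i^2 X_i$ reduces to $0$ via $\sigma_i^2 \mapsto 0$. Via $\sigma_i X_i \mapsto X_{i+1}\sigma_i + A\hbar$ followed by the undeformed rule $\sigma_i X_{i+1} \mapsto X_i\sigma_i$, it reduces to $A\hbar\sigma_i$. This discrepancy is exactly the term $-(A_i+B_i)\sigma_i$ in the paper's own kernel computation. So the literal $\nu_A$ is not Maurer--Cartan for $A \neq 0$. The lemma, and the paper's proof of it, only hold for your corrected system with $\sigma_i X_{i+1} \mapsto X_i\sigma_i - A\hbar$, i.e.\ $\NH_k$ with $\hbar$ replaced by $A\hbar$. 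For that system your computation of the scalar gives $A\hbar$ under either ordering convention for $\nu(\cdot,\cdot)$.

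One small slip: $X_i\sigma_i$ is irreducible because no left-hand side of the reduction system equals it. It is not because all rules begin with $\sigma_i$; the rules $X_iX_j \mapsto X_jX_i$ do not.
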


\begin{proof}
Let $ ν $ be any $ ℤ^2 $-graded deformation. Write $ π^{\MC} (ν) = A ħ $. Since $ ν_A $ has scalar $ A $ by definition, the scalars of $ ν $ and $ ν_A $ agree and by \autoref{th:unique-param-exist} we conclude that $ ν $ and $ ν_A $ are $ ℤ^2 $-graded gauge equivalent. Conversely, we recall that the elements $ A ħ $ are not gauge-equivalent in $ \MC(\HH(\NH_{k, ħ=0}), ℂ⟦ħ⟧_{(0, 1)})_0 $.
\end{proof}

\subsection{Main theorem}
\label{sec:unique-main}
In this section, we formulate the main theorem. This entails an isomorphism between $ \relFuk(Y_W)_L $ and the category $ \NH_k $. Recall that we abuse notation and denote by $ \NH_k $ the one-element category with endomorphism algebra $ \End(*) = \NH_k $. The argument is based on the relatively easy non-deformed description of $ \Fuk(Y_W)_L $ recalled in \autoref{sec:prelimhilbhor-calc} and part of the calculations from \cite[section 8]{ADLSZ}, however circumventing the final assembly of their result.

\begin{equation*}
\begin{tikzcd}
\relFuk(Y_W)_L \arrow[d, dash, "\text{defo}"] \arrow[r, "\sim"] & \NH_k \arrow[d, dash, "\text{defo}"] \\
\Fuk(Y_{O, W})_L \arrow[r, "\sim"] & \NH_{k, ħ=0}.
\end{tikzcd}
\end{equation*}

\begin{theorem}
\label{th:unique-main-th}
There is an isomorphism of $ A_∞ $-categories $ F: \relFuk(Y_W)_L \isoto \NH_k $.
\end{theorem}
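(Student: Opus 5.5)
We will reduce the theorem to the classification of \autoref{sec:unique-classification}, and specifically to \autoref{th:unique-unique-iso}. The plan has three steps.

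\textbf{Step 1: transport the deformation to $\NH_{k, ħ=0}$.} By the lemmas of \autoref{sec:unique-grading}, $\relFuk(Y_W)_L$ is a $ℤ^2$-graded deformation of $\Fuk(Y_{O, W})_L$ over $ℂ⟦ħ⟧_{(0, 1)}$. Its base has minimal model $\NH_{k, ħ=0}$ via the $ℤ^2$-graded identification of \autoref{th:prelimhilbhor-calc-iso}.

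Passing to the deformed minimal model, we obtain a $ℤ^2$-homogeneous quasi-isomorphism $π_q$ from $\relFuk(Y_W)_L$ to a deformation of $\H\End(L)$. The leading term of $π_q$ is the standard projection. Pulling back along the graded isomorphism $\NH_{k, ħ=0} \cong \H\End(L)$ gives a Maurer-Cartan element
\begin{equation*}
ν \in \MC(\HC(\NH_{k, ħ=0}), ℂ⟦ħ⟧_{(0, 1)})_0 .
\end{equation*}
It is homogeneous of degree zero by \autoref{th:prelim-defo-gradedMCpush}.

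A priori this element may carry curvature and a differential, since minimal models of deformations may be curved. Homogeneity rules both out. The curvature is a degree-zero element of $ħ ℂ⟦ħ⟧ \htensor \NH_{k, ħ=0}$ in cohomological degree $2$. But $\NH_{k, ħ=0}$ is concentrated in degree $0$, so this component vanishes. The same argument applies to $μ^1$ and to all higher $μ^{≥3}$. Hence $ν$ is a genuine associative deformation, and the classification applies to it.

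\textbf{Step 2: show the deformation is nontrivial.} This is the main obstacle. By \autoref{th:unique-param-exist}, it suffices to show that the scalar
\begin{equation*}
A = ħ^{-1}\bigl(ν(X_i, σ_i) - ν(σ_i, X_{i+1})\bigr)
\end{equation*}
is nonzero for a single $i$. Equivalently, we need the commutator relation
\begin{equation*}
X_i σ_i - σ_i X_{i+1} = Aħ
\end{equation*}
with $A \neq 0$ in $\H\End_{\relFuk(Y_W)}(L)$ at first order in $ħ$.

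To establish this, I will count the $ħ$-weight one disks with inputs $σ_i, X_i$ and output $\id$. These are the disks of $f_ħ$-degree $1$, i.e.\ disks whose $(y_i, y_{i+1})$-projection covers the point $y_i = y_{i+1}$ exactly once. Since everything is graded, only finitely many such disks contribute. This is exactly where the part of the calculation in \cite[section 8]{ADLSZ} is borrowed: the single bigon in the base rectangle crossing the diagonal, combined with the trivial fibre disk, contributes $\pm 1$.

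The difficulty is signs and orientations. I only need nonvanishing, though, not the exact value. So I will argue that the relevant moduli space consists of one rigid disk, whose contribution is therefore $\pm 1 \neq 0$.

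\textbf{Step 3: assemble.} With $ν$ nontrivial, \autoref{th:unique-unique-iso} provides a $ℤ^2$-homogeneous $A_∞$-isomorphism from $ν$ to $\NH_k$, via the rescaling substitution $ħ \mapsto A^{-1}ħ$. Composing this with $π_q$ and the identification of Step 1 yields the desired functor $F$.

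A caveat is that $π_q$ is only a quasi-isomorphism. I will interpret the statement on minimal models, identifying $\relFuk(Y_W)_L$ with its minimal model, which is how the endomorphism algebra in the theorem is to be read.
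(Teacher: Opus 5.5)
Your proposal follows the paper's proof. You view $\relFuk(Y_W)_L$, on minimal models, as a $\mathbb{Z}^2$-graded deformation of $\NH_{k,\hbar=0}$, take nontriviality (the scalar $A \neq 0$) from \cite{ADLSZ}, and conclude with \autoref{th:unique-unique-iso} via the substitution $\hbar \mapsto A^{-1}\hbar$; your Step~1 degree argument ruling out curvature and all $\mu^{k}$ with $k \neq 2$, together with your minimal-model caveat, spells out what the paper leaves implicit.

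One caution: the picture of the contributing disk as a base bigon times a trivial fibre disk is not literally right. Since $u_i = x_i/\prod_{j \neq i}(y_i - y_j)$, $u_i$ acquires a pole where the disk crosses the diagonal, and that pole is what absorbs the $-1$ fibre winding of $X_i$. As in the paper, though, the actual input for $A \neq 0$ is the citation of \cite{ADLSZ}, so the argument is unaffected.
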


\begin{proof}
We recall from \cite{ADLSZ} that the deformation $ \relFuk(Y_W)_L $ has nonzero $ A $ scalar and is thus a non-trivial deformation of $ \Fuk(Y_{O, W})_L $ over $ ℂ⟦ħ⟧_{(0, 1)} $. Along the standard identification $ \Fuk(Y_{O, W})_L $ of \autoref{sec:prelimhilbhor-calc}, this constitutes a non-trivial deformation of $ \NH_{k, ħ=0} $. Due to \autoref{th:unique-unique-iso} we conclude that there is an $ A_∞ $-isomorphism $ \relFuk(Y_W)_L → \NH_k $.
 \end{proof}

\begin{remark}
While both $ \relFuk(Y_W)_L $ and $ \NH_k $ are defined over $ ℂ⟦ħ⟧_{(0, 1)} $, the functor $ F $ is not necessarily $ ℂ⟦ħ⟧_{(0, 1)} $-linear. Indeed, its construction includes a potential substitution of the $ ħ $ variable.
\end{remark}

\appendix

\section{Nontriviality and examples}
\label{sec:nontriviality-examples}
In this section, we collect a direct approach to establish nontriviality of the deformation $ \relFuk(Y_{O, W})_L $ and examples of complexes in $ \Tw\NH_{k, ħ=0} $ together with their uncurvings in $ \Tw'\NH_k $.

\subsection{An approach to nontriviality of the deformation}
In this section, we approach the question whether the deformation $ \relFuk(Y_{O, W})_L $ is a nontrivial deformation. We construct a sequence of three Lagrangians in $ Y_{O, W} $ with three consecutive intersection points.

We start by recalling the coordinates $ (y_i) $ and $ (u_i) $ for $ Y_{O, W} $. We shall now construct a class of objects of $ \relFuk(Y_{O, W}) $. Denote by $ R ⊂ (ℂ^*)^2 $ the product of two standard lines in the fiber cylinder. For a position vector $ v ∈ ℂ \setminus \{0\} $ and a direction vector $ w ∈ ℂ $ we construct the object
\begin{equation*}
X'_{v, w} = \{(sw, v + tw) \running s, t ∈ ℝ\} × R ⊂ ℂ^2 × (ℂ^*)^2.
\end{equation*}
This subset $ X'_{v, w} ⊂ ℂ^2 × (ℂ^*)^2 $ does not touch the big diagonal and therefore descends to an object $ X ∈ \relFuk(Y_{O, W}) $. We shall now construct three such objects $ X_{v_i, w_i} $ for $ i = 1, 2, 3 $. We select $ w_1 = e^{πi/6} $, $ w_2 = e^{3πi/6} $ and $ w_3 = e^{5πi/6} $ and $ v_1 = e^{4πi/6} $, $ v_2 = e^{6πi/6} $ and $ v_3 = e^{8πi/6} $. The resulting objects $ X_{v_i, w_i} $ and their projection to the $ (y_1 - y_2)^2 $ coordinate is illustrated in \autoref{fig:nontriviality-triangle-lagrangians} and \ref{fig:nontriviality-triangle-projection}.


\begin{figure}
\centering
\begin{tikzpicture}
\begin{axis}[
    axis equal,
    xlabel={Re},
    ylabel={Im},
    domain=-1:1,
    samples=100,
]

\addplot [thick, blue] (
    {(( -1/2 + sqrt(3)*x/2 )^2 - ( sqrt(3)/2 + x/2 )^2)},
    {2*( -1/2 + sqrt(3)*x/2 )*( sqrt(3)/2 + x/2 )}
);

\addplot [thick, red] (
    {1 - x^2},
    {-2*x}
);

\addplot [thick, green!60!black] (
    {(( -1/2 - sqrt(3)*x/2 )^2 - ( -sqrt(3)/2 + x/2 )^2)},
    {2*( -1/2 - sqrt(3)*x/2 )*( -sqrt(3)/2 + x/2 )}
);

\end{axis}
\end{tikzpicture}
\caption{This figure depicts the three projections of the three objects $ X_{v_i, w_i} $ for $ i = 1, 2, 3 $ to the coordinate $ (y_1 - y_2)^2 $.}
\label{fig:nontriviality-triangle-projection}
\end{figure}

\begin{figure}
\centering
\begin{tikzpicture}
\begin{axis}[
    axis equal,
    xlabel={Re},
    ylabel={Im},
    domain=-2:2,
    samples=2, 
]

\addplot [thick, blue] (
    {sqrt(3)/2 * x},
    {1/2 * x}
);

\addplot [thick, blue, dashed] (
    {-1/2 + sqrt(3)/2 * x},
    {sqrt(3)/2 + 1/2 * x}
);

\addplot [thick, red] (
    {0},
    {x}
);

\addplot [thick, red, dashed] (
    {-1},
    {x}
);

\addplot [thick, green!60!black] (
    {-sqrt(3)/2 * x},
    {1/2 * x}
);

\addplot [thick, green!60!black, dashed] (
    {-1/2 - sqrt(3)/2 * x},
    {-sqrt(3)/2 + 1/2 * x}
);

\end{axis}
\end{tikzpicture}
\caption{This figure depicts the three two-dimensional Lagrangians $ X_{v_i, w_i} $ for $ i = 1, 2, 3 $. They are symmetric products of two individual lines. We have depicted each of the three objects in its own color.}
\label{fig:nontriviality-triangle-lagrangians}
\end{figure}

Let now construct three morphisms $ φ_1 ∈ \Hom(X_{v_1, w_1}, X_{v_2, w_2}) $, $ φ_2 ∈ \Hom(X_{v_2, w_2}, X_{v_3, w_3}) $ and $ φ_3 ∈ \Hom(X_{v_1, w_1}, X_{v_3, w_3}) $. Let us now construct the intersection points $ φ_1, φ_2, φ_3 $. Fix a point $ r = (r_1, r_2) ∈ R $. We have the intersection points
\begin{align*}
φ_1 &= (-1 + \frac{1}{\sqrt{3}} i, 0, r_1, r_2), \\
φ_2 &= (-1 - \frac{1}{\sqrt{3}} i, 0, r_1, r_2), \\
φ_3 &= (-2, 0, r_1, r_2).
\end{align*}

Let us now explain why $ μ^2 (φ_2, φ_1) = ħ φ_3 $. We shall indicate how to construct a disk $ φ: D \to Y_{O, W} $ where $ D $ is the unit disk with three consecutive punctures on the boundary which are mapped to $ φ_1 $, $ φ_2 $ and $ φ_3 $, respectively. We start by letting $ ψ: \mathbb{D} \to \mathbb{C} $ be a biholomorphism onto the area between the curves in the $ (y_1 - y_2)^2 $ picture. Denote by $ D $ be the unit disk with punctures given by the preimages of the three intersection points. We will now assign $ y_1 + y_2 = v_i $ so that one picks the points $ (y_1, y_2) $ given by $ y_1 = v_i - y_2 $ and such that $ (y_1 - y_2)^2 = (v_i - 2 y_2)^2 = ψ(x) $. There are two solutions but they differ exactly by swapping $ y_1 $ and $ y_2 $.

\subsection{Uncurving of twisted complexes}
In this section, we exhibit specific examples of twisted complexes in $ \Tw\Fuk(Y_{O, W})_L $. We start by constructing specific twisted complexes in $ \Tw\Fuk(Y_{O, W})_L $. Using the formalism of curved twisted completions recalled in \autoref{sec:prelim-liou-defo}, we explain their curvature as objects in $ \Tw\relFuk(Y_W)_L $. We determine for specific complexes whether they are uncurvable or not. Instead of viewing these twisted complexes as twisted complexes of objects of the Fukaya category, we construct them as twisted complexes over the single object of the NilHecke category $ \NH_{k, ħ=0} $.

Let us start with generalities on twisted complexes in $ \Tw\NH_{k, ħ=0} $. The single object $ * ∈ \NH_{k, ħ=0} $ has endomorphism algebra $ \End(*) = \NH_{k, ħ=0} $ concentrated in degree zero. Therefore a twisted complex in $ \NH_{k, ħ=0} $ takes the form
\begin{equation*}
C = (\bigoplus_{i ∈ ℤ} *[-i]^{⊕l_i}, δ) = … \overset{δ}{→} *^{⊕l_{-1}} \overset{δ}{→} *^{⊕l_0} \overset{δ}{→} *^{⊕l_1} → … ∈ \Tw\NH_{k, ħ=0}.
\end{equation*}
The condition to be a twisted complex comes down the set $ \{i ∈ ℤ \running l_i ≠ 0\} $ being finite and the square of $ δ $ as an ordinary matrix over the noncommutative ring $ \NH_{k, ħ=0} $ being zero. Indeed, the condition reads $ μ^0_C = 0 $ where
\begin{equation*}
μ^0_C = \sum_{l ≥ 1} μ^l_{\Add \NH_k} (δ, …, δ) = μ^2_{\Add \NH_k} (δ, δ) = - (\sum_{j = 1}^I μ^2_{\NH_{k, ħ=0}} (δ_{ij}, δ_{jl}))_{i, l}.
\end{equation*}
Let us recall from \autoref{sec:prelim-liou-defo} that the category $ \Tw\NH_k $ has exactly the same objects as $ \Tw\NH_{k, ħ=0} $. In particular, every complex $ C ∈ \Tw\NH_{k, ħ=0} $ gives rise to an object $ C ∈ \Tw\NH_k $ with curvature $ -δ^2 $, where the multiplication is evaluated in $ \NH_k $. The category $ \Tw'\NH_k $ is a variant of $ \Tw\NH_k $ which allows infinitesimal entries to stand on and below the diagonal of the $ δ $-matrix as well. Testing whether an object $ C = (\bigoplus_{i ∈ ℤ} *[-i]^{⊕l_i}, δ) ∈ \Tw\NH_{k, ħ=0} $ is uncurvable in $ \Tw\NH_k $ comes down to checking whether there is an infinitesimal deformation $ δ' = δ + ε $ so that $ (\bigoplus_{i ∈ ℤ} *[-i]^{⊕l_i}, δ') ∈ \Tw'\NH_k $ has vanishing curvature, in other words $ δ'^2 = 0 $. Note that the entries of $ ε $ need to be concentrated in the same degrees as those of $ δ $, so that the total degree of $ δ' $ is one.

\begin{example}
If $ A ∈ \NH_{k, ħ=0} $ is any element, then we regard the twisted complex
\begin{equation*}
C_A = \big(*[-1] ⊕ *, \pmat{0 & A \\ 0 & 0}\big) = * \overset{A}{→} * ∈ \Tw \NH_{k, ħ=0}.
\end{equation*}
This twisted complex as an element of $ \Tw\NH_k $ has vanishing curvature. 
\end{example}

\begin{example}
In $ \NH_{k, ħ=0} $ we have the relation $ σ_i X_i - X_{i+1} σ_i = 0 $ for $ i = 1, …, k-1 $. Therefore the following constitutes a twisted complex:
\begin{equation*}
C = \big(*[-2] ⊕ *[-1] ⊕ *[-1] ⊕ *, \pmat{0 & σ_i & -X_{i+1} & 0 \\ 0 & 0 & 0 & X_i \\ 0 & 0 & 0 & σ_i}\big) = * \xrightarrow{\pmat{X_i \\ σ_i}} *^{⊕2} \xrightarrow{\pmat{σ_i & -X_{i+1}}} * ∈ \Tw\NH_{k, ħ=0}.
\end{equation*}
As an object of $ \Tw\NH_k $, this complex has curvature
\begin{equation*}
μ^0_C = \pmat{0 & 0 & 0 & -ħ \\ 0 & 0 & 0 & 0 \\ 0 & 0 & 0 & 0 \\ 0 & 0 & 0 & 0}.
\end{equation*}
Let us show that this twisted complex is not uncurvable. Regard the deformed $ δ $-matrix $ δ' $ given by
\begin{equation*}
* \xrightarrow{\pmat{X_i + ħF \\ σ_i + ħG}} *^{⊕2} \xrightarrow{\pmat{σ_i + ħD & -X_{i+1} + ħE}} *.
\end{equation*}
The condition $ δ'^2 = 0 $ reads
\begin{align*}
0 &= (σ_i + ħD)(X_i + ħF) + (-X_{i+1} + ħE)(σ_i + ħG) \\
&= ħ + ħ(-DX_i + σ_iF - X_{i+1} G + Eσ_i) + \landau(ħ^2).
\end{align*}
In the second row, the multiplications between the individual terms can be carried out in $ \NH_{k, ħ=0} $ instead of $ \NH_k $ since the difference is absorbed into $ \landau(ħ^2) $. Noticing that the polynomial degree of any term in the bracket is at least one, we conclude that the bracket cannot cancel out the first $ ħ $ summand. The complex $ C $ is therefore not uncurvable. It is in fact the most elementary example of such a complex, sourced directly from the deformed relations of $ \NH_k $.
\end{example}

\begin{example}
Any complex with $ δ $ consisting only of either polynomials in the $ X_i $ variables or polynomials in the $ σ_i $ variables is uncurvable in $ \Tw'\NH_k $. Indeed, $ ℂ[X_1, …, X_k] $ embeds into $ \NH_k $ and similarly the subring of $ \NH_{k, ħ=0} $ generated by $ σ_1, …, σ_{k-1} $ embeds into $ \NH_k $.
\end{example}

We shall also regard complexes of the form $ * \xrightarrow{g} * \xrightarrow{f} * $ where $ fg = 0 $.

\begin{example}
\label{ex:nontrivial-uncurving-cases}
Regard the following type of complex in $ \NH_3 $:
\begin{align*}
C &= * \xrightarrow{g} * \xrightarrow{f} *, \\
f &= f_1 + f_2 σ_1 + f_3 σ_2 + f_4 σ_1 σ_2 + f_5 σ_2 σ_1 + f_6 σ_1 σ_2 σ_1, \\
g &= g_1 + σ_1 g_2 + σ_2 g_3 + σ_1 σ_2 g_4 + σ_2 σ_1 g_5 + σ_1 σ_2 σ_1 g_6.
\end{align*}
The complex $ C $ lies in $ \Tw \NH_{k, ħ=0} $ if and only if $ fg = 0 $, which is equivalent to the following set of conditions:
\begin{align*}
f_1 g_1 &= 0, \\
f_2 σ_1 g_1 + f_1 σ_1 g_2 &= 0, \\
f_2 σ_1 σ_2 g_3 &= 0, \\
f_3 σ_2 σ_1 g_2 &= 0, \\
f_2 σ_1 σ_2 σ_1 g_5 + f_3 σ_2 σ_1 σ_2 g_4 + f_4 σ_1 σ_2 σ_1 g_2 + f_5 σ_2 σ_1 σ_2 g_3 &= 0.
\end{align*}
From the first two conditions we observe that $ f_1 = g_1 = 0 $. The third and fourth condition give a distiction into four cases. In each of these cases, the fifth condition takes a particular form. We shall recast the fifth condition in each of these cases by shifting the product $ σ_1 σ_2 σ_1 = σ_2 σ_1 σ_2 $ to the right of the expression:
\begin{itemize}
\item \textbf{Case 1:} $ f_2 = f_3 = 0 $. The fifth condition reads
\begin{equation*}
f_4 g_2^{(σ_1 σ_2 σ_1)} + f_5 g_3^{(σ_1 σ_2 σ_1)} = 0.
\end{equation*}
\item \textbf{Case 2:} $ f_2 = g_2 = 0 $. The fifth condition reads
\begin{equation*}
f_3 g_4^{(σ_1 σ_2 σ_1)} + f_5 g_3^{(σ_1 σ_2 σ_1)} = 0.
\end{equation*}
\item \textbf{Case 3:} $ g_3 = f_3 = 0 $. The fifth condition reads
\begin{equation*}
f_2 g_5^{(σ_1 σ_2 σ_1)} + f_4 g_2^{(σ_1 σ_2 σ_1)} = 0.
\end{equation*}
\item \textbf{Case 4:} $ g_3 = g_2 = 0 $. The fifth condition reads
\begin{equation*}
f_2 g_5^{(σ_1 σ_2 σ_1)} + f_3 g_4^{(σ_1 σ_2 σ_1)} = 0.
\end{equation*}
\end{itemize}
It seems that all complexes with $ fg = 0 $ are uncurvable in $ \Tw'\NH_k $.
\end{example}

\begin{example}
We shall provide a few concrete examples of the complexes from Case 1 in \autoref{ex:nontrivial-uncurving-cases} together with their uncurvings:
\begin{align*}
(σ_1 σ_2 - σ_2 σ_1 + f_6 σ_1 σ_2 σ_1)(σ_1 + σ_2) &= 0.
\end{align*}
\begin{align*}
\big(X_2^2 σ_1 σ_2 - X_3^2 σ_2 σ_1 + ħ (- X_2 σ_1 + X_2 σ_2 - X_3 σ_1 - X_3 σ_2 - ħ)\big) \quad & \\
× \big(σ_1 X_1^2 + σ_2 X_2^2 + ħ (- X_1 - X_2)\big) &= 0.
\end{align*}
\begin{align*}
(X_2 σ_1 σ_2 - X_3 σ_2 σ_1 - ħ σ_1) (-ħ + σ_1 X_1 + σ_2 X_2) &= 0.
\end{align*}
\begin{align*}
\big(X_2^2 σ_1 σ_2 - X_3^2 σ_2 σ_1 + ħ (- X_2 σ_1 + X_2 σ_2 - X_3 σ_1 - X_3 σ_2 - ħ)\big) \quad & \\
× \big(σ_1 X_1^2 + σ_2 X_2^2 + ħ (-X_1-X_2)\big) &= 0.
\end{align*}
\begin{align*}
\bigg(X_2^3 σ_1 σ_2 - X_3^3 σ_2 σ_1 + ħ \big((- X_2^2 - X_2 X_3 - X_3^2) σ_1 \quad & \\
+ (X_2^2 - X_3^2 + X_1 X_2 - X_1 X_3) σ_2 - ħ (X_1 + X_2 + X_3)\big)\bigg) \quad & \\
× \bigg(σ_1 X_1^3 + σ_2 X_2^3 + ħ (- X_1^2 - X_1 X_2 - X_2^2)\bigg) &= 0.
\end{align*}
\begin{align*}
\bigg(X_2^4 σ_1 σ_2 - X_3^4 σ_2 σ_1 + ħ \big((- X_2^3 - X_2^2 X_3 - X_2 X_3^2 - X_3^3) σ_1 + (X_2^3 - X_3^3) σ_2\big) \quad & \\
+ ħ \big(ħ (- X_1^2 - X_1 X_2 - X_1 X_3 - X_2^2 - X_2 X_3 - X_3^2)\big) \quad & \\
+ ħ \big((X_1^2 X_2 - X_1^2 X_3 + X_1 X_2^2 - X_3 X_3 X_1) σ_2\big)\bigg) \quad & \\
× \bigg(σ_1 X_1^4 + σ_2 X_2^4 + ħ (- X_1^3 - X_1^2 X_2 - X_1 X_2^2 - X_2^3)\bigg) &= 0.
\end{align*}
\end{example}

\printbibliography

\end{document}